\DeclarePairedDelimiter\ceil{\lceil}{\rceil}
\newcommand{\R}{\mathbb{R}}
\newcommand{\N}{\mathbb{N}}
\newcommand{\tn}{\mathbb{P}}
\newcommand{\calH}{\mathcal{H}}
\newcommand{\calF}{\mathcal{F}}
\newcommand{\calS}{\mathcal{S}}
\newcommand{\calR}{\mathcal{R}}
\newcommand{\calU}{\mathcal{U}}
\newcommand{\Hd}{\dim_{\mathrm{H}}}
\newcommand{\diam}{\operatorname{diam}}
\newcommand{\dist}{\operatorname{dist}}
\theoremstyle{plain}
\newtheorem{thm}[equation]{Theorem}
\newtheorem*{"thm"}{"Theorem"}
\newtheorem{lemma}[equation]{Lemma}
\newtheorem{cor}[equation]{Corollary}
\newtheorem{proposition}[equation]{Proposition}
\newtheorem{question}{Question}
\newtheorem{claim}[equation]{Claim}
\theoremstyle{definition}
\newtheorem{definition}[equation]{Definition}
\newtheorem{notation}[equation]{Notation}
\theoremstyle{remark}
\newtheorem{remark}[equation]{Remark}
\begin{document}

\begin{frontmatter}[classification=text]

\title{On the Hausdorff Dimension of Circular Furstenberg Sets} 

\author[fk]{Katrin F\"assler\thanks{Supported by the Academy of Finland via
the project \emph{Singular integrals, harmonic functions,
and boundary regularity in Heisenberg groups}, grant No. 321696}}
\author[jl]{Jiayin Liu\thanks{Supported by the Academy of Finland via
the projects  \emph{Incidences on Fractals}, grant No. 321896 and \emph{Singular integrals, harmonic functions,
and boundary regularity in Heisenberg groups}, grant No. 321696}}
\author[to]{Tuomas Orponen\thanks{Supported by the Academy of Finland via
the project  \emph{Incidences on Fractals}, grant No. 321896}}

\begin{abstract}
For $0 \leq s \leq 1$ and $0 \leq t \leq 3$, a set $F \subset \R^{2}$ is called a \emph{circular $(s,t)$-Furstenberg set} if there exists a family of circles $\mathcal{S}$ of Hausdorff dimension $\Hd \mathcal{S} \geq t$ such that
\begin{displaymath} \Hd (F \cap S) \geq s, \qquad S \in \mathcal{S}. \end{displaymath}
We prove that if $0 \leq t \leq s \leq 1$, then every circular $(s,t)$-Furstenberg set $F \subset \R^{2}$ has Hausdorff dimension $\Hd F \geq s + t$.  The case $s = 1$ follows from earlier work of Wolff on circular Kakeya sets. 
\end{abstract}
\end{frontmatter}

%
%
%

\section{Introduction}

We start by introducing a few key notions. Throughout the paper, we identify families of circles $\mathcal{S}$ with subsets of $\R^{2} \times (0,\infty)$ in the obvious way: the circle $S(x,r)$ with centre $x \in \R^{2}$ and radius $r > 0$ is identified with the point $(x,r) \in \R^{2} \times (0,\infty)$. With this convention, if $E \subset \R^{2} \times (0,\infty)$, then the Hausdorff dimension of the circle family $\mathcal{S} = \{S(x,r) : (x,r) \in E\}$ is defined to be
\begin{displaymath} \Hd \mathcal{S} := \Hd E. \end{displaymath}

\begin{definition}[Circular Furstenberg sets]\label{def:circularFurstenbergSets} Let $0 \leq s \leq 1$ and $0 \leq t \leq 3$. A set $F \subset \R^{2}$ is called a \emph{circular $(s,t)$-Furstenberg set} if there exists a non-empty family of circles $\mathcal{S}$ with $\Hd \mathcal{S} \geq t$ such that $\Hd (F \cap S) \geq s$ for all $S \in \mathcal{S}$.

Equivalently, there exists a non-empty set $E \subset \R^{2} \times (0,\infty)$ with $\Hd E \geq t$, and with the property that $\Hd (F \cap S(x,r)) \geq s$ for all $(x,r) \in E$.  \end{definition}

Our main result is the following:

\begin{thm}\label{main} Let $0 \leq t \leq s \leq 1$. Then, every circular $(s,t)$-Furstenberg set $F \subset \R^{2}$ has Hausdorff dimension $\Hd F \geq s + t$. \end{thm}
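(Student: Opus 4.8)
The plan is to reduce Theorem~\ref{main} to a single-scale, $\delta$-discretized incidence statement and then to run a Córdoba-type $L^{2}$ argument fed by Wolff-type incidence and tangency estimates for circles. Suppose $F$ is a circular $(s,t)$-Furstenberg set with $\Hd F < s+t$. Using Frostman's lemma, produce a measure $\nu$ on the parameter set $E \subset \R^{2} \times (0,\infty)$ with $\nu(B(\xi,\rho)) \lesssim \rho^{t}$, together with, for each circle $S = S(x,r)$ with $(x,r) \in E$, a measure $\mu_{S}$ on $F \cap S$ with $\mu_{S}(B(z,\rho)) \lesssim \rho^{s}$. A routine pigeonholing at a small dyadic scale $\delta$, together with a standard regularisation of the weights, reduces matters to the following single-scale estimate (to be applied across all small scales, which is why one keeps the measures rather than bare covering numbers): given $\approx \delta^{-t}$ many $\delta$-separated circles, each carrying $\approx \delta^{-s}$ many ``heavy'' $\delta$-arcs, the union $F_{\delta}$ of these arcs cannot be covered by fewer than $\approx \delta^{-(s+t)}$ discs of radius $\delta$. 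Here ``$\approx$'' absorbs factors $\delta^{-\epsilon}$, harmless for a dimension statement.

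Write $N$ for the number of $\delta$-discs needed to cover $F_{\delta}$ and count incidences $\mathcal{I} = \#\{(S,Q) : Q \text{ a covering disc},\ Q \cap S \neq \emptyset\}$. On the one hand $\mathcal{I} \gtrsim \delta^{-t}\cdot\delta^{-s} = \delta^{-(s+t)}$; on the other hand, writing $m(Q)$ for the number of circles meeting a given disc $Q$, Cauchy--Schwarz gives $N \geq \mathcal{I}^{2}/\sum_{Q} m(Q)^{2}$, so it suffices to prove $\sum_{Q} m(Q)^{2} \lesssim \delta^{-(s+2t)}$. Now $\sum_{Q} m(Q)^{2}$ counts pairs of circles passing through a common $\delta$-disc: a transverse pair contributes $O(1)$, while a nearly tangent pair can share an arc of $\approx \delta^{1/2}$ many $\delta$-discs. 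The two inputs controlling this are (i) the cinematic curvature of the circle family --- equivalently, for a fixed disc $Q$, the set of $(x,r)$ with $Q \subset S(x,r)$ is a $\delta$-neighbourhood of a curved $2$-surface in $\R^{2}\times(0,\infty)$, which prevents a $t$-Frostman circle family from concentrating on any single such surface --- and (ii) Wolff's bound on the number of $\delta$-tangent pairs in a $\delta$-separated circle family, the very technology underlying the circular Kakeya theorem mentioned in the abstract. The hypothesis $t \leq s \leq 1$ enters precisely at the balancing step: it is the regime in which $s+t$ is the quantity extractable from a first/second-moment comparison (for $t > s$ the same scheme would only yield $2s$), and it is what allows the transverse contribution $\lesssim \delta^{-2t}$ and the tangent contribution to be absorbed into $\delta^{-(s+2t)}$.

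The main obstacle is that a crude Cauchy--Schwarz is not by itself strong enough, especially when $s$ and $t$ are small: one must instead run a two-ends and multiscale refinement (in the spirit of Orponen--Shmerkin) that peels off the degenerate sub-configurations --- families of circles that are nearly concentric, nearly coaxial, or concentrated near a common point or a common tangent line --- estimates their contribution separately, and applies the dimension bound inductively to the well-spread residual part. Carrying the $s$- and $t$-dimensional non-concentration of the Frostman data through the induction on scales, while simultaneously keeping the tangency count under control at every scale, is the crux of the proof.
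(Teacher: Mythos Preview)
Your proposal captures the correct overall architecture --- Frostman measures, pigeonholing to a single scale, counting incidences, and feeding in Wolff's tangency estimates --- and you correctly identify that a crude Cauchy--Schwarz is not enough. However, the repair you suggest (``a two-ends and multiscale refinement in the spirit of Orponen--Shmerkin'') is precisely the approach that the authors' previous paper \cite{2022arXiv220401770L} already tried, and the paper's introduction records the outcome: a two-ends argument exploiting the non-concentration of the $E(p)$ yields only $\Hd F \geq 4s/3$ in the diagonal case $t=s$, while the Wolff--Schlag machinery without full use of non-concentration gives $2s^{2}$. Neither reaches the sharp bound $2s$, and the gap is not cosmetic. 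The enemy configuration explained around Example~\ref{ex1} shows exactly why: for intermediate tangency levels $\delta \ll \lambda \ll 1$, the number of incomparable $(\delta,\delta/\sqrt{\lambda})$-rectangles incident to a bipartite pair $(W,B)$ can be as large as $|W||B|$, so Wolff's tangency bound~\eqref{form210} alone does not interpolate usefully between $\lambda=\delta$ and $\lambda=1$. Your sketch does not indicate any mechanism to handle this.

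The paper's actual argument is substantially different from what you outline. The total multiplicity $m_{\delta}$ is decomposed into \emph{partial} multiplicity functions $m_{\delta,\lambda,t}$, one for each dyadic pair of tangency and distance parameters, and the core estimate $\|m_{\delta,\lambda,t}\|_{\infty} \lessapprox 1$ is proved by induction on $\lambda$ (for each fixed $t$). The inductive step requires a genuine generalisation of Wolff's tangency bound, Theorem~\ref{thm4}, in which the right-hand side is augmented by factors $X_{\lambda}Y_{\lambda}$: here $X_{\lambda}$ records how the $(\delta,s)$-set $P$ is distributed among $\lambda$-cubes, and $Y_{\lambda}$ is the already-established multiplicity bound at the coarser triple $(\lambda,\lambda,t)$. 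This bootstrapping structure --- using the bound at scale $\lambda$ to control the enemy configuration at scale $\delta$ --- is the new idea, and it is what converts the $(\delta,s)$-set hypothesis on both $P$ and each $E(p)$ into the sharp exponent. Nothing in your proposal gestures toward this inductive coupling of scales through the tangency parameter.
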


\begin{remark} After the first version of this paper was posted on the \emph{arXiv}, Zahl \cite[Theorem 1.12]{2023arXiv230705894Z} proved a significant generalisation of Theorem \ref{main}, which covers much more general "curvy" Furstenberg sets and yields the same lower bound $\Hd F \geq s + t$. \end{remark}

Theorem \ref{main} will be deduced from a more quantitative $\delta$-discretised version, Theorem \ref{FurstenbergThm} below. To state this version, it is convenient to introduce the following subset of the parameter space $\R^{2} \times (0,\infty)$, where the centres are near the origin, and the radii are bounded both from above, and away from zero:
\begin{notation}[The domain $\mathbf{D}$] We write
\begin{equation}\label{def:BB} \mathbf{D} := \{(x,r) \in \R^{2} \times [0,\infty) : |x| \leq \tfrac{1}{4} \text{ and } r \in [\tfrac{1}{2},1]\}. \end{equation}
\end{notation}
A similar normalisation already appears in Wolff's work on circular Kakeya sets, for example \cite{Wolff99}. As long as we restrict attention to circles $S(p)$ with $p \in \mathbf{D}$, his geometric estimates will be available to us, including \cite[Lemma 3.1]{Wolff99}.

The following definition will be ubiquitous in the paper:
\begin{definition} Let $s \geq 0$, $C > 0$, and $\delta \in 2^{-\N}$. A bounded set $P \subset \R^{d}$ is called a \emph{$(\delta,s,C)$-set} if
\begin{displaymath} |P \cap B(x,r)|_{\delta} \leq Cr^{s}|P|_{\delta}, \qquad x \in \R^{d}, \, r \geq \delta. \end{displaymath}
Here, and in the sequel, $|E|_{\delta}$ refers to the number of dyadic $\delta$-cubes intersecting $E$. We also extend the definition to the case where $P$ is a finite family of dyadic $\delta$-cubes: such a family is called a $(\delta,s,C)$-set if the union $\cup P$ is a $(\delta,s,C)$-set in the sense above.  \end{definition}

The following observations are useful to keep in mind about $(\delta,s,C)$-sets. First, if $P$ is a non-empty $(\delta,s,C)$-set, then $|P|_{\delta} \geq C^{-1}\delta^{-s}$. This follows by applying the defining condition at scale $r = \delta$. Second, a $(\delta,s,C)$-set is a $(\delta,t,C)$-set for all $0 \leq t \leq s$.

It turns out that the critical case for Theorem \ref{main} is the case $s = t$: it will suffice to establish a $\delta$-discretised analogue of the theorem in the case $s = t$ (see Theorem \ref{FurstenbergThm} below), and the general case $0 \leq t \leq s$ of Theorem \ref{main} will follow from this. With this in mind, we introduce the following $\delta$-discretised variants of a circular $(s,s)$-Furstenberg sets. In the definition, $\pi_{\R^{3}} \colon \R^{5} \to \R^{3}$ stands for the map $\pi_{\R^{3}}(x_{1},\ldots,x_{5}) = (x_{1},x_{2},x_{3})$.

\begin{definition}\label{d:config}
Let $s \in (0,1]$, $C>0$, and $\delta \in 2^{-\N}$. A \emph{$(\delta,s,C)$-configuration} is a set $\Omega \subset \R^{5}$ such that $P := \pi_{\R^{3}}(\Omega)$ is a non-empty $(\delta,s,C)$-subset of $\mathbf{D}$, and $E(p) := \{v \in \R^{2} : (p,v) \in \Omega\}$ is a non-empty $(\delta,s,C)$-subset of $S(p)$ for all $p \in P$. Additionally, we require that the sets $E(p)$ have constant cardinality: there exists $M \geq 1$ such that $|E(p)| = M$ for all $p \in P$.

If the constant $M$ is worth emphasising, we will call $\Omega$ a $(\delta,s,C,M)$-configuration. Conversely, if the constant $C$ is not worth emphasising, we will talk casually about $(\delta,s)$-configurations (but only in heuristic and informal parts of the paper). \end{definition}

We note that automatically $M \geq \delta^{-s}/C$, since $E(p)$ is a non-empty $(\delta,s,C)$-set, but it may happen that $M$ is much greater than $\delta^{-s}$.

\begin{thm}\label{FurstenbergThm} For every $\kappa > 0$ and $s \in (0,1]$, there exist $\epsilon,\delta_{0} \in (0,\tfrac{1}{2}]$ such that the following holds for all $\delta \in (0,\delta_{0}]$. Let $\Omega$ be a $(\delta,s,\delta^{-\epsilon},M)$-configuration. Then, $|\mathcal{F}|_{\delta} \geq \delta^{\kappa - s}M$, where
\begin{displaymath} \mathcal{F} := \bigcup_{p \in P} E(p). \end{displaymath}
\end{thm}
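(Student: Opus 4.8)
I would prove Theorem~\ref{FurstenbergThm} as a bootstrap: starting from the trivial bound $|\mathcal{F}|_{\delta} \geq |E(p)| = M$, i.e.\ the exponent $\kappa_{0} = s$, I would show that a single ``factorisation across an intermediate scale'' upgrades a known exponent $\kappa_{j}$ (valid for all small enough $\delta$) to a strictly smaller exponent $\kappa_{j+1} = h(\kappa_{j})$; since the iterates $h^{(n)}(s)$ decrease to $0$, a number $n = n(\kappa,s)$ of iterations produces the claimed exponent $\kappa$, and because $n$ is fixed the accumulated loss is only $\delta^{O(n\epsilon)}$, absorbed by choosing $\epsilon = \epsilon(\kappa,s)$ small and $\delta_{0} = \delta_{0}(\kappa,s)$ small. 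Before running this, I would perform the usual regularisations, each costing $\delta^{O(\epsilon)}$: dyadic pigeonholing so that the branching numbers of $P$ and of each $E(p)$ are uniform across scales, and a two-ends reduction so that, after possibly replacing $E(p)$ by a sub-arc and affinely rescaling, $E(p)$ is not concentrated in any ball of radius $\rho \in [\delta,1)$ beyond what the $(\delta,s)$-set axiom forces. These steps are bookkeeping-heavy but routine.

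The engine is the factorisation. Coarsening $\mathbf{D}$ to a carefully chosen scale $\Delta \in (\delta,1)$, the $\Delta$-cubes meeting $P$ form a $(\Delta,s,\Delta^{-O(\epsilon)})$-set $P_{\Delta}$, and the coarsened point sets assemble, for each $Q \in P_{\Delta}$, into a $(\Delta,s)$-configuration on $S(Q)$, to which the currently-known bound applies at scale $\Delta$. Inside a fixed $Q$, the circles $\{S(p) : p \in P \cap Q\}$ are pairwise $\Delta$-close in parameter space; by Wolff's geometric estimate \cite[Lemma 3.1]{Wolff99}, after discarding a controlled set of ``tangency-degenerate'' pairs, the affine map blowing a relevant $\Delta$-ball of $\R^{2}$ up to unit scale carries the restricted $\delta$-annuli $S^{\delta}(p)$ and the sets $E(p) \cap Q$ to a genuine $(\delta/\Delta,s)$-configuration --- here one uses that a circle of radius $\sim 1$ rescales to one of radius $\sim \Delta^{-1}$, which is nearly straight, still satisfies Wolff's normalisation after truncation, and whose associated Furstenberg problem is therefore ``flatter'' and closer to the already-understood line case. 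Applying the known bound at scale $\delta/\Delta$ inside each $Q$, and multiplying the coarse and fine counts over the disjoint $\Delta$-cubes, yields $|\mathcal{F}|_{\delta} \geq \delta^{\kappa_{j+1}-s}M$, where the point is that the combination of the two-ends condition with the rarity of tangencies forces $\kappa_{j+1}$ to be \emph{strictly} below $\kappa_{j}$ whenever $\kappa_{j} > 0$.

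The main obstacle is exactly this passage to the fine-scale configuration and the attendant tangency control: two circles whose parameters lie in a common $\Delta$-ball need not separate at scale $\delta/\Delta$ if they are internally tangent to high order, and without care the fine configurations inside $\Delta$-cubes could collapse to dimension below $s$, destroying the induction. Wolff's Lemma~3.1 is precisely what bounds the measure of circles tangent to a fixed one to order $\geq \tau$ by $\lesssim \tau$; threading this through while (i) preserving the $(\delta,s,C)$-set property of both $P_{\Delta}$ and the rescaled fine pieces with $C$ degrading only by $\delta^{O(\epsilon)}$, and (ii) arranging that the coarse-plus-fine gain is genuinely positive, is the delicate core of the argument (and is where $s \le 1$ is used; the case $s=1$ is Wolff's circular Kakeya estimate). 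A secondary nuisance is that the two-ends reduction and the rescaling interact --- passing to a sub-arc and blowing it up distorts the circle --- so one must re-verify at each stage that Wolff's normalisation $\mathbf{D}$ is approximately restored.
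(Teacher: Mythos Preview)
Your proposed scheme is a genuinely different strategy from the paper's, and as outlined it has a real gap at exactly the point you flag as ``delicate.''

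The paper does \emph{not} run an induction on the scale $\delta$. Instead, it reduces Theorem~\ref{FurstenbergThm} to a multiplicity bound (Theorem~\ref{thm2}): after passing to a large sub-configuration $\Omega' \subset \Omega$, every point of $\R^{2}$ is covered at most $\delta^{-\kappa}$ times by the pairs in $\Omega'$, and the lower bound on $|\mathcal{F}|_{\delta}$ then follows by one line of double counting. The multiplicity is controlled by decomposing it into \emph{partial} multiplicities $m_{\delta,\lambda,t}$, indexed by the tangency parameter $\lambda = \Delta(p,p')$ and the distance $t = |p-p'|$, and bounding these by an induction on $\lambda$ (from $\lambda = \delta$ upward) for each fixed $t$. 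The base case $\lambda = \delta$ is Wolff's tangency estimate; the step requires a new generalisation of it (Theorem~\ref{thm4}) that bounds $(\delta,\delta/\sqrt{\lambda t})$-rectangles of high type in terms of the \emph{already established} multiplicity bound at the coarser triple $(\lambda,\lambda,t)$.

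Your multi-scale bootstrap faces two concrete obstacles that the outline does not overcome. First, after blowing up a $\Delta$-ball, the arcs have radius $\sim \Delta^{-1}$ and lie far outside $\mathbf{D}$, so the induction hypothesis does not apply; reducing to the linear Furstenberg bound instead requires that the set of tangent directions of the contributing arcs be a $(\delta/\Delta,s)$-set, which is not implied by $P$ being a $(\delta,s)$-set --- it is precisely the tangency obstruction. Second, and more seriously, Wolff's Lemma~3.1 together with a two-ends reduction is exactly the input that the paper's introduction identifies as insufficient: see Example~\ref{ex1} and Figure~\ref{fig4}, where $\sim (\lambda/\delta)$ circles in each of two well-separated $\lambda$-balls in parameter space are all $\lambda$-tangent to one $(\lambda,\sqrt{\lambda})$-rectangle, so the fine contributions from distinct $\Delta$-cubes in $\mathbf{D}$ pile onto the same region of $\R^{2}$ and the ``multiply coarse and fine'' step fails. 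The paper explicitly notes that the two-ends approach recovers only the bound $4s/3$ from the second author's earlier work, not the sharp exponent; your iteration $\kappa_{j+1} = h(\kappa_{j})$ would therefore stall at a positive limit rather than tend to $0$. Getting past this is the purpose of Theorem~\ref{thm4}, which has no analogue in your sketch.
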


The proof of Theorem \ref{FurstenbergThm} is based on starting with a $(\delta,s,\delta^{-\epsilon})$-configuration $\Omega$, and refining it multiple times (the required number depends on $\kappa$ and $s$) until the following \emph{total multiplicity function} of the final refinement is uniformly bounded from above.

\begin{definition}[Total multiplicity function]\label{grandMultiplicity} Let $\Omega \subset \R^{5}$ be a bounded set, and let $\delta > 0$. For $w \in \R^{2}$, we write
\begin{equation}\label{grandMF} m_{\delta}(w \mid \Omega) := |\{(p,v) \in \Omega : w \in B(v,\delta)\}|_{\delta}. \end{equation} \end{definition}
The total multiplicity function is called this way, because we will also introduce "partial" multiplicity functions (denoted $m_{\delta,\lambda,t}$) which do not take into account all pairs $(p,v) \in \Omega$, but rather impose certain restrictions on $p$, depending on the parameters $\lambda$ and $t$.

The next theorem contains the technical core of the paper, and it implies Theorem \ref{FurstenbergThm}.

\begin{thm}\label{thm2} For every $\kappa > 0$ and $s \in (0,1]$ there exist $\delta_{0},\epsilon \in (0,\tfrac{1}{2}]$ such that the following holds for all $\delta \in (0,\delta_{0}]$. Let $\Omega \subset \mathbf{D} \times \R^{2}$ be a $(\delta,s,\delta^{-\epsilon})$-configuration with $|P| \leq \delta^{-s - \epsilon}$. Then, there exists a subset $\Omega' \subset \Omega$ such that $|\Omega'|_{\delta} \geq \delta^{\kappa}|\Omega|_{\delta}$, and
\begin{equation}\label{mainIneq} m_{\delta}(w \mid \Omega') \leq \delta^{- \kappa}, \qquad w \in \Omega'. \end{equation}
\end{thm}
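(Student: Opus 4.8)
The plan is to obtain Theorem~\ref{thm2} by iterating a single \emph{multiplicity-reduction} step. The key sub-claim would be: there is a constant $c = c(s) > 0$ such that for every $\eta > 0$, if $\epsilon$ is small enough in terms of $\eta,s$ and $\delta$ is small, then for any $(\delta,s,\delta^{-\epsilon})$-configuration $\Omega \subset \mathbf{D} \times \R^{2}$ with $|P|_{\delta} \leq \delta^{-s-\epsilon}$ and any $\beta > c$ with $m_{\delta}(w \mid \Omega) \leq \delta^{-\beta}$ for all $w$, there is a subset $\Omega'' \subset \Omega$, carrying enough residual $(\delta,s)$-structure to re-enter the argument, with $|\Omega''|_{\delta} \geq \delta^{\eta}|\Omega|_{\delta}$ and $m_{\delta}(w \mid \Omega'') \leq \delta^{-\beta + c}$ for all $w$. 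Granting this, Theorem~\ref{thm2} follows at once: for a fixed $w$, each $\delta$-tube of circle parameters contributes $O(1)$ pairs $(p,v)\in\Omega$ with $w \in B(v,\delta)$ (since then $v \in B(w,\delta)$), so $m_{\delta}(w \mid \Omega) \leq C|P|_{\delta} \leq \delta^{-2}$ from the outset; hence, starting from $\beta_{0}=2$ and applying the step $N := \lceil 2/c \rceil = N(s)$ times with $\eta := \kappa/N$ and $\epsilon = \epsilon(\kappa,s)$ sufficiently small, the composition of the $N$ refinements produces $\Omega'$ with $|\Omega'|_{\delta} \geq \delta^{N\eta}|\Omega|_{\delta} = \delta^{\kappa}|\Omega|_{\delta}$ and $m_{\delta}(\cdot \mid \Omega') \leq \delta^{-\kappa}$, which is \eqref{mainIneq}. (Monotonicity of $m_{\delta}(w \mid \cdot)$ in the set lets us stop early once the bound $\delta^{-\kappa}$ is reached, and makes the final $\Omega'$ need no structure.)

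For a single step I would first uniformise. Decompose $\mathcal{F} := \bigcup_{p}E(p)$ dyadically according to the value of $m_{\delta}(\cdot \mid \Omega) \in [1,\delta^{-O(1)}]$; since $\sum_{w}m_{\delta}(w \mid \Omega) \sim |\Omega|_{\delta}$ and there are $O(\log \tfrac{1}{\delta})$ levels, some level $\mathcal{F}_{\beta'} = \{w : m_{\delta}(w \mid \Omega) \sim \delta^{-\beta'}\}$ satisfies $\delta^{-\beta'}|\mathcal{F}_{\beta'}|_{\delta} \gtrsim |\Omega|_{\delta}/\log\tfrac1\delta$. Restricting $\Omega$ to pairs $(p,v)$ with $v \in N_{\delta}(\mathcal{F}_{\beta'})$ keeps $\gtrsim |\Omega|_{\delta}/\log\tfrac1\delta$ of the cubes and does not decrease $m_{\delta}$ on $\mathcal{F}_{\beta'}$, so if $\beta' \leq \beta - c$ we are done; thus assume $\beta' > \beta-c$ and work with $m_{\delta}(\cdot\mid\Omega) \sim \delta^{-\beta'}$ on $N_\delta(\mathcal F)$ and $|\Omega|_{\delta}\sim\delta^{-\beta'}|\mathcal{F}|_{\delta}$. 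Further pigeonholing (each round costing only a $\delta^{o(1)}$ factor, absorbable into $\delta^{\eta}$ for $\delta$ small) makes the fibre sizes $|E(p)|_{\delta}$ and the circle-degrees $|\{w\in\mathcal F:\dist(w,S(p))\lesssim\delta\}|_{\delta}$ essentially constant, $\sim D \sim |\Omega|_{\delta}/|P|_{\delta}$, and a non-concentration (two-ends) reduction on $\mathcal{F}$ ensures that the incident points on a typical circle are not all packed into one short arc.

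The geometric heart is then to bound $\delta^{-\beta'}$ from above by double counting pairs of $\delta$-cubes $(Q_{w_{1}},Q_{w_{2}})$, $w_{1},w_{2}\in\mathcal{F}$, lying within $O(\delta)$ of a common circle $S(p)$, $p\in P$. By regularity of the circle-degrees and convexity this count is $\gtrsim |P|_{\delta}D^{2} \sim |\Omega|_{\delta}^{2}/|P|_{\delta} \sim \delta^{-2\beta'}|\mathcal{F}|_{\delta}^{2}/|P|_{\delta}$. For the matching upper bound I would fix a dyadic $\rho\in[\delta,1]$ and, for $w_{1},w_{2}$ with $|w_{1}-w_{2}|\sim\rho$, apply Wolff's circle-tangency estimate \cite[Lemma~3.1]{Wolff99} — legitimate since $P\subset\mathbf{D}$ — to bound the number of $p\in P$ whose circle passes within $\delta$ of both $w_{1}$ and $w_{2}$; one then sums this bound against the $(\delta,s,\delta^{-\epsilon})$-structure of $P$, against the $(\delta,s,\delta^{-\epsilon})$-structure of the fibres $E(p)$ (which controls, for each circle, how its $\sim\delta^{-\beta'}$ incident circles distribute along it and cluster near a given point), and over the $O(\log\tfrac1\delta)$ scales $\rho$. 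Comparing the two bounds forces $\beta'\lesssim c$ unless, along the way, a large sub-configuration is exhibited on which the multiplicity is already $\leq\delta^{-\beta+c}$ — in which case we pass to it. It is precisely this multi-scale double count that requires the \emph{partial} multiplicity functions $m_{\delta,\lambda,t}$ announced after Definition~\ref{grandMultiplicity}: the estimate must be run simultaneously at every intermediate scale $t\in[\delta,1]$ and threshold $\lambda$, because one application of Wolff's lemma only resolves a single pair of scales.

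The main obstacle is this multi-scale incidence count together with the bookkeeping around it: organising the many pigeonholing and uniformisation steps so that their cumulative loss is at most $\delta^{\eta}$ with $\eta$ as small as desired — which is what pins down how small $\epsilon$ must be, both in the step and, through the $N$-fold iteration, in the statement of Theorem~\ref{thm2} — and correctly tracking the interaction between the $(\delta,s)$-structure of $P$, the $(\delta,s)$-structure of each fibre $E(p)$, and Wolff's tangency bound across all scales $t$, so that the sum over $\rho$ genuinely converges to the claimed gain $c = c(s)$. A further subtlety, which the iteration must respect, is that each step should output not just a large subset but an object on which the step can be rerun; depending on how this is set up, this may force re-extracting a configuration (with a degraded constant $\delta^{-\epsilon'}$ and an extra pigeonholing to restore the constant-cardinality normalisation) after every step. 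Finally, it is worth noting that $M$ may be far larger than $\delta^{-s}$, so the fibres are not ``minimal'' $(\delta,s)$-sets; the argument must not discard this surplus, which is exactly why the target is $|\Omega'|_{\delta}\geq\delta^{\kappa}|\Omega|_{\delta}$ rather than a bound merely on $|\mathcal{F}|_{\delta}$ — the surplus is what propagates to the factor $M$ in Theorem~\ref{FurstenbergThm}.
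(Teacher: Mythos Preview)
Your single ``multiplicity-reduction step'' has a genuine gap. The double count you sketch --- bounding, for $w_{1},w_{2}$ at distance $\rho$, the number of $p\in P$ with $w_{1},w_{2}\in S^{O(\delta)}(p)$ via \cite[Lemma~3.1]{Wolff99} and the $(\delta,s)$-structure of $P$ and $E(p)$, then summing over $\rho$ --- does not yield a uniform gain $c=c(s)>0$ that persists down to $\beta'\approx 0$. The obstruction is the intermediate-tangency scenario of Example~\ref{ex1} and Figure~\ref{fig4}: for $\delta\ll\lambda\ll 1$, a bipartite family $W,B$ whose $\lambda$-annuli are all tangent to a common $(\lambda,\sqrt{\lambda})$-rectangle generates $\sim |W||B|$ incomparable $(\delta,\delta/\sqrt{\lambda})$-rectangles, so the pair count is dominated by a single tangency cluster that your upper bound cannot resolve. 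The lemma you cite is the two-annulus geometry lemma; the combinatorial input one would want is Wolff's tangency-counting bound \cite[Lemma~1.4]{MR1800068}, and even that handles only the extreme case $\lambda\sim\delta$. The paper notes explicitly that arguments of exactly your type (Wolff--Schlag plus a two-ends reduction on the fibres) were carried out in \cite{2022arXiv220401770L} and give only $\max\{4s/3,2s^{2}\}$ in place of the sharp $2s$; correspondingly, your iteration would stall at some positive $\beta'$ rather than reach an arbitrary $\kappa$.

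The paper's route is structurally different from an outer iteration on $\beta$. It first establishes the maximal-tangency bounds $m_{\lambda,\lambda,t}\lessapprox 1$ for \emph{all} pairs $(\lambda,t)$ via Theorem~\ref{thm3} (this step does reduce to \cite[Lemma~1.4]{MR1800068}), and then runs an induction on the tangency parameter $\lambda$: to bound $m_{\delta,\lambda,t}$ it invokes a generalised tangency estimate, Theorem~\ref{thm4}, whose right-hand side carries an extra factor $(X_{\lambda}Y_{\lambda})^{1/2}$ with $Y_{\lambda}=\|m_{\lambda,\lambda,t}\|_{L^{\infty}}$ precisely the quantity established in the first phase, and it also feeds in the already-proved bounds on $m_{\delta,\lambda',t}$ for all $\lambda'<\lambda$. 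You correctly sense in your last paragraph that the partial functions $m_{\delta,\lambda,t}$ must enter, but without the $(X_{\lambda}Y_{\lambda})$-weighted bound of Theorem~\ref{thm4} and the specific inductive order --- first all $(\lambda,\lambda,t)$, then $(\delta,\lambda,t)$ in increasing $\lambda$ --- the step does not close.
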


\begin{remark}\label{rem3} In practical applications of Theorem \ref{thm2}, it will be important to know that the constant $\epsilon > 0$ stays bounded away from zero as long as $\kappa > 0$ and $s \in (0,1]$ stay bounded away from zero. This is true, and follows from the proof of Theorem \ref{thm2}, where the dependence between $\epsilon$ and $\kappa,s$ is always explicit and effective. Since Theorem \ref{FurstenbergThm} is a consequence of Theorem \ref{thm2}, this remark also applies to Theorem \ref{FurstenbergThm}. \end{remark}

Deducing Theorem \ref{FurstenbergThm} from Theorem \ref{thm2}, and finally Theorem \ref{main} from \ref{FurstenbergThm}, is accomplished in Section \ref{s:discreteToContinuous}.

\subsection{Circular vs. linear Furstenberg sets} The results in this paper should be contrasted with their (known) counterparts regarding \emph{linear $(s,t)$-Furstenberg sets}.

A linear $(s,t)$-Furstenberg set is defined just like a circular $(s,t)$-Furstenberg set, except that the $t$-dimensional family of circles is replaced by a $t$-dimensional family of lines. The main difference between linear and circular Furstenberg sets is that the parameter space of circles is $3$-dimensional, whereas the parameter space of lines is only $2$-dimensional.

This difference makes linear Furstenberg sets substantially simpler: in particular, the analogue of Theorem \ref{main} for linear $(s,t)$-Furstenberg sets is known, see \cite[Theorem A.1]{HSY21} or \cite[Theorem 12]{MR4179019} for two very different proofs, and \cite{MR3973547,MR4002667,MolterRela12,Wolff99} for earlier partial results. Furthermore, any results for circular Furstenberg sets imply their own counterparts for linear Furstenberg sets, simply because the map $z \mapsto 1/z$ takes all lines to circles through $0$. In particular, Theorem \ref{main} gives another -- seriously over-complicated -- proof for \cite[Theorem A.1]{HSY21} and \cite[Theorem 12]{MR4179019}.

Even with Theorem \ref{main} in hand, the theory of circular
Furstenberg sets remains substantially less developed than its
linear counterpart. Theorem \ref{main} is obviously sharp in its
stated range $0 \leq t \leq s \leq 1$, but gives no new
information if $t > s$ (compared to the case $t = s$). In
contrast, it is known that linear $(s,t)$-Furstenberg sets have
Hausdorff dimension $\geq 2s + \epsilon(s,t)$ for $t > s$ (see
\cite{2021arXiv210603338O}). Even stronger results are available
for $t > \min\{1,2s\}$ (see \cite[Theorem
1.6]{2021arXiv211105093F} and \cite{2022arXiv221113363S} for the
current world records). For circular Furstenberg sets, the only
improvement over Theorem \ref{main} is known in the range $t \in
(3s,3]$: in an earlier paper \cite{2022arXiv220401770L}, the
second author proved that every circular $(s,t)$-Furstenberg set
has Hausdorff dimension at least $t/3 + s$, when $s \in (0,1]$ and $t \in (0,3]$ (the result is only stated for $t \in (0,1]$, but the proof actually works for $t \in (0,3]$).

The sharp lower bound for the dimension of linear $(s,t)$-Furstenberg sets is a major open problem: it seems plausible that every linear $(s,t)$-Furstenberg has dimension at least $\min\{(3s + t)/2,s + 1\}$. The case $t = 1$ of the problem was posed by Wolff in \cite[\S 3]{MR1692851} and \cite[Remark 1.5]{Wolff99}. The $(s + 1)$-bound governs the case $s + t \geq 2$, and is already known, see \cite[Theorem 1.6]{2021arXiv211105093F}. The bound $\min\{(3s + t)/2,s + 1\}$ would be sharp if true.

Linear Furstenberg sets can be viewed as special cases of circular Furstenberg sets (as explained above), so at least one cannot hope for something stronger than the lower bound $\min\{(3s + t)/2,s + 1\}$ for circular $(s,t)$-Furstenberg sets. However, it is not clear to us if the optimal lower bounds for linear and circular Furstenberg sets should always coincide. Theorem \ref{main} shows that they do in the range $0 \leq t \leq s \leq 1$.

\begin{remark} After this paper appeared on the \emph{arXiv}, the linear Furstenberg set problem was solved in \cite{2023arXiv230110199O,2023arXiv230808819R}. \end{remark}

\subsection{Relation to previous work} The main challenge in the proof of Theorem \ref{thm2} is to combine the non-concentration hypotheses inherent in $(\delta,s)$-configurations with the techniques of Wolff \cite{MR1800068,MR1473067} developed to treat the case $s = 1$ of Theorem \ref{main}. Our argument is also inspired by the work of Schlag \cite{MR1986697}.

To be accurate with the references, Wolff in \cite[Corollary 5.4]{MR1800068} proved that if $t \in [0,1]$, and $E \subset \R^{2}$ is a Borel set containing circles centred at all points of a Borel set with Hausdorff dimension $\geq t$, then $\Hd E \geq 1 + t$. This is formally weaker than the statement that circular $(1,t)$-Furstenberg sets have dimension $\geq 1 + t$, but the distinction is fairly minor: Wolff's technique is robust enough to deal with circular $(1,t)$-Furstenberg sets. The main novelty in the present paper is to consider the cases $(s,t)$ with $0 \leq t \leq s < 1$.

To illustrate the challenge, consider the case $s = \tfrac{1}{2}$. Let $\Omega = \{(p,v) : p \in P \text{ and } v \in E(p)\}$ be a $(\delta,\tfrac{1}{2})$-configuration. The $(\delta,\tfrac{1}{2})$-set property of the sets $E(p) \subset S(p)$ implies that $|E(p)|_{\delta} \gtrapprox \delta^{-1/2}$ for all $p \in P$. Unfortunately, this information alone is far too weak, because all the circles $S(p)$, $p \in P$, may be tangent to a single rectangle $R \subset \R^{2}$ of dimensions $\delta \times \delta^{1/2}$, and $|R|_{\delta} \sim \delta^{-1/2}$. So, if we only had access to the information $|E(p)|_{\delta} \gtrapprox \delta^{-1/2}$, all the sets $E(p)$ might be contained in $R$. In this case, the resulting "Furstenberg set" $\mathcal{F}$ in \eqref{FurstenbergThm} would have $|\mathcal{F}|_{\delta} \leq |R|_{\delta} \sim \delta^{-1/2}$. In other words, we could hope (at best!) to prove the trivial lower bound
\begin{equation}\label{form173} \dim \mathcal{F} \geq \tfrac{1}{2}, \end{equation}
whereas the "right answer" given by Theorem \ref{main} is $\dim \mathcal{F} \geq 1$. In a previous work \cite{2022arXiv220401770L}, the second author showed that every circular $(s,s)$-Furstenberg set has Hausdorff dimension at least $\max\{4s/3,2s^{2}\}$, and the second bound "$2s^{2}$" matches \eqref{form173} for $s = \tfrac{1}{2}$: this bound indeed follows by applying the techniques of Wolff and Schlag without fully exploiting the non-concentration of the sets $E(p)$. The first bound "$4s/3$" used the non-concentration, but only in a non-sharp "two-ends" manner.

Our proof is also inspired by the very recent work of Pramanik, Yang, and Zahl \cite{2022arXiv220702259P}. In fact, \cite[Section 1.1]{2022arXiv220702259P} is entitled \emph{A Furstenberg-type problem for circles}, and a special case of Theorem \ref{main} follows from \cite[Theorem 1.3]{2022arXiv220702259P}. To describe this case, let $s \in [0,1]$, and let $E \subset \R$ be a set with $\Hd E \geq s$. Let $\mathcal{S}$ be a $t$-dimensional family of circles, with $0 \leq t \leq s$, and write $E_{S} := S \cap (E \times \R)$ for all $S \in \mathcal{S}$. Assume that $\Hd E_{S} = \Hd E \geq s$ for all $S \in \mathcal{S}$. Then
\begin{displaymath} F := \bigcup_{S \in \mathcal{S}} E_{S} \end{displaymath}
is an $(s,t)$-Furstenberg set, and \cite[Theorem 1.3]{2022arXiv220702259P} (with some effort) implies $\Hd F \geq s + t$. In other words, \cite[Theorem 1.3]{2022arXiv220702259P} treats the case of $(s,t)$-Furstenberg sets arising from the specific construction described above. This precursor allowed us to expect Theorem \ref{FurstenbergThm}, but we did not succeed in modifying the argument of \cite{2022arXiv220702259P} to prove it in full generality. Our proof, outlined in the next section, is therefore rather different from \cite{2022arXiv220702259P}.

While the existing literature on circular Furstenberg sets is narrow, there are many more works dealing with various aspects of circular -- or in general: curvilinear -- Kakeya problems. We do not delve into the details or definitions here, but we refer the reader to \cite{MR3946717,2022arXiv221008320C,MR3568105,MR3775465,MR887283,MR1724841,MR4043823,MR4499576,MR2221250,MR3231483} for more information.

\subsection{Ideas of the proof: key concepts and structure}\label{s:outline} When studying circular Kakeya or Furstenberg sets, one needs to understand the geometry of intersecting $\delta$-annuli. If $p = (x,r) \in \R^{2} \times (0,\infty)$ and $\delta > 0$, we write $S^{\delta}(p)$ for the closed $\delta$-annulus around the circle $S(p)$, thus $S^{\delta}(p) = \{w \in \R^{2} : \dist(w,S(p)) \leq \delta\}$.

If $p = (x,r), q = (x',r') \in \mathbf{D} \subset \R^{2} \times (0,\infty)$, what does this intersection $S^{\delta}(p) \cap S^{\delta}(q)$ look like (when non-empty)? Wolff noted that the answer depends on two parameters:
\begin{equation}\label{form154} \lambda := \lambda(p,q) := ||x - x'| - |r - r'|| \quad \text{and} \quad t := t(p,q) := |p - q|. \end{equation}
Notice that "$t$" in \eqref{form154} has a different meaning than the letter "$t$" in $(s,t)$-Furstenberg sets. For the majority of the paper (proofs of Theorems \ref{FurstenbergThm} and \ref{thm2}), we only consider $(s,s)$-Furstenberg sets, so this should not cause confusion. In fact, from now on the letter "$t$" will always refer to the \emph{distance} parameter defined in \eqref{form154}, except for the short proof of Theorem \ref{main} in Section \ref{s:discreteToContinuous} (where the distance parameter is not needed).

Here $\lambda$ is called the \emph{tangency} parameter. If $\lambda(p,q) = 0$, then the circles $S(p),S(q)$ are internally tangent, whereas if $\lambda(p,q) \sim 1$, the circles $S(p),S(q)$ intersect roughly transversally. The intersection $S^{\delta}(p) \cap S^{\delta}(q)$ can be covered by boundedly many \emph{$(\delta,\delta/\sqrt{\lambda t})$-rectangles}. In general, a \emph{$(\delta,\sigma)$-rectangle} is the intersection of a $\delta$-annulus with a disc of radius $\sigma$, thus
\begin{displaymath} R^{\delta}_{\sigma}(p,v) = S^{\delta}(p) \cap B(v,\sigma) \end{displaymath}
for some $v \in S(p)$. If $\delta \leq \sigma \leq \sqrt{\delta}$, a $(\delta,\sigma)$-rectangle looks like a "straight" rectangle of dimensions $\sim \delta \times \sigma$. If $\sigma > \sqrt{\delta}$, then the curvature of the annulus becomes visible, and a $(\delta,\sigma)$-rectangle is a genuinely "curvy" set of thickness $\delta$ and diameter $\sim \sigma$.

When bounding the total multiplicity function $m_{\delta}$ (Definition \ref{grandMultiplicity}), one ends up studying families of $(\delta,\sigma)$-rectangles, for all possible values $\delta \leq \sigma \leq 1$. In some form, this problem appears in all previous works related to circular Kakeya sets, but the manner of formalising it varies. For us, the main new twist is to incorporate the information from the "fractal" sets $E(p) \subset S(p)$.

In addition to the total multiplicity function, we introduce a range of \emph{partial multiplicity functions}. The precise definition is Definition \ref{def:multFunction1}, but we give the idea. For $\delta \leq \lambda \leq t \leq 1$, the partial multiplicity function $m_{\delta,\lambda,t}$ looks like this: for $(p,v) \in \Omega$ (with $p \in P$ and $v \in E(p)$), we write
\begin{displaymath} m_{\delta,\lambda,t}(p,v) := |\{(p',v') \in \Omega^{\delta}_{\sigma} : \lambda(p,p') \sim \lambda, \, t(p,p') \sim t \text{ and } R^{\delta}_{\sigma}(p,v) \cap R^{\delta}_{\sigma}(p',v') \neq \emptyset\}|. \end{displaymath}
Here $\sigma := \delta/\sqrt{\lambda t}$, a common notation in the paper. The set $\Omega^{\delta}_{\sigma}$ is the \emph{$(\delta,\sigma)$-skeleton} of $\Omega$: slightly vaguely, it is a maximal $(\delta \times \sigma)$-separated set inside the original configuration $\Omega$.

It turns out that the total multiplicity function $m_{\delta}$ is bounded from above by the sum of the partial multiplicity functions $m_{\delta,\lambda,t}$, where the sum ranges over dyadic pairs $(\lambda,t)$, $\delta \leq \lambda \leq t \leq 1$. There are $\lesssim (\log(1/\delta))^{2} \leq \delta^{-\kappa}$ such pairs $(\lambda,t)$. So, to prove the upper bound \eqref{mainIneq} for $m_{\delta}$, it suffices to prove it separately for all the partial functions $m_{\delta,\lambda,t}$. This is what we do, see Theorem \ref{thm5}. Bounding $m_{\delta}$ by the sum of the partial functions $m_{\delta,\lambda,t}$ is straightforward, and is accomplished at the end of the paper, in Section \ref{s:thm2Proof}.

The partial multiplicity functions $m_{\delta,\lambda,t}$ have been normalised so that they might potentially satisfy the same bounds as the total multiplicity function (see Theorem \ref{thm2}): after replacing the original $(\delta,s)$-configuration $\Omega$ by a suitable refinement $\Omega'$ (depending on $\lambda$ and $t$), we expect -- and will prove in Theorem \ref{thm5} -- that
\begin{equation}\label{form175} \|m_{\delta,\lambda,t}(\cdot \mid \Omega')\|_{L^{\infty}(\Omega')} \lessapprox 1, \qquad \delta \leq \lambda \leq t \leq 1. \end{equation}
The proof of \eqref{form175} proceeds in a specific order of the triples $(\delta,\lambda,t)$. In order to cope with a given triple $(\delta,\lambda,t)$, we will need to know \emph{a priori} that the triples $(\lambda,\lambda,t)$ and $(\delta,\lambda',t)$ for all $\delta \leq \lambda' < \lambda$ have already been dealt with. More precisely: if we have already found a refinement $\Omega' \subset \Omega$ such that \eqref{form175} holds for all the triples $(\delta,\lambda',t)$ with $\delta \leq \lambda' < \lambda$, and also for the triple $(\lambda,\lambda,t)$, then we are able to refine $\Omega'$ further to obtain \eqref{form175} for $(\delta,\lambda,t)$.

We can now explain a technical challenge we need to overcome: the partial multiplicity function $m_{\delta,\lambda,t}$ counts elements in the $(\delta,\sigma)$-skeleton of $\Omega$, rather than $\Omega$ itself. However, our assumptions on the configuration $\Omega$ were formulated at scale $\delta$ -- recall that $\Omega$ is a $(\delta,s)$-configuration, which meant that both $P$, and the sets $E(p)$, are $(\delta,s)$-sets. In order for \eqref{form175} to be plausible, the property of "being a $(\delta,s)$-configuration" needs to be hereditary: the $(\delta,\sigma)$-skeleton $\Omega^{\delta}_{\sigma}$ of a $(\delta,s)$-configuration $\Omega$ needs to look like a $(\delta,\sigma,s)$-configuration (whatever that precisely means). This is not literally true, but we develop reasonable substitutes for this idea in Section \ref{s:prelimConfigurations}.

We next outline where the "inductive" structure for proving \eqref{form175} stems from. Why do we need information about the triple $(\lambda,\lambda,t)$ in order to handle the triple $(\delta,\lambda,t)$? The reason is one of the main technical results of the paper, Theorem \ref{thm4}. This is a generalisation of Wolff's famous "tangency bound" \cite[Lemma 1.4]{MR1800068}. We sketch the idea of Wolff's result, and our generalisation, in a slightly special case. Namely, we will confine the discussion to the case $t = 1$ to keep the numerology as simple as possible.

In Wolff's terminology, a pair of sets $W,B \subset P \subset \mathbf{D}$ is called \emph{bipartite} if
\begin{displaymath} \dist(W,B) \sim 1. \end{displaymath}
If $p \in W$ and $q \in B$, we have $|p - q| \sim 1$, but the tangency parameter $\lambda(p,q)$ may vary freely in $[0,1]$. If $\lambda(p,q) \sim \lambda \in [0,1]$, recall that the intersection $S^{\delta}(p) \cap S^{\delta}(q)$ can be covered by boundedly many $(\delta,\delta/\sqrt{\lambda})$-rectangles. When bounding the multiplicity function $m_{\delta,\lambda,1}$, the following turns out to be a key question:
\begin{question}\label{q1} What is the maximal cardinality of incomparable $(\delta,\delta/\sqrt{\lambda})$-rectangles which are incident to at least one pair $(p,q) \in W \times B$ with $\lambda(p,q) \sim \lambda$? \end{question}
One of the main results in Wolff's paper \cite{MR1800068} contains the answer in the case $\lambda = \delta$. If $\mathcal{R}_{\delta}$ is a collection of incomparable $(\delta,\sqrt{\delta})$-rectangles incident to at least one pair $(p,q) \in W \times B$ with $\lambda(p,q) \lesssim \delta$, then \cite[Lemma 1.4]{MR1800068} states that
\begin{equation}\label{form210} |\mathcal{R}_{\delta}| \lessapprox (|W||B|)^{3/4} + \text{lesser terms.} \end{equation}
This is a highly non-trivial result. In contrast, the case $\lambda \sim 1$ is trivial: the sharp answer is $|\mathcal{R}_{1}| \lesssim |W||B|$. In this case the $(\delta,\delta/\sqrt{1})$-rectangles are roughly $\delta$-discs, and clearly a generic bipartite pair $W,B$ may generate $\sim |W||B|$ transversal intersections.

Is there a way to "interpolate" between these bounds? One might hope that if $\delta \ll \lambda \ll 1$, then $|\mathcal{R}_{\delta}| \lessapprox (|W||B|)^{\theta(\lambda)}$ for some useful intermediate exponent $\theta(\lambda) \in (\tfrac{3}{4},1)$. Unfortunately, this is not true: if $\lambda \gg \delta$, the best one can say is $|\mathcal{R}_{\lambda}| \lesssim |W||B|$.
\begin{figure}[h!]
\begin{center}
\begin{overpic}[scale = 0.7]{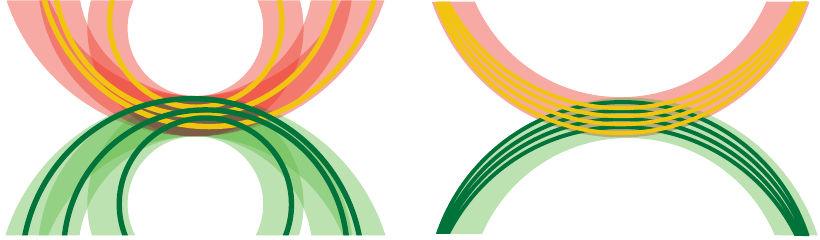}
\put(22,19){$R_{\lambda}$}
\put(74,19){$R_{\lambda}$}
\put(-1,20){$\mathcal{W}$}
\put(-1,5){$\mathcal{B}$}
\end{overpic}
\caption{Scenarios with $|\mathcal{R}_{\lambda}| \sim |W||B|$.}\label{fig6}
\end{center}
\end{figure}

Figure \ref{fig6} shows two slightly different ways in which $|\mathcal{R}_{\lambda}| \sim |W||B|$ can be realised. In both examples, there are two well-separated collections $\mathcal{W},\mathcal{B}$ of (thick, $\lambda$-separated) $\lambda$-annuli, all elements of which are tangent to a common $(\lambda,\sqrt{\lambda})$-rectangle $R_{\lambda}$. (A technical comment: to make the figure clearer, we deliberately draw annuli with external tangencies, although formally all our tangency-counting problems and estimates concern numbers of internal tangencies. The distinction between internal and external tangencies is, however, not relevant for the phenomenon we describe here.)

Inside each annulus in $\mathcal{W}$ (respectively $\mathcal{B})$ pick $X_{\mathcal{W}}$ (respectively $X_{\mathcal{B}}$) thinner $\delta$-annuli, shown in darker colours. This way one gets two well-separated collections $W,B$ of $\delta$-annuli with cardinalities
\begin{displaymath} |W| = |\mathcal{W}| \cdot X_{\mathcal{W}} \quad \text{and} \quad |B| = |\mathcal{B}| \cdot X_{\mathcal{B}}. \end{displaymath}
The picture on the left of Figure \ref{fig6} represents the case $X_{\mathcal{W}} = X_{\mathcal{B}} = 1$, the picture on the right represents the case $|\mathcal{W}| = |\mathcal{B}| = 1$. If the $\delta$-annuli in $W,B$ are chosen appropriately, their pairwise intersections (contained in $R_{\lambda}$) are located at incomparable $(\delta,\delta/\sqrt{\lambda})$-rectangles, say $\mathcal{R}$. (To be more accurate, this can be done as long as the total number of intersections $|W||B|$ does not exceed the total number of incomparable $(\delta,\delta/\sqrt{\lambda})$-rectangles contained in $R_{\lambda}$, roughly $(\lambda/\delta)^{2}$.) Each of the rectangles in $\mathcal{R}$ has type $(\geq 1,\geq 1)$ relative to $(W,B)$. Therefore, $|\mathcal{R}| \sim |W||B|$, provided $|W||B| \leq (\lambda/\delta)^{2}$.

The trivial upper bound $|\mathcal{R}_{\lambda}| \lesssim |W||B|$ is useless for $\lambda \ll 1$, but there is a way to improve it. The examples shown in Figure \ref{fig6} indicate the main obstructions: the high numbers of incomparable $(\delta,\delta/\sqrt{\lambda})$-rectangles are "caused" by either
\begin{itemize}
\item[(a)] a high level of tangency of "parent" annuli of thickness $\lambda$, or
\item[(b)] a high number of "child" $\delta$-annuli contained inside "parent" annuli of thickness $\lambda$.
\end{itemize}
If we stipulate \emph{a priori} bounds on the numbers relevant for problems (a)-(b), we get a non-trivial upper bound for $|\mathcal{R}_{\lambda}|$, which looks like this (see Theorem \ref{thm4} for a precise statement):
\begin{equation}\label{form174} |\mathcal{R}_{\lambda}| \lessapprox (|W||B|)^{3/4} \cdot (X_{\lambda}Y_{\lambda})^{1/2} + \text{lesser terms,} \end{equation}
 Here $X_{\lambda} = \max |P \cap B_{\lambda}|$, where the "$\max$" runs over balls $B_{\lambda} \subset \R^{2} \times (0,\infty)$ of radius $\lambda$, and $Y_{\lambda}$ is an upper bound for how many $\lambda$-annuli can be tangent to any fixed $(\lambda,\sqrt{\lambda})$-rectangle. In fact,
\begin{displaymath} Y_{\lambda}  = \|m_{\lambda,\lambda,1}\|_{L^{\infty}}. \end{displaymath}
In the examples of Figure \ref{fig6}, we have $X_{\lambda} = 1$ and $Y_{\lambda} = |W| = |B| \sim \lambda/\delta$ (left picture) or $X_{\lambda} \sim |W| \sim |B| \sim \lambda/\delta$ and $Y_{\lambda} = 1$ (right picture). In both cases \eqref{form174} only yields the trivial bound, as it should. On the other hand, if we have already established \eqref{form175} for the triple $(\lambda,\lambda,1)$, we can rest assured that $Y_{\lambda} \lessapprox 1$, and \eqref{form174} becomes a useful tool for proving \eqref{form175} for the triple $(\delta,\lambda,1)$ (bounds for the number $X_{\lambda}$ are, more easily, provided by non-concentration conditions on the collections of circles). This explains why our inductive proof of \eqref{form175} needs information about the triples $(\lambda,\lambda,t)$ to handle the triples $(\delta,\lambda,t)$. There is a separate reason why all the triples $(\delta,\lambda',t)$, $\lambda' < \lambda$, need to be treated before the triple $(\delta,\lambda,t)$, but we will not discuss this here: the reason will be revealed around Figure \ref{fig3}.

We have now quite thoroughly explained the structure of the paper, but let us summarise. In the short Section \ref{s:discreteToContinuous}, we first deduce Theorem \ref{FurstenbergThm} from Theorem \ref{thm2}, and then Theorem \ref{main} from Theorem \ref{FurstenbergThm}. Section \ref{s:prelimConfigurations} deals with the question: to what extent is the $(\lambda,\sigma)$-skeleton of a $(\delta,s)$-configuration a $(\lambda,\sigma,s)$-configuration?

Section \ref{s:tangencies} introduces $(\delta,\sigma)$-rectangles properly, and studies their elementary geometric properties. For example, what do we exactly mean by two $(\delta,\sigma)$-rectangles being "incomparable"? The results in Section \ref{s:tangencies} will look familiar to those readers knowledgeable of Wolff's work, but our $(\delta,\sigma)$-rectangles are more general than Wolff's $(\delta,\sqrt{\delta/t})$-rectangles, and in some cases we need more quantitative estimates than those recorded in \cite{MR1800068}.

In Section \ref{lambdalambdat}, we establish the cases $(\lambda,\lambda,t)$ of the estimate \eqref{form175}. The main produce of that section is Theorem \ref{thm3}. The geometric input behind Theorem \ref{thm3} is simply Wolff's estimate \eqref{form210}, and this is why it can be proven before introducing the general $(\delta,\lambda,t)$-version in \eqref{form174}. The proof of \eqref{form174} occupies Section \ref{deltalambdat}.

Finally, Section \ref{s:mainInduction} applies the estimate \eqref{form174} to prove \eqref{form175} in full generality.
 The upper bound for the total multiplicity function $m_{\delta}$ is an easy corollary, and the proof Theorem \ref{thm2}
 is concluded in Section \ref{s:thm2Proof}. In Appendix \ref{app}
 we prove some results from Section \ref{s:comparableRectangles}.

\subsection*{Notation} Some of the notation in this section has already been introduced above, but we gather it here for ease of reference. If $r \in 2^{-\N}$, the notation $|E|_{r}$ refers to the number of dyadic $r$-cubes intersecting $E$. Here $E$ might be a subset of $\R$, $\R^{2}$, or $\R^{3}$. We will only ever consider dyadic cubes in $\R^{3}$ which are subsets of the special region $\mathbf{D}$ introduced in \eqref{def:BB}. Therefore, the notation $\mathcal{D}_{r}$ will always refer to dyadic $r$-cubes contained in $\mathbf{D}$.

In general, we will denote points in $\R^{3}$ (typically in $\mathbf{D}$) by the letters $p,p',q,q'$. Points in $\R^{2}$ are denoted by $v,v',w,w'$.

For $p = (x,r) \in \R^{2} \times (0,\infty)$ (typically $p \in \mathbf{D}$), we write $S(p) = S(x,r)$ for the circle centred at $x$ and radius $r > 0$. The notation $S^{\delta}(p)$ refers to the $\delta$-annulus around $S(p)$, thus $S^{\delta}(p) = \{w \in \R^{2} : \dist(w,S(p)) \leq \delta\}$.

The notation $A \lesssim B$ means that there exists an absolute constant $C \geq 1$ such that $A \leq CB$. The two-sided inequality $A \lesssim B \lesssim A$ is abbreviated to $A \sim B$. If the constant $C$ is allowed to depend on a parameter "$\theta$", we indicate this by writing $A \lesssim_{\theta} B$.

For $\delta \in (0,1]$, the notation $A \lessapprox_{\delta} B$ means
that there exists an absolute constant $C \geq 1$ such that
\begin{displaymath}
A \leq C \cdot\left(1+
\log\left(\tfrac{1}{\delta}\right)^{C}\right) B.
\end{displaymath}
We write $A \thickapprox_{\delta} B$ if simultaneously $A
\lessapprox_{\delta} B$ and $B \lessapprox_{\delta} A$ hold true. If the constant $C$ is allowed to depend on a parameter "$\theta$", we indicate this by writing $A \lessapprox_{\delta,\theta} B$.

Given $p=(x,r) \in \R^{2} \times [0,\infty)$ and $p'=(x',r')$ in $\mathbb{R}^2\times
[0,\infty)$, we write $\Delta(p,p'):= \left||x-x'|-|r-r'|\right|$. This is slightly inconsistent with our notation from \eqref{form154}, but in the sequel we prefer to use the letter "$\Delta$" for this "tangency" parameter.



\section{Proof of Theorem \ref{FurstenbergThm} and Theorem \ref{main}}\label{s:discreteToContinuous}

We first use Theorem \ref{thm2} to prove Theorem \ref{FurstenbergThm}.

\begin{proof}[Proof of Theorem \ref{FurstenbergThm} assuming Theorem \ref{thm2}] Let $\Omega \subset \R^{5}$ be a $(\delta,s,\delta^{-\epsilon},M)$-configuration. Write $P := \pi_{\R^{3}}(\Omega) \subset \mathbf{D}$, and $E(p) = \{v \in \R^{2} : (p,v) \in \Omega\} \subset S(p)$. By replacing $P$ and $E(p)$ by maximal $\delta$-separated subsets, we may assume that $P$, $E(p)$, and $\Omega$ are finite and $\delta$-separated to begin with. Furthermore, $P$ contains a $(\delta,s,\delta^{-2\epsilon})$-subset $\bar{P} \subset P$ of cardinality $|\bar{P}| \leq \delta^{-s}$ by \cite[Lemma 2.7]{2021arXiv210603338O}. Then $\bar{\Omega} := \{(p,v) : p \in \bar{P} \text{ and } v \in E(p)\}$ remains a $(\delta,s,\delta^{-2\epsilon})$-configuration with $|E(p)| \equiv M$. It evidently suffices to prove Theorem \ref{FurstenbergThm} for this sub-configuration, so we may assume that $|P| \leq \delta^{-s}$ to begin with.

With this assumption, we may apply Theorem \ref{thm2} to find a subset $\Omega' \subset \Omega$ with $|\Omega'| \geq \delta^{\kappa}|\Omega| = \delta^{\kappa}M|P|$ and the property
\begin{displaymath} m_{\delta}(w \mid \Omega') \leq \delta^{-\kappa}, \qquad w \in \R^{2}. \end{displaymath}
For $p \in P$, we write $\Omega'(p) := \{v \in \R^{2} : (p,v) \in \Omega'\} \subset E(p)$ (this will become standard notation in the paper).

Let $F'$ be a maximal $\delta$-separated set in
\begin{displaymath} \bigcup_{p \in P} \Omega'(p) \subset \mathcal{F}, \end{displaymath}
where $\mathcal{F}$ appeared in the statement of Theorem \ref{FurstenbergThm}. We claim that $|F'| \geq \delta^{3\kappa - s}M$, if $\delta > 0$ is small enough. This will evidently suffice to prove Theorem \ref{FurstenbergThm}.

First, we notice that $|[\Omega'(p)]_{\delta} \cap F'| \gtrsim |\Omega'(p)|$ for all $p \in P$, where $[A]_{\delta}$ refers to the $\delta$-neighbourhood of $A$. The reason is that if $w \in \Omega'(p)$, then $\dist(w,F') \leq \delta$, and therefore there exists a point $w' \in [\Omega'(p)]_{\delta} \cap F'$ with $|w - w'| \leq \delta$. Moreover, since $\Omega'(p)$ was assumed to be $\delta$-separated, the map $w \mapsto w'$ is at most $C$-to-$1$.  As a consequence of this observation,
\begin{displaymath} \sum_{w \in F'} |\{v \in \Omega'(p) : w \in B(v,\delta)\} \geq |[\Omega'(p)]_{\delta} \cap F'| \gtrsim |\Omega'(p)|. \end{displaymath}
Now,
\begin{align*} \delta^{-\kappa} & \geq \frac{1}{|F'|} \sum_{w \in F'} m_{\delta}(w \mid \Omega') = \frac{1}{|F'|} \sum_{w \in F'} |\{(p,v) \in \Omega' : w \in B(v,\delta)\}|\\
& = \frac{1}{|F'|} \sum_{w \in F'} \sum_{p \in P} |\{v \in \Omega'(p) : w \in B(v,\delta)\}| \gtrsim \frac{1}{|F'|} \sum_{p \in P'} |\Omega'(p)| = \frac{|\Omega'|}{|F'|}. \end{align*}
Now, recalling that $|\Omega'| \geq \delta^{\kappa}M|P| \geq \delta^{\kappa + \epsilon - s}M$, and rearranging, we find $|F'| \geq \delta^{3\kappa - s}M$, assuming $\delta > 0$ small enough. This is what we claimed.  \end{proof}

Now we use Theorem \ref{FurstenbergThm} to prove Theorem \ref{main}. This is virtually the same argument as in the proof of \cite[Lemma 3.3]{HSY21}, but we give the details for the reader's convenience.

\begin{proof}[Proof of Theorem \ref{main}] We may assume that $t > 0$, since every circular $(s,0)$-Furstenberg set has Hausdorff dimension at least $s$ by the non-emptiness of $\mathcal{S}$ in Definition \ref{def:circularFurstenbergSets}. Fix $0 < t \leq s \leq 1$, and let $F \subset \R^{2}$ be a circular $(s, t)$-Furstenberg set with parameter set $E \subset \R^{2} \times (0,\infty)$ satisfying $\Hd E \geq t$. To avoid confusion, we mention already now that the plan is to apply Theorem \ref{FurstenbergThm} with parameter "$t$" in place of "$s$", and with $M \approx \delta^{-s}$ (which is potentially much larger than $\delta^{-t}$).

Translating and scaling $F$, it is easy to reduce to the case $E \subset \mathbf{D}$. Fix $t' \in [t/2,t]$ and $t' \leq s' < s$. Since $\mathcal{H}^{t'}_{\infty}(E) > 0$, there exists $\alpha = \alpha(E,t') >0$ and $E_1 \subset E$ such that $\calH^{t'}_\infty(E_1)> \alpha$,
where
\begin{equation}\label{lbdd11}
  E_1 := \{ p\in E \ | \ \calH^{s'}_\infty(F \cap S(p))> \alpha\}.
\end{equation}
This follows from the sub-additivity of Hausdorff content.

We also fix a parameter $\kappa > 0$, and we apply Theorem \ref{FurstenbergThm} with constants $\kappa$ and $t'$ (as above). The result is a constant $\epsilon(\kappa,t') > 0$. Recalling Remark \ref{rem3}, the constant $\epsilon(\kappa,t') > 0$ stays bounded away from zero for all $t' \in [t/2,t]$. We set
\begin{displaymath} \epsilon := \epsilon(\kappa,t) := \inf_{t' \in [t/2,t]} \epsilon(\kappa,t') > 0. \end{displaymath}

Next, we choose $k_0 = k_{0}(\alpha,\epsilon) = k_0(E,t',\epsilon) \in \N$ satisfying
\begin{equation}\label{para31}
  \alpha>\sum_{k=k_0}^{\infty} \frac{1}{k^2} \quad \text{and} \quad k_{0}^{2} \leq \min\{2^{\epsilon k_{0}}/C,2^{\kappa k_{0}}/C\},
\end{equation}
where $C \geq 1$ is an absolute constant to be determined later. Let $\calU = \{ D(x_i,r_i)\}_{i \in\mathcal{I}}$ be an arbitrary cover of $F$
by dyadic $r_i$-cubes with $r_i \le 2^{-k_0}$ and
$F \cap D(x_i,r_i) \ne \emptyset$ for all $i \in \mathcal{I}$.
For $k \ge k_0$, write
$$ \mathcal{I}_k:= \{i \in \mathcal{I} :  r_i = 2^{-k} \} \quad \mbox{and} \quad F_k:= \{\cup D(x_i,r_i) : i \in \mathcal{I}_k \}.$$
By the pigeonhole principle and \eqref{para31} we deduce that for each $p \in E_1$, there exists $k(p) \ge k_0$ such that
$$\calH^{s'}_\infty( F \cap S(p)\cap F_{k(p)})> k(p)^{-2} .$$
Using pigeonhole principle again we obtain that there exists $k_1 \ge k_0$ such that
\begin{equation}\label{lbdd21}
  \calH^{t'}_\infty(E_2)> k_1^{-2}
\end{equation}
 where $ E_2 := \{p \in E_1 : k(p) = k_1\}.$ By the construction of $E_2$, we have
\begin{displaymath}
\calH^{s'}_\infty(S(p)\cap F_{k_1})\ge \calH^{s'}_\infty(F \cap S(p) \cap F_{k_1})> k_1^{-2}, \qquad p \in E_{2}.
\end{displaymath}
Write $\delta =2^{-k_1}$.
By \eqref{lbdd21} and \cite[Lemma 3.13]{FasslerOrponen14}, we know that there exists a $\delta$-separated $(\delta,t',Ck_{1}^{2})$-set $P \subset E_2$ satisfying $(k_{1}^{-2}/C)\delta^{-t'} \leq |P| \leq \delta^{-t'}$. Since $P \subset E_{2}$, we have
\begin{equation}\label{lbdd31}
  \calH^{s'}_\infty(S(p)\cap F_{k_1})> k_1^{-2}, \qquad p \in P.
\end{equation}
 Applying \cite[Lemma 3.13]{FasslerOrponen14} again to $S(p)\cap F_{k_1}$, $p \in P$, we obtain $\delta$-separated $(\delta,s',Ck_{1}^{2})$-sets $E(p) \subset S(p)\cap F_{k_1}$ such that
\begin{equation*}
  |E(p)| \equiv M \geq (k_{1}^{-2}/C)\delta^{-s'} \stackrel{\eqref{para31}}{\geq} \delta^{\kappa - s'}, \qquad p \in P.
\end{equation*}
By \eqref{para31}, $P$ is a $(\delta,t',\delta^{-\epsilon})$-set, and each $E(p)$ is a $(\delta,s',\delta^{-\epsilon})$-set. Since $s' \geq t'$, the sets $E(p)$ are automatically also $(\delta,t',\delta^{-\epsilon})$-sets. Therefore,
$$ \Omega:= \{(p,v) : p \in P \text{ and } v \in E(p)\} \subset \mathbb{R}^5$$
is a $(\delta,t',\delta^{-\epsilon},M)$-configuration. Recall that $\epsilon \leq \epsilon(\kappa,t')$ by the definition of $\epsilon$. Letting
$$ \calF:= \bigcup_{p \in P} E(p)  $$
and applying Theorem \ref{FurstenbergThm}, we deduce that $|\mathcal{F}|_{\delta} \geq \delta^{\kappa - t'}M \geq \delta^{2\kappa - s'-t'}$.

Since $E(p) \subset F_{k_1}$ for each $p \in P$, we have $\calF \subset F_{k_1}$, which implies
 $$|\mathcal{I}_{k_1}| = |F_{k_1}|_\delta \geq  |\mathcal{F}|_{\delta} \geq \delta^{2\kappa - s'-t'}.$$
 Then
$$ \sum_{i \in \mathcal{I}} r_i^{s'+t'- 2\kappa} \ge \sum_{i \in \mathcal{I}_{k_1}} r_i^{s'+t'- 2\kappa} = \delta^{s'+t'- 2\kappa}|\mathcal{I}_{k_1}| \geq 1. $$
As the covering was arbitrary, we infer that $\Hd F \ge s'+t' - 2\kappa$.
Sending $s' \nearrow s$, $t' \nearrow t$, and $\kappa \searrow 0$, we arrive at the desired result.
\end{proof}


\section{Preliminaries on $(\delta,s)$-configurations}\label{s:prelimConfigurations}

The proof of Theorem \ref{thm2} -- the multiplicity upper bound for $(\delta,s)$-configurations -- will involve considering such configurations at scales $\Delta \gg \delta$. In a dream world, a $(\delta,s)$-configuration would admit a "dyadic" structure which would enable statements of the following kind: (a) the $\Delta$-parents of a $(\delta,s)$-configuration form a $(\Delta,s)$-configuration, and (b) the $\Delta$-parents of a $(\delta,s,C,M)$-configuration form a $(\Delta,s,C',M')$-configuration. Such claims are not only false as stated, but also seriously ill-defined.

To formulate the problems -- and eventually their solutions -- precisely, we introduce notation for dyadic cubes.

\begin{definition}[Dyadic cubes] For $\delta \in 2^{-\N}$, let $\mathcal{D}_{\delta}$ be the family
of dyadic cubes in $\R^{3}$ of side-length $\delta$ which are
contained in the set $\mathbf{D}$. We also write $\mathcal{D}
:= \bigcup_{\delta \in 2^{-\N}} \mathcal{D}_{\delta}$. If $P
\subset \R^{3}$ is an arbitrary set of points, or a family of
cubes, we also write
\begin{displaymath} \mathcal{D}_{\delta}(P) := \{Q \in \mathcal{D}_{\delta} : Q \cap P \neq \emptyset\}. \end{displaymath}
For $p \in \mathbf{D}$, we write $Q_{\delta}(p) \in
\mathcal{D}_{\delta}$ for the unique cube in
$\mathcal{D}_{\delta}$ containing $p$. \end{definition}

We then explain some of the problems we need to overcome. The first one is that if $P \subset \mathbf{D}$ or $P \subset \mathcal{D}_{\delta}$ is a $(\delta,s)$-set, it is not automatic that $P_{\Delta} := \mathcal{D}_{\Delta}(P)$ is a $(\Delta,s)$-set for $\delta < \Delta \leq 1$. This is not too serious: it is well-known that there exists a "refinement" $P' \subset P$ such that $|P'| \approx_{\delta} |P|$, and $P_{\Delta}'$ is a $(\Delta,s)$-set (a proof of this claim will be hidden inside the proof of Proposition \ref{prop6}).

There is another problem of the same nature, which seems more complex to begin with, but can eventually be solved with the same idea. Assume that $\Omega = \{(p,v) : p \in P \text{ and } v \in E(p)\}$ is a $(\delta,s)$-configuration, and $\Delta \gg \delta$. In what sense can we guarantee that some "$\Delta$-net" $\Omega_{\Delta} \subset \Omega$ is a $(\Delta,s)$-configuration? By the fact stated in the previous paragraph, we may start by refining $P \mapsto P'$ such that $P_{\Delta}'$ is a $(\Delta,s)$-set. Then the question becomes: which set $E_{\Delta}(\mathbf{p}) \subset S(\mathbf{p})$ should we associate to each $\mathbf{p} \in P_{\Delta}'$ in such a manner that
\begin{displaymath} \Omega_{\Delta} = \{(\mathbf{p},\mathbf{v}) : \mathbf{p} \in P_{\Delta}' \text{ and } \mathbf{v} \in E_{\Delta}(\mathbf{p})\} \end{displaymath}
is a $(\Delta,s)$-configuration -- which hopefully still has some useful relationship with $\Omega$? This question will eventually be answered in the main result of this section, Proposition \ref{prop6}, but we first need to set up some notation.

For $p=(x,r)\in \mathbb{R}^3_+$ and an arc $I\subset S(p)$, we let
$V(p,I)$ be the (one-sided) cone centred at $x$ and spanned by the
arc $I$. That is,
$$  V(p,I):= \bigcup_{e \in I}\{x+ t(e-x)\}_{t \ge 0}.
$$

\begin{definition}[Dyadic arcs] We introduce a dyadic partition on the circles $S(p)$. If
$\sigma \in 2^{-\N}$ and $p=(x,r) \in \mathbf{D}$, we let
$\mathcal{S}_{\sigma}(p)$ be a partition of $S(p)$ into disjoint
(half-open) arcs of length $2\pi r\sigma$. We also let
$\mathcal{S}(p) := \bigcup_{\sigma \in 2^{-\N}}
\mathcal{S}_{\sigma}(p)$. (We note that for $p = (x,r) \in \mathbf{D}$, always $r \in [\tfrac{1}{2},1]$, so the dyadic $\sigma$-arcs have length comparable to $\sigma$.)
\end{definition}

\begin{remark}\label{dyadicConvention} The notation of dyadic arcs $\mathcal{S}_{\sigma}(p)$ will often be applied with parameters such as $\sigma = \sqrt{\delta/t}$ or $\sigma = \delta/\sqrt{\lambda t}$, which are not dyadic rationals to begin with. In such cases, we really mean $\mathcal{S}_{\bar{\sigma}}(p)$, where $\bar{\sigma} \in 2^{-\N}$ is the smallest dyadic rational with $\sigma \leq \bar{\sigma}$.   \end{remark}

\begin{notation} In the sequel, it will be very common that the letters $p,q,\mathbf{p}$ refer to dyadic cubes instead of points in $\mathbf{D}$. Regardless, we will use the notation $S(p)$, $\mathcal{S}_{\delta}(p)$ and $V(p,I)$. This always refers to the corresponding definitions relative to the centre of $p,q,\mathbf{p}$, which is an element of $\mathbf{D}$. \end{notation}

\begin{lemma} \label{conelem} Let $0 < \delta \leq \Delta \leq 1$ and $0 < \sigma \leq \Sigma \leq
  1$ be dyadic numbers with  $\Delta \leq \Sigma$. Assume
  that $p\in \mathcal{D}_\delta$ and
$\mathbf{p} \in \mathcal{D}_\Delta$ with $p
\subset \mathbf{p}$, and let $v \in \mathcal{S}_{\sigma}(p)$. If $\mathbf{v} \in \mathcal{S}_{\Sigma}(\mathbf{p})$
is such that
\begin{displaymath}
 v \cap V(\mathbf{p},\mathbf{v}) \neq \emptyset,
 \end{displaymath}
then there exists an arc $I_{\mathbf{v}}\subset S(\mathbf{p})$ of
length $\lesssim \Sigma$ such that $v\subset
V(\mathbf{p},I_{\mathbf{v}})$ and $\mathbf{v}\subset
I_{\mathbf{v}}$.
\end{lemma}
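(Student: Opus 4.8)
The statement is purely geometric: we have a small dyadic cube $p \in \mathcal{D}_\delta$ contained in a larger dyadic cube $\mathbf{p} \in \mathcal{D}_\Delta$, a $\sigma$-arc $v \subset S(p)$, and a $\Sigma$-arc $\mathbf{v} \subset S(\mathbf{p})$ whose cone $V(\mathbf{p},\mathbf{v})$ meets $v$. We must produce a single arc $I_{\mathbf{v}} \subset S(\mathbf{p})$ of length $\lesssim \Sigma$ that simultaneously contains $\mathbf{v}$ and whose cone contains all of $v$. The natural candidate is to take $I_{\mathbf{v}}$ to be the union of $\mathbf{v}$ with a bounded number of neighbouring $\Sigma$-arcs on $S(\mathbf{p})$; the content of the lemma is that boundedly many suffice.

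First I would reduce everything to an angular estimate seen from the centre $\mathbf{x}$ of $\mathbf{p}$. Writing $x$ for the centre of $p$, the key quantitative facts are: (i) $|x - \mathbf{x}| \lesssim \Delta$ since $p \subset \mathbf{p}$; (ii) both circles have radius in $[\tfrac12,1]$, so $||x| - |\mathbf{x}||$-type quantities and the radii are all comparable to constants, and in particular $|r - \mathbf{r}| \lesssim \Delta$; (iii) the arc $v \subset S(p)$ has diameter $\lesssim \sigma \leq \Sigma$. The cone $V(\mathbf{p}, \mathbf{v})$ consists of rays from $\mathbf{x}$ through $\mathbf{v}$, so "$v \cap V(\mathbf{p},\mathbf{v}) \neq \emptyset$" says some point $y_0 \in v$ lies on a ray from $\mathbf{x}$ through $\mathbf{v}$, i.e. the angular direction of $y_0 - \mathbf{x}$ lies in the angular sector subtended by $\mathbf{v}$ at $\mathbf{x}$, which has angular width $\sim \Sigma$ (since $|\mathbf{v}| \sim \Sigma$ and $\mathbf{v}$ sits at distance $\mathbf{r} \sim 1$ from $\mathbf{x}$). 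I then want to show that \emph{every} point $y \in v$ has angular direction (as seen from $\mathbf{x}$) within $O(\Sigma)$ of that of $y_0$. Since $y, y_0 \in v$ and $\diam(v) \lesssim \Sigma$, while $|y - \mathbf{x}|, |y_0 - \mathbf{x}| \gtrsim 1$ (the point $y$ is on $S(p)$, distance $\mathbf{r} \sim 1$ from its own centre which is within $\Delta$ of $\mathbf{x}$, so $|y - \mathbf{x}| \in [\tfrac12 - O(\Delta), 1 + O(\Delta)] \sim 1$), the angle between the rays $\mathbf{x}y$ and $\mathbf{x}y_0$ is $\lesssim |y - y_0| / \min(|y-\mathbf{x}|,|y_0 - \mathbf{x}|) \lesssim \Sigma$. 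Hence the angular image of $v$ under the "direction from $\mathbf{x}$" map is an arc of angular width $\lesssim \Sigma$, and it meets the angular sector of $\mathbf{v}$.

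Now I would \emph{define} $I_{\mathbf{v}}$ to be the union of $\mathbf{v}$ together with all dyadic $\Sigma$-arcs of $\mathcal{S}_\Sigma(\mathbf{p})$ whose angular sector (at $\mathbf{x}$, equivalently at the centre of $\mathbf{p}$) is within angular distance $C\Sigma$ of that of $\mathbf{v}$, where $C$ is the implicit constant from the previous paragraph plus a bit of slack. This is a union of $O(1)$ consecutive $\Sigma$-arcs, hence an arc of length $\lesssim \Sigma$, and it contains $\mathbf{v}$ by construction. By the angular estimate, the direction (from $\mathbf{x}$) of every point $y \in v$ lies within $C\Sigma$ of the directions subtended by $\mathbf{v}$, hence inside the angular sector spanned by $I_{\mathbf{v}}$; therefore the ray from $\mathbf{x}$ through $y$ passes through $I_{\mathbf{v}}$, which is exactly the statement $y \in V(\mathbf{p}, I_{\mathbf{v}})$. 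So $v \subset V(\mathbf{p}, I_{\mathbf{v}})$, completing the proof.

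The main obstacle is bookkeeping rather than conceptual: one must be careful that the cones $V(\mathbf{p},\cdot)$ are centred at the centre of $\mathbf{p}$, not at the centre of $p$, so one cannot directly use that $v$ subtends a small angle at \emph{its own} centre; the shift of base point by $\lesssim \Delta \leq \Sigma$ is exactly what forces us to pass to the union of several $\Sigma$-arcs rather than keeping $\mathbf{v}$ itself. The hypothesis $\Delta \leq \Sigma$ is used precisely here: the displacement of centres is absorbed into the $O(\Sigma)$ angular fattening. I would also need the elementary bound that a chord of length $\ell$ in an annulus at distance $\sim 1$ from a point subtends an angle $\lesssim \ell$ there, which is standard trigonometry, and the fact that dyadic $\Sigma$-arcs on $S(\mathbf{p})$ have length and angular width both $\sim \Sigma$ because $\mathbf{r} \in [\tfrac12,1]$ — a point already noted in the definition of dyadic arcs.
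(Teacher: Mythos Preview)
Your argument is correct and in fact more direct than the paper's. The paper takes $I_{\mathbf{v}}$ to be the union of \emph{all} arcs in $\mathcal{S}_{\Sigma}(\mathbf{p})$ whose cone meets $v$, and then bounds the number $m$ of such arcs by showing that for each of the $m-2$ ``interior'' arcs $\mathbf{v}_i$, the curve $v$ must traverse the full cone $V(\mathbf{p},\mathbf{v}_i)$ from one boundary ray to the other, forcing $\ell(v \cap V(\mathbf{p},\mathbf{v}_i)) \gtrsim \Sigma$; since $\ell(v) \lesssim \sigma \leq \Sigma$, this caps $m$. You instead bound the angular spread of $v$ as seen from $\mathbf{x}$ in one step, via the chord-over-distance estimate $\angle(\mathbf{x}y,\mathbf{x}y_0) \lesssim |y-y_0|/\min(|y-\mathbf{x}|,|y_0-\mathbf{x}|) \lesssim \Sigma$, and then take $I_{\mathbf{v}}$ to be the corresponding arc on $S(\mathbf{p})$. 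Both routes lead to the same conclusion; yours avoids the case analysis on interior versus boundary arcs, while the paper's choice of $I_{\mathbf{v}}$ is canonically the smallest union of dyadic $\Sigma$-arcs that works.

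One small remark: your attribution of the role of the hypothesis $\Delta \leq \Sigma$ is slightly off. In your argument the centre displacement $|x-\mathbf{x}| \lesssim \Delta$ does not enter the angular bound directly (the bound $|y-y_0| \lesssim \sigma \leq \Sigma$ uses only $\sigma \leq \Sigma$). Rather, $\Delta \leq \Sigma$ is what lets you assume $\Delta$ is small after the harmless reduction $\Sigma \leq c$ for a small absolute constant $c$ (which the paper also makes), and this in turn gives the lower bound $|y - \mathbf{x}| \gtrsim 1$. This is a cosmetic point and does not affect the validity of your proof.
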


For all $p,\mathbf{p}$, and $v$ as in the statement of the lemma,
there exists at least one $\mathbf{v}\in
\mathcal{S}_{\Sigma}(\mathbf{p})$ such that $v \cap
V(\mathbf{p},\mathbf{v}) \ne \emptyset$, simply because
$\mathbb{R}^2= \cup_{\mathbf{v} \in
\mathcal{S}_{\Sigma}(\mathbf{p})} V(\mathbf{p},\mathbf{v})$.

\begin{proof} Without loss of generality, we may assume that $\Sigma \leq
1/12$, say. We denote
$$\mathcal{S}_{\Sigma}(\mathbf{p},v):= \{
\mathbf{v}\in \mathcal{S}_{\Sigma}(\mathbf{p}) : v \cap
V(\mathbf{p},\mathbf{v}) \ne \emptyset\}.$$ Our goal is to bound
the cardinality of $\mathcal{S}_{\Sigma}(\mathbf{p},v)$ uniformly
from above and
 prove
that $I_{\mathbf{v}}$ can be obtained as the union of the arcs in
$\mathcal{S}_{\Sigma}(\mathbf{p},v)$.

 Let $(x,r),(\mathbf{x},\mathbf{r}) \in
 \mathbf{D}$ be the centers of the cubes $p\in \mathcal{D}_{\delta}$ and $\mathbf{p}\in\mathcal{D}_{\Delta}$,
 respectively. By assumption, $\mathbf{r}\geq 1/2$ and $\Delta
 \leq \Sigma\leq 1/12$, so that $\mathbf{r}-3\Delta \gtrsim 1$.
Since $S(p) \subset S^{3\Delta}(\mathbf{p})$ by a simple application of the triangle inequality, we find
\begin{equation}\label{eq:dist_v_center}
\mathrm{dist}(v,\mathbf{x})\geq
\mathrm{dist}(S(p),\mathbf{x})\gtrsim 1.
\end{equation}
Moreover, using also that $r\geq 1/2$, and $\delta \leq 1/12$, it
follows that $\mathbf{x}$ must be contained in the interior of the
disk bounded by $S(p)$.

By the connectedness of $v$ and since $v\cap\{\mathbf{x}\} =
\emptyset$, we find that $ \cup_{
\mathcal{S}_{\Sigma}(\mathbf{p},v)} \mathbf{v}$ is a connected set
in $S(\mathbf{p})$ which implies that
$\mathcal{S}_{\Sigma}(\mathbf{p},v)=\{\mathbf{v}_i\}_{i=1,\cdots,m}$
is a family of adjacent arcs. If $m\in \{1,2\}$, then their union
is obviously an arc $I_{\mathbf{v}}$ of length at most
$4\pi\mathbf{r} \Sigma$ with $v\subset
V(\mathbf{p},I_{\mathbf{v}})$ and $\mathbf{v}\subset
I_{\mathbf{v}}$. Thus we assume from now on that $m\geq 3$.
Letting $\mathbf{v}_i^+$ and $\mathbf{v}_i^-$ be the two endpoints
of the arc $\mathbf{v}_i$, we can arrange the arcs $\mathbf{v}_i
\in \mathcal{S}_{\Sigma}(\mathbf{p},v)$ in such an order that $
\mathbf{v}_i^+ = \mathbf{v}_{i+1}^-$ for all $i=1, \cdots, m-1$.

To conclude the proof of the lemma, it suffices to show that $m$
is bounded from above by a universal constant. As $v\in
\mathcal{S}_{\sigma}(p)$ for $p=(x,r)$, the length $\ell(v)$ of
$v$ is $2\pi r \sigma$. As $\mathbf{x}$ lies inside the disk
bounded by $S(p)$, the set $v \cap V(\mathbf{p},\mathbf{v}_i)$ is
a curve for every $i$. Since $\sigma \leq \Sigma$ and $r\leq 2$,
we have that
$$4\pi  \Sigma \geq \ell(v) = \sum_{i=1}^m\ell(v \cap V(\mathbf{p},\mathbf{v}_i))
 \ge \sum_{i=2}^{m-1}\ell(v \cap V(\mathbf{p},\mathbf{v}_i)).$$
 Thus, the desired upper bound for $m$ will follow, if we manage
 to prove that
 \begin{equation}\label{eq:goal_lengthv}
\ell(v \cap V(\mathbf{p},\mathbf{v}_i))\gtrsim \Sigma, \qquad 2 \leq i \leq m-1.
\end{equation}

Note that
$$  \partial V(\mathbf{p},\mathbf{v}_i) = \{\mathbf{x}\} \cup \{\mathbf{x}+ t (\mathbf{v}_i^+ -\mathbf{x})\}_{t>0}
\cup\{\mathbf{x}+ t (\mathbf{v}_i^- -\mathbf{x})\}_{t>0}, \qquad 1 \leq i \leq m.$$
Write $\mathbf{\bar v}_i^+:=
\{\mathbf{x}+ t (\mathbf{v}_i^+ -\mathbf{x})\}_{t>0}$ and
$\mathbf{\bar v}_i^-:= \{\mathbf{x}+ t (\mathbf{v}_i^-
-\mathbf{x})\}_{t>0}$. We have
$$ \mathbf{\bar v}_i^+ \cap \mathbf{\bar v}_i^- = \emptyset, \qquad 1 \leq i \leq m.$$
Recall that $v \cap \{\mathbf{x}\} = \emptyset$. Then, by the
arrangement of the arcs $\mathbf{v}_i$, we know for $i=2,
\cdots,m-1$, that $v$ must intersect both $\mathbf{\bar v}_i^+$
and $\mathbf{\bar v}_i^-$. Let
$$ x_i^+ \in  \mathbf{\bar v}_i^+ \cap v  \quad \text{and} \quad x_i^- \in \mathbf{\bar v}_i^- \cap v, \qquad  2 \leq i \leq m-1.$$
We claim that
\begin{equation}\label{lengthv}
  |x_i^+ - x_i^-| \gtrsim \Sigma, \qquad 2 \leq i \leq m-1,
\end{equation}
which will yield \eqref{eq:goal_lengthv} and thus conclude the
proof of the lemma.

To prove \eqref{lengthv}, recall that $\mathbf{v}_i$ is an arc of
length $2\pi \mathbf{r} \Sigma$ in $S(\mathbf{p})$. Thus
 $$ \angle(\mathbf{\bar v}_i^+, \mathbf{\bar v}_i^-) = 2 \pi \Sigma   \leq \pi/2. $$
 We have
 \begin{align*}
  |x_i^+ -x_i^-| & \ge \mathrm{dist}(\{x_i^+\} , \mathbf{\bar v}_i^-)
   = \inf\{|x_i^+ - y|: y \in \mathbf{\bar v}_i^- \}
    = |x_i^+ - \mathbf{x}| \sin \angle(\mathbf{\bar v}_i^+, \mathbf{\bar v}_i^-) \\
    & \overset{\eqref{eq:dist_v_center}}{\gtrsim} \frac{ \angle(\mathbf{\bar v}_i^+, \mathbf{\bar v}_i^-)}2  \gtrsim \Sigma,
 \end{align*}
 where for the second inequality we recall that $x_i^+\in v \subset
 S^{3\Delta}(\mathbf{p})$, and we use the fact that $\sin \theta \ge \theta/2$ for all $0 \le \theta \le \pi/2$. The proof is complete.
\end{proof}

Dyadic cubes have the well-known useful property that if $Q,Q' \in
\mathcal{D}$ with $Q \cap Q' \neq \emptyset$, then either $Q
\subset Q'$ or $Q' \subset Q$. For a fixed circle $S(p)$, the
dyadic arcs $\mathcal{S}(p)$ have the same property, but things
get more complicated when we want to compare dyadic arcs in
$\mathcal{S}(p),\mathcal{S}(q)$ for $p \neq q$. The next notation
is designed to clarify this issue.

\begin{notation} Let $0 < \delta \leq \Delta \leq 1$ and $0 < \sigma \leq \Sigma
\leq
  1$ be dyadic numbers.
Assume that $p\in \mathcal{D}_\delta$ and $\mathbf{p} \in
\mathcal{D}_\Delta$ with $p \subset \mathbf{p}$.
For each $v \in \mathcal{S}_{\sigma}(p)$, we write $v \prec \mathbf{v}$ for the unique arc
$\mathbf{v} \in \mathcal{S}_{\Sigma}(\mathbf{p})$ such that the
centre of $v$ is contained in $V(\mathbf{p},\mathbf{v})$.  In particular, $v \cap V(\mathbf{p},\mathbf{v}) \neq \emptyset$. For two pairs $(p,v)$ and $(\mathbf{p},\mathbf{v})$, we write
\begin{displaymath} (p,v) \prec (\mathbf{p},\mathbf{v}) \quad \Longleftrightarrow \quad p \subset \mathbf{p} \text{ and } v \prec \mathbf{v}. \end{displaymath}
We remark that by Lemma \ref{conelem}, if  $\Delta \leq \Sigma$
and $(p,v) \prec (\mathbf{p},\mathbf{v})$, then $v \subset
V(\mathbf{p},I_{\mathbf{v}})$ for an arc $I_{\mathbf{v}}\subset
S(\mathbf{p})$ of length $\sim \Sigma$ with $\mathbf{v}\subset
I_{\mathbf{v}}$.
\end{notation}

The "$\prec$" relation is illustrated in Figure \ref{fig1}. It gives a precise meaning to "dyadic parents" of pairs $(p,v)$ with $p \in \mathcal{D}_{\delta}$ and $v \in S(p)$. We just have to keep in mind that if $(p,v) \prec (\mathbf{p},\mathbf{v})$, then it is not quite true that $v \subset \mathbf{v}$. A good substitute is the inclusion $v \subset V(\mathbf{p},I_{\mathbf{v}})$.

\begin{definition}[Skeleton]\label{def:skeleton} Let $0 < \delta \leq \Delta$ and $0 < \sigma \leq \Sigma$ be dyadic rationals. Assume that $p \in \mathcal{D}_{\delta}$ and $E_{\sigma}(p) \subset \mathcal{S}_{\sigma}(p)$. The \emph{$(\Delta,\Sigma)$-skeleton} of $E_{\sigma}(p)$ is the set
\begin{displaymath} E_{\Sigma}(p) = \{\mathbf{v} \in \mathcal{S}_{\Sigma}(\mathbf{p}) : v \prec \mathbf{v} \text{ for some } v \in E_{\sigma}(p)\}, \end{displaymath}
where $\mathbf{p} \in \mathcal{D}_{\Delta}$ is the unique dyadic cube with $p \subset \mathbf{p}$. (It is important to note that the $(\Delta,\Sigma)$-skeleton of $E_{\sigma}(p)$ is a subset of $\mathcal{S}_{\Sigma}(\mathbf{p})$ instead of $\mathcal{S}_{\Sigma}(p)$. These coincide if $\Delta = \delta$.)

We also need the following version of the definition. Let $P \subset \mathcal{D}_{\delta}$, and assume that we are given a (possibly empty) family $E_{\sigma}(p) \subset \mathcal{S}_{\sigma}(p)$ for all $p \in P$. Write $\Omega = \{(p,v) : p \in P \text{ and } v \in E_{\sigma}(p)\}$. Then, the
\emph{$(\Delta,\Sigma)$-skeleton of $\Omega$} is defined to be
\begin{displaymath} \Omega^{\Delta}_{\Sigma} := \{(\mathbf{p},\mathbf{v}) : \mathbf{p} \in \mathcal{D}_{\Delta}, \, \mathbf{v} \in \mathcal{S}_{\Sigma}(\mathbf{p}), \text{ and } (p,v) \prec (\mathbf{p},\mathbf{v}) \text{ for some } (p,v) \in \Omega\}. \end{displaymath}
In other words, $\Omega^{\Delta}_{\Sigma}$ consists of pairs $(\mathbf{p},\mathbf{v})$ such that $\mathbf{p} \in \mathcal{D}_{\Delta}(P)$, and $\mathbf{v} \in E_{\Sigma}(p)$ for some $p \in P$ with $p \subset \mathbf{p}$. We write
\begin{displaymath} E_{\Sigma}(\mathbf{p}) := \{\mathbf{v} \in \mathcal{S}_{\Sigma}(\mathbf{p}) : (\mathbf{p},\mathbf{v}) \in \Omega^{\Delta}_{\Sigma}\} \quad \text{and} \quad
P_{\Delta} := \{\mathbf{p} \in \mathcal{D}_{\Delta} : E_{\Sigma}(\mathbf{p}) \neq \emptyset\}. \end{displaymath} \end{definition}

\begin{remark} Note that $E_{\Sigma}(\mathbf{p})$ is the union of all the $(\Delta,\Sigma)$-skeletons $E_{\Sigma}(p)$ for all $p \in P$ with $p \subset \mathbf{p}$. Thus, $E_{\Sigma}(\mathbf{p})$ may be rather wild, even if the individual sets $E_{\sigma}(p)$ are nice (say, $(\sigma,s)$-sets). Proposition \ref{prop6} will regardless give us useful information about the sets $E_{\Sigma}(\mathbf{p})$, provided that we are first allowed to prune $\Omega$ (and hence the sets $E_{\sigma}(p)$) slightly. \end{remark}

\begin{figure}[h!]
\begin{center}
\begin{overpic}[scale = 1]{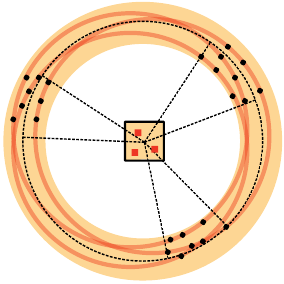}
\put(40,38){$\mathbf{p}$}
\end{overpic}
\caption{The red squares represent the centres of three circles
$S(p_{1}),S(p_{2}),S(p_{3})$, where $p_{1},p_{2},p_{3} \in
\mathcal{D}_{\delta}$. In the figure we have $p_{1},p_{2},p_{3}
\subset \mathbf{p}$ for a certain $\mathbf{p} \in
\mathcal{D}_{\Delta}$, where $\Delta > \delta$. Therefore the red
$\delta$-annuli
$S^{\delta}(p_{1}),S^{\delta}(p_{2}),S^{\delta}(p_{3})$ are
contained in the (yellow) $\Delta$-annulus
$S^{\Delta}(\mathbf{p})$. The black dots on the red circles
represent the sets
$E_{\sigma}(p_{1}),E_{\sigma}(p_{2}),E_{\sigma}(p_{3})$, and the
three longer arcs spanning the cones form the set
$E_{\Sigma}(\mathbf{p}) \subset S(\mathbf{p})$. As shown in the
figure, each pair $(p_{j},v)$ with $v \in E_{\sigma}(p_{j})$
satisfies $(p_{j},v) \prec (\mathbf{p},\mathbf{v})$ for some
$\mathbf{v} \in E_{\Sigma}(\mathbf{p})$.}\label{fig1}
\end{center}
\end{figure}

Let us recap the meaning of $(\delta,s,C,M)$-configurations from Definition \ref{d:config}. These were defined to be sets $\Omega \subset \R^{5}$ such that $P = \pi_{\R^{3}}(\Omega) \subset \mathbf{D}$ is a non-empty $(\delta,s,C)$-set, and $E(p) = \{v \in \R^{2} : (p,v) \in \Omega\}$ is a $(\delta,s,C)$-subset of $S(p)$ for all $p \in P$, satisfying $|E(p)|_{\delta} \equiv M$. We next pose the following dyadic (and slightly generalised) variant of the definition.

\begin{definition}\label{d:config2}
Let $0 < s \leq 1$, $C>0$, and let $0 < \delta  \leq 1$, $0 < \delta,\sigma \leq
1$ be dyadic rationals. A $(\delta,\sigma,s,C,M)$-configuration is a set of the form
\begin{displaymath} \Omega = \{(p,v) : p \in P \text{ and } v \in E_{\sigma}(p)\}, \end{displaymath}
where $P \subset \mathcal{D}_{\delta}$ is a $(\delta,s,C)$-set, and $E_{\sigma}(p) \subset \mathcal{S}_{\sigma}(p)$, for $p \in P$, is a $(\sigma,s,C)$-set of constant cardinality $|E_{\sigma}(p)| \equiv M$. If $\Omega$ is a $(\delta,\sigma,s,C,M)$-configuration for some $M$, we simply say that $\Omega$ is a $(\delta,\sigma,s,C)$-configuration.

 \end{definition}

In the new terminology, the $(\delta,s,C,M)$-configurations from Definition \ref{d:config} correspond to $(\delta,\delta,s,C,M)$-configurations. To be precise, we should distinguish between $(\delta,s,C,M)$-configurations and "dyadic" $(\delta,s,C,M)$-configurations, but we will not do this: in the sequel, the terminology will always refer to the dyadic variant in Definition \ref{d:config2}.

We record the following simple \emph{refinement principle} for $(\delta,\sigma,s,C,M)$-configurations:
\begin{lemma}[Refinement principle]\label{refinement} Let $\Omega$ be a $(\delta,\sigma,s,C)$-configuration, and let $G \subset \Omega$ be a subset with $|G| \geq c|\Omega|$, where $c \in (0,1]$. Then, there exists a $(\delta,\sigma,s,2C/c)$-configuration $\Omega' \subset G$ with $|\Omega'| \geq (c^{2}/4)|\Omega|$. \end{lemma}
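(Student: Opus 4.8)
The statement asks: given a $(\delta,\sigma,s,C)$-configuration $\Omega$ (with $|E_\sigma(p)|\equiv M$) and a dense subset $G\subset\Omega$ with $|G|\ge c|\Omega|$, we want to extract from $G$ a genuine sub-configuration $\Omega'$ that is a $(\delta,\sigma,s,2C/c)$-configuration and still captures a $\gtrsim c^2$ fraction of $\Omega$. The natural obstruction is that $G$ need not have the ``product'' structure of a configuration: the fibres $G(p):=\{v:(p,v)\in G\}$ may be empty for many $p$, and among the nonempty ones they can have wildly varying cardinalities, so neither $\pi_{\R^3}(G)$ need be a $(\delta,s,\cdot)$-set of controlled mass nor do the fibres have constant cardinality. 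The plan is a standard two-step pigeonholing: first restrict to cubes $p$ whose fibre $G(p)$ is large (a positive fraction of $M$), then dyadically pigeonhole the surviving cubes so that $P':=\pi_{\R^3}(\Omega')$ has cardinality within a constant of its original size, which forces the non-concentration inequality to persist.

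\textbf{Step 1 (fibre refinement).} Write $P=\pi_{\R^3}(\Omega)$, so $|\Omega|=M|P|$ and $|G|\ge cM|P|$. Let $P_G:=\{p\in P : |G(p)|\ge \tfrac{c}{2}M\}$. A counting argument (using $|G(p)|\le M$ for every $p$) gives $\sum_{p\in P_G}|G(p)|\ge |G|-\tfrac{c}{2}M|P|\ge \tfrac{c}{2}M|P|$, hence in particular $|P_G|\ge \tfrac{c}{2}|P|$. For each $p\in P_G$, choose a subset $E_\sigma'(p)\subset G(p)$ of cardinality exactly $M':=\lceil \tfrac{c}{2}M\rceil$ (possible since $|G(p)|\ge \tfrac{c}{2}M$); note $M'\ge\tfrac{c}{2}M$. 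Since $E_\sigma'(p)\subset E_\sigma(p)$ and $E_\sigma(p)$ is a $(\sigma,s,C)$-set, the non-concentration bound $|E_\sigma(p)\cap B(x,r)|_\sigma\le Cr^s|E_\sigma(p)|_\sigma = Cr^sM$ immediately yields $|E_\sigma'(p)\cap B(x,r)|_\sigma\le Cr^sM \le \tfrac{2C}{c}r^sM'$, so each $E_\sigma'(p)$ is a $(\sigma,s,2C/c)$-set, and it has constant cardinality $M'$.

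\textbf{Step 2 (base refinement).} We still need $\pi_{\R^3}$ of the final object to be a $(\delta,s,2C/c)$-set. Here I would simply take $P':=P_G$ and $\Omega':=\{(p,v):p\in P_G,\ v\in E_\sigma'(p)\}$. Since $P_G\subset P$ and $P$ is a $(\delta,s,C)$-set, for any ball $B(x,r)$ with $r\ge\delta$ we have $|P_G\cap B(x,r)|_\delta\le |P\cap B(x,r)|_\delta\le Cr^s|P|_\delta\le Cr^s\cdot\tfrac{2}{c}|P_G|_\delta=\tfrac{2C}{c}r^s|P_G|_\delta$, using $|P|_\delta\le\tfrac{2}{c}|P_G|_\delta$ from Step 1. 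Thus $P'$ is a $(\delta,s,2C/c)$-set, every $E_\sigma'(p)$ is a $(\sigma,s,2C/c)$-set of constant cardinality $M'$, so $\Omega'$ is a $(\delta,\sigma,s,2C/c,M')$-configuration with $\Omega'\subset G$. Finally $|\Omega'|=M'|P'|\ge \tfrac{c}{2}M\cdot\tfrac{c}{2}|P| = \tfrac{c^2}{4}M|P| = \tfrac{c^2}{4}|\Omega|$, as required. I do not expect any real obstacle here: the only mild subtlety is bookkeeping the constant $2C/c$ in both the base and fibre directions, and observing that one does \emph{not} need a separate dyadic pigeonholing on $|G(p)|$ because the fibre cardinalities were already forced to be exactly $M'$ by construction; everything else is elementary counting. $\hfill\qed$
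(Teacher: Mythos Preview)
Your proof is correct and essentially identical to the paper's own argument: both define $P'=\{p:|G(p)|\ge cM/2\}$, show $|P'|\ge c|P|/2$ by the obvious counting, then truncate each surviving fibre to a common size $\approx cM/2$ and observe that passing to a subset of density $\ge c/2$ in both base and fibre worsens the $(\delta,s,C)$-constant by at most a factor $2/c$. Your use of $\lceil cM/2\rceil$ is even slightly more careful about integrality than the paper's version.
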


\begin{proof} Write $\Omega = \{(p,v) : p \in P \text{ and } v \in E_{\sigma}(p)\}$. For $p \in P$, let $G(p) := \{v \in E_{\sigma}(p) : (p,v) \in G\}$. Note that (with $M := |E_{\sigma}(p)|$), we have
\begin{displaymath} cM|P| = c|\Omega| \leq |G| = \sum_{p \in P} |G(p)| \leq M|\{p : |G(p)| \geq cM/2\}| + cM|P|/2.  \end{displaymath}
It follows that the set $P' := \{p \in P : |G(p)| \geq cM/2\}$ has $|P'| \geq c|P|/2$. For each $p \in P'$, let $E_{\sigma}'(p) \subset G(p)$ be a set with $|E_{\sigma}'(p)| = cM/2 = c|E_{\sigma}(p)|/2$. Now, $P'$ is a $(\delta,s,2C/c)$-set, $E_{\sigma}'(p)$ is a $(\sigma,s,2C/c)$-set for all $p \in P'$, and
\begin{displaymath} \Omega' := \{(p,v) : p \in P' \text{ and } v' \in E_{\sigma}'(p)\} \subset G \end{displaymath}
is the desired $(\delta,\sigma,2C/c)$-configuration with $|\Omega'| = c|P'|M/2 \geq (c^{2}/4)|\Omega|$. \end{proof}

We then arrive at the main result of this section.

\begin{proposition}\label{prop6}
Let  $0 < \delta \leq \Delta \leq 1$ and $0 < \sigma \leq \Sigma
\leq 1$ be dyadic numbers with $\delta \leq \sigma$ and $\Delta \leq \Sigma$. For every $C \geq 1$, there exists a
constant $C' \approx_{\delta} C$ such that the following
holds. If $\Omega$ is a $(\delta,\sigma,s,C)$-configuration, then there exists a
subset $G \subset \Omega$ with $|G| \approx_{\delta} |\Omega|$ whose
$(\Delta,\Sigma)$-skeleton $G^{\Delta}_{\Sigma}$ is a
$(\Delta,\Sigma,s,C')$-configuration with the property
\begin{equation}\label{form40} |\{(p,v) \in G : (p,v) \prec (\mathbf{p},\mathbf{v})\}|
\approx_{\delta} \frac{|\Omega|}{|G^{\Delta}_{\Sigma}|}, \qquad (\mathbf{p},\mathbf{v}) \in G^{\Delta}_{\Sigma}.
\end{equation}
\end{proposition}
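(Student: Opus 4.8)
The plan is to produce the refinement $G$ in two stages, first fixing the non-concentration of the spatial part $P$ at scale $\Delta$, and then fixing the non-concentration of the angular parts at scale $\Sigma$ together with the uniform-fibre property \eqref{form40}. For the first stage, I would invoke the standard ``$(\delta,s,C)$-sets become $(\Delta,s,C')$-sets after refining'' principle: given the $(\delta,s,C)$-set $P \subset \mathcal{D}_\delta$, there is a subset $P_1 \subset P$ with $|P_1| \approx_\delta |P|$ such that $\mathcal{D}_\Delta(P_1)$ is a $(\Delta,s,C')$-set with $C' \approx_\delta C$. (One proves this by a dyadic pigeonholing over the $\approx_\delta 1$ scales between $\Delta$ and $1$: at each scale one throws away the cubes whose $\Delta$-parents contain too large a mass, losing only a $\approx_\delta 1$ fraction each time.) Restricting $\Omega$ to $P_1$ and keeping all the fibres $E_\sigma(p)$ intact, we obtain a $(\delta,\sigma,s,C')$-configuration $\Omega_1 \subset \Omega$ with $|\Omega_1| \approx_\delta |\Omega|$, and now $P_{1,\Delta} := \mathcal{D}_\Delta(P_1)$ is a $(\Delta,s,C')$-set.

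For the second stage, fix $\mathbf{p} \in P_{1,\Delta}$. By Lemma \ref{conelem} and the definition of the skeleton, each $(p,v) \in \Omega_1$ with $p \subset \mathbf{p}$ has a well-defined parent $(\mathbf{p},\mathbf{v}) = (\mathbf{p}, \mathbf{v}(p,v))$ with $\mathbf{v} \in \mathcal{S}_\Sigma(\mathbf{p})$, and $v \subset V(\mathbf{p}, I_{\mathbf{v}})$ for an arc $I_{\mathbf{v}}$ of length $\sim \Sigma$. The point is that the map $(p,v) \mapsto \mathbf{v}(p,v)$ is boundedly-to-one into $\mathcal{S}_\Sigma(\mathbf{p})$ on each individual fibre: for a single $p \subset \mathbf{p}$, consecutive $v$'s in $\mathcal{S}_\sigma(p)$ map to consecutive $\mathbf{v}$'s, and a fixed $\mathbf{v}$ receives $\lesssim \Sigma/\sigma$ of the $v$'s (with equality in the extreme case), so $|E_\Sigma(\mathbf{p})| \gtrsim |E_\sigma(p)| \cdot (\sigma/\Sigma)$ for every such $p$, while summing over the $p \subset \mathbf{p}$ gives $\sum_{p \subset \mathbf{p}}|E_\sigma(p)| = M \cdot |\{p \in P_1 : p \subset \mathbf{p}\}|$ preimages distributed among at most $\lesssim 1/\Sigma$ arcs. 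To upgrade the $(\sigma,s,C')$-property of the individual $E_\sigma(p)$ into a $(\Sigma,s,C'')$-property of the \emph{union} $E_\Sigma(\mathbf{p})$, one needs the individual skeletons $E_\Sigma(p)$ (for $p \subset \mathbf{p}$) to be $(\Sigma,s,C'')$-sets — which follows from the $(\sigma,s,C')$-property of $E_\sigma(p)$ together with $\sigma \le \Sigma$, since passing to a coarser scale and to the parent arcs preserves non-concentration up to the overlap constant of Lemma \ref{conelem} — but a union of $(\Sigma,s)$-sets need not be a $(\Sigma,s)$-set. This is where I again pigeonhole: throw away those $\mathbf{p} \in P_{1,\Delta}$, and those $p \subset \mathbf{p}$, for which $|E_\Sigma(\mathbf{p})|$ fails to be comparable to a fixed dyadic value, and for which $|\{p \in P_1 : p \subset \mathbf{p}\}|$ fails to be comparable to a fixed dyadic value; after $\approx_\delta 1$ such pigeonholings the surviving $\mathbf{p}$ all have $|E_\Sigma(\mathbf{p})| \sim M'$ for a single $M'$ and the same number of children, losing only a $\approx_\delta 1$ fraction of $|\Omega_1|$. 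With $|E_\Sigma(\mathbf{p})|$ now of constant order, the union $E_\Sigma(\mathbf{p}) = \bigcup_{p \subset \mathbf{p}} E_\Sigma(p)$ of boundedly many ``slots'' worth of $(\Sigma,s,C'')$-sets is itself a $(\Sigma,s,C''')$-set with $C''' \approx_\delta C$; I would make this precise by the same ball-counting estimate: for $B(w,\rho)$ with $\rho \ge \Sigma$, $|E_\Sigma(\mathbf{p}) \cap B(w,\rho)|_\Sigma \le \sum_{p}|E_\Sigma(p) \cap B(w,\rho)|_\Sigma \lesssim C'' \rho^s \sum_p |E_\Sigma(p)|_\Sigma \lesssim C'' \rho^s \cdot (\text{number of children}) \cdot M' \lesssim C''' \rho^s M'$, using that the number of children times $M'$ is $\approx_\delta |E_\Sigma(\mathbf{p})|$ after the pigeonholing.

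Finally I would extract the uniform-fibre statement \eqref{form40}. After the above steps we have $G \subset \Omega_1 \subset \Omega$ with $|G| \approx_\delta |\Omega|$, its skeleton $G^\Delta_\Sigma$ is a $(\Delta,\Sigma,s,C''')$-configuration with $C''' \approx_\delta C$, and — crucially — $|G^\Delta_\Sigma| = M' \cdot |P_\Delta|$ where $P_\Delta$ consists of the surviving $\Delta$-cubes, all of which have the same number $N$ of surviving children $p \in P_1$, each child carrying a fibre of the same cardinality $M$, and with $\mathbf{v}(\cdot)$ boundedly-to-one of a controlled type on each child. Hence for any $(\mathbf{p},\mathbf{v}) \in G^\Delta_\Sigma$, the preimage count $|\{(p,v) \in G : (p,v) \prec (\mathbf{p},\mathbf{v})\}|$ is, up to a $\approx_\delta 1$ factor, equal to (number of children of $\mathbf{p}$ whose fibre hits the cone over $\mathbf{v}$) times ($\sigma$-to-$\Sigma$ multiplicity) $\approx_\delta N M / M' = (N M |P_\Delta|)/(M' |P_\Delta|) = |G|/|G^\Delta_\Sigma|$, which is \eqref{form40}. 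To make even this uniform one last pigeonholing is needed, restricting to $\mathbf{v}$'s that receive a comparable number of preimages; since there are $\approx_\delta 1$ dyadic values this again costs only a $\approx_\delta 1$ factor and can be absorbed. The main obstacle, and the step requiring genuine care rather than bookkeeping, is the middle one: controlling the geometry of the fibre map $(p,v) \mapsto (\mathbf{p},\mathbf{v})$ well enough — via Lemma \ref{conelem} — to simultaneously (i) deduce the $(\Sigma,s)$-property of each $E_\Sigma(p)$ from that of $E_\sigma(p)$, and (ii) control the overlaps of the cones $V(\mathbf{p}, I_{\mathbf{v}})$ so that the union $E_\Sigma(\mathbf{p})$ does not collapse in cardinality; everything else is the standard $\approx_\delta$-lossy dyadic pigeonholing.
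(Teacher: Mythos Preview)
Your plan has the right overall shape --- pigeonhole on child counts, then on fibre behaviour --- but the middle step contains two genuine errors that break the argument.

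\textbf{First error.} You claim that each individual skeleton $E_\Sigma(p)$ is a $(\Sigma,s,C'')$-set with $C'' \approx_\delta C$, because ``passing to a coarser scale preserves non-concentration''. This is false. The $(\sigma,s,C)$-property of $E_\sigma(p)$ gives, for $r \geq \Sigma$, the bound $|E_\Sigma(p) \cap B(w,r)| \lesssim |E_\sigma(p) \cap B(w,O(r))|_\sigma \leq C r^s M$. But for $E_\Sigma(p)$ to be a $(\Sigma,s,C'')$-set you need $\lesssim C'' r^s |E_\Sigma(p)|$, and $|E_\Sigma(p)|$ can be as small as $\sim C^{-1}\Sigma^{-s}$ while $M$ can be as large as $\sigma^{-1}$. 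So the implied constant is off by $M/|E_\Sigma(p)|$, which can be a genuine power of $\delta^{-1}$. (This is exactly the issue the paper flags in the paragraph preceding the proposition: the $\Delta$-skeleton of a $(\delta,s)$-set need not be a $(\Delta,s)$-set without refining.)

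\textbf{Second error.} Even granting the first claim, your sum-over-children estimate gives $|E_\Sigma(\mathbf{p}) \cap B| \lesssim C'' \rho^s \sum_p |E_\Sigma(p)| \leq C'' \rho^s \cdot N \cdot M'$, and you then assert ``$N \cdot M' \approx_\delta |E_\Sigma(\mathbf{p})|$''. But $M' = |E_\Sigma(\mathbf{p})|$ is the size of the \emph{union}, so this would force $N \approx_\delta 1$. In general the individual skeletons $E_\Sigma(p)$ can overlap heavily (even coincide), in which case $\sum_p |E_\Sigma(p)| \sim N |E_\Sigma(\mathbf{p})| \gg |E_\Sigma(\mathbf{p})|$, and your bound degrades by a factor of $N$.

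\textbf{The fix.} The correct move --- and you actually have it, just in the wrong place --- is your ``one last pigeonholing'' on the preimage counts $|\mathbf{p} \otimes \mathbf{v}|$. Do this \emph{first}, before attempting any $(\Sigma,s)$-set verification: restrict to $\mathbf{v} \in \mathcal{S}_\Sigma(\mathbf{p})$ with $|\mathbf{p} \otimes \mathbf{v}| \sim 2^{j_0}$ for a single $j_0$, and take $\mathbf{E}(\mathbf{p})$ to be (a constant-cardinality subset of) these. Then for $\mathbf{v}_r \in \mathcal{S}_r(\mathbf{p})$,
\[
|\{\mathbf{v} \in \mathbf{E}(\mathbf{p}) : \mathbf{v} \subset \mathbf{v}_r\}| \cdot 2^{j_0}
\;\lesssim\; \Big|\bigcup_{\mathbf{v} \subset \mathbf{v}_r} \mathbf{p} \otimes \mathbf{v}\Big|
\;\leq\; \sum_{p \subset \mathbf{p}} |\{v \in E_\sigma(p) : v \prec \mathbf{v}_r\}|
\;\lesssim\; 2^{i_0} \cdot C r^s M,
\]
where the last inequality uses the $(\sigma,s,C)$-property of each $E_\sigma(p)$ directly at scale $r$ (via Lemma~\ref{conelem}), not any property of $E_\Sigma(p)$. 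Since $M_\Sigma := |\mathbf{E}(\mathbf{p})| \approx_\delta 2^{i_0}M/2^{j_0}$, this yields the required $(\Sigma,s,C')$-bound. The uniform-fibre property \eqref{form40} then comes for free from the same pigeonholing, rather than requiring a separate step.
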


\begin{remark}\label{rem1}
Let $0 < \delta \leq \Delta \leq 1$ and $0 < \sigma \leq \Sigma \leq 1$ be dyadic rationals.
Let $\Omega =\{(p,v) : p \in
P \text{ and } v\in E_{\sigma}(p)\}$, as in Proposition \ref{prop6}. We will use the following notation:
\begin{displaymath}
 \mathbf{p} \otimes \mathbf{v} := \{(p,v) \in \Omega : (p,v) \prec (\mathbf{p},\mathbf{v}))\}, \qquad \mathbf{p} \in \mathcal{D}_{\Delta}, \, \mathbf{v} \in \mathcal{S}_{\Sigma}(\mathbf{p}).
 \end{displaymath}
 (So, the sets $\mathbf{p} \otimes \mathbf{v}$ depend on "$\Omega$" even though this is suppressed from the notation). The sets $\mathbf{p} \otimes \mathbf{v}$ are disjoint for distinct $(\mathbf{p},\mathbf{v})$ with
$\mathbf{p}\in \mathcal{D}_{\Delta}$ and $\mathbf{v}\in
\mathcal{S}_{\Sigma}(\mathbf{p})$. Indeed, if $\mathbf{p} \neq \mathbf{p}'$, evidently no pair
$(p,v)$ can lie in $\mathbf{p} \otimes \mathbf{v}$ and
 $\mathbf{p}' \otimes \mathbf{v}'$
  for any $\mathbf{v},\mathbf{v}' \in \mathcal{S}(\mathbf{p})$.
   On the other hand, if $\mathbf{p} = \mathbf{p}'$ and $p \in \mathcal{D}_{\delta}$ with $p \subset \mathbf{p} = \mathbf{p}'$,
   then for each arc $v \in \mathcal{S}_{\sigma}(p)$ we have chosen exactly one arc $\mathbf{v} \in \mathcal{S}_{\Sigma}(\mathbf{p})$
   such that $v \prec \mathbf{v}$. That is, $(p,v) \prec
   (\mathbf{p},\mathbf{v})$ for only one $ (\mathbf{p},\mathbf{v})$.
\end{remark}

To simplify the proof of Proposition \ref{prop6} slightly, we extract the following lemma:
\begin{lemma}\label{lemma9} Let $0 < \delta \leq \Delta \leq 1$ be dyadic rationals, and let $P \subset \mathcal{D}_{\delta}$ be a $(\delta,s,C)$-set. Assume that every set
\begin{displaymath} \mathbf{p} \cap P := \{p \in P : p \subset \mathbf{p}\}, \qquad \mathbf{p} \in P_{\Delta} := \mathcal{D}_{\Delta}(P), \end{displaymath}
has cardinality $|\mathbf{p} \cap P| \in [m,2m]$ for some $m \geq 1$. Then $P_{\Delta}$ is a $(\Delta,s,C')$-set with $C' \sim C$. \end{lemma}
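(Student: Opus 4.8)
The goal is to show that if $P \subset \mathcal{D}_\delta$ is a $(\delta,s,C)$-set whose $\Delta$-cubes all contain between $m$ and $2m$ elements of $P$, then $P_\Delta = \mathcal{D}_\Delta(P)$ is a $(\Delta,s,C')$-set with $C' \sim C$. So I must bound $|P_\Delta \cap B(x,r)|$ for every $x$ and every $r \geq \Delta$, by a constant multiple of $r^s |P_\Delta|$.

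First I would reduce everything to a counting identity. Since each $\mathbf{p} \in P_\Delta$ contains between $m$ and $2m$ cubes of $P$, and the families $\{\mathbf{p} \cap P : \mathbf{p} \in P_\Delta\}$ partition $P$, we get $m|P_\Delta| \leq |P| \leq 2m|P_\Delta|$, i.e. $|P_\Delta| \sim |P|/m$. Next, fix $x \in \R^3$ and $r \geq \Delta$. Every cube $\mathbf{p} \in P_\Delta$ that meets $B(x,r)$ is contained in $B(x, 3r)$ (say), and conversely each such $\mathbf{p}$ contributes its $\geq m$ sub-cubes of $P$, all lying in $B(x,3r)$ as well. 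Hence
\begin{displaymath}
m \cdot |P_\Delta \cap B(x,r)| \leq |P \cap B(x,3r)|.
\end{displaymath}
Now apply the $(\delta,s,C)$-property of $P$ at the ball $B(x,3r)$, which is legitimate since $3r \geq \Delta \geq \delta$: this gives $|P \cap B(x,3r)| \lesssim C (3r)^s |P| \lesssim C r^s |P|$. Combining with $|P| \leq 2m|P_\Delta|$ yields
\begin{displaymath}
|P_\Delta \cap B(x,r)| \leq \frac{|P \cap B(x,3r)|}{m} \lesssim \frac{C r^s |P|}{m} \lesssim C r^s |P_\Delta|,
\end{displaymath}
which is exactly the $(\Delta,s,C')$-set property with $C' \sim C$ (tracking the $3^s \leq 3$ and the factor $2$ into the implicit constant). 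One should also check the base case $r = \Delta$ is covered — it is, since the argument only used $r \geq \Delta \geq \delta$.

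**Main obstacle.** There is essentially no hard step here; the only things to be careful about are (i) the geometric containment bookkeeping — a $\Delta$-cube meeting $B(x,r)$ has diameter $\lesssim \Delta \leq r$, so it and all its $P$-subcubes sit inside a fixed dilate $B(x,Cr)$ — and (ii) making sure the lower bound $|P_\Delta| \geq |P|/(2m)$ and the upper bound $|P \cap B(x,3r)|$ are combined in the right direction so that $m$ cancels cleanly. Both the upper bound "$|P\cap B| \geq m|P_\Delta\cap B|$" and the global lower bound "$|P| \leq 2m|P_\Delta|$" are needed, and it is the uniform two-sided control $|\mathbf{p}\cap P| \in [m,2m]$ that makes $m$ drop out with only an absolute-constant loss. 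So I expect the proof to be three or four lines once the containment constants are pinned down.
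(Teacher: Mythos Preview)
Your proof is correct and essentially identical to the paper's: both use the two-sided bound $|\mathbf{p}\cap P|\in[m,2m]$ to sandwich $|P_\Delta\cap B|$ between $m^{-1}|P\cap B'|$ and $Cr^s|P_\Delta|$, so that $m$ cancels. The only cosmetic difference is that the paper works first with dyadic $r$-cubes (where no $3r$ enlargement is needed) and then remarks that this implies the ball version, whereas you go directly via balls with the $3r$ dilation.
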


\begin{proof} Let $Q \in \mathcal{D}_{r}$ with $\Delta \leq r \leq 1$. Then,
\begin{displaymath} m \cdot |Q \cap P_{\Delta}| \leq |Q \cap P| \lesssim Cr^{s}|P| \leq 2m \cdot Cr^{s}|P_{\Delta}|. \end{displaymath}
Dividing by "$m$" yields a dyadic version of the $(\Delta,s,C')$-set condition for $P_{\Delta}$. This easily implies the usual $(\Delta,s,C')$-set condition with a slightly worse "$C'$". \end{proof}

We then complete the proof of Proposition \ref{prop6}.

 \begin{proof}[Proof of Proposition \ref{prop6}] In the first part of the proof, we construct certain sets
 $P_{\Delta}\subset \mathcal{D}_{\Delta}$ and $\mathbf{E}(\mathbf{p})\subset \mathcal{S}_{\Sigma}(\mathbf{p})$, $\mathbf{p}\in
 P_{\Delta}$, by pigeonholing, and we define $\bar{\Omega} = \{(\mathbf{p},\mathbf{v}) : p \in P_{\Delta} \text{ and } \mathbf{v} \in
\mathbf{E}(\mathbf{p})\}$. The set $G \subset \Omega$ will be defined as
\begin{equation}\label{form169} G := \bigcup_{(\mathbf{p},\mathbf{v}) \in \bar{\Omega}} \mathbf{p} \otimes \mathbf{v} \subset \Omega. \end{equation}
This implies trivially that $G^{\Delta}_{\Sigma} \subset \bar{\Omega}$. In the second part of the proof, we show that $\bar{\Omega}$ is a $(\Delta,\Sigma,s,C')$-configuration satisfying \eqref{form40}, so in particular $\mathbf{p} \otimes \mathbf{v} \neq \emptyset$ for all $(\mathbf{p},\mathbf{v}) \in \bar{\Omega}$. Therefore also $G^{\Delta}_{\Sigma} \supset \bar{\Omega}$ by definitions, and the proof will be complete.

Write $\Omega = \{(p,v) : p \in P \text{ and } v \in E(p)\}$, where $P \subset \mathcal{D}_{\delta}$ and $E(p) \subset \mathcal{S}_{\delta}(p)$. To construct $P_{\Delta}$, consider initially $P_\Delta^1 := \mathcal{D}_{\Delta}(P)$. Each $\mathbf{p} \in P_\Delta^1$ may contain different
numbers of $\delta$-cubes from $P$, and to fix this we perform our first
pigeonholing. Let $\mathbf{p} \in P_\Delta^1$ and define
$$  \mathcal{D}_{\delta}(\mathbf{p} \cap P) :=\{p \in P : p \subset \mathbf{p}\} \mbox{ and }
 P_{\Delta,i}^{1} := \{\mathbf{p} \in P_\Delta^1 : 2^{i-1} \leq |\mathcal{D}_{\delta}(\mathbf{p} \cap P)| < 2^i \}   $$
for $i \geq 1$. Observing that
$$  |P| = \sum_{i\in \mathbb{N}} \sum_{\mathbf{p} \in P_{\Delta,i}} |\mathcal{D}_{\delta}(\mathbf{p} \cap P)| $$
and noting that $P_{\Delta,i}$ is empty if
$2^{i-1}>|\mathcal{D}_{\delta}|\sim \delta^{-3}$, we conclude by
pigeonholing that there exists $i_0\lessapprox_{\delta} 1$ such that
$$ |P| \approx_\delta  \sum_{\mathbf{p} \in P_{\Delta,i_0}} |\mathcal{D}_{\delta}(\mathbf{p} \cap P)|.$$
For this index $i_0$, we then have
$|P_{\Delta,i_0}|\approx_\delta |P|/2^{i_0}$.

For each $\mathbf{p}=(\mathbf{x},\mathbf{r})\in P^1_{\Delta}$, we
next construct families $\mathcal{S}^j_{\Sigma}(\mathbf{p})$,
$j\in \mathbb{N}$, that will be used for the definition of the
sets $\mathbf{E}(\mathbf{p})$. Again, we use pigeonholing to find
a subset of $\{ \mathbf{p} \otimes \mathbf{v}\colon \mathbf{p} \in
P_{\Delta,i_0},\, \mathbf{v} \in
\calS_\Sigma(\mathbf{p})\}$ of typical cardinality.

First, since each $\mathbf{p}\in P_{\Delta,i_0}$
contains $\sim 2^{i_0}$ cubes $p\in P$, since we
have $|E_\sigma(p)| \equiv M$ for all of them, and since for each such
$p$ and $v\in E_{\sigma}(p)$ there exists a unique $\mathbf{v}
\in \calS_\Sigma(\mathbf{p})$ such that $(p,v) \prec (\mathbf{p},
\mathbf{v} )$, we obtain
\begin{equation}\label{eq:sum_M}
 \sum_{\mathbf{v} \in \calS_\Sigma(\mathbf{p})}|\mathbf{p}
\otimes \mathbf{v}| \sim 2^{i_0}M.
\end{equation}
 Next,
for $j \geq 1$, we define
\begin{equation}\label{sj}
  \calS_\Sigma^j(\mathbf{p}) := \{\mathbf{v} \in \calS_\Sigma(\mathbf{p}):  |\mathbf{p} \otimes \mathbf{v}| \in [2^{j-1}, 2^{j})   \} .
\end{equation}
Since $\delta \leq \sigma$ by assumption, we have $|\mathbf{p} \otimes \mathbf{v}| \lesssim \delta^{-3}\sigma^{-1} \leq \delta^{-4}$. It follows  that $\calS_\Sigma^j(\mathbf{p})= \emptyset$ if $2^j
\gg \delta^{-4}$. Hence, by \eqref{eq:sum_M} and
pigeonholing, there exists $j(\mathbf{p}) \lessapprox_{\delta}1$
such that
\begin{equation*}
  \sum_{\mathbf{v} \in \calS_\Sigma^{j(\mathbf{p})}(\mathbf{p})}|\mathbf{p} \otimes \mathbf{v}| \approx_{\delta} 2^{i_0}M.
\end{equation*}
By a second pigeonholing, since $|P_{\Delta,i_0}|\lesssim\delta^{-3}$,
 there exists $j_0 \lessapprox_{\delta} 1$ and $P_\Delta \subset
P_{\Delta,i_0} \subset P_\Delta^1$ such that
\begin{equation}\label{eq:PDelta}
 |P_\Delta| \approx_{\delta}
|P_{\Delta,i_0}|\approx_{\delta}  \frac{|P|}{2^{i_0}},
\end{equation}
and
\begin{equation}\label{approx1}
  \sum_{\mathbf{v} \in \calS_\Sigma^{j_0}(\mathbf{p})}|\mathbf{p} \otimes \mathbf{v}|
  \approx_{\delta}  2^{i_0}M, \quad \mathbf{p} \in P_\Delta.
\end{equation}
In \eqref{eq:CardSigmaj}, we will see that all the sets
$\calS_\Sigma^{j_0}(\mathbf{p})$, $p\in P_{\Delta}$, have cardinality $\approx_{\delta} 2^{i_{0}}M/2^{j_{0}}$, but the sets $\mathbf{E}(\mathbf{p})$, $\mathbf{p} \in P_{\Delta}$ are required to have exactly the same cardinality. To this end, we define
\begin{equation}\label{eq:MSigma} M_\Sigma:=
\min\{|\calS_\Sigma^{j_0}(\mathbf{p})| : \mathbf{p} \in
P_\Delta\},\end{equation} which satisfies $M_{\Sigma}\geq 1$ by
\eqref{approx1}. For each $\mathbf{p} \in P_\Delta$, we choose
$\mathbf{E}(\mathbf{p})$ to be an arbitrary subset of
$\calS_\Sigma^{j_0}(\mathbf{p})$ of cardinality $
|\mathbf{E}(\mathbf{p})|= M_\Sigma$. Now, as already announced at the start of the proof, we set $\bar{\Omega} := \{(\mathbf{p},\mathbf{v}) : \mathbf{p} \in P_{\Delta} \text{ and } \mathbf{v} \in \mathbf{E}(\mathbf{p})\}$, and we define $G \subset \Omega$ with the formula \eqref{form169}. We record that $|\bar{\Omega}| = |P_{\Delta}|M_{\Sigma}$.

Keeping in mind Remark \ref{rem1} about the disjointness of the sets $\mathbf{p}\otimes \mathbf{v}$, and
using the definition of the sets $\mathbf{E}(\mathbf{p})\subset
\mathcal{S}^{j_0}_{\Sigma}(\mathbf{p})$, we have
\begin{displaymath}
|G| = \sum_{\mathbf{p} \in P_{\Delta}} \sum_{\mathbf{v} \in \mathbf{E}(\mathbf{p})} |\mathbf{p} \otimes \mathbf{v}| \geq |P_{\Delta}| \cdot M_{\Sigma} \cdot
2^{j_0 - 1}
\end{displaymath}
To conclude that $|G| \gtrapprox_{\delta} |\Omega| = |P|M$, it suffices to check that
\begin{equation}\label{eq:goalMSigma}
M_\Sigma \approx_{\delta} \frac{|P|M}{|P_\Delta|2^{j_0}}.
\end{equation} Since $(\mathbf{p} \otimes \mathbf{v}) \cap
(\mathbf{p} \otimes \mathbf{v'}) =\emptyset$ for $\mathbf{v} \ne
\mathbf{v'}$, recalling first \eqref{sj} and \eqref{approx1}, and
then \eqref{eq:PDelta}, we have
\begin{equation}\label{eq:CardSigmaj}
|\calS_\Sigma^{j_0}(\mathbf{p})| \approx_{\delta}
\frac{2^{i_0}M}{2^{j_0}} \approx_{\delta}
\frac{|P|M}{|P_\Delta|2^{j_0}}, \quad \mathbf{p} \in P_\Delta.
\end{equation} Hence \eqref{eq:goalMSigma} holds by the definition
of $M_{\Sigma}$ in \eqref{eq:MSigma},  and therefore
$|G| \approx_{\delta} |\Omega|$, as desired. In retrospect, \eqref{eq:CardSigmaj} also implies $M_{\Sigma} \approx 2^{i_{0}}M/2^{j_{0}}$.

We next verify \eqref{form40}. The definition of
$\calS_\Sigma^{j_{0}}(\mathbf{p})$ in \eqref{sj} results in
\begin{equation}\label{eq:pv}
 |\mathbf{p} \otimes \mathbf{v}| \sim 2^{j_0} \stackrel{\eqref{eq:goalMSigma}}{\approx_{\delta}}
\frac{|P|M}{|P_\Delta|M_\Sigma} = \frac{|\Omega|}{|\bar{\Omega}|}, \quad  \mathbf{p} \in P_\Delta
\mbox{ and } \mathbf{v} \in \mathbf{E}(\mathbf{p}). \end{equation}
By the definition of $G$, we have $(\mathbf{p}\otimes
\mathbf{v}) \cap G = \mathbf{p}\otimes \mathbf{v} \neq \emptyset$ for
$\mathbf{p}\in P_{\Delta}$ and $\mathbf{v}\in
E_{\Sigma}(\mathbf{v})$, and evidently $G^{\Delta}_{\Sigma} = \bar{\Omega}$. Therefore \eqref{form40} follows from \eqref{eq:pv}.

Next we  show that $P_\Delta$ is a $(\Delta,s,C')$-set and
$\mathbf{E}(\mathbf{p})$ is a $(\Sigma,s,C')$-set for all
$\mathbf{p} \in P_\Delta$. This will show that $\bar{\Omega} = G^{\Delta}_{\Sigma}$ is a $(\Delta,\Sigma,s,C',M_{\Sigma})$-configuration, and conclude the proof of the proposition. To verify that $P_\Delta$ is a $(\Delta,s,C')$-set, note that $P' = \bigcup_{\mathbf{p} \in
P_\Delta}\mathcal{D}_{\delta}(\mathbf{p} \cap P)$ has $|P'| \approx_{\delta} |P|$ by \eqref{eq:PDelta}. Therefore $P'$ is a $(\delta,s,C')$-set with $C' \approx_{\delta} C$. But now $P_{\Delta} = \mathcal{D}_{\Delta}(P')$, and every cube in $P_{\Delta}$ contains $\sim 2^{i_{0}}$ elements of $P'$. Therefore, it follows from Lemma \ref{lemma9} that $P_{\Delta}$ is a $(\Delta,s,C')$-set.

It remains to verify that $\mathbf{E}(\mathbf{p})$ is a $(\Sigma,s,C')$-set for
all $\mathbf{p} \in P_\Delta$. Fix $\mathbf{p} \in P_\Delta$, let $\Sigma
\le r \le 1$ be a dyadic number and $\mathbf{v}_r \in
\mathcal{S}_r(\mathbf{p})$. Our goal is to show (and it suffices to show) that $|\{\mathbf{v} \in \mathbf{E}(\mathbf{p}) : \mathbf{v} \subset \mathbf{v}_{r}\}| \lessapprox_{\delta} Cr^{s}M_{\Sigma}$. To this end, we first note that
\begin{equation}\label{eq:goal_v_prec2}
   |\{ v \in E_\sigma(p) : v \prec \mathbf{v}_r\}| \lesssim C r^sM,\quad p \in \mathbf{p} \cap P.
\end{equation}
This follows by observing that all $v \in E_{\sigma}(p)$ with $v \prec \mathbf{v}_{r}$ are contained in $V(\mathbf{p},I_{\mathbf{v}}) \cap S(p)$ by Lemma
\ref{conelem}, and $\diam(V(\mathbf{p},I_{\mathbf{v}}) \cap S(p)) \lesssim r$.

Next, observe that
\begin{equation}\label{eq:SetUnion}
\mathop{\bigcup_{\mathbf{v} \in \mathbf{E}(\mathbf{p})}}_{\mathbf{v} \subset \mathbf{v}_{r}}
\mathbf{p} \otimes \mathbf{v} \subset  \bigcup_{p \in
\mathbf{p} \cap P} \{(p,v) : v \in \mathbf{E}(p) \text{ and } v \prec \mathbf{v}_r\}.
\end{equation}
Thus recalling that
$|\mathbf{p} \otimes \mathbf{v}|\sim  2^{j_0}$ for $\mathbf{v}\in \mathbf{E}(\mathbf{p})\subset
\mathcal{S}^{j_0}_{\Sigma}(\mathbf{p})$, we have
\begin{align*}
  |\{\mathbf{v} \in \mathbf{E}(\mathbf{p}) : \mathbf{v} \subset \mathbf{v}_{r}\}| \cdot 2^{j_0} & \sim
  \Big | \mathop{\bigcup_{\mathbf{v} \in \mathbf{E}(\mathbf{p})}}_{\mathbf{v} \subset \mathbf{v}_{r}} \mathbf{p} \otimes \mathbf{v} \Big| \\
   & \overset{}{\leq} \Big|\bigcup_{p \in
\mathbf{p} \cap P} \{(p,v) : v \in E_\sigma(p) \text{ and } v \prec \mathbf{v}_r\}\Big| \\
   & \leq |\mathbf{p} \cap P| \max_{p \in
\mathbf{p} \cap P} |\{ (p,v) : v \in E_\sigma(p) \text{ and } v \prec \mathbf{v}_r\}|  \\
   & \overset{\eqref{eq:goal_v_prec2}}{\lesssim} 2^{i_0} \cdot Cr^s M.
\end{align*}
To conclude the proof, we recall from \eqref{eq:CardSigmaj} that $M_\Sigma \approx_{\delta}
2^{i_0}M/2^{j_0}$. Therefore,
\begin{displaymath} |\{\mathbf{v} \in \mathbf{E}(\mathbf{p}) : \mathbf{v} \subset \mathbf{v}_{r}\}| \lessapprox_{\delta} Cr^s \cdot \frac{2^{i_0}M}{2^{j_0}} \approx_{\delta} Cr^s M_\Sigma, \qquad \mathbf{v} \in \mathcal{S}_{r}(\mathbf{p}), \end{displaymath}
as desired. We have now proven that $G^{\Delta}_{\Sigma} = \bar{\Omega} = \{(\mathbf{p},\mathbf{v}) : \mathbf{p} \in P_{\Delta} \text{ and } \mathbf{v} \in \mathbf{E}(\mathbf{p})\}$ is a $(\Delta,\Sigma,s,C',M_{\Sigma})$-configuration, as claimed. \end{proof}


\section{Rectangles and geometry}\label{s:tangencies}

The purpose of this section is to gather facts about curvilinear rectangles (that is: pieces of annuli) and their geometry. Similar considerations are present in every paper regarding curvilinear Kakeya problems and its relatives, for example \cite{2022arXiv220702259P,MR1800068,MR1473067,Wolff99}.

\subsection{$(\delta,\sigma)$-rectangles and some basic properties}

\begin{definition}[$(\delta,\sigma)$-rectangle] Let $\delta,\sigma \in (0,1]$. By definition, a $(\delta,\sigma)$-rectangle is a set of the form
\begin{displaymath} R^{\delta}_{\sigma}(p,v) := S^{\delta}(p) \cap B(v,\sigma), \end{displaymath}
where $p \in \mathbf{D}$ and $v \in S(p)$. For $C > 0$, we write $CR^{\delta}_{\sigma}(p,v) := R^{C\delta}_{C\sigma}(p,v)$. \end{definition}

\begin{remark} If the reader is familiar with the terminology of Wolff's paper \cite{MR1800068},
we mention here that Wolff's "$(\delta,t)$-rectangles" are the same as our $(\delta,\sqrt{\delta/t})$-rectangles. While Wolff's notation for these objects is more elegant, the purpose of our terminology is to handle e.g. $(\delta,\delta/\sqrt{\lambda t})$-rectangles without having to introduce further notation.
\end{remark}

For the next lemma, we recall that $\Delta(p,p') = ||x - x'| - |r - r'||$ for $p = (x,r) \in \R^{2} \times (0,\infty)$ and $p' = (x',r') \in \R^{2} \times (0,\infty)$.

\begin{lemma}\label{lemma5} Let $p,q \in \mathbf{D}$ be points with $|p - q| = t$ and $\Delta(p,q) = \lambda$. Then, the intersection $S^{\delta}(p) \cap S^{\delta}(q)$ can be covered by boundedly many $(\delta,\sigma)$-rectangles, where
\begin{displaymath} \sigma := \delta/\sqrt{(\lambda + \delta)(t + \delta)}. \end{displaymath}
Conversely, assume that $v \in S(p) \cap S(q)$. Then, for all $C \geq 1$, we have
\begin{equation}\label{form71} CR_{\sigma}^{\delta}(p,v) \subset S^{C'\delta}(q), \end{equation}
where $C' \lesssim \max\{C,C^{2}\delta/(\lambda + \delta)\} \leq C^{2}$. \end{lemma}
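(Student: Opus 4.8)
\emph{Plan.} Write $p=(x,r)$, $q=(x',r')$ and $d:=|x-x'|$; since $p,q\in\mathbf{D}$ we have $d\leq\tfrac14$ and $r,r'\in[\tfrac12,1]$, so in particular $d<r+r'$ (external tangency is impossible), and all implicit constants below are absolute. For the first (covering) assertion we may assume $S^{\delta}(p)\cap S^{\delta}(q)\neq\emptyset$; if moreover $S(p)\cap S(q)=\emptyset$, then the normalisation $p,q\in\mathbf{D}$ forces the circles to be nested and $\Delta(p,q)\leq 2\delta$, which falls under the case $\lambda\leq C_{0}\delta$ treated below, so we may also assume $S(p)\cap S(q)\neq\emptyset$ (and $x\neq x'$, the concentric case being trivial). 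If $w\in S^{\delta}(p)\cap S^{\delta}(q)$ and $\bar w\in S(p)$ is a nearest point, then $\dist(\bar w,S(q))\leq 2\delta$, so $w$ lies in the $\delta$-neighbourhood of
\[ \Gamma:=\{w\in S(p):\dist(w,S(q))\leq 2\delta\}=S(p)\cap S^{2\delta}(q). \]
Thus it suffices to cover $\Gamma$ by $O(1)$ sub-arcs of $S(p)$ of diameter $\lesssim\sigma$: for such an arc $A$ with some $c\in A$, its $\delta$-neighbourhood lies in $S^{\delta}(p)\cap B(c,O(\sigma))=R^{\delta}_{O(\sigma)}(p,c)$, a $(\delta,\sigma)$-rectangle up to a constant dilation. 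Parametrising $S(p)$ by $\theta\mapsto w(\theta):=x+re^{i\theta}$ and putting $\rho(\theta):=|w(\theta)-x'|$, the set $\Gamma$ is the preimage of $[r'-2\delta,r'+2\delta]$ under $\rho$; since $\rho$ has exactly two critical points -- the two points of $S(p)$ on the line through $x$ and $x'$ -- and is monotone between them, $\Gamma$ is a union of at most two intervals, one inside each monotone arc, each containing a point $\theta_{0}$ with $\rho(\theta_{0})=r'$ (i.e.\ $w(\theta_{0})\in S(p)\cap S(q)$). The task reduces to bounding the length of such an interval $J$ by $\lesssim\sigma$.

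\emph{The analysis and the main obstacle.} Put $\lambda:=\Delta(p,q)$, $t:=|p-q|$; since the circles meet, $d\geq|r-r'|$, $\lambda=d-|r-r'|$ and $d+|r-r'|\sim t$. If $t\lesssim\delta$, then $\lambda\leq t\lesssim\delta$, $\sigma\sim 1$, and $\Gamma$ (of diameter $\lesssim 1\sim\sigma$) is a single arc. If $\lambda\leq C_{0}\delta$ for a suitable absolute $C_{0}$ (and $t\geq\delta$), then $\sigma\sim\sqrt{\delta/t}$ and the claim follows from Wolff's \cite[Lemma 3.1]{Wolff99}. So suppose $\lambda>C_{0}\delta$, so that $t\geq\lambda>C_{0}\delta$ and $\sigma\sim\delta/\sqrt{\lambda t}$. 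Writing $\beta(\theta):=\angle(w(\theta)-x,w(\theta)-x')$, the law of cosines in the triangle $x,x',w(\theta)$ gives $2\sin^{2}(\beta/2)=(d^{2}-(r-\rho)^{2})/(2r\rho)$, which yields two facts. First, at a point $\theta_{0}$ with $\rho(\theta_{0})=r'$ we have $d^{2}-(r-r')^{2}=(d-|r-r'|)(d+|r-r'|)=\lambda(d+|r-r'|)\sim\lambda t$, hence $|\rho'(\theta_{0})|=r|\sin\beta(\theta_{0})|\sim\sqrt{\lambda t}$; the same identity also gives the upper bound $\sin\beta(\theta_{0})\lesssim\sqrt{\lambda t}$ used below. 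Second -- and this is the key quantitative point -- one computes $\rho''(\theta)=\tfrac{r\cos\beta}{\rho}\,(r\cos\beta-\rho)$, and on $\Gamma$, where $\rho$ is within $2\delta$ of $r'$ so that $d^{2}-(r-\rho)^{2}\lesssim\lambda t$, this gives the improved bound $|\rho''(\theta)|\lesssim t$, not merely $\lesssim 1$. Feeding these into a short bootstrap: on $J\subset\Gamma$ we have $|\rho''|\lesssim t$, so $|\rho'|\geq c\sqrt{\lambda t}-Ct\,|J|$ there; were $|J|$ to exceed a small multiple of $\sqrt{\lambda/t}$, the value $|\rho-r'|$ at that distance from $\theta_{0}$ would exceed $c\sqrt{\lambda t}\cdot\sqrt{\lambda/t}\sim\lambda>2\delta$ (this is where $\lambda>C_{0}\delta$ enters), contradicting $J\subset\Gamma$ -- and $J$ cannot reach a critical point, since there $|\rho-r'|\in\{\lambda,\,|r-r'|+d\}$, both $>2\delta$. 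Hence $|J|\lesssim\sqrt{\lambda/t}$, whence $|\rho'|\gtrsim\sqrt{\lambda t}$ on $J$ and therefore $|J|\lesssim\delta/\sqrt{\lambda t}\sim\sigma$. The obstacle to be careful about is exactly the estimate $|\rho''|\lesssim t$: with only $|\rho''|\lesssim 1$ the bootstrap fails for small $t$, and it is the curvature gain $d^{2}-(r-\rho)^{2}\lesssim\lambda t$ valid on $\Gamma$ that rescues it.

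\emph{The converse.} Let $v\in S(p)\cap S(q)$, $C\geq 1$, and $w\in CR^{\delta}_{\sigma}(p,v)=S^{C\delta}(p)\cap B(v,C\sigma)$; here no calculus is needed. Decompose $w-v=a\,\mathbf{n}+b\,\mathbf{n}^{\perp}$ in the orthonormal frame at $v$ with $\mathbf{n}:=(v-x)/|v-x|$ (radial to $S(p)$) and $\mathbf{n}^{\perp}$ tangential, so $|a|,|b|\leq|w-v|\leq C\sigma$. From $|w-x|^{2}-|v-x|^{2}=2\langle v-x,w-v\rangle+|w-v|^{2}$, the hypothesis $|\,|w-x|-r\,|\leq C\delta$ (recall $|v-x|=r$), and $|w-x|+|v-x|\sim 1$, we get $|a|\lesssim C\delta+|w-v|^{2}$. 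Next, since $|v-x'|=r'\sim 1$,
\[ \dist(w,S(q))\leq\big|\,|w-x'|-|v-x'|\,\big|=\frac{\big|2\langle v-x',w-v\rangle+|w-v|^{2}\big|}{|w-x'|+|v-x'|}\lesssim|\langle v-x',w-v\rangle|+|w-v|^{2}. \]
Writing $\mathbf{m}:=(v-x')/|v-x'|$ and $\beta:=\angle(v-x,v-x')$, we have $\langle v-x',w-v\rangle=r'(a\langle\mathbf{m},\mathbf{n}\rangle+b\langle\mathbf{m},\mathbf{n}^{\perp}\rangle)=r'(a\cos\beta\pm b\sin\beta)$, so
\[ \dist(w,S(q))\lesssim|a|+|b|\sin\beta+|w-v|^{2}\lesssim C\delta+C\sigma\sin\beta+C^{2}\sigma^{2}. \]
By the law of cosines at $v$ (as in the previous paragraph), $\sin\beta\lesssim\sqrt{\lambda t}$, hence $C\sigma\sin\beta\lesssim C\sigma\sqrt{(\lambda+\delta)(t+\delta)}=C\delta$; and $C^{2}\sigma^{2}=C^{2}\delta^{2}/((\lambda+\delta)(t+\delta))\leq C^{2}\delta^{2}/(\lambda+\delta)^{2}\leq C^{2}\delta^{2}/(\lambda+\delta)$, using first $t\geq\lambda$ and then $\lambda+\delta\leq 1$. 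Therefore $\dist(w,S(q))\lesssim C\delta+C^{2}\delta^{2}/(\lambda+\delta)=C'\delta$ with $C'=\max\{C,\,C^{2}\delta/(\lambda+\delta)\}$, and $C'\leq C^{2}$ since $\delta/(\lambda+\delta)\leq 1$. This proves \eqref{form71}; the only inputs are the law of cosines for the angle at $v$ and the normalisation $p,q\in\mathbf{D}$.
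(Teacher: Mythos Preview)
Your treatment of the covering statement is correct and is actually more self-contained than the paper's, which simply cites Wolff's \cite[Lemma 3.1]{Wolff99} for the entire range. You supply an independent argument in the regime $\lambda>C_{0}\delta$ via the refined bound $|\rho''|\lesssim t$ on $\Gamma$ (not just $|\rho''|\lesssim 1$) together with a clean bootstrap, and your handling of the critical points and of the degenerate cases $t\lesssim\delta$, $\lambda\lesssim\delta$ is sound.

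Your proof of the converse inclusion \eqref{form71}, however, contains a genuine error. In the displayed chain
\[
C^{2}\sigma^{2}=\frac{C^{2}\delta^{2}}{(\lambda+\delta)(t+\delta)}\leq\frac{C^{2}\delta^{2}}{(\lambda+\delta)^{2}}\leq\frac{C^{2}\delta^{2}}{\lambda+\delta},
\]
the last inequality requires $\lambda+\delta\geq 1$, not $\leq 1$; as written it is false, and indeed for $t\sim\lambda\sim\delta\ll 1$ one has $\sigma\sim 1$, so $C^{2}\sigma^{2}\sim C^{2}\gg C'\delta$. The underlying problem is that in passing to $\dist(w,S(q))\lesssim|a|+|b|\sin\beta+|w-v|^{2}$ you applied the triangle inequality too early and lost a cancellation between the $a$-term and the $|w-v|^{2}$-term. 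The repair, in your own framework, is to retain the exact relation $2ra+|w-v|^{2}=O(C\delta)$ coming from the $S(p)$-constraint and write
\[
2r'a\cos\beta+|w-v|^{2}=\tfrac{r'\cos\beta}{r}\bigl(2ra+|w-v|^{2}\bigr)+|w-v|^{2}\Bigl(1-\tfrac{r'\cos\beta}{r}\Bigr);
\]
the first bracket is $O(C\delta)$, and the law of cosines at $v$ gives $r-r'\cos\beta=\bigl((r-r')(r+r')+d^{2}\bigr)/(2r)\lesssim t$, so the residual is $\lesssim t\,C^{2}\sigma^{2}\leq C^{2}\delta^{2}/(\lambda+\delta)$, which does recover the asserted $C'$. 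The paper obtains this factor of $t$ by a different route: it parametrises $S(p)$ and reduces to showing $|\cos\theta-\cos\theta_{0}|\lesssim C'\delta/t$, the $1/t$ on the right arising from the identity $|e^{i\theta}-z|^{2}-r^{2}=-2x(\cos\theta-\cos\theta_{0})$ with $x\sim t$; this is the same gain that appears as $r-r'\cos\beta\lesssim t$ in your decomposition.
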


\begin{proof} The first statement is well-known, see for example \cite[Lemma 3.1]{Wolff99}, so we only prove the inclusion \eqref{form71}. Recall that $p,q \in \mathbf{D}$, so the radii of the circles $S(p),S(q)$ are bounded between $\tfrac{1}{2}$ and $1$. For this reason, there is no loss of generality in assuming that $S(p)$ is the unit circle $S(p) = S^{1}$, that the radius of $S(q)$ is $r \in [\tfrac{1}{2},1)$, and that $S(q)$ is centred at a point $z = (x,0)$ with $x > 0$. These are incidentally the same normalisations as in  \cite[Lemma 3.1]{Wolff99}, and our proof is overall very similar to the argument in that lemma. With this notation, we observe that
\begin{displaymath} \lambda = \Delta(p,q) = ||z - 0| - |1 - r|| = |(1 - x) - r| \sim |(1 - x)^{2} - r^{2}|, \end{displaymath}
and $t = |p - q| \sim x + (1 - r)$. Since $S(p) \cap S(q) \neq \emptyset$, we moreover have $x \geq 1 - r$, and therefore $t \sim x$. We may assume that $x \sim t \geq \delta$, since otherwise \eqref{form71} follows from $CR^{\delta}_{\sigma}(p,v) \subset S^{C\delta}(p) \subset S^{C\delta + t}(q) \subset S^{2C\delta}(q)$.

We assume that $v \in S(p) \cap S(q)$. Since $S(p) = S^{1}$, we may therefore write $v = e^{i\theta_{0}}$ for some $\theta_{0} \in (-\pi,\pi]$. Recalling that $z$ is the center of $S(q)$, we have
\begin{equation}\label{form83} 1 - 2 x \cos \theta_{0} + x^{2} = |e^{i\theta_0} - z|^{2} = r^{2} \quad \Longleftrightarrow \quad \cos \theta_{0} = \frac{1 - r^{2} + x^{2}}{2x}. \end{equation}
We further rewrite this as
\begin{displaymath} \cos \theta_{0} = \frac{1 - r^{2} + x^{2}}{2x} = 1 - \frac{r^{2} - (1 - x)^{2}}{2x} =: 1 - h, \end{displaymath}
where $|h| = |r^{2} - (1 - x)^{2}|/(2x) \sim \lambda/t$. We now claim that
\begin{equation}\label{form82a} |\theta - \theta_{0}| \leq C\sigma \sim \frac{C\delta}{\sqrt{(\lambda + \delta)t}} \quad \Longrightarrow \quad ||e^{i\theta} - z|^{2} - r^{2}| \leq C'\delta, \end{equation}
where $C' \sim \max\{C,C^{2}\delta/\lambda\}$. This means that a circular arc on $S(p)$ of length $C\sigma$ around $v = e^{i\theta_{0}}$ is contained in $S^{C'\delta}(q)$. Since every point on $CR^{\delta}_{\sigma}(p,v)$ is within distance $C\delta \leq C'\delta$ from such a circular arc, \eqref{form71} follows immediately.

Revisiting the calculation in \eqref{form83}, the condition on the right hand side of \eqref{form82a} is equivalent to
\begin{displaymath} |1 - 2x \cos \theta + x^{2} - r^{2}| \lesssim C'\delta \quad \Longleftrightarrow \quad |\cos \theta - \cos \theta_{0}| = \left|\cos \theta - \frac{1 - r^{2} + x^{2}}{2x} \right| \leq \frac{C'\delta}{2x}. \end{displaymath}
Moreover, the right hand side here is $\sim C'\delta/t$. To prove that this estimate is valid whenever $|\theta - \theta_{0}| \leq C\sigma$, we note that
\begin{displaymath} 1 = \sin^{2}\theta_{0} + \cos^{2} \theta_{0} = \sin^{2}\theta_{0} + (1 - h)^{2} \quad \Longrightarrow \quad \sin^{2} \theta_{0} = 2h - h^{2}, \end{displaymath}
and therefore
\begin{displaymath} |\cos' \theta_{0}| = |\sin \theta_{0}| = \sqrt{|2h - h^{2}|} \lesssim \sqrt{\lambda/t}. \end{displaymath}
recalling that $|h| \sim \lambda/t \leq 1$. Finally, for all $|\theta - \theta_{0}| \leq C\sigma$, we have
\begin{displaymath} |\cos \theta - \cos \theta_{0}| = \left| \int_{\theta_{0}}^{\theta} \cos' \zeta \, d\zeta \right| \leq \int_{\theta_{0}}^{\theta} |\cos' \zeta - \cos' \theta_{0}| \, d\zeta + |\theta - \theta_{0}| \cdot |\cos' \theta_{0}| =: I_{1} + I_{2}. \end{displaymath}
The term $I_{2}$ is bounded from above by
\begin{displaymath} I_{2} \lesssim C \sigma \cdot \sqrt{\lambda/t} \sim \frac{C\delta}{\sqrt{(\lambda + \delta)t}} \cdot \sqrt{\lambda/t} \sim \frac{C\delta}{t}. \end{displaymath}
Since $\cos' = \sin$ is $1$-Lipschitz, the term $I_{1}$ is bounded from above by
\begin{displaymath} I_{1} \leq \int_{\theta_{0}}^{\theta} |\zeta - \theta_{0}| \, d\zeta = \int_{0}^{|\theta - \theta_{0}|} |\zeta| \, d\zeta \sim |\theta - \theta_{0}|^{2} \leq C^{2}\sigma^{2} = \frac{C^{2}\delta^{2}}{(\lambda + \delta) t}. \end{displaymath}
This completes the proof of \eqref{form82a} with constant $C' \sim \max\{C\delta,C^{2}\delta/(\lambda + \delta)\}$.  \end{proof}

\begin{cor}\label{cor2} Let $p,q \in \mathbf{D}$ be points with $\lambda = \Delta(p,q)$ and $t = |p - q|$. Write $\sigma := \delta/\sqrt{(\lambda + \delta)(t + \delta)}$. Assume that
\begin{displaymath} CR^{\delta}_{\sigma}(p,v) \cap CR^{\delta}_{\sigma}(q,w) \neq \emptyset \end{displaymath}
for some $v \in S(p)$, $w \in S(q)$, and $C \geq 1$. Then, $CR^{\delta}_{\sigma}(p,v) \subset C'R^{\delta}_{\sigma}(q,w)$ for some $C' \lesssim C^{4}$. \end{cor}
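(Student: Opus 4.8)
The plan is to chain together Lemma~\ref{lemma5} with a short geometric argument, exactly in the spirit of the analogous step in Wolff's work. Suppose $CR^{\delta}_{\sigma}(p,v) \cap CR^{\delta}_{\sigma}(q,w) \neq \emptyset$, and pick a witness point $u$ in the intersection. The key point is that $u$ lies in both $S^{C\delta}(p)$ and $S^{C\delta}(q)$, so it is a point where (slightly fattened versions of) the two circles meet. I will use this to produce a genuine intersection point $v_{0} \in S(p) \cap S(q)$ lying near $u$, which allows Lemma~\ref{lemma5} (specifically the inclusion \eqref{form71}) to be invoked.

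Concretely, here are the steps I would carry out. First, since $u \in B(v,C\sigma) \cap S^{C\delta}(p)$ and likewise $u \in B(w,C\sigma) \cap S^{C\delta}(q)$, a comparison of the annuli $S^{C\delta}(p)$ and $S^{C\delta}(q)$ near $u$ shows that $S(p)$ and $S(q)$ must actually intersect at some point $v_{0}$ with $|v_{0} - u| \lesssim C\sigma$ — here one uses that $\sigma \geq \delta/\sqrt{(\lambda+\delta)(t+\delta)}$ so that the transversality/tangency geometry of $S(p), S(q)$ at scale $C\delta$ is controlled on a ball of radius $\sim C\sigma$; this is essentially the content of the first (well-known) half of Lemma~\ref{lemma5} applied with a slightly larger constant. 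Second, because $|v_{0} - v| \leq |v_{0} - u| + |u - v| \lesssim C\sigma$ and $v_{0}, v \in S(p)$, and similarly $|v_{0} - w| \lesssim C\sigma$ with $v_{0}, w \in S(q)$, the rectangles satisfy
\begin{displaymath} CR^{\delta}_{\sigma}(p,v) \subset C_{1}R^{\delta}_{\sigma}(p,v_{0}) \quad \text{and} \quad C_{1}R^{\delta}_{\sigma}(q,v_{0}) \subset C'R^{\delta}_{\sigma}(q,w) \end{displaymath}
for constants $C_{1} \lesssim C$ and $C' \lesssim C$, simply because enlarging the radius of the defining ball by a bounded factor absorbs the displacement of the centre along the circle. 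Third, apply \eqref{form71} of Lemma~\ref{lemma5} to the point $v_{0} \in S(p) \cap S(q)$: it gives $C_{1}R^{\delta}_{\sigma}(p,v_{0}) \subset S^{C_{2}\delta}(q)$ with $C_{2} \lesssim \max\{C_{1}, C_{1}^{2}\delta/(\lambda+\delta)\} \lesssim C^{2}$. Finally, intersect with the ball: $C_{1}R^{\delta}_{\sigma}(p,v_{0}) \subset S^{C_{2}\delta}(q) \cap B(v_{0}, C_{1}\sigma)$, and since $B(v_{0}, C_{1}\sigma) \subset B(w, C'\sigma)$ we conclude $C_{1}R^{\delta}_{\sigma}(p,v_{0}) \subset S^{C_{2}\delta}(q) \cap B(w, C'\sigma) = C'R^{\delta}_{\sigma}(q,w)$ after absorbing $C_{2}$ into $C'$. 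Combining the three inclusions yields $CR^{\delta}_{\sigma}(p,v) \subset C'R^{\delta}_{\sigma}(q,w)$ with $C' \lesssim C^{4}$, where the $C^{4}$ comes from the $C^{2}$ in \eqref{form71} compounded with the bounded dilations needed to move centres and to pass from $v_{0}$-based to $w$-based rectangles (tracking these carefully gives $\max\{C, C^{2}\delta/(\lambda+\delta)\}^{2} \lesssim C^{4}$).

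The main obstacle I anticipate is the first step: producing the genuine intersection point $v_{0} \in S(p) \cap S(q)$ near the witness point $u$, with the quantitative bound $|v_{0} - u| \lesssim C\sigma$, and being careful about whether $S(p) \cap S(q)$ is even non-empty (it could be that the true circles miss each other but their $C\delta$-neighbourhoods overlap). One clean way around this is to not insist on $v_{0} \in S(p) \cap S(q)$ exactly, but instead to directly estimate, for an arbitrary point $y \in CR^{\delta}_{\sigma}(p,v)$, its distance to $S(q)$: writing $y$ in coordinates adapted to $S(p) = S^{1}$ as in the proof of Lemma~\ref{lemma5}, one has $|y - u| \lesssim C\sigma$ and $u \in S^{C\delta}(q)$, and then the same Taylor-expansion computation that proves \eqref{form82a} — now centred at $u$ rather than at an exact intersection point — gives $\dist(y, S(q)) \lesssim \max\{C, C^{2}\delta/(\lambda+\delta)\}\delta \lesssim C^{2}\delta$. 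This sidesteps the existence question entirely. Either way, the remaining steps are routine bookkeeping with dilation constants, and the final exponent $C^{4}$ should emerge without difficulty; the only real content is the localised re-run of the Lemma~\ref{lemma5} estimate.
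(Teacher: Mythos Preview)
Your proposal is correct in spirit, and your own diagnosis of the obstacle is accurate: the first route (finding a genuine $v_{0}\in S(p)\cap S(q)$ near $u$) is shaky exactly because $S(p)\cap S(q)$ may be empty, and even when non-empty the quantitative claim $|v_{0}-u|\lesssim C\sigma$ would itself need an argument. Your fallback---re-running the Taylor-expansion computation of Lemma~\ref{lemma5} with the base point replaced by an approximate intersection point $u\in S^{C\delta}(p)\cap S^{C\delta}(q)$---does work: the only change is that $\cos\theta_{0}=1-h+O(C\delta/t)$ rather than $=1-h$ exactly, which perturbs $|\sin\theta_{0}|$ to $\lesssim\sqrt{(\lambda+C\delta)/t}$ and leaves the estimate intact up to a factor of $\sqrt{C}$. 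So the argument closes.

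The paper, however, sidesteps both obstacles with a cleaner trick that you may find instructive. Instead of searching for an intersection of $S(p)$ and $S(q)$, it \emph{translates} each circle by at most $C\delta$ so as to force the witness point $\mathbf{v}:=u$ to lie on both: set $p'=(x+(\mathbf{v}-v'),r)$ where $v'\in S(p)$ is the nearest point to $\mathbf{v}$, and similarly for $q'$. Then $\mathbf{v}\in S(p')\cap S(q')$ genuinely, $|p-p'|,|q-q'|\le C\delta$, and Lemma~\ref{lemma5} (specifically \eqref{form71}) applies directly at scale $\delta$ with the new parameters $\lambda'=\Delta(p',q')$, $t'=|p'-q'|$; one checks $\sigma\lesssim C\sigma'$ and chains back. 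The advantage is that Lemma~\ref{lemma5} is used as a black box---no computation needs to be reopened---at the cost of a short verification that the parameters $\lambda',t',\sigma'$ are comparable to $\lambda,t,\sigma$ up to factors of $C$. Your approach trades this bookkeeping for a repeat of the analytic estimate; both are valid, and the constant $C^{4}$ is reached either way.
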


\begin{proof} Fix $\mathbf{v} \in CR^{\delta}_{\sigma}(p,v) \cap CR^{\delta}_{\sigma}(q,w)$. Then
\begin{displaymath} \max\{\dist(\mathbf{v},S(p)),\dist(\mathbf{v}),S(q)\}\} \leq C\delta. \end{displaymath}
Consequently, there exist points
\begin{displaymath} v' \in S(p) \cap B(v,C\sigma) \quad \text{and} \quad w' \in S(q) \cap B(w,C\sigma) \end{displaymath}
such that $|v' - \mathbf{v}| \leq C\delta$ and $|w' - \mathbf{v}| \leq C\delta$. Now we shift the circles $S(p)$ and $S(q)$ a little bit so that $\mathbf{v}$ lies in their intersection. The details are as follows. Write $p = (x,r)$, and define $p' = (x',r)$, where $x' = x + (\mathbf{v} - v')$. Thus, $S(p') = S(p) + \mathbf{v} - v'$, and
\begin{displaymath} \mathbf{v} = v' + (\mathbf{v} - v') \in S(p) + (\mathbf{v} - v') = S(p').  \end{displaymath}
We define similarly $q' := (y',r)$, where $q = (y,r)$, and $y' = y + \mathbf{v} - v'$.

With these definitions, $|p - p'| \leq C\delta$ and $|q - q'| \leq C\delta$, and
\begin{displaymath} \mathbf{v} \in S(p') \cap S(q'). \end{displaymath}
Write $\lambda' := \Delta(p',q')$ and $t' := |p' - q'|$, and $\sigma' := \delta/\sqrt{(\lambda' + \delta)(t' + \delta)}$. After a small case chase, it is easy to check that $\sigma \leq AC\sigma'$, where $A \geq 1$ is absolute (the worst case in the inequality occurs if $\lambda \leq t \leq \delta$, but $\lambda' \sim t' \sim C\delta$). It now follows from Lemma \ref{lemma5} that
\begin{displaymath} (AC^{2})R^{\delta}_{\sigma'}(p',\mathbf{v}) \subset S^{C'\delta}(q'), \end{displaymath}
where $C' \lesssim C^{4}$, since $A \geq 1$ is absolute. Finally,
\begin{align*} CR^{\delta}_{\sigma}(p,v) & = S^{C\delta}(p) \cap B(v,C\sigma)\\
& \subset S^{2C\delta}(p') \cap B(\mathbf{v},AC^{2}\sigma')\\
& \subset (AC^{2})R^{\delta}_{\sigma'}(p',\mathbf{v}) \subset S^{C'\delta}(q') \subset S^{2C'\delta}(q). \end{align*}
Since also $CR^{\delta}_{\sigma}(p,v) \subset B(\mathbf{v},2C\sigma) \subset B(w,4C\sigma)$, we have now shown that
\begin{displaymath} CR^{\delta}_{\sigma}(p,v) \subset S^{2C'\delta}(q) \cap B(w,4C\sigma) \subset 2C'R^{\delta}_{\sigma}(q,w), \end{displaymath}
as claimed. \end{proof}


\subsection{Comparable rectangles}\label{s:comparableRectangles}

\begin{definition}\label{def:comparability}
Given a constant $C \geq 1$, we say that two $(\delta,\sigma)$-rectangles
 $R_{1},R_{2}$ are \emph{$C$-comparable} if there exists a third $(\delta,\sigma)$-rectangle
  $R = R^{\delta}_{\sigma}(p,v)$ such that $R_{1},R_{2} \subset CR$.
If no such rectangle $R$ exists, we say that $R_1$ and $R_2$ are
\emph{$C$-incomparable}.
    \end{definition}

The definition of $C$-comparability raises a few questions. Is it necessary to speak about the third rectangle $R$, or is it equivalent to require that $R_{1} \subset CR_{2}$ and $R_{2} \subset CR_{1}$ (up to changing constants)? If this definition is equivalent, is it enough to require the one-sided condition $R_{1} \subset CR_{2}$? The answer to both questions is affirmative, and follows from the next lemma.

\begin{lemma}\label{lemma6} Let $0 < \delta \leq \sigma \leq 1$, and let $R_{1},R_{2}$ be $(\delta,\sigma)$-rectangles satisfying $R_{1} \subset CR_{2}$ for some $C \geq 1$. Then $CR_{2} \subset C'R_{1}$ for some $C' \lesssim C^{5}$. \end{lemma}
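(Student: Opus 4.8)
The claim is a one-sided-to-two-sided comparability statement: if $R_1 = R^\delta_\sigma(p_1,v_1)$ satisfies $R_1 \subset CR_2$ with $R_2 = R^\delta_\sigma(p_2,v_2)$, then $CR_2 \subset C'R_1$ with $C' \lesssim C^5$. The plan is to unwind the inclusion $R_1 \subset CR_2$ geometrically into information comparing the two annuli $S^\delta(p_1)$ and $S^\delta(p_2)$ together with the two discs $B(v_1,\sigma)$ and $B(v_2,\sigma)$, and then reverse the roles of $1$ and $2$ using the rigidity of circles of comparable radii in $\mathbf{D}$.

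First I would record the trivial half: $CR_2 = S^{C\delta}(p_2) \cap B(v_2,C\sigma)$, so to prove $CR_2 \subset C'R_1 = S^{C'\delta}(p_1) \cap B(v_1, C'\sigma)$ it suffices to show two things separately: (i) $CR_2 \subset S^{C'\delta}(p_1)$, i.e. every point within $C\delta$ of $S(p_2)$ and within $C\sigma$ of $v_2$ lies within $C'\delta$ of $S(p_1)$; and (ii) $CR_2 \subset B(v_1, C'\sigma)$, which is the easy part: since $R_1 \subset CR_2$ is nonempty, pick $z \in R_1 \subset CR_2$; then $|z - v_1| \le \sigma$ and $|z - v_2| \le C\sigma$, so $v_2 \in B(v_1,(C+1)\sigma)$ and hence $CR_2 \subset B(v_2,C\sigma) \subset B(v_1,(2C+1)\sigma)$. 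So the whole content is in (i).

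For (i), the mechanism should be exactly the one used in Lemma~\ref{lemma5} and Corollary~\ref{cor2}: translate $S(p_1)$ and $S(p_2)$ slightly so that they share a common point near $R_1$, then invoke \eqref{form71}. Concretely, choose $z \in R_1$; there are points $v_1' \in S(p_1)$ and $v_2' \in S(p_2)$ within $C\delta$ of $z$. Shift $p_i$ to $p_i'$ (keeping the radius, moving the centre by $z - v_i'$) so that $z \in S(p_1') \cap S(p_2')$, with $|p_i - p_i'| \le C\delta$. Let $\lambda' = \Delta(p_1',p_2')$, $t' = |p_1' - p_2'|$, and $\sigma' = \delta/\sqrt{(\lambda'+\delta)(t'+\delta)}$. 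Since $R_1 \subset CR_2$, the two rectangles have $\dist(p_1,p_2) \lesssim C\sigma$ — indeed both annuli pass within $C\delta$ of $z$ and both centres lie in $\mathbf{D}$ with radii in $[\tfrac12,1]$ — and this together with $\lambda = \Delta(p_1,p_2) \lesssim (C\sigma)^2 \cdot(\text{something})$... here I need to be a little careful: the cleanest route is to bound $t' \lesssim C\sigma^2(\lambda+\delta)/\delta + C\delta$ type quantities and conclude $\sigma \lesssim C\sigma'$ and symmetrically, so that the two rectangles $R^\delta_\sigma(p_i,v_i)$ are each contained in a $(C^2)$-dilate of $R^\delta_{\sigma'}(p_i',z)$. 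Then Lemma~\ref{lemma5}, applied with $v = z \in S(p_1') \cap S(p_2')$, gives $(AC^2)R^\delta_{\sigma'}(p_1',z) \subset S^{C'\delta}(p_2')$ with $C' \lesssim C^4$, and undilating the shifts yields $CR_1 \subset S^{\lesssim C^4\delta}(p_2)$ and hence, combined with the disc containment above, $CR_1 \subset \lesssim C^5 R_2$ — but this is the \emph{reverse} of what we want, so in fact the right bookkeeping is: from $R_1 \subset CR_2$ deduce that the common-point rescaled rectangle controls $R_2$ as well (the radii being comparable, the annulus $S(p_2)$ is pinned near $S(p_1')$ over a $C\sigma'$-arc through $z$), giving $CR_2 \subset S^{\lesssim C^4\delta}(p_1) \cap B(v_1,\lesssim C\sigma) \subset \lesssim C^5 R_1$.

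\textbf{Main obstacle.}
The delicate point is the comparison of scale parameters $\sigma \thicksim C\sigma'$ (and its analogue for $p_2$): one must check that after the $C\delta$-shifts the new tangency and distance parameters $\lambda', t'$ cannot be so much larger than $\lambda, t$ that $\sigma'$ collapses relative to $\sigma$. This is a finite case analysis (the worst case being $\lambda \le t \le \delta$ with $\lambda' \thicksim t' \thicksim C\delta$, exactly as flagged in the proof of Corollary~\ref{cor2}), and it is where all the powers of $C$ are spent. Everything else — the disc containment and the invocation of Lemma~\ref{lemma5}/Corollary~\ref{cor2} — is routine, and indeed one could shortcut much of step (i) by simply quoting Corollary~\ref{cor2}: $R_1 \subset CR_2$ forces $CR_1 \subset C'' R_2$ with $C'' \lesssim C^4$ (since they intersect), and then a symmetric application starting from the near-common point recovers $CR_2 \subset \lesssim C^5 R_1$.
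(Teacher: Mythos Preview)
Your approach is correct and essentially the same as the paper's: split into the easy disc containment and the annulus containment, and feed the latter through Corollary~\ref{cor2}. The paper's execution is tidier in two respects worth noting. First, rather than shifting circles to a common point again, the paper simply applies Corollary~\ref{cor2} at scale $C\delta$ (the shifting is already packaged inside that corollary). Second, and more importantly, the scale comparison you flag as the main obstacle is obtained in one line, with no case analysis: from $R_1 \subset CR_2 \subset S^{C\delta}(p_1)\cap S^{C\delta}(p_2)$ and the covering part of Lemma~\ref{lemma5}, the intersection is covered by boundedly many $(C\delta,\bar\sigma)$-rectangles with $\bar\sigma = C\delta/\sqrt{(\lambda+C\delta)(t+C\delta)}$, hence $\sigma \sim \diam(R_1) \lesssim \bar\sigma$, and so $\bar C := \max\{1, C\sigma/\bar\sigma\} \lesssim C$. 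This replaces your anticipated case analysis entirely. (Your passing claim that $|p_1-p_2|\lesssim C\sigma$ is not obviously true and is not needed; only $\sigma \lesssim \bar\sigma$ matters.)
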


\begin{proof} Write $R_{1} = R_{\sigma}^{\delta}(p_{1},v_{1})$ and $R_{2} = R^{\delta}_{\sigma}(p_{2},v_{2})$. Let $t := |p_{1} - p_{2}|$ and $\lambda := \Delta(p_{1},p_{2})$. Write
\begin{displaymath} \bar{\sigma} := C\delta/\sqrt{(\lambda + C\delta)(t + C\delta)}. \end{displaymath}
From Lemma \ref{lemma5}, we know that the intersection $S^{C\delta}(p_{1}) \cap S^{C\delta}(p_{2})$ can be covered by boundedly many $(C\delta,\bar{\sigma})$-rectangles. Since $R_{1} \subset CR_{2} \subset S^{C\delta}(p_{1}) \cap S^{C\delta}(p_{2})$, and $\diam(R_{1}) \gtrsim \sigma$, we may infer that $\bar{\sigma} \gtrsim \sigma$. We set $\bar{C} := \max\{1,C(\sigma/\bar{\sigma})\} \lesssim C$.

It follows from our assumption $R^{\delta}_{\sigma}(p_{1},v_{1}) \subset CR^{\delta}_{\sigma}(p_{2},v_{2})$ that $R^{\delta}_{\sigma}(p_{1},v_{1}) \subset \bar{C}R^{C\delta}_{\bar{\sigma}}(p_{2},v_{2})$, and in particular $\bar{C}R^{C\delta}_{\bar{\sigma}}(p_{1},v_{1}) \cap \bar{C}R^{C\delta}_{\bar{\sigma}}(p_{2},v_{2}) \neq \emptyset$. Therefore, applying Corollary \ref{cor2} at scale $C\delta$ and with constant $\bar{C}$, we get
\begin{displaymath} CR^{\delta}_{\sigma}(p_{2},v_{2}) \subset \bar{C}R^{C\delta}_{\bar{\sigma}}(p_{2},v_{2}) \stackrel{\mathrm{C.\,} \ref{cor2}}{\subset} C'R^{C\delta}_{\bar{\sigma}}(p_{1},v_{1}) \subset S^{CC'\delta}(p_{1}) \end{displaymath}
for some $C' \lesssim \bar{C}^{4} \sim C^{4}$. Finally, since also $CR^{\delta}_{\sigma}(p_{2},v_{2}) \subset B(v_{2},C\sigma) \subset B(v_{1},2C\sigma)$, using that $v_{1} \in R_{1} \subset B(v_{2},C\sigma)$, we may infer that
\begin{displaymath} CR^{\delta}_{\sigma}(p_{2},v_{2}) \subset S^{CC'\delta}(p_{1}) \cap B(v_{1},2C\sigma) \subset CC'R^{\delta}_{\sigma}(p_{1},v_{1}). \end{displaymath}
Since $CC' \lesssim C^{5}$, the proof is complete. \end{proof}

\begin{remark} \label{r:CompRem} Lemma \ref{lemma6} clarifies the (up-to-constants) equivalence of different notions of comparability. If $R_{1},R_{2}$ are $C$-comparable $(\delta,\sigma)$-rectangles according to Definition \ref{def:comparability}, then there exists a third $(\delta,\sigma)$-rectangle $R$ such that $R_{1},R_{2} \subset CR$. But now $R \subset C'R_{2}$ according to Lemma \ref{lemma6}, so $R_{1} \subset C'R_{2}$. By symmetric reasoning, also $R_{2} \subset C'R_{1}$.

Similarly, if we took as our definition the one-sided inclusion $R_{1} \subset CR_{2}$, then Lemma \ref{lemma6} would imply that $R_{2} \subset C'R_{1}$, and consequently we could infer the symmetric condition $R_{1} \subset C'R_{2}$ and $R_{2} \subset C'R_{1}$ (or $R_{1},R_{2} \subset C'R$ with either $R := R_{1}$ or $R := R_{2}$). \end{remark}

We record the following useful corollary of Lemma \ref{lemma6}:

\begin{cor}[Transitivity of comparability]\label{cor1}
For every $C \geq 1$ there exists $C' \lesssim C^{5}$ such that
the following holds. Let $0 < \delta \leq \sigma \leq 1$, and let
$R_{1},R_{2},R_{3}$ be $(\delta,\sigma)$-rectangles such that
$R_{1},R_{2}$ and $R_{2},R_{3}$ are both $C$-comparable. Then
$R_{1},R_{3}$ are $C'$-comparable.
\end{cor}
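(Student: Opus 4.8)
The plan is to prove Corollary \ref{cor1} by using the comparability of $R_1,R_2$ to pass from a containing rectangle to a \emph{two-sided} inclusion via Lemma \ref{lemma6}, then chaining the inclusions and reconstructing a containing rectangle for $R_1,R_3$. By the definition of $C$-comparability, there is a $(\delta,\sigma)$-rectangle $R$ with $R_1,R_2 \subset CR$, and a $(\delta,\sigma)$-rectangle $R'$ with $R_2,R_3 \subset CR'$. Applying Lemma \ref{lemma6} to the pair $(R_2,R)$ (with the roles so that $R_2 \subset CR$) gives $CR \subset C_1 R_2$ with $C_1 \lesssim C^5$; in particular $R_1 \subset CR \subset C_1 R_2$. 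Similarly, applying Lemma \ref{lemma6} to $(R_2,R')$ gives $CR' \subset C_1 R_2$, hence $R_3 \subset C_1 R_2$ as well (after possibly enlarging $C_1$ by an absolute constant to absorb both applications).

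Now both $R_1$ and $R_3$ are contained in $C_1 R_2 = R^{C_1 \delta}_{C_1 \sigma}(p_2,v_2)$. To conclude $C'$-comparability of $R_1,R_3$, I would take $R := R_2$ as the candidate "third rectangle" in Definition \ref{def:comparability}: we have $R_1, R_3 \subset C_1 R_2$, and since $C_1 \lesssim C^5$, this is exactly the assertion that $R_1,R_3$ are $C'$-comparable with $C' \lesssim C^5$. (There is nothing further to check: Definition \ref{def:comparability} only requires the existence of some $(\delta,\sigma)$-rectangle $R$ with $R_1,R_3 \subset C'R$, and $R_2$ works.)

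The one subtlety is bookkeeping of the constants and the direction of inclusions fed into Lemma \ref{lemma6}: that lemma takes a hypothesis of the form $R_a \subset C R_b$ and outputs $CR_b \subset C' R_a$ with $C' \lesssim C^5$. Here I use it with $R_a = R_2$, $R_b = R$ (legitimate since $R_2 \subset CR$ by choice of $R$), obtaining $CR \subset C'R_2$, and symmetrically with $R_b = R'$. Since the input constant is $C$ in both applications, the output constant is $\lesssim C^5$ in both, and the final comparability constant for $R_1, R_3$ is $\lesssim C^5$ as claimed. No genuine geometric obstacle arises — the entire content has already been extracted in Lemma \ref{lemma5}, Corollary \ref{cor2}, and Lemma \ref{lemma6}; this corollary is a formal consequence. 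The only mild care needed is to confirm the hypotheses $0 < \delta \le \sigma \le 1$ required by Lemma \ref{lemma6}, which are part of the hypotheses of Corollary \ref{cor1}, so they transfer directly.

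In summary, the proof is: (i) unpack $C$-comparability of $R_1,R_2$ and of $R_2,R_3$ into containing rectangles $R,R'$; (ii) use Lemma \ref{lemma6} twice to convert these into $R_1 \subset C'R_2$ and $R_3 \subset C'R_2$ with $C' \lesssim C^5$; (iii) read off from Definition \ref{def:comparability}, with witness rectangle $R_2$, that $R_1,R_3$ are $C'$-comparable. I expect step (ii) to be the only place requiring attention, and even there the work is entirely in tracking the exponent $5$ through Lemma \ref{lemma6}; there is no hard estimate to redo.
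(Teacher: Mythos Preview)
Your proof is correct and follows essentially the same route as the paper: unpack comparability into containing rectangles, apply Lemma \ref{lemma6} twice to obtain $R_1 \subset C'R_2$ and $R_3 \subset C'R_2$ with $C' \lesssim C^5$, and take $R_2$ as the witness rectangle. The paper's argument is identical up to notation (it calls the containing rectangles $R_{12}$ and $R_{23}$).
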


\begin{proof} Since $R_{1},R_{2}$ and $R_{2},R_{3}$ are $C$-comparable, by definition there exist $(\delta,\sigma)$-rectangles $R_{12}$ and $R_{23}$ such that $R_{1},R_{2} \subset CR_{12}$ and $R_{2},R_{3} \subset CR_{23}$. We may infer from Lemma \ref{lemma6} that
\begin{displaymath} R_{1} \subset CR_{12} \subset C'R_{2} \quad \text{and} \quad R_{3} \subset CR_{23} \subset C'R_{2}, \end{displaymath}
for some $C' \lesssim C^{5}$. This means by definition that $R_{1},R_{3}$ are $C'$-comparable. \end{proof}

Next, given a  family $\calR$ of pairwise $100$-incomparable
$(\delta,\sigma)$-rectangles, for  $A \geq 100$, we
will show that there exists a subfamily $\bar \calR \subset \calR$ consisting of $A$-incomparable rectangles such that $A^{O(1)}|\bar \calR| \geq |\calR|$. This result will be proved in Corollary \ref{AT14}.

Indeed, Corollary \ref{AT14} is a direct consequence of the
following proposition:

\begin{proposition} \label{PYZ}
Let $A \geq 100$ and $\delta \leq \sigma \leq 1$, and let
$\mathcal{R}$ be a family of pairwise $100$-incomparable
$(\delta,\sigma)$-rectangles. Suppose also that there exists a fixed ($\delta,\sigma)$-rectangle
  $\mathbf{R}$ such that the union of the rectangles in $\mathcal{R}$ is contained in $A\mathbf{R}$. Then, $|\mathcal{R}| \lesssim A^{10}$.
\end{proposition}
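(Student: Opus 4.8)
The plan is to reduce the statement to a volume/packing estimate inside the fixed rectangle $A\mathbf{R}$. The key point is that a $(\delta,\sigma)$-rectangle has Lebesgue measure $\sim \delta\sigma$ (when $\sigma \leq \sqrt{\delta}$, it is essentially a straight $\delta \times \sigma$ tube; when $\sigma > \sqrt{\delta}$ it is a curvy strip of the same order of measure), while the dilate $A\mathbf{R} = R^{A\delta}_{A\sigma}(\mathbf{p},\mathbf{w})$ has measure $\lesssim A^2 \delta\sigma$. So if the rectangles in $\mathcal{R}$ had bounded overlap we would immediately get $|\mathcal{R}| \lesssim A^2$. The issue is that $100$-incomparability does \emph{not} by itself force bounded overlap: two rectangles lying in the same tube are $100$-comparable, but rectangles with very different \emph{curvatures} (i.e.\ coming from circles $S(p),S(q)$ with $|p-q|$ large) can intersect in a small set while being incomparable.

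So the first step is to control the overlap. Fix a point $z$ and count how many rectangles $R^{\delta}_{\sigma}(p_i,v_i) \in \mathcal{R}$ contain $z$. For each such rectangle, $z$ lies within $\delta$ of the circle $S(p_i)$; since all the rectangles lie in $A\mathbf{R}$, all the associated circles $S(p_i)$ pass through the common $A\delta$-annulus $S^{A\delta}(\mathbf{p})$ and through the small disc $B(\mathbf{w}, A\sigma)$. I would like to say: among rectangles through $z$, any two are $O(A)$-comparable, because at a common point $z$ two $\delta$-annuli that also both meet a disc of radius $\sigma \leq \sqrt{\delta}$-ish around $z$ must nearly coincide over that disc. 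For $\sigma \leq \sqrt{\delta}$ this is genuinely true (the annuli are flat on that scale) and gives that all rectangles through $z$ are pairwise $O(A)$-comparable, hence — once $A$ is chosen so that $O(A) < $ the incomparability threshold after accounting for Corollary~\ref{cor1} — there can be only $O(1)$ of them, modulo a subtlety about constants. The cleanest route is: Corollary~\ref{cor2} says that if two $(\delta,\sigma)$-rectangles with the \emph{same} $\sigma$ intersect and their annuli genuinely overlap, one is contained in a bounded dilate of the other; the hypothesis that everything sits in $A\mathbf{R}$ forces $\sigma$ to be (up to a factor $A$) the natural scale $\delta/\sqrt{(\lambda+\delta)(t+\delta)}$ of each pair $(p_i,p_j)$, so Corollary~\ref{cor2} (applied with constant $\sim A$) gives $R_i \subset C'R_j$ with $C' \lesssim A^4$, i.e.\ $R_i, R_j$ are $CA^{4}$-comparable for an absolute $C$.

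Having fixed the overlap, the endgame is bookkeeping. If every point of $\R^2$ lies in at most $N$ rectangles of $\mathcal{R}$, then
\begin{displaymath}
|\mathcal{R}| \cdot \min_{R \in \mathcal{R}} \mathrm{Leb}(R) \leq N \cdot \mathrm{Leb}(A\mathbf{R}) \lesssim N A^2 \delta \sigma,
\end{displaymath}
and since $\mathrm{Leb}(R) \sim \delta\sigma$ for every $(\delta,\sigma)$-rectangle (uniformly, by the remarks in Section~\ref{s:tangencies}), we conclude $|\mathcal{R}| \lesssim N A^2$. The work above shows $N \lesssim 1$ once $A$ exceeds the threshold at which $CA^4$-comparability contradicts $100$-incomparability — but $100$ is fixed and $A^4 \to \infty$, so this naive comparison fails; the correct statement is rather that the rectangles through a common point split into $O(A^{?})$ $100$-incomparability classes, each a singleton, giving $N \lesssim A^{8}$ or so. Combining, $|\mathcal{R}| \lesssim A^{10}$.

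\textbf{The main obstacle} I expect is exactly this curvature/overlap issue in the regime $\sigma > \sqrt{\delta}$, where $(\delta,\sigma)$-rectangles are honestly curvy and "being within $\delta$ of two long circular arcs near a common point" does \emph{not} immediately pin the arcs together; one must quantify, using that all the parent circles are constrained to pass through the fixed small disc $B(\mathbf{w},A\sigma)$ and the fixed annulus $S^{A\delta}(\mathbf{p})$, that two such arcs through a common point $z$ stay $O(A\delta)$-close over the whole disc $B(z,\sigma)$ — and hence are $O(A^{O(1)})$-comparable via Corollary~\ref{cor2}. This is the one spot where the hypothesis "$\bigcup \mathcal{R} \subset A\mathbf{R}$" (as opposed to merely pairwise incomparability) is essential, and where the power $A^{10}$ — rather than $A^2$ — is spent.
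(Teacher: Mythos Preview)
Your overall framework --- bound the pointwise overlap $N = \max_{z} |\{R \in \mathcal{R} : z \in R\}|$ and then use the volume estimate $|\mathcal{R}|\cdot\delta\sigma \lesssim N \cdot \mathrm{Leb}(A\mathbf{R}) \lesssim N A^{2}\delta\sigma$ --- is exactly what the paper does in its main case. But the entire content of the proposition lies in bounding $N$, and your argument for this is circular. You argue that any two rectangles through a common point $z$ are $O(A^{O(1)})$-comparable; but this is vacuous, since \emph{all} rectangles in $\mathcal{R}$ are already $A$-comparable (they sit in $A\mathbf{R}$), whether or not they share a point. You then say the rectangles through $z$ ``split into $O(A^{?})$ $100$-incomparability classes, each a singleton'' --- but since the rectangles in $\mathcal{R}$ are pairwise $100$-incomparable, each is its own class, and the number of classes is $N$ itself. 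Counting pairwise $100$-incomparable rectangles inside a common $CA^{4}$-dilate is precisely the proposition you are trying to prove.

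The missing idea, which the paper supplies, is a \emph{slope separation} argument. After reducing (in the main case $A\sigma \leq 1/600$) to a setting where each circle $S(p_{i})$ is the graph of a $C^{2}$ function $f_{i}$ over a common interval, one shows: (i) containment in $A\mathbf{R}$ forces $|f_{i}'(\theta_{0}) - f_{j}'(\theta_{0})| \lesssim A\delta/\sigma$ for all $i,j$ with $z = (\theta_{0},y_{0}) \in R_{i} \cap R_{j}$, because otherwise the graphs would separate vertically by more than the $\sim A\delta$ thickness of $A\mathbf{R}$ over the length-$\sigma$ interval; (ii) $100$-incomparability forces $|f_{i}'(\theta_{0}) - f_{j}'(\theta_{0})| \gtrsim \delta/\sigma$, because otherwise $R_{i}$ and $R_{j}$ would stay within $O(\delta)$ of each other over that interval and be $100$-comparable. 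These two bounds together give $N \lesssim A$. The paper also handles the complementary case $\sigma \gtrsim A^{-1}$ separately, by observing that all the base points $p_{i}$ lie in a ball of radius $\lesssim A\delta/\sigma^{2} \lesssim A^{3}\delta$ and counting directly. Your appeal to Corollary~\ref{cor2} does not substitute for either step: that corollary needs $\sigma$ to match the natural scale $\delta/\sqrt{(\Delta+\delta)(t+\delta)}$ of the pair, and the inclusion in $A\mathbf{R}$ only gives $t \lesssim A\delta/\sigma^{2}$, hence $\bar{\sigma} \gtrsim \sigma^{2}/A$, not $\bar{\sigma} \gtrsim \sigma/A$.
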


After somewhat tedious initial reductions, the proof will be virtually the same as the
proof of \cite[Lemma 3.15]{2022arXiv220702259P}. We postpone the details to the Appendix \ref{app}, and only give a short outline here. Since \cite[Lemma 3.15]{2022arXiv220702259P}
was stated for curvilinear rectangles arising as neighborhoods of
arcs of graphs of $C^2(\mathbf{I})$ functions defined on an interval $\mathbf{I}\subset \mathbb{R}$, we need several auxiliary lemmas (see Lemma
\ref{lipgragh}, \ref{cinematic}, \ref{Nincl5}, and \ref{Nincl4} in Appendix \ref{app}) to reduce our proof to a situation similar to \cite[Lemma 3.15]{2022arXiv220702259P}. Then, in the
terminology of \cite{2022arXiv220702259P}, the
$(\delta,\sigma)$-rectangles we need to consider are called
$(\delta,t)$-rectangles with $t = \delta/\sigma^{2}$ provided that
$\sigma \geq \sqrt{\delta}$ (hence $t \leq 1$). Thus in the range
$\sigma \geq \sqrt{\delta}$, our proposition would basically
follow from \cite[Lemma 3.15]{2022arXiv220702259P}. (The
comparison between the different types of rectangles is stated
more precisely in \eqref{fact422}-\eqref{fact422b}.) But we also need to check that the proof works if $\sigma \leq
\sqrt{\delta}$.
 In this range our rectangles are shorter than any of the rectangles literally treated by \cite[Lemma 3.15]{2022arXiv220702259P}.
 The argument we give in Appendix \ref{app} for Proposition \ref{PYZ} reveals, however, that the proof sees no essential difference between these cases. Alternatively, one could treat the case $\sigma \leq \sqrt{\delta}$ separately, relying on the fact that the $(\delta,\sigma)$-rectangles in this range look like "ordinary" or "straight" rectangles.

The following consequence of Proposition \ref{PYZ} is similar in
spirit to \cite[Lemma 3.16]{2022arXiv220702259P}. It is not used
in this section but will be applied later in the proof of Theorem
\ref{thm4}.

\begin{cor} \label{AT14} Let $A \geq 100$ and $\delta \le \sigma \le 1$. Let $\calR$ be a pairwise $100$-incomparable family of $(\delta,\sigma)$-rectangles.
    Then $\calR$ contains a subset $\bar{\mathcal{R}}$ of cardinality $|\bar{\mathcal{R}}| \gtrsim A^{-50}|\mathcal{R}|$
consisting of pairwise
    $A$-incomparable rectangles.
\end{cor}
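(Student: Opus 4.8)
The plan is to take $\bar{\mathcal{R}} \subset \mathcal{R}$ to be a maximal subfamily that is pairwise $A$-incomparable (such a family exists: $\mathcal{R}$ may be assumed finite, or one invokes Zorn's lemma), and then to bound $|\mathcal{R}|$ from above in terms of $|\bar{\mathcal{R}}|$ by a covering argument. By maximality of $\bar{\mathcal{R}}$, every $R \in \mathcal{R}$ is $A$-comparable to at least one rectangle in $\bar{\mathcal{R}}$ (if $R \notin \bar{\mathcal{R}}$, then $\bar{\mathcal{R}} \cup \{R\}$ fails to be pairwise $A$-incomparable; and if $R \in \bar{\mathcal{R}}$ it is $A$-comparable to itself). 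Choose, for each $R \in \mathcal{R}$, one such rectangle $\phi(R) \in \bar{\mathcal{R}}$. It then suffices to prove that each fibre $\phi^{-1}(R')$, $R' \in \bar{\mathcal{R}}$, has cardinality $\lesssim A^{50}$, because then $|\mathcal{R}| = \sum_{R' \in \bar{\mathcal{R}}} |\phi^{-1}(R')| \lesssim A^{50}|\bar{\mathcal{R}}|$, which rearranges to $|\bar{\mathcal{R}}| \gtrsim A^{-50}|\mathcal{R}|$.

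To estimate a single fibre, fix $R' \in \bar{\mathcal{R}}$ and take any $R \in \phi^{-1}(R')$. By the definition of $A$-comparability (Definition \ref{def:comparability}) there is a $(\delta,\sigma)$-rectangle $R''$ with $R, R' \subset AR''$. Applying Lemma \ref{lemma6} to the inclusion $R' \subset AR''$ (valid since $0 < \delta \leq \sigma \leq 1$) gives $AR'' \subset C'R'$ with $C' \lesssim A^{5}$, hence $R \subset AR'' \subset C'R'$. Thus there is an absolute constant $c_{0} \geq 1$ so that, setting $\tilde{A} := c_{0}A^{5}$, every $R \in \phi^{-1}(R')$ satisfies $R \subset \tilde{A}R'$; note $\tilde{A} \geq 100$ because $A \geq 100$ and $c_{0} \geq 1$. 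The rectangles in $\phi^{-1}(R')$ are pairwise $100$-incomparable (being a subfamily of $\mathcal{R}$) and all lie in $\tilde{A}R'$, so Proposition \ref{PYZ}, applied with the constant $\tilde{A}$ in place of $A$, yields $|\phi^{-1}(R')| \lesssim \tilde{A}^{10} = c_{0}^{10}A^{50} \lesssim A^{50}$. This is exactly the fibre bound claimed above, and the corollary follows.

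There is no real obstacle here: all the geometric content has already been absorbed into Lemma \ref{lemma6} (the two-sided comparability of nested rectangles) and Proposition \ref{PYZ} (the $A^{10}$ packing bound for incomparable rectangles inside a dilated rectangle), and the corollary is just a clean maximal-family-plus-covering repackaging. The only minor points to verify in writing it out are the bookkeeping of constants ($C' \lesssim A^{5}$, hence $\tilde{A}^{10} \lesssim A^{50}$) and the applicability of Proposition \ref{PYZ} with constant $\tilde{A} \geq 100$, both of which are immediate.
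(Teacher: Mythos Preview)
Your proof is correct and follows essentially the same approach as the paper: take a maximal $A$-incomparable subfamily, use Lemma~\ref{lemma6} (the paper cites Remark~\ref{r:CompRem}, which in turn invokes Lemma~\ref{lemma6}) to show every $R\in\mathcal{R}$ lies in $CA^{5}R'$ for some $R'\in\bar{\mathcal{R}}$, and then apply Proposition~\ref{PYZ} with constant $\sim A^{5}$ to bound each fibre by $\lesssim A^{50}$. The only differences are cosmetic: you phrase the counting via a map $\phi$ and its fibres, while the paper writes $\mathcal{R}$ as a union of sets $\mathcal{R}_{A}(R)$.
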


\begin{proof}
  Let $\bar{\mathcal{R}}$ be the maximal $A$-incomparable subfamily of $\calR$.
That is, $\bar{\mathcal{R}}$  consists of pairwise
$A$-incomparable rectangles, and any element in $\calR$ is $A$-comparable to at least one rectangle in
$\bar{\mathcal{R}}$. For  $R \in \bar{\mathcal{R}}$, we define
  $$ \calR_A(R):= \{ R' \in \calR: R' \subset CA^{5}R\},$$
 where $C \geq 1$ is an absolute constant to be fixed momentarily. By Proposition \ref{PYZ},
  \begin{equation}\label{eq:RAR}
  |\calR_A(R)| \lesssim A^{50}, \quad R \in \bar{\mathcal{R}}.
  \end{equation}
We claim that
 \begin{equation}\label{eq:calRUnion}
 \calR = \bigcup_{R \in \bar{\mathcal{R}}}
 \calR_A(R).\end{equation}
 Once \eqref{eq:calRUnion} has been verified, a combination of \eqref{eq:RAR}-\eqref{eq:calRUnion} shows that $|\calR| \lesssim  |\bar \calR|A^{50}$, and the proof will be complete.

To prove \eqref{eq:calRUnion}, fix $R' \in \mathcal{R}$. Then $R'$ is $A$-comparable to some $R \in \bar{\mathcal{R}}$ by the maximality of $\bar{\mathcal{R}}$. By Remark
    \ref{r:CompRem}, this gives $R' \subset CA^{5}R$, provided that $C > 0$ is a sufficiently large absolute constant. In particular, $R' \in \mathcal{R}_{A}(R)$, as desired.  \end{proof}


\subsection{A slight generalisation of Wolff's tangency counting bound}

The following definition is due to Wolff \cite{MR1800068}.

\begin{definition}[$t$-bipartite pair]\label{def:WolffBipartite} Let $0 < \delta \leq t \leq 1$. A pair of sets $W,B \subset \mathbf{D}$ is called \emph{$t$-bipartite} if both $W,B$ are $\delta$-separated, $\max\{\diam(B),\diam(W)\} \leq t$, and additionally
\begin{displaymath} \dist(B,W) \geq t \quad \text{and} \quad \diam(B \cup W) \leq 100t. \end{displaymath}
\end{definition}

\begin{lemma}\label{lemma7} Let $\delta \leq t \leq 1$, and let $W,B \subset \mathbf{D}$ be a $t$-bipartite pair of sets. Let $C \geq 1$ be a constant, and assume that $p_{1},\ldots,p_{k} \in W$ and $q_{1},\ldots,q_{l} \in B$ are points satisfying
\begin{displaymath} \Delta(p_{i},q_{j}) \leq C\delta, \qquad 1 \leq i \leq k, \, 1 \leq j \leq l. \end{displaymath}
Assume further that there exists a point $v \in \R^{2}$ which lies on all the circles $S(p_{i}),S(q_{j})$.

Write $\Sigma := \sqrt{\delta/t}$. Then, for suitable $C' \sim C$, every $(\delta,\Sigma)$-rectangle $R^{\delta}_{\Sigma}(p_{i},v)$ is contained in every annulus $S^{C'\delta}(p_{m})$ and $S^{C'\delta}(q_{n})$ (where $i$ has no relation to $m,n$).   \end{lemma}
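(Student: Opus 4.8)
The plan is to use the transitivity of the inclusions between annuli provided by Lemma \ref{lemma5}, applied repeatedly at scale $\delta$ and with tangency parameter $\lambda = \Delta(p_i,q_j) \leq C\delta$, which is in the ``maximally tangent'' regime. First I would fix any pair $(i,j)$ with $1 \leq i \leq k$ and $1 \leq j \leq l$. The hypotheses give $v \in S(p_i) \cap S(q_j)$, $\Delta(p_i,q_j) \leq C\delta$, and $|p_i - q_j| =: t_{ij}$ with $t \leq t_{ij} \leq 100t$ by the $t$-bipartite assumption. Therefore the scale $\sigma_{ij} := \delta/\sqrt{(\Delta(p_i,q_j)+\delta)(t_{ij}+\delta)}$ satisfies $\sigma_{ij} \sim \delta/\sqrt{\delta \cdot t} = \sqrt{\delta/t} = \Sigma$; more precisely $\Sigma/(C') \leq \sigma_{ij} \leq C'\Sigma$ for some $C' \sim C$. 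Consequently, up to adjusting the constant, every $(\delta,\Sigma)$-rectangle $R^\delta_\Sigma(p_i,v)$ is contained in (a bounded dilate of) $R^\delta_{\sigma_{ij}}(p_i,v)$, and we may apply the inclusion \eqref{form71} of Lemma \ref{lemma5}: since $\lambda = \Delta(p_i,q_j) \geq \delta$ up to constants (or $\lambda \leq C\delta$, which only helps), we get $C''R^\delta_{\sigma_{ij}}(p_i,v) \subset S^{C'''\delta}(q_j)$ with $C''' \lesssim \max\{C'', (C'')^2 \delta/(\lambda+\delta)\} \lesssim (C'')^2$, an absolute-times-$C$ constant. This handles the $q_n$-annuli: choosing $j = n$ shows $R^\delta_\Sigma(p_i,v) \subset S^{C'\delta}(q_n)$ for every $n$.

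Next I would handle the $p_m$-annuli. One route: from $v \in S(p_i) \cap S(q_1)$ and $v \in S(p_m) \cap S(q_1)$ we again have $\Delta(p_i,q_1), \Delta(p_m,q_1) \leq C\delta$, so the same reasoning gives $R^\delta_\Sigma(p_i,v) \subset S^{C'\delta}(q_1)$ and (since $v \in S(p_m)$, and using \eqref{form71} in the form with $p_m$ and $q_1$) $R^\delta_\Sigma(q_1,v) \subset S^{C'\delta}(p_m)$. To chain these I would instead argue symmetrically: by the $t$-bipartite hypothesis $\dist(B,W) \geq t$ and $\diam(B \cup W) \leq 100t$, so $|p_i - q_1| \sim t$ and $|p_m - q_1| \sim t$, hence both $R^\delta_\Sigma(p_i,v)$ and $R^\delta_\Sigma(p_m,v)$ are (up to constants) of the form $R^\delta_{\sigma}(\cdot,v)$ with $\sigma \sim \Sigma$. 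Now apply Lemma \ref{lemma5}'s first assertion: $S^{C\delta}(q_1) \cap S^{C\delta}(p_m)$ is covered by boundedly many $(C\delta, \sigma')$-rectangles with $\sigma' \sim \Sigma$ (since $\Delta(p_m,q_1) \lesssim \delta$ and $|p_m - q_1| \sim t$), and one of these contains a bounded dilate of $R^\delta_\Sigma(p_m,v)$ while also lying in $S^{C'\delta}(p_m)$. Then the argument of Corollary \ref{cor2} (or a direct repetition of the $S(p)$-shifting trick there) applied to the pair $p_i, p_m$ — both of whose circles pass near $v$ with $\Delta(p_i,p_m) \leq 2C\delta$ by the triangle inequality $\Delta(p_i,p_m) \leq \Delta(p_i,q_1) + \Delta(q_1,p_m) \leq 2C\delta$ and $|p_i - p_m| \leq \diam(W) \leq t$ — yields $R^\delta_\Sigma(p_i,v) \subset S^{C'\delta}(p_m)$ for a constant $C' \sim C$.

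The cleanest way to organise all of this is: prove the single general statement that whenever $a, b \in \mathbf{D}$ satisfy $v \in S(a) \cap S(b)$, $\Delta(a,b) \lesssim \delta$, and $|a - b| \sim t$ (with possibly $|a-b| \leq t$, in which case $|a - b|$ could be small, but then $\sigma$ only grows and the inclusion is easier), then $R^\delta_\Sigma(a,v) \subset S^{C'\delta}(b)$; this follows directly from Lemma \ref{lemma5}\eqref{form71} after checking $\Sigma \lesssim \sigma(a,b) = \delta/\sqrt{(\Delta(a,b)+\delta)(|a-b|+\delta)}$. Then apply it with $(a,b) = (p_i, q_n)$ to get the $q_n$-annuli, and with $(a,b) = (p_i, p_m)$ — using $\Delta(p_i, p_m) \leq \Delta(p_i, q_1) + \Delta(p_1, q_1) + \Delta(p_1, p_m) \lesssim \delta$ via any fixed $q_1$, together with $v \in S(p_i) \cap S(p_m)$ and $|p_i - p_m| \leq \diam W \leq t$ — to get the $p_m$-annuli. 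I expect the main obstacle to be purely bookkeeping: verifying that the scale $\sigma(a,b)$ is comparable to $\Sigma = \sqrt{\delta/t}$ in all the relevant cases (including the degenerate case $|a-b| \ll t$ for the $p_i,p_m$ pair, where $\Delta$ and $|a-b|$ can both be as small as $\delta$, forcing $\sigma(a,b) \sim 1 \geq \Sigma$, so the inclusion is automatic), and tracking how the constant $C'$ accumulates through the $\max\{C, C^2\delta/(\lambda+\delta)\}$ factor — but since $\lambda + \delta \gtrsim \delta$ throughout, this factor is always $\lesssim C$, so $C' \sim C$ as claimed. No genuinely new geometric input beyond Lemma \ref{lemma5} is needed.
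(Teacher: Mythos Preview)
Your ``cleanest'' approach has a genuine gap: the inequality $\Delta(p_i,p_m) \leq \Delta(p_i,q_1) + \Delta(q_1,p_m)$ is \emph{false} --- the quantity $\Delta(p,q) = \big||x-x'|-|r-r'|\big|$ does not satisfy a triangle inequality. For instance, with $p_1=((0,0),1)$, $p_2=((1,0),2)$, $p_3=((2,0),1)$ one has $\Delta(p_1,p_2)=\Delta(p_2,p_3)=0$ but $\Delta(p_1,p_3)=2$. So you cannot conclude $\Delta(p_i,p_m) \lesssim \delta$ from the hypotheses this way, and without that bound the scale $\sigma(p_i,p_m)=\delta/\sqrt{(\Delta(p_i,p_m)+\delta)(|p_i-p_m|+\delta)}$ need not dominate $\Sigma$ (consider $|p_i-p_m|\sim t$ and $\Delta(p_i,p_m)$ possibly as large as $|p_i-p_m|$), so the direct application of \eqref{form71} to the pair $(p_i,p_m)$ is not justified. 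It is conceivable that $\Delta(p_i,p_m)\lesssim C\delta$ does hold under the full hypotheses of the lemma (shared point $v$ plus near-tangency to a common $q_1$), but that would require a separate geometric argument you have not given.

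Ironically, your abandoned ``first route'' is exactly what the paper does, and you were one elementary observation away from completing it. You had $R^\delta_\Sigma(p_i,v)\subset S^{C'\delta}(q_1)$ and $R^\delta_\Sigma(q_1,v)\subset S^{C'\delta}(p_m)$, and you wanted to chain them. The missing link is that the first inclusion upgrades for free to $R^\delta_\Sigma(p_i,v)\subset R^{C'\delta}_\Sigma(q_1,v)$: indeed $R^\delta_\Sigma(p_i,v)\subset B(v,\Sigma)$ trivially, so $R^\delta_\Sigma(p_i,v)\subset S^{C'\delta}(q_1)\cap B(v,\Sigma)\subset R^{C'\delta}_\Sigma(q_1,v)$. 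Then one more application of \eqref{form71} at scale $C'\delta$ (to the pair $(q_1,p_m)$, where the hypothesis $\Delta(q_1,p_m)\leq C\delta$ is given) yields $R^{C'\delta}_\Sigma(q_1,v)\subset S^{C''\delta}(p_m)$ with $C''\sim C$. That is the paper's proof.
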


\begin{proof} We will use the inclusion \eqref{form71}. Namely, \eqref{form71} applied with $\lambda := C\delta$ shows immediately that if $(i,j) \in \{1,\ldots,k\} \times \{1,\ldots,l\}$ is a fixed pair, then
\begin{equation}\label{form84} R^{\delta}_{\Sigma}(p_{i},v) \subset S^{C'\delta}(q_{j}) \end{equation}
for some $C' \sim C$. (Note that now $\Sigma \lesssim
\sqrt{C}\delta/\sqrt{\Delta(p_{i},q_{j})|p_{i} - q_{j}|} \sim
\sqrt{C}\sigma$ in the notation of \eqref{form71}, so we may apply
\eqref{form71} with constant $\sim \sqrt{C}$ to obtain
\eqref{form84}.) This already proves that every rectangle
$R^{\delta}_{\Sigma}(p_{i},v)$ is contained in every annulus
$S^{C'\delta}(q_{j})$. What remains is to prove a similar
conclusion about the annuli $S^{C'\delta}(p_{m})$ for $m \neq i$.

To proceed, we observe that \eqref{form84} can immediately be upgraded to
\begin{equation}\label{form85} R_{\Sigma}^{\delta}(p_{i},v) \subset R^{C'\delta}_{\Sigma}(q_{j},v), \end{equation}
simply as a consequence of \eqref{form84} and definitions. Further, if $m \in \{1,\ldots,k\}$, we have
\begin{equation}\label{form86} R^{C'\delta}_{\Sigma}(q_{j},v) \subset S^{C''\delta}(p_{m}), \end{equation}
by another application of the inclusion \eqref{form71} (here still $C'' \sim C' \sim C$). Now, chaining \eqref{form85}-\eqref{form86}, we find $R^{\delta}_{\Sigma}(p_{i},v) \subset S^{C''\delta}(p_{m})$. Combined with \eqref{form84}, this completes the proof. \end{proof}

In this paper, we will need the following slight relaxation of $t$-bipartite pairs:

\begin{definition}[Almost $t$-bipartite pair]\label{def:Bipartite} Let $\delta \leq t \leq 1$. A pair of sets $W,B \subset \mathbf{D}$ is called \emph{$(\delta,\epsilon)$-almost $t$-bipartite} if both $W,B$ are $\delta$-separated, and additionally
\begin{displaymath} \dist(W,B) \geq \delta^{\epsilon}t \quad \text{and} \quad \diam(B \cup W) \leq \delta^{-\epsilon}t. \end{displaymath}
\end{definition}

\begin{definition}[Type]\label{def:WolffType} Let $0 < \delta \leq \sigma \leq 1$, $\epsilon > 0$. Let $W,B \subset \mathbf{D}$ be finite sets. For $m,n \geq 1$, we say that a $(\delta,\sigma)$-rectangle $R \subset \R^{2}$ has \emph{type $(\geq m,\geq n)_{\epsilon}$ relative to $(W,B)$} if $R \subset S^{\delta^{1 - \epsilon}}(p)$ for at least $m$ points $p \in W$, and $R \subset S^{\delta^{1 - \epsilon}}(q)$ at least $n$ points $q \in B$. \end{definition}

Here is a slight variant of \cite[Lemma 1.4]{MR1800068}:

\begin{lemma}\label{lemma8} For every $\epsilon > 0$, there exists $\delta_{0} > 0$ such that the following holds for all $\delta \in (0,\delta_{0}]$. Let $0 < \delta \leq t \leq 1$, and let $W,B \subset \mathbf{D}$ be a $(\delta,\epsilon)$-almost $t$-bipartite pair of sets. Let $\Sigma := \sqrt{\delta/t}$, and let $\mathcal{R}^{\delta}_{\Sigma}$ be a family of pairwise $100$-incomparable $(\delta,\Sigma)$-rectangles of type $(\geq m,\geq n)_{\epsilon}$ relative to $(W,B)$, where $1 \leq m \leq |W|$ and $1 \leq n \leq |B|$. Then,
\begin{equation}\label{form78} |\mathcal{R}^{\delta}_{\Sigma}| \leq \delta^{-C\epsilon} \left(\left(\frac{|W||B|}{mn} \right)^{3/4} + \frac{|W|}{m} + \frac{|B|}{n} \right), \end{equation}
where $C > 0$ is an absolute constant. \end{lemma}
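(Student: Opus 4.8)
The strategy is to reduce Lemma \ref{lemma8} to Wolff's original tangency bound, namely \cite[Lemma 1.4]{MR1800068}, which is exactly the case $m = n = 1$, $\epsilon = 0$, together with the genuine $t$-bipartite hypothesis of Definition \ref{def:WolffBipartite}. There are two discrepancies to overcome: (i) the pair $(W,B)$ is only \emph{$(\delta,\epsilon)$-almost $t$-bipartite}, i.e.\ the separation and diameter constraints are relaxed by factors $\delta^{\pm\epsilon}$, and the rectangles are only required to lie in the fattened annuli $S^{\delta^{1-\epsilon}}(p)$ rather than $S^{C\delta}(p)$; (ii) we want multiplicity weights $m,n \geq 1$ rather than $m = n = 1$. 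I would handle (i) by a pigeonholing/rescaling argument, and (ii) by a standard dyadic-pigeonhole-plus-double-counting trick exactly as in Wolff's original deduction of the $(m,n)$-version.

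\textbf{Step 1: reduce to a genuine bipartite pair at scale $\delta^{1-\epsilon}$.}
Set $\delta' := \delta^{1-\epsilon}$ and $t' := \delta^{-\epsilon} t$ (roughly). First partition $W$ and $B$ into boundedly many ($\lesssim \delta^{-O(\epsilon)}$, using $\diam(B\cup W) \leq \delta^{-\epsilon}t$) pieces $W_i$, $B_j$, each of diameter $\leq t'$, chosen so that for the ``interesting'' pairs $(W_i, B_j)$ we have $\dist(W_i, B_j) \gtrsim \delta^{\epsilon} t =: \tau$ and $\diam(W_i \cup B_j) \lesssim t'$, while throwing away pairs that are too close together; since $\tau \geq \delta^{\epsilon} t$ and the rectangles have been defined with the $\delta^{1-\epsilon}$-thickening, one checks that the pair $(W_i, B_j)$ becomes \emph{$\tau$-bipartite} after we replace $\delta$ by $\delta' = \delta^{1-\epsilon}$ (note $\delta' \leq \tau$ precisely because $\delta^{1-\epsilon} \leq \delta^{\epsilon} t$ once $\delta$ is small, using $t \geq \delta$ and $\epsilon$ fixed — here is where the hypothesis $\delta \leq \delta_0(\epsilon)$ enters). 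The rectangles in $\mathcal{R}^{\delta}_{\Sigma}$ are $(\delta, \Sigma)$-rectangles with $\Sigma = \sqrt{\delta/t}$; one verifies from Lemma \ref{lemma5} (or directly) that after fattening, each such rectangle is contained in a bounded number of $(\delta', \Sigma')$-rectangles with $\Sigma' = \sqrt{\delta'/\tau}$, and that $100$-incomparability of the original family survives (up to a controlled loss in the constant, using Corollary \ref{AT14} to pass back to $100$-incomparable rectangles at the new scale at the cost of a factor $\delta^{-O(\epsilon)}$). The number of pairs $(i,j)$ is $\delta^{-O(\epsilon)}$, so it suffices to bound the rectangles associated to a single pair.

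\textbf{Step 2: the $(m,n)$-refinement via double counting.}
Fix one bipartite pair, now genuinely $\tau$-bipartite at scale $\delta'$, write $w := |W|$, $b := |B|$, and suppose $\mathcal{R}$ is a $100$-incomparable family of type $(\geq m, \geq n)$. The classical argument (see the proof of \cite[Lemma 1.4]{MR1800068}, or the Córdoba-type $L^2$/Cauchy–Schwarz scheme) runs as follows: by Wolff's bound at multiplicity $1$ (applied with all of $W$ and $B$ at once, after possibly a further dyadic pigeonhole so that every rectangle lies in comparable numbers of annuli), the total number of \emph{incidences} between $100$-incomparable $(\delta',\Sigma')$-rectangles and tangent pairs is controlled. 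More precisely, one counts triples $(R, p, q)$ with $R \in \mathcal{R}$, $p \in W$, $q \in B$, $R \subset S^{\delta'}(p) \cap S^{\delta'}(q)$ and $\Delta(p,q) \lesssim \delta'$ (the tangency being forced by $R$ lying in both thin annuli, via Lemma \ref{lemma5} / Lemma \ref{lemma7}). On one hand this is $\geq |\mathcal{R}| \cdot mn$ by the type hypothesis. On the other hand, for each \emph{fixed} tangent pair $(p,q)$, the rectangles $R$ with $R \subset S^{\delta'}(p)\cap S^{\delta'}(q)$ are all $O(1)$-comparable (Lemma \ref{lemma5} says the intersection is covered by boundedly many $(\delta',\Sigma')$-rectangles), so there are $O(1)$ of them in the $100$-incomparable family $\mathcal{R}$; hence the triple count is $\lesssim |\{(p,q) \in W\times B : \Delta(p,q) \lesssim \delta'\}|$. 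The number of such tangent pairs is itself bounded by Wolff's estimate \eqref{form210}: it is $\lessapprox (wb)^{3/4} + w + b$. Combining,
\begin{displaymath} |\mathcal{R}| \cdot mn \lessapprox (wb)^{3/4} + w + b, \end{displaymath}
which after dividing by $mn$ and reinstating the $\delta^{-O(\epsilon)}$ losses from Step 1 gives \eqref{form78}. (The terms $|W|/m$ and $|B|/n$ in \eqref{form78} absorb the lower-order $w+b$ and also cover the degenerate regimes where one of $m,n$ is comparable to $w,b$.)

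\textbf{Main obstacle.}
The routine parts are the dyadic pigeonholing and the bookkeeping of $\delta^{-O(\epsilon)}$ factors. The one genuine subtlety — and the step I would be most careful about — is Step 1: making sure that after replacing $\delta$ by $\delta^{1-\epsilon}$ the pair really is bipartite \emph{in Wolff's exact sense}, i.e.\ that $\delta' \leq \tau$ (which needs $\delta$ small depending on $\epsilon$, hence the quantifier order in the statement), and that the rectangles of type $(\geq m,\geq n)_\epsilon$ at scale $\delta$ genuinely sit inside $O(1)$-many $(\delta',\Sigma')$-rectangles at the new scale with the $100$-incomparability essentially preserved — this requires invoking Lemma \ref{lemma5}, Lemma \ref{lemma6} and Corollary \ref{AT14} in the right order, and checking that $\Sigma' = \sqrt{\delta'/\tau}$ is the correct companion scale to $\delta'$ for the new annular thickness. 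Once the geometry is correctly set up at the rescaled parameters, invoking \cite[Lemma 1.4]{MR1800068} and the double-counting upgrade to general $(m,n)$ is standard.
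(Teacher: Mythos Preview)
Your Step 1 is essentially the paper's argument: decompose $W,B$ into $\delta^{-O(\epsilon)}$ pieces of diameter $\leq \delta^{\epsilon}t$, pigeonhole the type down to some $(W_i,B_j)$, pass to scale $\delta^{1-O(\epsilon)}$ so that the pair becomes genuinely bipartite in Wolff's sense, and control the map from $(\delta,\Sigma)$-rectangles to $(\delta',\Sigma')$-rectangles using Lemma~\ref{lemma5}, Lemma~\ref{lemma7} and Proposition~\ref{PYZ}. That part is fine.

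Step 2, however, contains a genuine error. Your double-counting gives
\[
|\mathcal{R}|\cdot mn \;\lesssim\; \#\{(p,q)\in W\times B : \Delta(p,q)\lesssim \delta'\},
\]
and you then assert that the right-hand side is $\lessapprox (|W||B|)^{3/4} + |W| + |B|$ by ``Wolff's estimate''. This is false: Wolff's \cite[Lemma 1.4]{MR1800068} bounds the number of \emph{incomparable rectangles}, not the number of tangent pairs, and the latter can be as large as $|W||B|$. Concretely, place $W$ and $B$ on a single null line in $\mathbf{D}$ (a line on which $|x-x'|\equiv |r-r'|$); then every pair $(p,q)\in W\times B$ satisfies $\Delta(p,q)=0$, yet all circles are tangent at a common point so there is only one incomparable rectangle. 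With $|W|=|B|=N$ this gives $N^{2}$ tangent pairs versus $(wb)^{3/4}+w+b\sim N^{3/2}$. Note also that your argument would output the main term $(|W||B|)^{3/4}/(mn)$, which is strictly smaller than the correct $((|W||B|)/(mn))^{3/4}$; that discrepancy is a red flag.

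The paper avoids this entirely: Wolff's \cite[Lemma 1.4]{MR1800068} is \emph{already} stated and proved for general $(m,n)$, so after the rescaling of Step 1 one invokes it directly with the pigeonholed multiplicities $\bar m,\bar n$. The upgrade from $(1,1)$ to $(m,n)$ inside Wolff's paper is done by random sampling (pick each $p\in W$ independently with probability $\sim 1/m$, each $q\in B$ with probability $\sim 1/n$, and observe that a fixed rectangle of type $(\geq m,\geq n)$ survives with probability $\gtrsim 1$), exactly the mechanism you will see reproduced later in the proof of Theorem~\ref{thm4}. If you replace your flawed double-counting by this random-sampling reduction, the rest of your outline goes through.
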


This lemma is the same as \cite[Lemma 1.4]{MR1800068}, except that it allows for constants of form "$\delta^{-\epsilon}$" in both Definition \ref{def:Bipartite} and Definition \ref{def:WolffType}. In \cite[Lemma 1.4]{MR1800068}, the definition of "$t$-bipartite pair" is exactly the one we stated in Definition \ref{def:WolffBipartite}, and  the definition of "type" was defined with a large absolute constant $C_{0} \geq 1$ in place of $\delta^{-\epsilon}$. As it turns out, Lemma \ref{lemma8} can be formally reduced to \cite[Lemma 1.4]{MR1800068} with a little pigeonholing.

\begin{proof}[Proof of Lemma \ref{lemma8}] In this proof, the letter "$C$" will refer to an absolute constant whose value may change from line to line.

We may assume that $\delta^{1 - 3\epsilon} \leq t$, since if
$(W,B)$ is $(\delta,\epsilon)$-almost $t$-bipartite for some $t
\leq \delta^{1 - 3\epsilon}$, then both $W$ and $B$ have
cardinality $\lesssim \delta^{-12\epsilon}$, and it easily follows
that $|\mathcal{R}^{\delta}_{\Sigma}| \leq \delta^{-C\epsilon}$.

 By assumption, we have $\diam(W) \leq \delta^{-\epsilon}t$ and $\diam(B) \leq \delta^{-\epsilon}t$. Therefore, we may decompose both $W$ and $B$ into $r \lesssim (\delta^{-\epsilon}t/\delta^{\epsilon}t)^{3} = \delta^{-6\epsilon}$ subsets $W_{1},\ldots,W_{r}$ and $B_{1},\ldots,B_{r}$ of diameter $\leq \delta^{\epsilon}t$. Now, for each pair $W_{i},B_{j}$, we have
\begin{equation}\label{form77} \dist(W_{i},B_{j}) =: \tau_{ij} \in [\delta^{\epsilon}t,\delta^{-\epsilon}t]. \end{equation}
Each pair $(W_{i},B_{j})$ is $\tau_{ij}$-bipartite in the terminology of Definition \ref{def:WolffBipartite}, since \eqref{form77} holds, and
\begin{displaymath} \max\{\diam(W_{i}),\diam(B_{j})\} \leq \delta^{\epsilon}t \leq \tau_{ij} \quad \text{and} \quad \diam(W_{i} \cup B_{j}) \leq 3\tau_{ij}. \end{displaymath}
Next, notice that if $R \in \mathcal{R}^{\delta}_{\Sigma}$, then there exists (by the pigeonhole principle) at least one pair $(W_{i},B_{j})$ such that $R^{\delta}_{\Sigma}$ has type $(\geq \bar{m},\geq \bar{n})_{\epsilon}$ relative to $(W_{i},B_{j})$, where
\begin{displaymath} \bar{m} := \max\{\delta^{6\epsilon}m,1\} \quad \text{and} \quad \bar{n} := \max\{\delta^{6\epsilon}n,1\}. \end{displaymath}
This means that there exist at least $\bar{m}$ circles $S(p_{1}),\ldots,S(p_{\bar{m}})$ with $p_{k} \in W_{i}$ and and at least $\bar{n}$ circles $S(q_{1}),\ldots,S(q_{\bar{n}})$ with $q_{l} \in B_{j}$ with the property
\begin{equation}\label{form79} R \subset S^{\delta^{1 - \epsilon}}(p_{k}) \quad \text{and} \quad R \subset S^{\delta^{1 - \epsilon}}(q_{l}). \end{equation}
Based on what we just said, we have
\begin{equation}\label{form80} \mathcal{R}^{\delta}_{\Sigma} \subset \bigcup_{i,j} \mathcal{R}^{\delta}_{\Sigma}(i,j) \quad \Longrightarrow \quad |\mathcal{R}^{\delta}_{\Sigma}| \leq \sum_{i,j} |\mathcal{R}^{\delta}_{\Sigma}(i,j)|, \end{equation}
where $\mathcal{R}^{\delta}_{\Sigma}(i,j)$ refers to rectangles of
type $(\geq \bar{m},\geq \bar{n})_{\epsilon}$ relative to
$(W_{i},B_{j})$. Since the  number of pairs $(i,j)$ is $\leq
\delta^{-C\epsilon}$, it suffices to prove \eqref{form78} for each
$\mathcal{R}^{\delta}_{\Sigma}(i,j)$ individually.

Fix $1 \leq i,j \leq r$, and write $\tau := \tau_{ij} \in [\delta^{\epsilon}t,\delta^{-\epsilon}t]$, and also abbreviate (or redefine) $W := W_{i}$ and $B := B_{j}$ and $\mathcal{R}^{\delta}_{\Sigma} := \mathcal{R}^{\delta}_{\Sigma}(i,j)$. Before proceeding further, we deduce information about the "tangency" of $p_{k} \in W$ and $q_{l} \in B$ satisfying \eqref{form79}. Recall that $|p_{k} - q_{l}| \geq \tau \geq \delta^{\epsilon}t$, and note that $\diam(R) \gtrsim \Sigma = \sqrt{\delta/t}$. Then,
\begin{displaymath} \sqrt{\delta/t} \lesssim \diam(R)
 \stackrel{\mathrm{L.\,} \ref{lemma5}}{\lesssim} \frac{\delta^{1 - \epsilon}}{\sqrt{\delta^{\epsilon}t \Delta(p_{k},q_{l})}}, \end{displaymath}
from which we may infer that
\begin{equation}\label{form82} \Delta(p_{k},q_{l}) \lesssim \delta^{1 - 3\epsilon}, \qquad 1 \leq k \leq \bar{m}, \, 1 \leq l \leq \bar{n}. \end{equation}

 For purposes to become apparent in a moment, it would be convenient if $W,B$ were $\delta^{1 - 3\epsilon}$-separated instead of just $\delta$-separated. This can be arranged, at the cost of reducing $\bar{m}$ and $\bar{n}$ slightly. Indeed, we may partition $W$ and $B$ into $\delta^{1 - 3\epsilon}$-separated subsets $W_{1},\ldots,W_{s}$ and $B_{1},\ldots,B_{s}$, where $s \leq \delta^{-9\epsilon}$. Now, arguing as before, every rectangle $R \in \mathcal{R}^{\delta}_{\Sigma}$ has type $(\geq \bar{m}',\geq \bar{n}')$ relative to at least one pair $(W_{i},B_{j})$, where $\bar{m}' := \max\{\delta^{9\epsilon}\bar{m},1\}$ and $\bar{n}' := \max\{\delta^{9\epsilon}\bar{n},1\}$. After repeating the argument at \eqref{form80}, we may focus attention to bounding the number of rectangles associated with a fixed $(W_{i},B_{j})$. Since the passage from $(W,B)$ to $(W_{i},B_{j})$ eventually just affects the absolute constant "$C$" in \eqref{form78}, we now assume that $W,B$ are $\delta^{1 - 3\epsilon}$-separated to begin with, and $\bar{m}' = \bar{m}$ and $\bar{n} = \bar{n}'$.

The improved separation of $W,B$ gives the following benefit: the pair $(W,B)$ is $\tau$-bipartite relative to the scale $\delta^{1 - 3\epsilon}$ in the strong sense of Definition \ref{def:WolffBipartite}. The role of "$\delta$" (or now $\delta^{1 - 3\epsilon}$) is hardly emphasised, but one of the assumptions in Definition \ref{def:WolffBipartite} was that a $\tau$-bipartite set is $\delta$-separated, and the conclusion of \cite[Lemma 1.4]{MR1800068} concerns "type" and "tangency" defined for $\delta$-annuli and $(\delta,\sqrt{\delta/t})$-rectangles. Now, since $W,B$ are $\delta^{1 - 3\epsilon}$-separated, we have access to the conclusion of the same lemma at scale $\delta^{1 -3\epsilon}$.

Now, \cite[Lemma 1.4]{MR1800068} implies that the maximal number of pairwise $100$-incomparable $(\delta^{1 - 3\epsilon},\sqrt{\delta^{1 - 3\epsilon}/\tau})$-rectangles of type $(\geq \bar{m},\geq \bar{n})$ relative to $(W_{i},B_{j})$ is bounded from above by the right hand side of \eqref{form78}. The definition of "type" here is the one which Wolff is using in the statement of \cite[Lemma 1.4]{MR1800068}: a $(\delta^{1 - 3\epsilon},\sqrt{\delta^{1 - 3\epsilon}/\tau})$-rectangle $\bar{R}$ has type $(\geq \bar{m},\geq \bar{n})$ relative to $(W,B)$ if there are $p_{1},\ldots,p_{\bar{m}} \in W$ and $q_{1},\ldots,q_{\bar{n}} \in B$ such that
\begin{equation}\label{form81} \bar{R} \subset S^{C\delta^{1 - 3\epsilon}}(p_{k}) \cap S^{C\delta^{1 - 3\epsilon}}(q_{l}), \qquad 1 \leq k \leq \bar{m}, \, 1 \leq l \leq \bar{n}, \end{equation}
where $C > 0$ is an absolute constant.

What does this conclusion about the rectangles $\bar{R}$ tell us
about the cardinality of $\mathcal{R}^{\delta}_{\Sigma}$? We will
use the $(\delta,\Sigma)$-rectangles in
$\mathcal{R}^{\delta}_{\Sigma}$ to produce a new family
$\bar{\mathcal{R}}$ of pairwise $100$-incomparable $(\delta^{1 -
3\epsilon},\bar{\Sigma})$-rectangles satisfying \eqref{form81},
where $\bar{\Sigma} = \sqrt{\delta^{1 - 3\epsilon}/\tau}$. Then,
we will apply the upper bound for $|\bar{\mathcal{R}}|$ (given by
\cite[Lemma 1.4]{MR1800068}) to conclude the desired estimate for
$|\mathcal{R}^{\delta}_{\Sigma}|$.

Recall from \eqref{form79} that each of our $(\delta,\Sigma)$-rectangles $R \in \mathcal{R}^{\delta}_{\Sigma}$ has type $(\geq \bar{m},\geq \bar{n})_{\epsilon}$ relative to $(W,B)$ in the sense $R \subset S^{\delta^{1 - \epsilon}}(p_{k}) \cap S^{\delta^{1 - \epsilon}}(q_{l})$ for every $1 \leq k \leq \bar{m}$ and $1 \leq l \leq \bar{n}$. As we observed in \eqref{form82}, this implies $\Delta(p_{k},q_{l}) \lesssim \delta^{1 - 3\epsilon}$. Recall that further $|p_{k} - q_{l}| \sim \tau$ for all $1 \leq k \leq \bar{m}$ and $1 \leq l \leq \bar{n}$.

In view of applying Lemma \ref{lemma7}, we would need that the
circles $S(p_{k})$ and $S(q_{l})$ share a common point. This is
not quite true, but it is true for slightly shifted copies of
$S(p_{k})$ and $S(q_{l})$. Namely, take "$v$" to be an arbitrary
point in $R$, for example its centre (writing $R =
R^{\delta}_{\Sigma}(p,v)$ for some $p \in \mathbf{D}$ and $v \in
S(p)$). Now, since $v \in R\subset S^{\delta^{1 -
\epsilon}}(p_{k})$, there exists $\bar{p}_{k} \in
B(p_{k},\delta^{1 - \epsilon})$ such that $v \in S(\bar{p}_{k})$
(see the proof of Corollary \ref{cor2}). Similarly, there exist
points $\bar{q}_{l} \in B(q_{k},\delta^{1 - \epsilon})$, $1 \leq l
\leq \bar{n}$, such that $v \in S(\bar{q}_{l})$. Note that the
crucial hypotheses $\Delta(\bar{p}_{k},\bar{q}_{l}) \lesssim
\delta^{1 - 3\epsilon}$ and $|\bar p_{k} - \bar q_{l}| \sim \tau$
were not violated (since $\tau \geq \delta^{\epsilon}t \geq
\delta^{1 - 2\epsilon}$).

Now, we are in a position to apply Lemma \ref{lemma7} at scale $\delta^{1 - 3\epsilon}$, and with "$\tau$" in place of "$t$". The conclusion is that if we set
\begin{displaymath} \bar{R} := \bar{R}(R) := R^{\delta^{1 - 3\epsilon}}_{\bar{\Sigma}}(p_{1},v), \qquad \bar{\Sigma} := \sqrt{\delta^{1 - 3\epsilon}/\tau}, \end{displaymath}
then \eqref{form81} holds, provided that the constant $C > 0$ is sufficiently large (initially with the points $\bar{p}_{k},\bar{q}_{l}$, but since $|\bar{p}_{k} - p_{k}| \leq \delta^{1 - \epsilon}$ and $|q_{l} - \bar{q}_{l}| \leq \delta^{1 - \epsilon}$, we also get \eqref{form81} as stated). In other words, $\bar{R}$ is a $(\delta^{1 - 3\epsilon},\bar{\Sigma})$-rectangle which has type $(\geq \bar{m},\geq \bar{n})$ relative to $(W,B)$ in the terminology of Wolff.

We have now shown that each rectangle $R \in \mathcal{R}^{\delta}_{\Sigma}$ gives rise to a $(\delta^{1 - 3\epsilon},\bar{\Sigma})$-rectangle
 $\bar{R}(R)$ which has type $(\geq \bar{m},\geq \bar{n})$ relative to $(W,B)$. We also observe that
\begin{equation}\label{form212} R \stackrel{\eqref{form79}}{\subset} S^{\delta^{1 - \epsilon}}(p_{1})
\cap B(v,\Sigma) \subset S^{\delta^{1 - 3\epsilon}}(p_{1}) \cap B(v,\bar{\Sigma}) = \bar{R}(R). \end{equation}
Finally, let $\bar{\mathcal{R}}$ be a maximal pairwise $100$-incomparable subset of
 $\{\bar{R}(R) : R \in \mathcal{R}^{\delta}_{\Sigma}\}$.
 The rectangles in $\bar{\mathcal{R}}$ have type $(\geq \bar{m},\geq \bar{n})$
 relative to $(W,B)$, so $|\bar{\mathcal{R}}|$ satisfies the desired upper bound \eqref{form78} by \cite[Lemma 1.4]{MR1800068}. It remains to show that
\begin{equation}\label{form213} |\mathcal{R}^{\delta}_{\Sigma}| \leq \delta^{-C\epsilon}|\bar{\mathcal{R}}|. \end{equation}
If $R \in \mathcal{R}^{\delta}_{\Sigma}$, then $\bar{R}(R) \sim_{100} \bar{R}$ for some $\bar{R} \in \bar{\mathcal{R}}$.
Combining \eqref{form212} and Lemma \ref{lemma6}, we may infer that $R \subset \bar{R}(R) \subset C\bar{R}$ for some absolute constant $C > 0$.
Therefore, \eqref{form213} will follow if we manage to argue that
\begin{displaymath} |\{R \in \mathcal{R}^{\delta}_{\Sigma} : R \subset C\bar{R}\}| \leq \delta^{-C\epsilon}, \qquad \bar{R} \in \bar{\mathcal{R}}.
 \end{displaymath}
But since the rectangles in $\mathcal{R}^{\delta}_{\Sigma}$ are pairwise $100$-incomparable, this follows immediately from Proposition \ref{PYZ}.
The proof is complete.
\end{proof}


\section{Bounding partial multiplicity functions with high tangency}\label{lambdalambdat}

In this section, we will finally introduce the \emph{partial multiplicity functions} $m_{\delta,\lambda,t}$ mentioned in the proof outline, Section \ref{s:outline} (see Definition \ref{def:multFunction1}). The plan of this section is to prove a desirable upper bound for $m_{\lambda,\lambda,t}$ -- the partial multiplicity function only taking into account incidences of maximal tangency at scale $\lambda$. This will be accomplished in Theorem \ref{thm3}, although most of the work is contained in Proposition \ref{prop3}.

\begin{notation}[$G_{\lambda,t}^{\rho}(\omega)$]\label{not3} Let $0 < \delta \leq \sigma \leq 1$, and let $P \subset \mathcal{D}_{\delta}$, $\{E(p)\}_{p \in P}$ be finite sets, where $E(p) \subset \mathcal{S}_{\sigma}(p)$ for all $p \in P$. Let $\Omega = \{(p,v) : p \in P \text{ and } v \in E(p)\}$. If $G \subset \Omega$ is an arbitrary subset, $\delta \leq \lambda \leq t \leq 1$, and $\rho \geq 1$, we define
\begin{displaymath} G_{\lambda,t}^{\rho}(\omega) := \{(p',v') \in G : t/\rho \leq |p - p'| \leq \rho t \text{ and } \lambda/\rho \leq \Delta(p,p') \leq \rho \lambda\}, \qquad \omega \in \Omega. \end{displaymath}
The distance $|p - p'|$ and $\Delta(p,p')$ are defined relative to the centres of $p,p' \in \mathcal{D}_{\delta}$. If $\lambda \in [\delta,\rho \delta]$ (as in Proposition \ref{prop3} below), we remove the lower bound $\Delta(p,p') \geq \lambda/\rho$ from the definition.
 \end{notation}

\begin{proposition}\label{prop3} For every $\kappa > 0$, and $s \in (0,1]$, there exist $\epsilon = \epsilon(\kappa,s) \in (0,\tfrac{1}{2}]$ and $\lambda_{0} = \lambda_{0}(\epsilon,\kappa,s) > 0$ such that the following holds for all $\lambda \in (0,\lambda_{0}]$. Let $\lambda \leq t \leq 1$ and $\Sigma := \sqrt{\lambda/t}$. Let $\Omega = \{(p,v) : p \in P \text{ and } v \in E(p)\}$ be a $(\lambda,\Sigma,s,\lambda^{-\epsilon})$-configuration (see Definition \ref{d:config2}). Then, there exists a $(\lambda,\Sigma,s,C_{\kappa}\lambda^{-\epsilon})$-configuration $G \subset \Omega$ with $|G| \sim_{\kappa} |\Omega|$ with the property
\begin{equation}\label{form39} |\{\omega' \in G_{\lambda,t}^{\lambda^{-\epsilon}}(\omega) : \lambda^{-\epsilon}R^{\lambda}_{\Sigma}(\omega) \cap \lambda^{-\epsilon}R^{\lambda}_{\Sigma}(\omega') \neq \emptyset\}| \leq \lambda^{s - \kappa}|P|, \qquad \omega \in G. \end{equation}
\end{proposition}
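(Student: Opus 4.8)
The plan is to exploit the fact that the relevant scale is $\lambda$ (the "base" scale of the configuration), so that the rectangles $R^{\lambda}_{\Sigma}(\omega)$ are genuine $(\lambda,\Sigma)$-rectangles with $\Sigma = \sqrt{\lambda/t}$, i.e. exactly Wolff's $(\lambda,t)$-rectangles, and hence Wolff's tangency bound (here Lemma \ref{lemma8}) applies directly. First I would set up a double counting: for $\omega = (p,v) \in \Omega$, partition the pairs $\omega' = (p',v') \in G_{\lambda,t}^{\lambda^{-\epsilon}}(\omega)$ according to (i) the $100$-comparability class of the rectangle $R^{\lambda}_{\Sigma}(\omega')$ among themselves, using Corollary \ref{AT14} to pass to an $\lambda^{-\epsilon}$-incomparable subfamily at the cost of a factor $\lambda^{-O(\epsilon)}$, and (ii) dyadic values of the "type" multiplicities $m,n$, i.e. how many $p' \in P$ (with the two ends $W := \{p' : \Delta(p,p') \sim \lambda, p' \text{ on one side}\}$, $B := \{p' : \ldots \text{ other side}\}$, suitably chosen to be almost $t$-bipartite after a further dyadic pigeonholing of the distance $|p-p'| \sim t$ and the "angular position" of $p'$ around $p$) have $R^{\lambda}_{\Sigma}(\omega') \subset S^{\lambda^{1-\epsilon}}(p'')$ for a fixed rectangle. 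The point of the almost-bipartite reduction is that around a fixed $p$, the ball $B(p, \rho t) \setminus B(p, t/\rho)$ with $\rho = \lambda^{-\epsilon}$ can be decomposed into $\lesssim \lambda^{-O(\epsilon)}$ pieces each of which, together with a "reflected" piece, forms an almost $t$-bipartite pair in the sense of Definition \ref{def:Bipartite}; this is where the hypothesis $\delta \le \lambda \le t \le 1$ and the width of the $\rho$-windows matter.

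The core estimate is then: for a fixed almost $t$-bipartite pair $(W,B) \subset P$ and a fixed incidence point, the number of pairwise incomparable $(\lambda,\Sigma)$-rectangles of type $(\geq m, \geq n)_{\epsilon}$ is, by Lemma \ref{lemma8}, at most $\lambda^{-C\epsilon}((|W||B|/(mn))^{3/4} + |W|/m + |B|/n)$. Summing this over the $\lesssim \lambda^{-O(\epsilon)}$ bipartite pieces and the $\lesssim (\log 1/\lambda)^{2}$ dyadic choices of $(m,n)$, and using that $|W|, |B| \le |P|$ while $m, n \ge 1$, one gets a bound of the shape $\lambda^{-C\epsilon} |P|^{3/2}$ for the number of incomparable rectangles incident to a fixed point of $R^{\lambda}_{\Sigma}(\omega)$. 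To convert this into \eqref{form39} I would use the $(\lambda, \Sigma, s)$-configuration structure: the non-concentration of $P$ forces $|P| \le \lambda^{-\epsilon}\lambda^{-s}\cdot(\text{something})$, but more importantly I expect one needs the refinement $G \subset \Omega$ precisely so that, after discarding a $\lambda^{\kappa/2}$-fraction, every surviving $\omega' \in G_{\lambda,t}^{\rho}(\omega)$ with $R^{\lambda}_{\Sigma}(\omega) \cap R^{\lambda}_{\Sigma}(\omega') \neq \emptyset$ lies in a rectangle which is $100$-comparable to $R^{\lambda}_{\Sigma}(\omega)$ itself; by Proposition \ref{PYZ} at most $\lambda^{-C\epsilon}$ rectangles of the incomparable family can be comparable to a fixed one, so each incomparable rectangle accounts for $\le \lambda^{-C\epsilon}$ values of $\omega'$, and the count $\lambda^{-C\epsilon}|P|^{3/2}$ of incomparable rectangles must be balanced against $\lambda^{s}|P|$ using $|P| \lesssim \lambda^{-s-O(\epsilon)}$, which gives $|P|^{3/2}\lambda^{-C\epsilon} = |P| \cdot |P|^{1/2}\lambda^{-C\epsilon} \le |P|\lambda^{-s/2 - C\epsilon}$ — not obviously $\le \lambda^{s-\kappa}|P|$ unless $s/2 + C\epsilon \le \kappa - s$, i.e. $s$ small. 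So the honest argument must use the fractal sets $E(p)$ and not just $|P|$: the refinement $G$ should be chosen (via Proposition \ref{prop6}-style pigeonholing on the skeleton, together with the refinement principle Lemma \ref{refinement}) so that the multiplicity with which a fixed incomparable rectangle is hit by pairs $(p',v') \in G$ is controlled, turning $|P|^{3/2}$ into a bound compatible with $\lambda^{s-\kappa}|P|$.

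The main obstacle, I expect, is exactly this last conversion: matching the Wolff-type count of \emph{rectangles} (which is cubic-root-type, i.e. $(|W||B|)^{3/4}$) against the desired linear-in-$|P|$ bound $\lambda^{s-\kappa}|P|$ for the number of \emph{incidences}, while only having the $(\lambda,\Sigma,s)$-configuration non-concentration at our disposal. The resolution should be a two-ends / bipartite-reflection trick combined with a careful choice of the refined configuration $G$ so that each rectangle in the incomparable family is "populated" by roughly the same number of pairs from $G$ (this is the role of \eqref{form40} in Proposition \ref{prop6}), plus the elementary inequality $|P| \le \lambda^{-s-\epsilon}$ coming from $P$ being a $(\lambda,s,\lambda^{-\epsilon})$-set inside $\mathbf{D}$ applied at scale $\sim 1$. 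Everything else — the dyadic pigeonholing into almost $t$-bipartite pieces, the passage to incomparable families via Corollary \ref{AT14}, the application of Lemma \ref{lemma8}, and the bookkeeping of the $\lambda^{-O(\epsilon)}$ and $(\log 1/\lambda)^{O(1)}$ losses, which are all absorbed into $C_{\kappa}\lambda^{-\epsilon}$ and $\lambda^{-\kappa}$ respectively by choosing $\epsilon = \epsilon(\kappa,s)$ small enough and $\lambda_{0}$ small enough — is routine in the style of Wolff \cite{MR1800068} and Schlag \cite{MR1986697}.
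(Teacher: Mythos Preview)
Your proposal identifies the right ingredient (Lemma \ref{lemma8}) but misses the mechanism that makes it effective. You correctly diagnose the obstruction yourself: a direct application of Wolff's bound with $m=n=1$ yields $|P|^{3/2}$, which is useless. Your suggested fix via Proposition \ref{prop6}-style pigeonholing does not resolve this, because the problem is not uniformity of the configuration but rather the \emph{relation between the type parameters $m$ and $n$}.

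The paper's proof is an iterative stopping-time argument. One constructs a decreasing sequence $G_{0}=\Omega \supset G_{1}\supset\cdots$ together with a decreasing sequence of target multiplicities $\kappa_{h}>\kappa_{h-1}>\cdots>\kappa_{1}=0$ that is $(\kappa s/10)$-dense in $[0,2s]$. At step $j$, either the multiplicity drops below $\lambda^{-\kappa_{h-j-1}}$ on half of $G_{j}$ (and one iterates), or it does not, in which case one has a set $H\subset G_{j}$ with $|H|\geq\tfrac12|G_{j}|$ on which
\[
n := \lambda^{s-\bar\kappa+\zeta}|P| \;\leq\; m_{j+1}(\omega\mid G_{j}) \;\leq\; m_{j}(\omega\mid G_{j}) \;\leq\; \lambda^{-\zeta}n,
\]
with $\zeta\leq \kappa s/10$. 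The \emph{near-equality} of the upper and lower bounds is the whole point: it forces the type parameter $m$ (the number of $\omega'\in\mathbf{W}$ whose rectangle is $100$-comparable to a fixed $R$) to satisfy $m\lesssim\lambda^{-\zeta}n$ as well, because each such $\omega'$ can be fed back into the \emph{upper} multiplicity bound at a single pivot $\beta\in\mathbf{B}$. With $m\approx n$, Wolff's bound $|\bar{\mathcal{R}}^{\lambda}_{\Sigma}|\lesssim(|W||B|/(mn))^{3/4}$ combined with the lower bound $|\bar{\mathcal{R}}^{\lambda}_{\Sigma}|\gtrsim|W|M_{\Sigma}/m$ rearranges to $M_{\Sigma}\lessapprox n^{-1/2}|W|^{1/2}\lessapprox\lambda^{\bar\kappa/2}(t/\lambda)^{s/2}$, which contradicts $M_{\Sigma}\gtrsim\lambda^{\epsilon}(t/\lambda)^{s/2}$ when $\bar\kappa>\kappa$. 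Hence the ``bad'' alternative never occurs before the multiplicity drops below $\lambda^{-\kappa}$.

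Without this bootstrapping to force $m\approx n$, there is no way to extract the factor $\lambda^{\bar\kappa/2}$ that beats $M_{\Sigma}$; the sets $E(p)$ enter only through the trivial lower bound $M_{\Sigma}\geq\lambda^{\epsilon}\Sigma^{-s}$, not through any additional non-concentration you could hope to engineer by refinement.
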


To be precise, $\Sigma$ in Proposition \ref{prop3} refers to the smallest dyadic rational $\bar{\Sigma} \in 2^{-\N}$ with $\Sigma \leq \bar{\Sigma}$, recall Remark \ref{dyadicConvention}. Taking this carefully into account has a small impact on some constants in the proof below, but leave this to the reader.

\begin{proof}[Proof of Proposition \ref{prop3}] Write $M_{\Sigma} := |E(p)|$ for $p \in P$ (this constant is independent of $p \in P$ by Definition \ref{d:config2}). We start by disposing of the special case where $t \leq \lambda^{1 - \kappa/3}$. In this case we claim that $G = \Omega$ works. To see this, note that now $\Sigma = \sqrt{\lambda/t} \geq \lambda^{\kappa/6}$, so $M_{\Sigma} = |E(p)| \leq |\mathcal{S}_{\Sigma}(p)| \lesssim \lambda^{-\kappa/6}$. Furthermore,
\begin{displaymath} \Omega_{\lambda,t}^{\lambda^{-\epsilon}}(p,v) \subset \{(p',v') \in \Omega : p' \in P \cap B(p,\lambda^{1 - \kappa/2})\}, \qquad (p,v) \in \Omega,  \end{displaymath}
assuming that $\epsilon \leq \kappa/6$. Fix $\omega = (p,v) \in \Omega$. Then, for every $p' \in B(p,\lambda^{1 - \kappa/2})$, using the $\Sigma$-separation of $E(p')$, there are $\lesssim \lambda^{-\epsilon}$ possible choices $v' \in E(p')$ such that
\begin{displaymath} \lambda^{-\epsilon}R^{\lambda}_{\Sigma}(\omega) \cap \lambda^{-\epsilon}R^{\lambda}_{\Sigma}(p',v') \neq \emptyset. \end{displaymath}
Consequently,
\begin{align*} |\{\omega' \in \Omega_{\lambda,t}^{\lambda^{-\epsilon}}(\omega) : \lambda^{-\epsilon}R^{\lambda}_{\Sigma}(\omega) \cap \lambda^{-\epsilon}R^{\lambda}_{\Sigma}(\omega') \neq \emptyset\}| & \lesssim \lambda^{-\epsilon} \cdot |P \cap B(p,\lambda^{1 - \kappa/2})|\\
& \lesssim \lambda^{-2\epsilon}\lambda^{(1 - \kappa/2)s}|P| \leq \lambda^{s - 5\kappa/6}|P|. \end{align*}
using the $(\lambda,s,\lambda^{-\epsilon})$-set property of $P$ in the final inequality, as well as $\epsilon \leq \kappa/6$, and $s \leq 1$. We have now proven \eqref{form39} with $G = \Omega$. In the sequel, we may assume that
\begin{equation}\label{form106} t \geq \lambda^{1 - \kappa/3}. \end{equation}

Fix $\epsilon = \epsilon(\kappa,s) > 0$ and $\lambda > 0$ (depending on $\epsilon,\kappa,s$) be so small that
\begin{equation}\label{def:epsilon} A \cdot 18^{\ceil{20/\kappa}}\epsilon < \kappa s \quad \text{and} \quad A^{18^{\ceil{20/\kappa}}}\lambda^{-18^{\ceil{20/\kappa}}\epsilon} < \lambda^{-s}, \end{equation}
where $A \geq 1$ is a suitable absolute constant. We start by defining a sequence of constants
\begin{displaymath} \mathbf{C}_{0} \gg \mathbf{C}_{1} \gg \ldots \gg \mathbf{C}_{h} := \lambda^{-\epsilon}, \end{displaymath}
where  $h = \ceil{20/\kappa}$, and such that $\mathbf{C}_{j} = A\mathbf{C}_{j + 1}^{18}$. Thus,
\begin{equation}\label{form205} \mathbf{C}_{0} \leq A^{18^{\ceil{20/\kappa}}}\lambda^{-18^{\ceil{20/\kappa}}\epsilon} < \lambda^{-s}. \end{equation}

We will abbreviate
\begin{displaymath} n_{j}(\omega \mid G) := |\{\omega' \in G^{\mathbf{C}_{j}}_{\lambda,t}(\omega) : \mathbf{C}_{j}R^{\lambda}_{\Sigma}(\omega) \cap \mathbf{C}_{j}R^{\lambda}_{\Sigma}(\omega') \neq \emptyset\}| \end{displaymath}
for $G \subset \Omega$ and $\omega \in \Omega$. Note that the constants $\mathbf{C}_{j}$ are decreasing functions of "$j$", so $n_{h} \leq n_{h - 1} \leq \ldots \leq n_{0}$. Also, $n_{j}(\omega \mid G)$ is an upper bound for the left hand side of \eqref{form39} for each $0 \leq j \leq h$, since $\mathbf{C}_{j} \geq \lambda^{-\epsilon}$.

We start by recording the "trivial" upper bound
\begin{equation}\label{form45} n_{0}(\omega \mid G) \leq n_{0}(\omega \mid \Omega) \lesssim \mathbf{C}_{0}|P|, \qquad \omega \in \Omega, \, G \subset \Omega. \end{equation}
The first inequality is clear. To see the second inequality, fix $\omega = (p,v) \in \Omega$ and $p' \in P$. Now, if $v' \in \mathcal{S}_{\Sigma}(p')$ is such that
\begin{displaymath} \mathbf{C}_{0}R^{\lambda}_{\Sigma}(p',v') \cap \mathbf{C}_{0}R^{\lambda}_{\Sigma}(\omega) \neq \emptyset, \end{displaymath}
then $|v - v'| \lesssim \mathbf{C}_{0}\Sigma$. But the points $v' \in S_{\Sigma}(p')$ are $\Sigma$-separated, so there are $\lesssim \mathbf{C}_{0}$ possible choices for $v'$, for each $p' \in P$. This gives \eqref{form45}.

The trivial inequality \eqref{form45} tells us that the estimate \eqref{form39} holds automatically with $G = \Omega$ and $\kappa = 2s$, since $\lambda^{-\epsilon} \leq \mathbf{C}_{0} < \lambda^{-s}$ by \eqref{def:epsilon}.

By the previous explanation, if $\kappa > 2s$, there is nothing to prove (we can take $G = \Omega$). Let us then assume that $\kappa \leq 2s$. Then, let
\begin{displaymath} 0 = \kappa_{1} < \kappa_{2} < \ldots < \kappa_{h} = 2s \end{displaymath}
be a $(\kappa s/10)$-dense sequence in $[0,2s]$ (this is why we chose $h = \ceil{20/\kappa}$). We now define a decreasing sequence of sets $\Omega = G_{0} \supset G_{1} \supset \ldots \supset G_{k}$, where $k \leq h$. We set $G_{0} := \Omega$, and in general we will always make sure inductively that $|G_{j + 1}| \geq \tfrac{1}{2}|G_{j}|$ for $j \geq 0$. Note that $n_{0}(\omega \mid G_{0}) \leq \lambda^{-s}|P| = \lambda^{s - \kappa_{h}}|P|$ by \eqref{form45}, for all $\omega \in G_{0}$.

Let us then assume that the sets $G_{0} \supset \ldots \supset G_{j}$ have already been defined. We also assume inductively that $n_{j}(\omega \mid G_{j}) \leq \lambda^{s - \kappa_{h - j}}|P|$ for all $\omega \in G_{j}$. This was true for $j = 0$. Define
\begin{displaymath} H_{j} := \{\omega \in G_{j} : n_{j + 1}(\omega \mid G_{j}) \geq \lambda^{s - \kappa_{h - (j + 1)}}|P|\}, \qquad 0 \leq j \leq k. \end{displaymath}
This is the subset of $G_{j}$ where the lower bound for the multiplicity nearly matches the (inductive) upper bound -- albeit with a slightly different definition of the multiplicity function.  There are two options.
\begin{enumerate}
\item If $|H_{j}| \geq \tfrac{1}{2}|G_{j}|$, then we set $H := H_{j}$ and $k := j$ and the construction of the sets $G_{j}$ terminates.
\item If $|H_{j}| < \tfrac{1}{2}|G_{j}|$, then the set $G_{j + 1} := G_{j} \, \setminus \, H_{j}$ has $|G_{j + 1}| \geq \tfrac{1}{2}|G_{j}|$, and moreover
\begin{displaymath} n_{j + 1}(\omega \mid G_{j + 1}) \leq n_{j + 1}(\omega \mid G_{j}) \leq \lambda^{s - \kappa_{h - (j + 1)}}|P|, \qquad \omega \in G_{j + 1}. \end{displaymath}
In other words, $G_{j + 1}$ is a valid "next set" in our sequence $G_{0} \supset \ldots \supset G_{j + 1}$, and the inductive construction may proceed.
\end{enumerate}

The "hard" case of the proof of Proposition \ref{prop3} occurs when case (1) is reached for some "$j$" with $\kappa_{h - j} > \kappa$. Namely, if case (1) never takes place for such indices "$j$", then we can keep constructing the sets $G_{j}$ until the first index "$j$" where $\kappa_{h - j} < \kappa$. At this stage, the set $G := G_{j}$ satisfies $n_{j}(\omega \mid G) \leq \lambda^{s - \kappa}|P|$ for all $\omega \in G$ (so \eqref{form39} is satisfied because $\mathbf{C}_{j} \geq \lambda^{-\epsilon}$), and since $|G| \geq 2^{-j}|\Omega| \geq 2^{-\ceil{20/\kappa}}|\Omega| \sim_{\kappa} |\Omega|$, the proof is complete. (To be accurate, $G$ is not quite yet a $(\delta,\Sigma,s,C_{\kappa}\lambda^{-\epsilon})$-configuration, but this can be fixed by a single application of Lemma \ref{refinement}).

In fact, we claim that case (1) cannot occur: more precisely, if $\epsilon = \epsilon(\kappa,s) > 0$ is as small as we declared in \eqref{def:epsilon}, then case (1) cannot occur for $\kappa_{h - j} \geq \kappa$. To prove this, we make a counter assumption: case (1) is reached at some index $j \in \{0,\ldots,h\}$ satisfying $\kappa_{h - j} \geq \kappa$. We write $\bar{\kappa} := \kappa_{h - j}$ and
\begin{equation}\label{form91} \kappa_{h - (j + 1)} =: \bar{\kappa} - \zeta, \qquad \text{where }\zeta \leq (\kappa s)/10 \leq (\bar{\kappa}s)/10. \end{equation}
We also set
\begin{displaymath} \bar{G} := G_{j} \quad \text{and} \quad H := H_{j} = \{\omega \in \bar{G} : n_{j + 1}(\omega \mid \bar{G}) \geq \lambda^{s - \bar{\kappa} + \zeta}|P|\}, \end{displaymath}
so that $|H| \geq \tfrac{1}{2}|\bar{G}| \gtrsim_{\kappa} |\Omega|$ by the assumption that case (1) occurred. Finally, we will abbreviate
\begin{equation}\label{form214} n := \lambda^{s - \bar{\kappa} + \zeta}|P|  \end{equation}
in the sequel. Thus, to spell out the definitions, we have $H \subset \bar{G}$, and
\begin{equation}\label{form46} |\{\omega' \in \bar{G}_{\lambda,t}^{\mathbf{C}_{j + 1}}(\omega) : \mathbf{C}_{j + 1}R^{\lambda}_{\Sigma}(\omega) \cap \mathbf{C}_{j + 1}R^{\lambda}_{\Sigma}(\omega') \neq \emptyset\}| \geq n, \qquad \omega \in H. \end{equation}
On the other hand, by the definition of $\bar{G} = G_{j}$, and $\bar{\kappa} = \kappa_{h - j}$, we have
\begin{equation}\label{form47} |\{\omega' \in \bar{G}_{\lambda,t}^{\mathbf{C}_{j}}(\omega) : \mathbf{C}_{j}R^{\lambda}_{\Sigma}(\omega) \cap \mathbf{C}_{j}R^{\lambda}_{\Sigma}(\omega') \neq \emptyset\}| \leq \lambda^{s - \bar{\kappa}}|P| = \lambda^{-\zeta}n, \qquad \omega \in \bar{G}. \end{equation}

We perform a small refinement to $H$. Note that
\begin{displaymath} \sum_{p \in P} |H(p)| = |H| \gtrsim_{\kappa} |\Omega| = M_{\Sigma}|P|, \end{displaymath}
where as usual $H(p) = \{v \in E(p) : (p,v) \in H\}$. Consequently, there exists a subset $\bar{P} \subset P$ of cardinality $|\bar{P}| \gtrsim_{\kappa} |P|$ and a number $\bar{M}_{\Sigma} \gtrsim_{\kappa} M_{\Sigma}$ such that $|H(p)| \geq \bar{M}_{\Sigma}$ for all $p \in \bar{P}$. For each $p \in \bar{P}$, we further pick (arbitrarily) a subset $\bar{H}(p) \subset H(p)$ of cardinality precisely $|\bar{H}(p)| = \bar{M}_{\Sigma}$. Then, we define $\bar{H} := \{(p,v) : p \in \bar{P} \text{ and } v \in \bar{H}(p)\} \subset H$. Note that $|\bar{H}| \sim_{\kappa} |\Omega|$, and now $\bar{H}$ has the additional nice feature compared to $H$ that
\begin{equation}\label{form193} |\bar{H}(p)| = \bar{M}_{\Sigma}, \qquad p \in \bar{P}. \end{equation}

Let $\mathcal{B}$ be a cover of $P$ by balls of radius $\tfrac{1}{4}t/\mathbf{C}_{j + 1}$ such that even the concentric balls of radius $2t\mathbf{C}_{j + 1}$ (that is, the balls $\{8\mathbf{C}_{j + 1}^{2}B : B \in \mathcal{B}\}$) have overlap bounded by $\lambda^{-C(\kappa)\epsilon}$ (this is possible, since $\mathbf{C}_{j} \leq \lambda^{-C(\kappa)\epsilon}$ for all $1 \leq j \leq h$, recall \eqref{form205}). Then, we choose the ball $B(p_{0},\tfrac{1}{4}t/\mathbf{C}_{j + 1}) \in \mathcal{B}$ in such a way that the ratio
\begin{displaymath} \theta := \frac{|\bar{P} \cap B(p_{0},\tfrac{1}{4}t/\mathbf{C}_{j + 1})|}{|P \cap B(p_{0},2\mathbf{C}_{j + 1}t)|} \end{displaymath}
is maximised. We claim that $\theta \geq \lambda^{C(\kappa)\epsilon}$: this follows from the estimate
\begin{displaymath} |\bar{P}| \leq \sum_{B \in \mathcal{B}} |\bar{P} \cap B| \leq \theta \sum_{B \in \mathcal{B}} |P \cap 8\mathbf{C}_{j + 1}^{2}B| \leq \theta \lambda^{-C(\kappa)\epsilon}|P|, \end{displaymath}
and recalling that $|\bar{P}| \gtrsim_{\kappa} |P|$. Now, we set

\begin{equation}\label{form89} W := \bar{P} \cap B(p_{0},\tfrac{1}{4}t/\mathbf{C}_{j + 1}) \quad \text{and} \quad B := P \cap B(p_{0},2\mathbf{C}_{j + 1}t) \, \setminus \, B(p_{0},\tfrac{1}{2}t/\mathbf{C}_{j + 1}), \end{equation}
so that
\begin{equation}\label{form42} |B| \leq |P \cap B(p_{0},2\mathbf{C}_{j + 1}t)| = \theta^{-1}|W| \lesssim_{\kappa} \lambda^{-C(\kappa)\epsilon}|W|. \end{equation}
We also set
\begin{displaymath} \mathbf{W} := \{(p,v) \in \bar{H} : p \in W\} \quad \text{and} \quad \mathbf{B} := \{(p,v) \in \bar{G} : p \in B\}. \end{displaymath}
Let us note that
\begin{equation}\label{form42b}  |\mathbf{W}(p)| = |\{v \in E(p) : (p,v) \in \mathbf{W}\}| \geq |\bar{H}(p)| = \bar{M}_{\Sigma} \sim_{\kappa} M_{\Sigma}, \quad p \in W, \end{equation}
since $W \subset \bar{P}$, recall \eqref{form193}. We now claim that
\begin{equation}\label{form195} \omega \in \mathbf{W} \quad \Longrightarrow \quad \bar{G}_{\lambda,t}^{\mathbf{C}_{j + 1}}(\omega) \subset \mathbf{B}_{\lambda,t}^{\mathbf{C}_{j + 1}}(\omega). \end{equation}
Indeed, fix $\omega = (p,v) \in \mathbf{W}$ and $(p',v') \in \bar{G}^{\mathbf{C}_{j + 1}}_{\lambda,t}(\omega)$. We simply need to show that $p' \in B$, and this follows from $p \in W \subset B(p_{0},\tfrac{1}{4}t/\mathbf{C}_{j + 1})$, and $t/\mathbf{C}_{j + 1} \leq |p - p'| \leq \mathbf{C}_{j + 1}t$, and the triangle inequality:
\begin{displaymath} \tfrac{3}{4}t/\mathbf{C}_{j + 1} \leq |p - p'| - |p_{0} - p| \leq |p_{0} - p'| \leq |p_{0} - p| + |p - p'| \leq 2\mathbf{C}_{j + 1}t. \end{displaymath}
From \eqref{form195}, and since $\mathbf{W} \subset \bar{H} \subset H$, and recalling \eqref{form46}, it follows
\begin{equation}\label{form41}  |\{\beta \in \mathbf{B}_{\lambda,t}^{\mathbf{C}_{j + 1}}(\omega) : \mathbf{C}_{j + 1}R^{\lambda}_{\Sigma}(\omega) \cap \mathbf{C}_{j + 1}R^{\lambda}_{\Sigma}(\beta) \neq \emptyset\}| \geq n > 0, \qquad \omega \in \mathbf{W}. \end{equation}

Next, we consider the rectangles
\begin{displaymath} \mathcal{R}^{\lambda}_{\Sigma} := \{R^{\lambda}_{\Sigma}(\omega) : \omega \in \mathbf{W}\}. \end{displaymath}
To be precise, let $\mathcal{R}^{\lambda}_{\Sigma}$ be the maximal family of pairwise $100$-incomparable $(\lambda,\Sigma)$-rectangles inside the family indicated above. Below, we will denote the $100$-comparability of $R,R'$ by $R \sim_{100} R'$. We now seek to show that every rectangle in $\mathcal{R}^{\lambda}_{\Sigma}$ has a high type relative to the pair $(W,B)$, in the terminology of Definition \ref{def:WolffType}.

To this end, we first define the quantity
\begin{equation}\label{form87} m(R) = |\{\omega \in \mathbf{W} : R \sim_{100} R^{\lambda}_{\Sigma}(\omega)\}|, \end{equation}
The value of $m(R)$ may vary between $1$ and $\leq \lambda^{-4}$, but by pigeonholing, we may find a subset $\bar{\mathcal{R}}^{\lambda}_{\Sigma} \subset \mathcal{R}^{\lambda}_{\Sigma}$ with the property $m(R) \equiv m \in [1,\lambda^{-4}]$ for all $R \in \bar{\mathcal{R}}^{\lambda}_{R}$, and moreover
\begin{equation}\label{form194} \sum_{\omega \in \mathbf{W}} |\{R \in \bar{\mathcal{R}}^{\lambda}_{\Sigma} : R \sim_{100} R^{\lambda}_{\Sigma}(\omega)\}| \gtrapprox_{\lambda} \sum_{\omega \in \mathbf{W}} |\{R \in \mathcal{R}^{\lambda}_{\Sigma} : R \sim_{100} R^{\lambda}_{\Sigma}(\omega)\}|. \end{equation}
Now, we have
\begin{align} |\mathcal{\bar{R}}^{\lambda}_{\Sigma}| & = \frac{1}{m} \sum_{R \in \bar{\mathcal{R}}^{\lambda}_{\Sigma}} \sum_{p \in W} \mathop{\sum_{v \in E(p)}}_{(p,v) \in \mathbf{W}} \mathbf{1}_{\{R \sim_{100} R^{\lambda}_{\Sigma}(p,v)\}} \notag\\
& \stackrel{\eqref{form194}}{\gtrapprox_{\lambda}} \frac{1}{m} \sum_{p \in W} \mathop{\sum_{v \in E(p)}}_{(p,v) \in \mathbf{W}} |\{R \in \mathcal{R}^{\lambda}_{\Sigma} : R \sim_{100} R^{\lambda}_{\Sigma}(p,v)\}| \notag\\
&\label{form48} \stackrel{\eqref{form42b}}{\geq} \frac{|W|\bar{M}_{\Sigma}}{m} \sim_{\kappa} \frac{|W|M_{\Sigma}}{m}.  \end{align}
The second-to-last inequality is true because every rectangle $R^{\lambda}_{\Sigma}(p,v)$ with $(p,v) \in \mathbf{W}$ is $100$-comparable to at least one rectangle in $\mathcal{R}^{\lambda}_{\Sigma}$, by definition of $\mathcal{R}^{\lambda}_{\Sigma}$.

\subsubsection{Proving that $m \lessapprox n$} We next claim that
\begin{equation}\label{form196} m(R) \leq \lambda^{-\zeta} n, \qquad R \in \mathcal{R}^{\lambda}_{\Sigma}, \end{equation}
where $n \geq 1$ was the constant defined in \eqref{form214}. In particular $m \leq \lambda^{-\zeta}n$. The estimate \eqref{form196} will eventually follow from the inductive hypothesis \eqref{form47}, but the details take some work. Let $R^{\lambda}_{\Sigma}(\omega) \in \mathcal{R}^{\lambda}_{\Sigma}$, with $\omega = (p,v) \in \mathbf{W}$. According to \eqref{form41}, there exists at least one element $\beta = (q,w) \in \mathbf{B}^{\mathbf{C}_{j + 1}}_{\lambda,t}(\omega) \subset \bar{G}$ such that
\begin{equation}\label{form199} \mathbf{C}_{j + 1}R^{\lambda}_{\Sigma}(\omega) \cap \mathbf{C}_{j + 1}R^{\lambda}_{\Sigma}(\beta) \neq \emptyset. \end{equation}
We claim that if $\omega' = (p',v') \in \mathbf{W}$ is any element such that $R^{\lambda}_{\Sigma}(\omega) \sim_{100} R^{\lambda}_{\Sigma}(\omega')$, then automatically
\begin{equation}\label{form197} \omega' \in \bar{G}_{\lambda,t}^{\mathbf{C}_{j}}(\beta) \quad \text{and} \quad \mathbf{C}_{j}R^{\lambda}_{\Sigma}(\omega') \cap \mathbf{C}_{j}R^{\lambda}_{\Sigma}(\beta) \neq \emptyset. \end{equation}
This will show that
\begin{displaymath} m(R) \leq |\{\omega' \in \bar{G}_{\lambda,t}^{\mathbf{C}_{j}}(\beta) : \mathbf{C}_{j}R^{\lambda}_{\Sigma}(\beta) \cap \mathbf{C}_{j}R^{\lambda}_{\Sigma}(\omega') \neq \emptyset\}| \stackrel{\eqref{form47}}{\leq} \lambda^{-\zeta}n, \end{displaymath}
as desired. The points $\omega,\omega' \in \mathbf{W}$ and $\beta \in \mathbf{B}$, as above, will be fixed for the remainder of this subsection.

The second claim in \eqref{form197} is easy: since $R^{\lambda}_{\Sigma}(\omega) \sim_{100} R^{\lambda}_{\Sigma}(\omega')$, it follows from Lemma \ref{lemma6} that $R^{\lambda}_{\Sigma}(\omega') \subset AR^{\lambda}_{\Sigma}(\omega) \subset \mathbf{C}_{j + 1}R^{\lambda}_{\Sigma}(\omega)$ for a suitable absolute constant $A \geq 1$. Lemma \ref{lemma6} then yields
\begin{equation}\label{form200} \mathbf{C}_{j + 1}R^{\lambda}_{\Sigma}(\omega) \subset A\mathbf{C}_{j + 1}^{5}R^{\lambda}_{\Sigma}(\omega') \subset \mathbf{C}_{j}R^{\lambda}_{\Sigma}(\omega'). \end{equation}
The second part of \eqref{form197} follows from this inclusion, and \eqref{form199}.

We turn to the first claim in \eqref{form197}. Since $\omega' = (p',v') \in \mathbf{W}$ and $\beta = (q,w) \in \mathbf{B}$, we have $p' \in W$ and $q \in B$, so
\begin{displaymath} t/\mathbf{C}_{j} \leq \tfrac{1}{4}t/\mathbf{C}_{j + 1} \leq |p' - q| \leq 2\mathbf{C}_{j + 1}t \leq \mathbf{C}_{j}t. \end{displaymath}
It therefore only remains to show that $\Delta(p',q) \leq \mathbf{C}_{j}\lambda$. To this end, recall that $\omega = (p,v)$. Then, since $\beta = (q,w) \in \mathbf{B}^{\mathbf{C}_{j + 1}}_{\lambda,t}(\omega)$, we have
\begin{displaymath} \bar{\lambda} := \Delta(p,q) \leq \mathbf{C}_{j + 1}\lambda \quad \text{and} \quad \bar{t} := |p - q| \leq \mathbf{C}_{j + 1}t. \end{displaymath}
Consequently,
\begin{displaymath} \bar{\Sigma} := \lambda/\sqrt{(\bar{\lambda} + \lambda)(\bar{t} + \lambda)} \gtrsim \mathbf{C}_{j + 1}^{-1}\sqrt{\lambda/t} = \mathbf{C}_{j + 1}^{-1}\Sigma,\end{displaymath}
and because of this,
\begin{displaymath} A\mathbf{C}_{j + 1}^{2}R^{\lambda}_{\bar{\Sigma}}(\omega) \cap A\mathbf{C}_{j + 1}^{2}R^{\lambda}_{\bar{\Sigma}}(\beta) \supset \mathbf{C}_{j + 1}R^{\lambda}_{\Sigma}(\omega) \cap \mathbf{C}_{j + 1}R^{\lambda}_{\Sigma}(\beta) \stackrel{\eqref{form199}}{\neq} \emptyset. \end{displaymath}
It now follows from Corollary \ref{cor2} applied at scale $\lambda$ and with constant $C = A\mathbf{C}_{j + 1}^{2}$ that
\begin{equation}\label{form201} R^{\lambda}_{\Sigma}(\omega) \subset \mathbf{C}_{j + 1}'R^{\lambda}_{\Sigma}(\beta) \subset S^{\mathbf{C}_{j + 1}'\lambda}(q), \end{equation}
for some $\mathbf{C}_{j + 1}' \lesssim \mathbf{C}_{j + 1}^{8}$. On the other hand, we saw in \eqref{form200} that
\begin{displaymath} R^{\lambda}_{\Sigma}(\omega) \subset A\mathbf{C}_{j + 1}^{5}R^{\lambda}_{\Sigma}(\omega') \subset S^{A\mathbf{C}^{5}_{j + 1}}(p'), \end{displaymath}
and therefore $R^{\lambda}_{\Sigma}(\omega)$ is contained in the intersection $S^{\mathbf{C}_{j + 1}^{9}\lambda}(q) \cap S^{\mathbf{C}^{9}_{j + 1}}(p')$. But this intersection can be covered by boundedly many discs of radius $\mathbf{C}_{j + 1}^{9}\lambda/\sqrt{\Delta(p',q)|p' - q|}$, which shows that
\begin{displaymath} \sqrt{\frac{\lambda}{t}} = \Sigma \lesssim  \frac{\mathbf{C}_{j + 1}^{18}\lambda}{\sqrt{\Delta(p',q)|p' - q|}}, \end{displaymath}
and rearranging this we find $\Delta(p',q) \lesssim \mathbf{C}_{j + 1}^{12}\lambda$. This proves that $\Delta(p',q) \leq \mathbf{C}_{j}\lambda$, since we chose $\mathbf{C}_{j} = A\mathbf{C}_{j + 1}^{18}$ above \eqref{form205}. We have now shown \eqref{form197}, and therefore \eqref{form196}.

\subsubsection{The type of rectangles in $\bar{\mathcal{R}}^{\lambda}_{\Sigma}$} We claim that that every $R \in \bar{\mathcal{R}}^{\lambda}_{\Sigma}$ has type $(\geq \bar{m},\geq \bar{n})_{\rho}$ relative to $(W,B)$, where
\begin{equation}\label{form204} \bar{m} := \lambda^{\rho}m, \quad \bar{n} \geq \lambda^{\rho}n, \quad \text{and} \quad \rho = 10 \cdot 18^{\ceil{20/\kappa}}\epsilon. \end{equation}
Let us recall from Definition \ref{def:WolffType} what this means: a $(\lambda,\Sigma)$-rectangle $R$ has type $(\geq \bar{m},\geq \bar{n})_{\rho}$ relative to $W,B$ if there exists at least $\bar{m}$ points $\{p_{1},\ldots,p_{\bar{m}}\} \subset W$ and at least $\bar{n}$ points $\{q_{1},\ldots,q_{\bar{n}}\} \subset B$ such that
\begin{equation}\label{form88} R \subset S^{\lambda^{1 - \rho}}(p_{k}) \cap S^{\lambda^{1 - \rho}}(q_{l}), \qquad 1 \leq k \leq \bar{m}, \, 1 \leq l \leq \bar{n}. \end{equation}
To see this, recall that $m(R) \equiv m$ for all $R \in \bar{\mathcal{R}}^{\lambda}_{\Sigma}$, where $m(R)$ was defined in \eqref{form87}: there exist $m$ pairs $\{\omega_{1},\ldots,\omega_{m}\} \subset \mathbf{W}$ such that $R \sim_{100} R^{\lambda}_{\Sigma}(\omega_{j})$. Writing $\omega_{k} = (p_{k},v_{k})$, and using Lemma \ref{lemma6}, this implies
\begin{displaymath} R \subset AR^{\lambda}_{\Sigma}(\omega_{k}) \subset S^{\lambda^{1 - \epsilon}}(p_{k}), \end{displaymath}
where $A \geq 1$ is absolute, and the second inclusion holds for $\lambda > 0$ small enough. This is even better than the first inclusion in \eqref{form88}. There is a small problem: some of the points "$p_{k}$" may be repeated, even though the pairs $\omega_{k} = (p_{k},v_{k}) \in \mathbf{W}$ are distinct. However, for $p_{k} \in \mathbf{D}$ fixed, there are $\lesssim 1$ choices $v_{k} \in E(p)$ such that $R \subset AR^{\lambda}_{\Sigma}(p_{k},v_{k})$ (since $E(p)$ is $\Sigma$-separated), so the number of distinct points "$p_{k}$" is $\gtrsim m$, and certainly $\geq \bar{m}$.

The proof of the second inclusion in \eqref{form88} is similar, but now based on \eqref{form41}: for all $R = R^{\lambda}_{\Sigma}(\omega) \in \mathcal{R}^{\lambda}_{\Sigma}$, there exist $n$ pairs
\begin{displaymath} \{\beta_{1},\ldots,\beta_{n}\} \subset \mathbf{B}^{\mathbf{C}_{j + 1}}_{\lambda,t}(\omega) \quad \text{s.t.} \quad \mathbf{C}_{j + 1}R \cap \mathbf{C}_{j + 1}R^{\lambda}_{\Sigma}(\beta_{l}) \neq \emptyset \text{ for } 1 \leq l \leq n. \end{displaymath}
If we write $\beta_{l} = (q_{l},w_{l})$, then the same argument which we used in \eqref{form201} shows that
\begin{equation}\label{form203} R^{\lambda}_{\Sigma}(\omega) \subset \mathbf{C}_{j + 1}^{9}R^{\lambda}_{\Sigma}(\beta_{l}) \subset S^{\mathbf{C}_{j + 1}^{9}\lambda}(q_{l}) \subset S^{\lambda^{1 - \rho}}(q_{l}), \qquad 1 \leq l \leq n, \end{equation}
using in the final inclusion that
\begin{displaymath} \mathbf{C}_{j + 1}^{9} \leq \mathbf{C}_{0}^{9} \stackrel{\eqref{form205}}{\leq} A^{9 \cdot 18^{\ceil{20/\kappa}}} \cdot \lambda^{-9 \cdot 18^{\ceil{20/\kappa}}\epsilon} \stackrel{\eqref{form204}}{\leq} \lambda^{-\rho}, \end{displaymath}
assuming $\lambda > 0$ small enough (depending on $\epsilon,\kappa$) in the final inequality. This proves the second inclusion in \eqref{form88}. Again, all the "$n$" points $q_{l}$ need not be distinct, but for every fixed $q_{l}$, the first inclusion in \eqref{form203} can hold for $\lesssim \mathbf{C}_{j + 1}^{5} \leq \lambda^{-\rho}$ choices of "$w_{l}$", so $|\{q_{1},\ldots,q_{l}\}| \gtrsim \lambda^{\rho}n$, as desired. This completes the proof of \eqref{form88}.

\subsubsection{Applying Lemma \ref{lemma8}} To find a contradiction, and conclude the proof, we aim to apply Lemma \ref{lemma8} to bound the cardinality of $\bar{\mathcal{R}}^{\lambda}_{\Sigma}$ from above. Notice that, by the definition of $W,B$, see \eqref{form89}, the definition of "$\rho$" at \eqref{form204}, and since $4\mathbf{C}_{j + 1} \leq \lambda^{-\rho}$, the pair $(W,B)$ is $(\lambda,\rho)$-almost $t$-bipartite. In the previous section, we showed that every rectangle $R \in \bar{\mathcal{R}}_{\Sigma}^{\lambda}$ has type $(\geq \bar{m},\geq \bar{n})_{\rho}$ relative to $W,B$. Therefore, Lemma \ref{lemma8} is applicable to $\bar{\mathcal{R}}^{\lambda}_{\Sigma}$. This yields the following inequality (see explanations below it):
\begin{displaymath} \frac{|W| M_{\Sigma}}{m} \stackrel{\eqref{form48}}{\lessapprox_{\lambda}} |\bar{\mathcal{R}}^{\lambda}_{\Sigma}| \lessapprox_{\lambda} \left(\frac{|B||W|}{mn} \right)^{3/4} + \frac{|B|}{m} + \frac{|W|}{n} \lessapprox_{\lambda} \left(\frac{|W|^{2}}{mn} \right)^{3/4} + \lambda^{-\zeta} \frac{|W|}{m}. \end{displaymath}
To make the estimate look neater, we allowed the "$\approx_{\lambda}$" notation hide constants of the form $\lambda^{-C(\kappa)\epsilon}$, where $C(\kappa) \geq 1$ is a constant depending only on $\kappa$. In the second inequality, we are hiding the constant $\lambda^{-C\rho} = \lambda^{-C(\kappa)\epsilon}$ produced by Lemma \ref{lemma8}. In the third inequality, we are hiding the constant $\lambda^{-C(\kappa)\epsilon}$ produced by \eqref{form42}. The factor $\lambda^{-\zeta}$ in the second inequality appears from \eqref{form196}, and it is a good moment to recall from \eqref{form91} that $\zeta \leq (\kappa s)/10$.

We observe immediately that the second term on the right cannot dominate the left hand side for $\epsilon = \epsilon(\kappa,s) > 0$ sufficiently small (the choice in \eqref{def:epsilon} should suffice), and $\lambda = \lambda(\epsilon,\kappa,s) > 0$ sufficiently small: this is because $M_{\Sigma} \geq \lambda^{\epsilon}\Sigma^{-s} = \lambda^{\epsilon}\sqrt{\lambda/t}^{-s} \geq \lambda^{\epsilon - \kappa s/6} \geq \lambda^{-\kappa s/7}$ (using our assumption \eqref{form106}), whereas $\lambda^{-\zeta} \leq \lambda^{-\kappa s/10}$ by \eqref{form91}.

Therefore, the term $|W|^{3/2}/(mn)^{3/4}$ needs to dominate the left hand side. Rearranging this inequality, using again $m \leq \lambda^{-\zeta}n$, recalling that $n = \lambda^{s - \bar{\kappa} + \zeta}|P|$, and finally using the $(\lambda,s,\lambda^{-\epsilon})$-set property of $P$ to bound $|W| \leq \lambda^{-\epsilon}t^{s}|P|$ leads to
\begin{displaymath} M_{\Sigma} \lessapprox_{\lambda} \lambda^{-\zeta/4}n^{-1/2}|W|^{1/2} \leq \lambda^{-s/2 - \zeta + \bar{\kappa}/2} \left(\frac{|W|}{|P|} \right)^{1/2} \lessapprox_{\lambda} \lambda^{-\zeta + \bar{\kappa}/2}\left(\frac{t}{\lambda} \right)^{s/2}. \end{displaymath}
This inequality is impossible for $\epsilon,\lambda > 0$ small enough depending on $\kappa$, since $\bar{\kappa} \geq \kappa$, and $\zeta \leq (\kappa s)/10 \leq \bar{\kappa}/10$ -- and finally because $M_{\Sigma} \equiv |E(p)| \geq \lambda^{\epsilon}\Sigma^{-s} = \lambda^{\epsilon}(t/\lambda)^{s/2}$.

To summarise, we have now shown that the case (1) in the construction of the sequence $\{G_{j}\}$ cannot occur as long as long as $\kappa_{h - j} \geq \kappa$. As we explained below the case distinction, this allows us to set $G := G_{j}$ for the first index satisfying $\kappa_{h - j} < \kappa$. The proof of Proposition \ref{prop3} is complete. \end{proof}

We will use Proposition \ref{prop3} via Theorem \ref{thm3} below. First, as promised at the beginning of this section, we introduce \emph{the partial multiplicity functions}. Compare these with the \emph{total multiplicity function} from Definition \ref{grandMultiplicity}.

 \begin{definition}[Partial multiplicity function]\label{def:multFunction1} Fix $0 < \delta \leq \Delta \leq \lambda \leq t \leq 1$ and $\rho \geq 1$. Let $P \subset \mathcal{D}_{\delta}$, and $E(p) \subset \mathcal{S}_{\delta}(p)$ for all $p \in P$. Write $\Omega = \{(p,v) : p \in P \text{ and } v \in E(p)\}$, and let $\sigma \in 2^{-\N}$ be the smallest dyadic rational larger than $\Delta/\sqrt{\lambda t}$. For $G \subset \Omega$, we define
\begin{displaymath} m_{\Delta,\lambda,t}^{\rho,C}(\omega \mid G) := |\{\omega' \in (G^{\Delta}_{\sigma})_{\lambda,t}^{\rho}(\omega) : CR^{\Delta}_{\sigma}(\omega) \cap CR^{\Delta}_{\sigma}(\omega') \neq \emptyset\}|, \qquad \omega \in G \cup G^{\Delta}_{\sigma}. \end{displaymath}
Here $G^{\Delta}_{\sigma}$ is the $(\Delta,\sigma)$-skeleton of $G$. \end{definition}

\begin{remark} The only interesting parameters "$\Delta$" for us will be $\Delta = \delta$ and $\Delta = \lambda$. If $\Delta = \delta$, we will usually write $\sigma = \Delta/\sqrt{\lambda t} = \delta/\sqrt{\lambda t}$, and for $\Delta = \lambda$, we will instead use the capital letter $\Sigma = \Delta/\sqrt{\lambda t} = \sqrt{\lambda /t}$. Also, to be accurate, the notation $\sigma,\Sigma$ typically refers to the smallest dyadic rational greater than $\delta/\sqrt{\lambda t}$ and $\sqrt{\lambda/t}$, respectively.

Finding a nice notation for the partial multiplicity functions is a challenge, due to the large number of parameters. In addition to the "range" and "constant" parameters $\rho$ and $C$, one could add up to $4$ further parameters: two "skeleton" parameters and two "rectangle" parameters. In practice, however, if the triple $(\Delta,\lambda,t)$ is given, the only useful rectangles are the $(\Delta,\Delta/\sqrt{\lambda t})$-rectangles. This relationship stems from Lemma \ref{lemma5}. So, we have decided against introducing the fourth parameter independently.

 \end{remark}

\begin{thm}\label{thm3} For every $\kappa > 0$ and $s \in (0,1]$, there exist $\epsilon_{0} := \epsilon_{0}(\kappa,s) \in (0,\tfrac{1}{2}]$ and $\delta_{0} = \delta_{0}(\epsilon,\kappa,s) > 0$ such that the following holds for all $\delta \in (0,\delta_{0}]$ and $\epsilon \in (0,\epsilon_{0}]$.

Let $\Omega$ be a $(\delta,\delta,s,\delta^{-\epsilon})$-configuration with $P := \pi_{\R^{3}}(\Omega)$. Fix $\delta \leq \lambda \leq t \leq 1$. Then, there exists a $(\delta,\delta,s,C\delta^{-\epsilon})$-configuration $G \subset \Omega$ such that $C \approx_{\delta,\kappa} 1$, $|G| \approx_{\delta,\kappa} |\Omega|$, and
\begin{equation}\label{form18} m_{\lambda,\lambda,t}^{\delta^{-\epsilon_{0}},\delta^{-\epsilon_{0}}}(\omega \mid G) \leq \delta^{-\kappa}\lambda^{s}|P|_{\lambda}, \qquad \omega \in G^{\lambda}_{\Sigma}. \end{equation}
\end{thm}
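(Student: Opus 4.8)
The plan is to deduce Theorem \ref{thm3} from Proposition \ref{prop3}, using Proposition \ref{prop6} to pass from the given $(\delta,\delta,s,\delta^{-\epsilon})$-configuration $\Omega$ to a $(\lambda,\Sigma,s,\cdot)$-configuration at the coarser scale $\lambda$, where I write $\Sigma := \sqrt{\lambda/t}$ (rounded up to a dyadic rational, cf.\ Remark \ref{dyadicConvention}); note that $\lambda \leq \Sigma \leq 1$ since $\lambda t \leq 1$. First I would fix $\kappa,s$, apply Proposition \ref{prop3} with the parameter pair $(\kappa/2,s)$ to produce the constants $\epsilon_{3} := \epsilon(\kappa/2,s) \in (0,\tfrac12]$ and $\lambda_{0}^{\mathrm{P3}} := \lambda_{0}(\epsilon_{3},\kappa/2,s)>0$, set $c_{0} := \min\{\tfrac12,\kappa/(2s)\} \in (0,1)$, and declare $\epsilon_{0} := \epsilon_{0}(\kappa,s) := \tfrac{1}{16}\min\{c_{0}\epsilon_{3},\kappa,s\}$. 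Finally $\delta_{0} = \delta_{0}(\epsilon,\kappa,s)$ is chosen small enough that the poly-logarithmic factor hidden in the $\approx_{\delta}$-notation of Proposition \ref{prop6} is absorbed into $\delta^{-\epsilon}$, and so that $\delta_{0}^{c_{0}} \leq \lambda_{0}^{\mathrm{P3}}$.

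In both cases I would begin by applying Proposition \ref{prop6} with $(\delta,\delta)$ replaced by $(\lambda,\Sigma)$ (legitimate, since $\delta \leq \delta$ and $\lambda \leq \Sigma$): this yields $G_{0} \subset \Omega$ with $|G_{0}| \approx_{\delta} |\Omega|$ whose $(\lambda,\Sigma)$-skeleton $\Omega_{1} := (G_{0})^{\lambda}_{\Sigma}$ is a $(\lambda,\Sigma,s,C_{\mathrm{skel}})$-configuration with $C_{\mathrm{skel}} \approx_{\delta} \delta^{-\epsilon}$ (so $C_{\mathrm{skel}} \leq \delta^{-2\epsilon}$ for $\delta \leq \delta_{0}$), and which satisfies the fibre-counting identity \eqref{form40}, i.e.\ $|\mathbf{p}\otimes\mathbf{v}| \approx_{\delta} |\Omega|/|\Omega_{1}|$ for every $(\mathbf{p},\mathbf{v}) \in \Omega_{1}$ (the fibres $\mathbf{p}\otimes\mathbf{v}$ of Remark \ref{rem1}). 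Its base $P_{1} := \pi_{\R^{3}}(\Omega_{1})$ lies inside $\mathcal{D}_{\lambda}(P)$, hence $|P_{1}| \leq |P|_{\lambda}$, and $P_{1}$ is a $(\lambda,s,C_{\mathrm{skel}})$-set.

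The argument then splits according to the relative sizes of $\delta$ and $\lambda$. If $\lambda > \delta^{c_{0}}$ — so $\delta$ is very fine compared to $\lambda$ and $\delta^{-\kappa}$ dwarfs every power of $\lambda$ — I would take $G$ to be a $(\delta,\delta,s,C\delta^{-\epsilon})$-subconfiguration of $G_{0}$ with $C \approx_{\delta} 1$ and $|G| \approx_{\delta} |\Omega|$ (one application of Lemma \ref{refinement}), whose skeleton satisfies $G^{\lambda}_{\Sigma} \subseteq \Omega_{1}$, and bound $m_{\lambda,\lambda,t}^{\delta^{-\epsilon_{0}},\delta^{-\epsilon_{0}}}(\omega \mid G)$ trivially: for $\omega=(\mathbf p,\mathbf v)\in G^{\lambda}_{\Sigma}$, the intersection condition forces $|\mathbf v-\mathbf v'|\lesssim\delta^{-\epsilon_{0}}\Sigma$, so there are $\lesssim \delta^{-\epsilon_{0}}$ admissible $\mathbf v'$ for each $\mathbf p'$ (by $\Sigma$-separation of $\mathcal{S}_{\Sigma}(\mathbf p')$), while the admissible $\mathbf p'$ lie in $P_{1}\cap B(\mathbf p,\lesssim\delta^{-\epsilon_{0}}t)$, of which there are $\lesssim C_{\mathrm{skel}}(\delta^{-\epsilon_{0}}t)^{s}|P_{1}|$; absorbing absolute constants into an extra $\delta^{-\epsilon}$ gives $\lesssim \delta^{-\epsilon_{0}(1+s)-3\epsilon}\lambda^{-s}\cdot\lambda^{s}|P|_{\lambda}$, which is $\leq \delta^{-\kappa}\lambda^{s}|P|_{\lambda}$ because $\lambda^{-s}<\delta^{-c_{0}s}$ and $c_{0}s+\epsilon_{0}(1+s)+3\epsilon\leq\kappa$ (this is exactly how $c_{0}$ and $\epsilon_{0}$ were chosen, checking $\kappa<2s$ where $c_{0}s=\kappa/2$, and $\kappa\geq 2s$ where $c_{0}s=s/2$). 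If instead $\lambda \leq \delta^{c_{0}}$, then $\lambda \leq \delta_{0}^{c_{0}} \leq \lambda_{0}^{\mathrm{P3}}$ and $C_{\mathrm{skel}} \leq \delta^{-2\epsilon} \leq \delta^{-c_{0}\epsilon_{3}} \leq \lambda^{-\epsilon_{3}}$ (using $2\epsilon\leq 2\epsilon_{0}\leq c_{0}\epsilon_{3}$ and $\lambda\leq\delta^{c_{0}}$), so $\Omega_{1}$ is a valid input for Proposition \ref{prop3} with parameters $(\kappa/2,s)$. This returns a $(\lambda,\Sigma,s,C_{\kappa}\lambda^{-\epsilon_{3}})$-subconfiguration $G_{1}\subset\Omega_{1}$ with $|G_{1}|\sim_{\kappa}|\Omega_{1}|$ and the multiplicity bound $\leq\lambda^{s-\kappa/2}|P_{1}|$ for $\lambda^{-\epsilon_{3}}$-dilated $(\lambda,\Sigma)$-rectangles. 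I would then pull $G_{1}$ back by setting $G:=\bigcup_{(\mathbf p,\mathbf v)\in G_{1}}\mathbf p\otimes\mathbf v\subset G_{0}\subset\Omega$, so that $G^{\lambda}_{\Sigma}=G_{1}$ and, by \eqref{form40}, $|G|=\sum_{(\mathbf p,\mathbf v)\in G_{1}}|\mathbf p\otimes\mathbf v|\approx_{\delta,\kappa}|\Omega|$; a final application of Lemma \ref{refinement} restores constant cardinality (keeping the skeleton inside $G_{1}$ and the constant $\approx_{\delta,\kappa}1$). Since $\delta^{-\epsilon_{0}}\leq\delta^{-c_{0}\epsilon_{3}}\leq\lambda^{-\epsilon_{3}}$, monotonicity in the set and in the range/dilation parameters gives $m_{\lambda,\lambda,t}^{\delta^{-\epsilon_{0}},\delta^{-\epsilon_{0}}}(\omega\mid G)\leq\lambda^{s-\kappa/2}|P_{1}|$ for $\omega\in G^{\lambda}_{\Sigma}\subseteq G_{1}$, and $\lambda^{s-\kappa/2}|P_{1}|\leq\delta^{-\kappa/2}\lambda^{s}|P|_{\lambda}\leq\delta^{-\kappa}\lambda^{s}|P|_{\lambda}$ using $\delta\leq\lambda$ and $|P_{1}|\leq|P|_{\lambda}$; this is \eqref{form18}.

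The main obstacle, and the reason for the case split, is the two-scale bookkeeping of the non-concentration constant: Proposition \ref{prop6} only loses poly-logarithmic factors, but $\delta^{-\epsilon}$ can be a huge negative power of $\lambda$ when $\delta\ll\lambda$, whereas Proposition \ref{prop3} accepts an input constant only up to $\lambda^{-\epsilon(\kappa,s)}$. The threshold $\lambda=\delta^{c_{0}}$ is chosen precisely so that below it the skeleton constant is genuinely $\leq\lambda^{-\epsilon_{3}}$ and Proposition \ref{prop3} applies verbatim, while above it the target $\delta^{-\kappa}\lambda^{s}|P|_{\lambda}$ carries so much slack that the trivial incidence count already wins. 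Everything else — the dyadic rounding of $\Sigma$, the verification that $\Omega_{1}$ and the pulled-back $G$ are bona fide configurations, and the comparison of $\delta^{-\epsilon_{0}}$- with $\lambda^{-\epsilon_{3}}$-dilated rectangles — should be routine.
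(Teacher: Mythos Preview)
Your proposal is correct and follows essentially the same route as the paper: pass to the $(\lambda,\Sigma)$-skeleton via Proposition \ref{prop6}, apply Proposition \ref{prop3} there, pull back along the fibres $\mathbf{p}\otimes\mathbf{v}$, and clean up with Lemma \ref{refinement}. The only cosmetic differences are the choice of threshold for the trivial case (you use $\lambda>\delta^{c_{0}}$ with $c_{0}=\min\{\tfrac12,\kappa/(2s)\}$ and a count based on the $(\lambda,s)$-set property of $P_{1}$, whereas the paper uses $\lambda\geq\delta^{\kappa/10}$ with the cruder bound $|G^{\lambda}_{\Sigma}|\lesssim\lambda^{-4}$ and $G=\Omega$) and that you invoke Proposition \ref{prop3} with parameter $\kappa/2$ rather than $\kappa$.
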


\begin{proof} Let us spell out what \eqref{form18} means: for $\Sigma = \sqrt{\lambda/t}$, we should prove that
\begin{displaymath} |\{\omega' \in (G^{\lambda}_{\Sigma})^{\delta^{-\epsilon_{0}}}_{\lambda,t}(\omega) : \delta^{-\epsilon_{0}}R^{\lambda}_{\Sigma}(\omega) \cap \delta^{-\epsilon_{0}}R^{\lambda}_{\Sigma}(\omega') \neq \emptyset\}| \leq \delta^{-\kappa}\lambda^{s}|P|_{\lambda}, \qquad \omega \in G^{\lambda}_{\Sigma}. \end{displaymath}
We first dispose of the case where $\lambda \geq \delta^{\kappa/10}$. In this case we simply take $\epsilon_{0} = \kappa/5$ and $G := \Omega$. Now, the left hand side of \eqref{form18} is bounded from above by
\begin{displaymath} |G^{\lambda}_{\Sigma}| \lesssim \lambda^{-3}\Sigma^{-1} \leq \lambda^{-4} \leq \delta^{-2\kappa/5} = \delta^{-3\kappa/5}\delta^{\epsilon_{0}} \leq \delta^{-3\kappa/5}\lambda^{s}|P|_{\lambda}, \end{displaymath}
using finally the assumption that $P$ is a non-empty $(\delta,s,\delta^{-\epsilon_{0}})$-set.

Let us then assume that $\lambda \leq \delta^{\kappa/10}$. We apply Proposition \ref{prop6} with $\sigma = \delta$ and $\Delta = \lambda$ and $\Sigma = \sqrt{\lambda/t} \geq \sigma$. This produces a subset $G_{0} \subset \Omega$ of cardinality $|G_{0}| \approx_{\delta} |\Omega|$ whose $(\lambda,\Sigma)$-skeleton
\begin{displaymath}  (G_{0})^{\lambda}_{\Sigma} = \{(\mathbf{p},\mathbf{v}) : \mathbf{p} \in P_{\lambda} \text{ and } \mathbf{v} \in \mathbf{E}(\mathbf{p})\} \end{displaymath}
is a $(\lambda,\Sigma,s,C\delta^{-\epsilon})$-configuration with $C \approx_{\delta} 1$ (in particular, this skeleton is a $(\lambda,\Sigma,s,C\delta^{-\epsilon_{0}})$-configuration). Moreover, recall from \eqref{form40} that
\begin{equation}\label{form44} |\{(p,v) \in G_{0} : (p,v) \prec (\mathbf{p},\mathbf{v})\}| \approx_{\delta} \frac{|\Omega|}{|(G_{0})^{\lambda}_{\Sigma}|}, \qquad (\mathbf{p},\mathbf{v}) \in (G_{0})^{\lambda}_{\Sigma}. \end{equation}
It may be worth emphasising a small technical point: we never claimed, and do not claim here either, that $G_{0}$ would be a $(\delta,\delta,s,C\delta^{-\epsilon})$-configuration.

Next, we apply Proposition \ref{prop3} with constants "$\kappa,s$". This produces a constant $\epsilon_{1} := \epsilon_{1}(\kappa,s) > 0$. Note that since $\lambda \leq \delta^{\kappa/10}$ by assumption, we have $C\delta^{-\epsilon_{0}} \leq \lambda^{-20\epsilon_{0}/\kappa}$ for $\delta > 0$ small enough. Therefore, if we choose "$\epsilon_{0}$" presently so small that $20\epsilon_{0}/\kappa < \epsilon_{1}$, we see that $(G_{0})^{\lambda}_{\Sigma}$ is a $(\lambda,\Sigma,s,\lambda^{-\epsilon_{1}})$-configuration. Now, by Proposition \ref{prop3}, there exists a $(\lambda,\Sigma,s,C_{\kappa}\lambda^{-\epsilon_{1}})$-configuration $\mathbf{G} \subset (G_{0})^{\lambda}_{\Sigma}$ with $|\mathbf{G}| \sim_{\kappa} |(G_{0})^{\lambda}_{\Sigma}|$, and the property
\begin{equation}\label{form43} |\{\omega' \in \mathbf{G}_{\lambda,t}^{\lambda^{-\epsilon_{1}}}(\omega) : \lambda^{-\epsilon_{1}}R^{\lambda}_{\Sigma}(\omega) \cap \lambda^{-\epsilon_{1}}R^{\lambda}_{\Sigma}(\omega') \neq \emptyset\}| \leq \lambda^{s - \kappa}|P_{\lambda}|, \qquad \omega \in \mathbf{G}. \end{equation}
Note that $\delta^{-\epsilon_{0}} \leq \lambda^{-\epsilon_{1}}$ by our choices of constants, and $\lambda \geq \delta$, so \eqref{form43} implies
\begin{equation}\label{form43a} |\{\omega' \in \mathbf{G}_{\lambda,t}^{\delta^{-\epsilon_{0}}}(\omega) : \delta^{-\epsilon_{0}}R^{\lambda}_{\Sigma}(\omega) \cap \delta^{-\epsilon_{0}}R^{\lambda}_{\Sigma}(\omega') \neq \emptyset\}| \leq \\\delta^{-\kappa}\lambda^{s}|P|_{\lambda}, \qquad \omega \in \mathbf{G}. \end{equation}
We also used that $|P_{\lambda}| \leq |P|_{\lambda}$. Next, let
\begin{displaymath} G_{1} := \bigcup_{(\mathbf{p},\mathbf{v}) \in \mathbf{G}} \{(p,v) \in G_{0} : (p,q) \prec (\mathbf{p},\mathbf{v})\} = \bigcup_{(\mathbf{p},\mathbf{v}) \in \mathbf{G}} G_{0} \cap (\mathbf{p} \otimes \mathbf{v}). \end{displaymath}
Then $(G_{1})^{\lambda}_{\Sigma} \subset \mathbf{G}$ by definition, so \eqref{form43a} implies \eqref{form18} for $G_{1}$. Moreover, as explained in Remark \ref{rem1}, the sets $\mathbf{p} \otimes \mathbf{v}$ are disjoint, so
\begin{displaymath} |G_{1}| = \sum_{(\mathbf{p},\mathbf{v}) \in \mathbf{G}} |G_{0} \cap (\mathbf{p} \otimes \mathbf{v})| \stackrel{\eqref{form44}}{\approx_{\delta}} |\mathbf{G}| \cdot \frac{|\Omega|}{|(G_{0})^{\lambda}_{\Sigma}|} \sim_{\kappa} |\Omega|. \end{displaymath}
The only problem remaining is that $G_{1}$ may not be a $(\delta,\delta,s,C\delta^{-\epsilon})$-configuration. However, $|G_{1}| \approx_{\delta,\kappa} |\Omega|$, so it follows from the refinement principle (Lemma \ref{refinement}) that there exists a $(\delta,\delta,s,C\delta^{-\epsilon})$-configuration $G \subset G_{1}$ such that $C \approx_{\delta,\kappa} 1$ and $|G| \approx_{\delta,\kappa} |\Omega|$. Now, $G$ continues to satisfy \eqref{form18}, so the proof of Theorem \ref{thm3} is complete.  \end{proof}

\begin{remark} It may be worth remarking that if $G$ is the final $(\delta,\delta,s)$-configuration in the previous theorem, the $(\lambda,\Sigma)$-skeleton $G^{\lambda}_{\Sigma}$ may fail to be a $(\lambda,\Sigma,s)$-configuration. This was not claimed either. It seems generally tricky to ensure that a set $G \subset \Omega$ is simultaneously a $(\delta,\sigma,s)$-configuration and a $(\Delta,\Sigma,s)$-configuration for $\delta \ll \Delta$ and $\sigma \ll \Sigma$.  \end{remark}


\section{An upper bound for incomparable $(\delta,\sigma)$-rectangles}\label{deltalambdat}

\begin{notation}\label{not1} Let $0 < \delta \leq \Delta \leq 1$ and $0 < \sigma \leq \Sigma \leq 1$. Let $p \in \mathcal{D}_{\delta}$, and let $E_{\sigma}(p) \subset \mathcal{S}_{\sigma}(p)$ (recall that the notation $S_{\delta}(p)$ refers to a circle associated to the centre of $p$). We write
\begin{displaymath} \mathcal{E}^{\Delta}_{\Sigma}(p) := \bigcup_{\mathbf{v} \in E_{\Sigma}(p)} R^{\Delta}_{\Sigma}(\mathbf{p},\mathbf{v}) \subset S^{\Delta}(\mathbf{p}), \end{displaymath}
where $\mathbf{p} \in \mathcal{D}_{\Delta}$ is the unique dyadic $\Delta$-cube with $p \subset \mathbf{p}$, and $E_{\Sigma}(p)$ is the $(\Delta,\Sigma)$-skeleton of $E_{\sigma}(p)$, namely $E_{\Sigma}(p) = \{\mathbf{v} \in \mathcal{S}_{\Sigma}(\mathbf{p}) : v \prec \mathbf{v} \text{ for some } v \in E_{\sigma}(p)\}$.  \end{notation}

\begin{lemma}\label{lemma2} Let $C \geq 1$, $0 < \delta \leq \Delta \leq 1$, $0 < \sigma \leq \Sigma \leq 1$. Assume also that $\Delta \leq \Sigma$. Let $p \in \mathcal{D}_{\delta}$, and let $E_{\sigma}(p) \subset \mathcal{S}_{\sigma}(p)$. Then $C\mathcal{E}^{\delta}_{\sigma}(p) \subset C'\mathcal{E}^{\Delta}_{\Sigma}(p)$ for some $C' \sim C$.
\end{lemma}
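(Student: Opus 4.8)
The plan is to unwind Notation~\ref{not1}, reduce the claim to a single containment between one enlarged $(\delta,\sigma)$-rectangle and one enlarged $(\Delta,\Sigma)$-rectangle, and then dispatch that containment using Lemma~\ref{conelem} together with two triangle inequalities. Let $\mathbf{p} \in \mathcal{D}_{\Delta}$ be the unique dyadic $\Delta$-cube with $p \subset \mathbf{p}$. Since the $(\delta,\sigma)$-skeleton of $E_{\sigma}(p)$ is $E_{\sigma}(p)$ itself, we have $\mathcal{E}^{\delta}_{\sigma}(p) = \bigcup_{v \in E_{\sigma}(p)} R^{\delta}_{\sigma}(p,v)$, while $\mathcal{E}^{\Delta}_{\Sigma}(p) = \bigcup_{\mathbf{v} \in E_{\Sigma}(p)} R^{\Delta}_{\Sigma}(\mathbf{p},\mathbf{v})$, where $E_{\Sigma}(p) \subset \mathcal{S}_{\Sigma}(\mathbf{p})$ is the $(\Delta,\Sigma)$-skeleton of $E_{\sigma}(p)$. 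Fix $v \in E_{\sigma}(p)$ and let $\mathbf{v} \in \mathcal{S}_{\Sigma}(\mathbf{p})$ be the unique arc with $v \prec \mathbf{v}$; then $\mathbf{v} \in E_{\Sigma}(p)$ by definition of the skeleton. Taking unions over $v \in E_{\sigma}(p)$ afterwards, it suffices to find $C' \sim C$, independent of $v$, with
\begin{equation}\label{lemma2reduce} CR^{\delta}_{\sigma}(p,v) \subset C'R^{\Delta}_{\Sigma}(\mathbf{p},\mathbf{v}). \end{equation}

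To prove \eqref{lemma2reduce}, recall $CR^{\delta}_{\sigma}(p,v) = S^{C\delta}(p) \cap B(v,C\sigma)$, and estimate the two factors separately. For the annulus: as in the proof of Lemma~\ref{conelem}, the centres and radii of $p$ and $\mathbf{p}$ differ by $\lesssim \Delta$, so $S(p) \subset S^{3\Delta}(\mathbf{p})$ by the triangle inequality; hence $S^{C\delta}(p) \subset S^{C\delta + 3\Delta}(\mathbf{p}) \subset S^{4C\Delta}(\mathbf{p})$, using $\delta \leq \Delta$ and $C \geq 1$. For the disc: since $\Delta \leq \Sigma$ and $v \prec \mathbf{v}$ (so in particular $v \cap V(\mathbf{p},\mathbf{v}) \neq \emptyset$), Lemma~\ref{conelem} provides an arc $I_{\mathbf{v}} \subset S(\mathbf{p})$ of length $\lesssim \Sigma$ with $v \subset V(\mathbf{p},I_{\mathbf{v}})$ and $\mathbf{v} \subset I_{\mathbf{v}}$. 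The radial projection from the centre of $\mathbf{p}$ carries $V(\mathbf{p},I_{\mathbf{v}})$ into $I_{\mathbf{v}}$ and displaces points of $S^{3\Delta}(\mathbf{p})$ by at most $3\Delta \leq 3\Sigma$; therefore $V(\mathbf{p},I_{\mathbf{v}}) \cap S^{3\Delta}(\mathbf{p})$ has diameter $\lesssim \Sigma$. Now $v \subset V(\mathbf{p},I_{\mathbf{v}}) \cap S^{3\Delta}(\mathbf{p})$ by the previous paragraph and Lemma~\ref{conelem}, and also $\mathbf{v} \subset I_{\mathbf{v}} \subset V(\mathbf{p},I_{\mathbf{v}}) \cap S(\mathbf{p}) \subset V(\mathbf{p},I_{\mathbf{v}}) \cap S^{3\Delta}(\mathbf{p})$, so $\dist(v,\mathbf{v}) \lesssim \Sigma$. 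Hence $B(v,C\sigma) \subset B(\mathbf{v},C''\Sigma)$ with $C'' = A_{1}C$ for an absolute constant $A_{1}$, using $\sigma \leq \Sigma$ and $C \geq 1$. (Here I use the paper's convention that for an arc $\mathbf{v}$ of length $\sim \Sigma$ the symbols $B(\mathbf{v},\cdot)$ and $R^{\Delta}_{\Sigma}(\mathbf{p},\mathbf{v})$ are well-defined up to bounded dilation, whether $\mathbf{v}$ is read as the arc or as its centre.)

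Combining the two estimates, $CR^{\delta}_{\sigma}(p,v) \subset S^{4C\Delta}(\mathbf{p}) \cap B(\mathbf{v},A_{1}C\Sigma) \subset (\max\{4,A_{1}\}C)R^{\Delta}_{\Sigma}(\mathbf{p},\mathbf{v})$, so \eqref{lemma2reduce} holds with $C' := \max\{4,A_{1}\}C \sim C$. Finally, taking the union over $v \in E_{\sigma}(p)$ and noting that the corresponding parents $\mathbf{v}$ all lie in $E_{\Sigma}(p)$ gives $C\mathcal{E}^{\delta}_{\sigma}(p) \subset C'\mathcal{E}^{\Delta}_{\Sigma}(p)$, as desired.

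The only genuinely geometric input---and the step to treat carefully---is the diameter bound $\diam(V(\mathbf{p},I_{\mathbf{v}}) \cap S^{3\Delta}(\mathbf{p})) \lesssim \Sigma$ for this ``curved cone slab''; this is precisely where the hypothesis $\Delta \leq \Sigma$ is used and where Lemma~\ref{conelem} does the real work. The remaining containments are routine triangle-inequality bookkeeping, relying on $\delta \leq \Delta$ for the annulus part and $\sigma \leq \Sigma$ for the disc part.
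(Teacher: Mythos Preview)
Your proof is correct and follows essentially the same approach as the paper's: reduce to a single rectangle inclusion $CR^{\delta}_{\sigma}(p,v) \subset C'R^{\Delta}_{\Sigma}(\mathbf{p},\mathbf{v})$, handle the annulus part via $\delta \leq \Delta$ and $|p - \mathbf{p}| \lesssim \Delta$, and handle the disc part via the distance bound $\dist(v,\mathbf{v}) \lesssim \Sigma$ coming from $v \prec \mathbf{v}$ and $\Delta \leq \Sigma$. You supply slightly more detail on the distance bound (via the cone-slab diameter estimate and Lemma~\ref{conelem}) than the paper, which simply asserts $|v - \mathbf{v}| \leq C'\Sigma$ as following from $v \cap V(\mathbf{p},\mathbf{v}) \neq \emptyset$ and $\Delta \leq \Sigma$.
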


\begin{figure}[h!]
\begin{center}
\begin{overpic}[scale = 1]{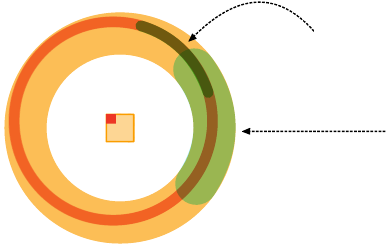}
\put(36,30){$\mathbf{p}$}
\put(23,32){$p$}
\put(82,53){$CR^{\delta}_{\sigma}(p,v)$}
\put(102,29){$C'R^{\Delta}_{\Sigma}(\mathbf{p},\mathbf{v})$}
\end{overpic}
\caption{The rectangles $CR^{\delta}_{\sigma}(p,v)$ and $C'R^{\Delta}_{\Sigma}(\mathbf{p},\mathbf{v})$ in the proof of Lemma \ref{lemma2}.}\label{fig2}
\end{center}
\end{figure}

\begin{proof} The proof is illustrated in Figure \ref{fig2}. The set $\mathcal{E}^{\delta}_{\sigma}(p)$ is a union of the $(\delta,\sigma)$-rectangles $R^{\delta}_{\sigma}(p,v)$ centred at $v \in E_{\sigma}(p)$. Let $R = R^{\delta}_{\sigma}(p,v)$ be one of these rectangles. By the definition of $(\Delta,\Sigma)$-skeleton, there exists $\mathbf{v} \in E_{\Sigma}(p)$ such that $v \prec \mathbf{v}$, or in other words $p \subset \mathbf{p} \in \mathcal{D}_{\Delta}$ and $v \cap V(\mathbf{p},\mathbf{v}) \neq \emptyset$. Since $|p - \mathbf{p}| \leq 2\Delta$ and $\delta \leq \Delta$, we have
\begin{displaymath} S^{C\delta}(p) \subset S^{2C\Delta}(\mathbf{p}). \end{displaymath}
Moreover, it follows from $v \cap V(\mathbf{p},\mathbf{v}) \neq \emptyset$ and $\Delta \leq \Sigma$ that $|v - \mathbf{v}| \leq C'\Sigma$, where $C' > 0$ is absolute. Consequently,
\begin{displaymath} B(v,C\sigma) \subset B(v,C\Sigma) \subset B(\mathbf{v},(C + C')\Sigma). \end{displaymath}
Combining this information, we have
\begin{displaymath} CR_{\sigma}^{\delta}(p,v) = S^{C\delta}(p) \cap B(v,C\sigma) \subset S^{2C\Delta}(\mathbf{p}) \cap B(\mathbf{v},(C + C')\Sigma) = C''R^{\Delta}_{\Sigma}(\mathbf{p},\mathbf{v}), \end{displaymath}
with $C'' := \max\{2C,C + C'\}$. This completes the proof. \end{proof}

We next define a variant of the "type" introduced in Definition \ref{def:WolffType}.

\begin{definition}\label{def:type} Let $0 < \delta \leq \sigma \leq1$, and let $P \subset \mathcal{D}_{\delta}$. For every $p \in P$, let $E(p) \subset \mathcal{S}_{\delta}(p)$. Let $W,B \subset P$ be finite sets. For $\delta \leq \lambda \leq 1$, and $m,n \geq 1$, we say that a $(\delta,\sigma)$-rectangle $R \subset \R^{2}$ has \emph{$\lambda$-restricted type $(\geq m,\geq n)_{\epsilon}$ relative to $(W,B,\{E(p)\})$} if there exists a set $W_{R} \subset W$ of cardinality $|W_{R}| \geq m$, and for every $p \in W_{R}$ a subset $B_{R}(p) \subset B$ of cardinality $|B_{R}(p)| \geq n$ such that the following holds:
\begin{enumerate}
\item $\delta^{\epsilon}\lambda \leq \Delta(p,q) \leq \delta^{-\epsilon}\lambda$ for all $p \in W_{R}$ and all $q \in B_{R}(p)$.
\item $R \subset \delta^{-\epsilon}\mathcal{E}^{\delta}_{\sigma}(p) \cap \delta^{-\epsilon}\mathcal{E}^{\delta}_{\sigma}(q)$ for all $p \in W_{R}$ and all $q \in B_{R}(p)$.
\end{enumerate}
If $\lambda = \delta$, then the requirement in (1) is relaxed to $\Delta(p,q) \leq \delta^{1 - \epsilon}$.
\end{definition}

\begin{remark} The presence of the sets $E(p)$ is a major difference compared to Definition \ref{def:WolffType}, and we will distinguish between these two definitions by using the terminology "...relative to $(W,B)$" in Definition \ref{def:WolffType}, and "...relative to $(W,B,\{E(p)\})$ in Definition \ref{def:type}. We will make sure that there is never a risk of confusion which definition is meant.

Other differences are (obviously) the condition (1) of Definition \ref{def:type}, which is completely absent from Definition \ref{def:WolffType}. A more subtle point is the asymmetry of Definition \ref{def:type}: even if a rectangle has $\lambda$-restricted type $(\geq m,\geq m)_{\epsilon}$ relative to $(W,B,\{E(p)\})$, it need not have $\lambda$-restricted type $(\geq m,\geq m)_{\epsilon}$ relative to $(B,W,\{E(p)\})$. \end{remark}

\begin{thm}\label{thm4} For every $\eta > 0$, there exist $\epsilon = \epsilon(\eta) \in (0,1]$ and $\delta_{0} = \delta_{0}(\eta,\epsilon) \in (0,1]$ such that the following holds for all $\delta \in (0,\delta_{0}]$. Let $0 < \delta \leq \lambda \leq t \leq 1$ be dyadic rationals with $\lambda \leq \delta^{2\epsilon}t$. Let $P \subset \mathcal{D}_{\delta}$ be a set satisfying
\begin{equation}\label{form49} |P \cap \mathbf{p}| \leq X_{\lambda}, \qquad \mathbf{p} \in \mathcal{D}_{\lambda}, \end{equation}
where $X_{\lambda} \in \N$. For every $p \in P$, let $E(p) \subset \mathcal{S}_{\delta}(p)$. Write $\Sigma := \sqrt{\lambda/t}$, and let $\Omega^{\lambda}_{\Sigma}$ be the $(\lambda,\Sigma)$-skeleton of $\Omega = \{(p,v) : p \in P \text{ and } v \in E(p)\}$. Assume that, for some $Y_{\lambda} \in \N$,
\begin{equation}\label{form54} m^{\delta^{-\mathbf{A}\epsilon},\delta^{-\mathbf{A}\epsilon}}_{\lambda,\lambda,t}(\omega \mid \Omega) \leq Y_{\lambda}, \qquad \omega \in \Omega^{\lambda}_{\Sigma}, \end{equation}
where $\mathbf{A} \geq 1$ is a sufficiently large absolute constant, in particular $\mathbf{A}$ is independent of the previous parameters $\delta,\eta,\lambda,t$. Write $\sigma := \delta/\sqrt{\lambda t}$, and let $W,B$ be a $(\delta,\epsilon)$-almost $t$-bipartite pair of subsets of $P$. Let $1 \leq m \leq |W|$ and $1 \leq n \leq |B|$. Let $\mathcal{R}^{\delta}_{\sigma}$ be a collection of pairwise $100$-incomparable $(\delta,\sigma)$-rectangles whose $\lambda$-restricted type relative to $(W,B,\{E(p)\})$ is $(\geq m,\geq n)_{\epsilon}$. Then,
\begin{equation}\label{form61} |\mathcal{R}^{\delta}_{\sigma}| \leq \delta^{-\eta} \left[ \left(\frac{|W||B|}{mn} \right)^{3/4} (X_{\lambda}Y_{\lambda})^{1/2} + \frac{|W|}{m} \cdot X_{\lambda}Y_{\lambda} + \frac{|B|}{n} \cdot X_{\lambda}Y_{\lambda} \right]. \end{equation}
\end{thm}

\begin{remark} It is worth noting that the upper bound \eqref{form54} which is \textbf{assumed} here looks exactly like the upper bound provided by Theorem \ref{thm3}.

Another remark is that \eqref{form61} in the case $\lambda = \delta$ may actually be weaker than Wolff's tangency bound \eqref{form78}. In this case evidently $X_{\lambda} \leq 1$, but it may well happen that $Y_{\lambda} \gg 1$. This is irrelevant for our purposes, since Theorem \ref{thm4} will only be applied in a situation where $Y_{\lambda} \lessapprox 1$. For the interested reader, we mention that the main loss in the proof arises from the estimate \eqref{form95}, which is always unsharp if $M_{\lambda}N_{\lambda} \cdot (m_{\lambda}n_{\lambda}) \gg (\lambda/\delta)^{2}$.   \end{remark}

\begin{proof}[Proof of Theorem \ref{thm4}] We start with the case $m = 1 = n$, and later deal with the general case with a "random sampling" argument. Fix $\eta > 0$. We also choose $\epsilon > 0$ so small that $\sqrt{\epsilon} < c\eta$ for a suitable absolute constant to be determined later (this constant will be determined by the constant in Lemma \ref{lemma8}).

In this proof, "$C$" will refer to an absolute constant whose value may change -- usually increase -- from one line to the next without separate remark. We will also assume, when needed, that "$\delta > 0$ is small enough" without separate remark.

We may assume with no loss of generality that the rectangles in $\mathcal{R}^{\delta}_{\sigma}$ are pairwise $\delta^{-C\epsilon}$-incomparable for a suitable absolute constant $C > 0$, instead of just $100$-incomparable. This is because by Corollary \ref{AT14}, any collection of $100$-incomparable rectangles $\mathcal{R}^{\delta}_{\sigma}$ contains a $\delta^{-C\epsilon}$-incomparable subset $\bar{\mathcal{R}}^{\delta}_{\sigma}$ of cardinality $|\bar{\mathcal{R}}^{\delta}_{\sigma}| \geq \delta^{O(C\epsilon)}|\mathcal{R}^{\delta}_{\sigma}|$, and now it suffices to prove \eqref{form61} for $\bar{\mathcal{R}}^{\delta}_{\sigma}$.

By assumption, every rectangle $R \in \mathcal{R}^{\delta}_{\sigma}$ has $\lambda$-restricted type $(\geq 1, \geq 1)_{\epsilon}$ relative to $(W,B)$. Thus, for every $R \in \mathcal{R}^{\delta}_{\sigma}$ we may associate a pair $(p,q)_{R} \in W \times B$ with the properties
\begin{equation}\label{form51} \delta^{\epsilon}\lambda \leq \Delta(p,q) \leq \delta^{-\epsilon}\lambda \quad \text{and} \quad R \subset \delta^{-\epsilon}\mathcal{E}^{\delta}_{\sigma}(p) \cap \delta^{-\epsilon}\mathcal{E}^{\delta}_{\sigma}(q) \subset S^{\delta^{1 - \epsilon}}(p) \cap S^{\delta^{1 - \epsilon}}(q). \end{equation}
(If $\lambda = \delta$, we only have $\Delta(p,q) \leq \delta^{1 - \epsilon}$.) We record at this point that any fixed pair $(p,q) \in W \times B$ can only be associated to boundedly many rectangles $R \in \mathcal{R}^{\delta}_{\sigma}$:
\begin{equation}\label{form70} |\{R \in \mathcal{R}^{\delta}_{\sigma} : (p,q)_{R} = (p,q)\}| \lesssim 1, \qquad (p,q) \in W \times B. \end{equation}
Indeed, if there exists at least one rectangle $R_{0} \in \mathcal{R}^{\delta}_{\sigma}$ such that $(p,q)_{R_{0}} = (p,q)$, then $|p - q| \geq \delta^{\epsilon}t$ and $\Delta(p,q) \geq \delta^{\epsilon}\lambda$. Under these conditions, Lemma \ref{lemma5} implies that the intersection $S^{\delta^{1 - \epsilon}}(p) \cap S^{\delta^{1 - \epsilon}}(q)$ can be covered by boundedly many $(\delta^{1 - C\epsilon},\delta^{1 - C\epsilon}/\sqrt{\lambda t})$-rectangles, and actually they can be selected to be of the form
\begin{displaymath} \delta^{-C\epsilon}R_{j} := \delta^{-C\epsilon}R^{\delta}_{\sigma}(q,v_{j}), \qquad 1 \leq j \lesssim 1, \end{displaymath}
where each $R_{j}$ is a $(\delta,\sigma)$-rectangle. (We note that this is true also if $\lambda = \delta$, using only $|p - q| \geq \delta^{\epsilon} t$ in that case.) We claim that each rectangle $R \in \mathcal{R}^{\delta}_{\sigma}$ with $(p,q)_{R} = (p,q)$ is $\delta^{-C\epsilon}$-comparable to one of the rectangles $R_{j}$. This will imply \eqref{form70}, because at most one rectangle in $\mathcal{R}_{\sigma}^{\delta}$ can be $\delta^{-C\epsilon}$-comparable to a fixed $R_{j}$: indeed any pair of $(\delta,\sigma)$-rectangles $\delta^{-C\epsilon}$-comparable to $R_{j}$ would be $\lesssim \delta^{-C\epsilon}$-comparable to each other by Corollary \ref{cor1}, contradicting our "without loss of generality" assumption that the rectangles in $\mathcal{R}^{\delta}_{\sigma}$ are $\delta^{-C\epsilon}$-incomparable. Thus, the left hand side of \eqref{form70} is bounded by the number of the rectangles $R_{j}$ (which is $\lesssim 1$).

We then show that every $R \in \mathcal{R}^{\delta}_{\sigma}$ with $(p,q)_{R} = (p,q)$ is $\delta^{-C\epsilon}$-comparable to some $R_{j}$. Namely, if $(p,q)_{R} = (p,q)$, then $R \subset S^{\delta^{1 - \epsilon}}(p) \cap S^{\delta^{1 - \epsilon}}(q)$ by definition, and because $\diam(R) \leq 2\sigma$, it follows that $R \subset \delta^{-\epsilon}R^{\delta}_{\sigma}(q,v)$ for some $v \in S(q)$ (here e.g. $v$ is the closest point on $S(q)$ from the centre of $R$). On the other hand, since the rectangles $\delta^{-C\epsilon}R_{j} = \delta^{-C\epsilon}R^{\delta}_{\sigma}(q,v_{j})$ cover $S^{\delta^{1 - \epsilon}}(p) \cap S^{\delta^{1 - \epsilon}}(q)$, one of them intersects $R$, say $R \cap \delta^{-C\epsilon}R_{j} \neq \emptyset$. Now, it is easy to check that
\begin{displaymath} R_{j} \subset \delta^{-C\epsilon}R^{\delta}_{\sigma}(q,v), \end{displaymath}
and therefore $R,R_{j} \subset \delta^{-C\epsilon}R^{\delta}_{\sigma}(q,v)$. In other words, $R,R_{j}$ are $\delta^{-C\epsilon}$-comparable.

With the proof of \eqref{form70} behind us, we proceed with other preliminaries. Let
\begin{displaymath} \mathbf{p} \in \mathcal{D}_{\lambda}(W) =: \mathcal{W}_{\lambda} \quad \text{and} \quad \mathbf{q} \in \mathcal{D}_{\lambda}(B) =: \mathcal{B}_{\lambda}, \end{displaymath}
and write
\begin{displaymath} \mathcal{R}^{\delta}_{\sigma}(\mathbf{p},\mathbf{q}) := \{R \in \mathcal{R}^{\delta}_{\sigma} : (p,q)_{R} \in (W \cap \mathbf{p}) \times (B \cap \mathbf{q})\}. \end{displaymath}
With this notation, we have
\begin{equation}\label{form64} |R^{\delta}_{\sigma}| \leq \sum_{(\mathbf{p},\mathbf{q}) \in \mathcal{W}_{\lambda} \times \mathcal{B}_{\lambda}} |\mathcal{R}^{\delta}_{\sigma}(\mathbf{p},\mathbf{q})|. \end{equation}
We use the pigeonhole principle to find subsets
\begin{displaymath} \overline{\mathcal{W}}_{\lambda} \subset \mathcal{W}_{\lambda} \quad \text{and} \quad \overline{\mathcal{B}}_{\lambda} \subset \mathcal{B}_{\lambda} \end{displaymath}
with the properties
\begin{equation}\label{form67} \begin{cases} |W \cap \mathbf{p}| \sim M_{\lambda}, & \mathbf{p} \in \overline{\mathcal{W}}_{\lambda}, \\ |B \cap \mathbf{q}| \sim N_{\lambda}, & \mathbf{q} \in \overline{\mathcal{B}}_{\lambda}, \end{cases} \end{equation}
(where $M_{\lambda},N_{\lambda} \in \{1,\ldots,X_{\lambda}\}$ are fixed integers) and such that
\begin{equation}\label{form66} |\mathcal{R}^{\delta}_{\sigma}| \stackrel{\eqref{form64}}{\leq} \sum_{(\mathbf{p},\mathbf{q}) \in \mathcal{W}_{\lambda} \times \mathcal{B}_{\lambda}} |\mathcal{R}^{\delta}_{\sigma}(\mathbf{p},\mathbf{q})| \approx_{\delta} \sum_{(\mathbf{p},\mathbf{q}) \in \overline{\mathcal{W}}_{\lambda} \times \overline{\mathcal{B}}_{\lambda}} |\mathcal{R}^{\delta}_{\sigma}(\mathbf{p},\mathbf{q})|. \end{equation}
It now suffices to show that
\begin{equation}\label{form65} \sum_{(\mathbf{p},\mathbf{q}) \in \overline{\mathcal{W}}_{\lambda} \times \overline{\mathcal{B}}_{\lambda}} |\mathcal{R}^{\delta}_{\sigma}(\mathbf{p},\mathbf{q})| \lessapprox_{\delta} (|W||B|)^{3/4}(X_{\lambda}Y_{\lambda})^{1/2} + |W|(X_{\lambda}Y_{\lambda}) + |B|(X_{\lambda}Y_{\lambda}). \end{equation}
To begin with, we claim that
\begin{equation}\label{form50} |\mathcal{R}^{\delta}_{\sigma}(\mathbf{p},\mathbf{q})| \lesssim M_{\lambda}N_{\lambda}, \qquad \mathbf{p} \in \overline{\mathcal{W}}_{\lambda}, \, \mathbf{q} \in \overline{\mathcal{B}}_{\lambda}.  \end{equation}
This follows from
\begin{displaymath} |\mathcal{R}^{\delta}_{\sigma}(\mathbf{p},\mathbf{q})| = \sum_{(p,q) \in (W \cap \mathbf{p}) \times (B \cap \mathbf{q})} |\{R \in \mathcal{R}^{\delta}_{\sigma} : (p,q)_{R} = (p,q)\}|, \end{displaymath}
and the fact recorded in \eqref{form70} that every term in this sum is $\lesssim 1$.

To proceed estimating \eqref{form66}, notice that we only need to sum over the pairs $(\mathbf{p},\mathbf{q}) \in \overline{\mathcal{W}}_{\lambda} \times \overline{\mathcal{B}}_{\lambda}$ with $\mathcal{R}^{\delta}_{\sigma}(\mathbf{p},\mathbf{q}) \neq \emptyset$. In this case there exists at least one pair $p \in W \cap \mathbf{p}$ and $q \in B \cap \mathbf{q}$ satisfying \eqref{form51}. It follows from Lemma \ref{lemma2} applied with $\Delta = \lambda$ and $\Sigma := \sqrt{\lambda/t} \geq \max\{\lambda,\sigma\}$ that
\begin{equation}\label{form52} \delta^{2\epsilon}t \leq |\mathbf{p} - \mathbf{q}| \leq \delta^{-2\epsilon}t, \quad \Delta(\mathbf{p},\mathbf{q}) \leq \delta^{-2\epsilon}\lambda, \quad \text{and} \quad \delta^{-2\epsilon}\mathcal{E}^{\lambda}_{\Sigma}(\mathbf{p}) \cap \delta^{-2\epsilon}\mathcal{E}^{\lambda}_{\Sigma}(\mathbf{q}) \neq \emptyset. \end{equation}
Here the bounds $|\mathbf{p} - \mathbf{q}| \geq \delta^{2\epsilon}t$ and $\Delta(\mathbf{p},\mathbf{q}) \leq \delta^{-2\epsilon}\lambda$ used our assumption $\lambda \leq \delta^{2\epsilon}t$ (and that $|p - q| \geq \delta^{\epsilon}t$ for some pair $p \in \mathbf{p}$ and $q \in \mathbf{q}$). To spell out the definitions, here
\begin{displaymath} \mathcal{E}_{\Sigma}^{\lambda}(\mathbf{p}) = \bigcup_{\mathbf{v} \in E_{\Sigma}(\mathbf{p})} R^{\lambda}_{\Sigma}(\mathbf{p},\mathbf{v}), \end{displaymath}
where $E_{\Sigma}(\mathbf{p})$ is the $(\lambda,\Sigma)$-skeleton of $E(p)$ (for $p \in P \cap \mathbf{p}$). We record at this point that
\begin{displaymath} \mathbf{p} \in \mathcal{W}_{\lambda} \cup \mathcal{B}_{\lambda} \text{ and } \mathbf{v} \in E_{\Sigma}(\mathbf{p}) \quad \Longrightarrow \quad (\mathbf{p},\mathbf{v}) \in \Omega_{\Sigma}^{\lambda}, \end{displaymath}
where $\Omega_{\Sigma}^{\lambda}$ is the $(\Sigma,\lambda)$-skeleton of $\Omega$.

We write $\mathbf{p} \sim \mathbf{q}$ if $\mathbf{p} \in \overline{\mathcal{W}}_{\lambda}$, $\mathbf{q} \in \overline{\mathcal{B}}_{\lambda}$, and the conditions \eqref{form52} hold. Then, by \eqref{form66} and the preceding discussion
\begin{equation}\label{form58} |\mathcal{R}^{\delta}_{\sigma}| \lessapprox \sum_{\mathbf{p} \sim \mathbf{q}} |\mathcal{R}^{\delta}_{\sigma}(\mathbf{p},\mathbf{q})| \stackrel{\eqref{form50}}{\lesssim} M_{\lambda}N_{\lambda} \cdot |\{(\mathbf{p},\mathbf{q}) \in \overline{\mathcal{W}}_{\lambda} \times \overline{\mathcal{B}}_{\lambda} : \mathbf{p} \sim \mathbf{q}\}|. \end{equation}
To estimate the cardinality $|\{(\mathbf{p},\mathbf{q}) : \mathbf{p} \sim \mathbf{q}\}|$, we will infer from \eqref{form52} that whenever $\mathbf{p} \sim \mathbf{q}$, then $S(\mathbf{p})$ and $S(\mathbf{q})$ are "roughly" tangent to a $(\lambda,\Sigma)$-rectangle, denoted $R^{\lambda}_{\Sigma}(\mathbf{p},\mathbf{q})$, more precisely satisfying
\begin{equation}\label{form94} R^{\lambda}_{\Sigma}(\mathbf{p},\mathbf{q}) \subset \delta^{-C\epsilon}\mathcal{E}^{\lambda}_{\Sigma}(\mathbf{p}) \cap \delta^{-C\epsilon}\mathcal{E}^{\lambda}_{\Sigma}(\mathbf{q}) \end{equation}
for a suitable absolute constant $C \geq 1$. Let us justify why $R^{\lambda}_{\Sigma}(\mathbf{p},\mathbf{q})$ can be found. Since $\delta^{-2\epsilon}\mathcal{E}^{\lambda}_{\Sigma}(\mathbf{p}) \cap \delta^{-2\epsilon}\mathcal{E}^{\lambda}_{\Sigma}(\mathbf{q}) \neq \emptyset$, there first of all exist $\mathbf{v} \in E_{\Sigma}(\mathbf{p})$, $\mathbf{w} \in E_{\Sigma}(\mathbf{q})$, and a point
\begin{displaymath} v \in \delta^{-2\epsilon}R^{\lambda}_{\Sigma}(\mathbf{p},\mathbf{v}) \cap \delta^{-2\epsilon}R^{\lambda}_{\Sigma}(\mathbf{q},\mathbf{w}). \end{displaymath}
Consequently, we may find points $\bar{\mathbf{p}}$ and $\bar{\mathbf{q}}$ with $|\bar{\mathbf{p}} - \mathbf{p}| \leq \delta^{-2\epsilon}\lambda$ and $|\bar{\mathbf{q}} - \mathbf{q}| \leq \delta^{-2\epsilon}\lambda$ such that $v \in S(\bar{\mathbf{p}}) \cap S(\bar{\mathbf{q}})$. Since $\Sigma \geq \lambda$, we also have
\begin{equation}\label{form93} \max\{\dist(v,E_{\Sigma}(\mathbf{p})),\dist(v,E_{\Sigma}(\mathbf{q}))\} \leq \delta^{-2\epsilon}\Sigma. \end{equation}
Now, it follows from \eqref{form52} and the inclusion \eqref{form71} (and noting that $\Sigma \leq \delta^{-\epsilon}\sqrt{\lambda/|\mathbf{p} - \mathbf{q}|}$) that
\begin{displaymath} R^{\lambda}_{\Sigma}(\mathbf{p},\mathbf{q}) := R^{\lambda}_{\Sigma}(\bar{\mathbf{p}},v) \subset S^{\delta^{-C\epsilon}\lambda}(\bar{\mathbf{p}}) \cap S^{\delta^{-C\epsilon}\lambda}(\bar{\mathbf{q}}). \end{displaymath}
Taking also into account \eqref{form93}, we arrive at \eqref{form94}.

Now that we have defined the $(\lambda,\Sigma)$-rectangles $R^{\lambda}_{\Sigma}(\mathbf{p},\mathbf{q})$, we let $\mathcal{R}^{\lambda}_{\Sigma}$ be a maximal collection of pairwise $100$-incomparable rectangles in $\{\mathcal{R}^{\lambda}_{\Sigma}(\mathbf{p},\mathbf{q}) : \mathbf{p} \in \overline{\mathcal{W}}_{\lambda}, \, \mathbf{q} \in \overline{\mathcal{B}}_{\lambda} \text{ and } \mathbf{p} \sim \mathbf{q}\}$. For $R \in \mathcal{R}_{\Sigma}^{\lambda}$, we then write $R \sim (\mathbf{p},\mathbf{q})$ if $\mathbf{p} \sim \mathbf{q}$ and $R \sim_{100} R^{\lambda}_{\Sigma}(\mathbf{p},\mathbf{q})$. With this notation, we may estimate
\begin{equation}\label{form57} |\{(\mathbf{p},\mathbf{q}) \in \overline{\mathcal{W}}_{\lambda} \times \overline{\mathcal{B}}_{\lambda} : \mathbf{p} \sim \mathbf{q}\}| \leq \sum_{R \in \mathcal{R}^{\lambda}_{\Sigma}} |\{(\mathbf{p},\mathbf{q}) \in \overline{\mathcal{W}}_{\lambda} \times \overline{\mathcal{B}}_{\lambda} : R \sim (\mathbf{p},\mathbf{q})\}|,  \end{equation}
since every pair $(\mathbf{p},\mathbf{q})$ with $\mathbf{p} \sim \mathbf{q}$ satisfies $R \sim (\mathbf{p},\mathbf{q})$ for at least one rectangle $R \in \mathcal{R}^{\lambda}_{\Sigma}$.

To estimate \eqref{form57} further, we consider the following slightly \emph{ad hoc} "type" of the rectangles $R \in \mathcal{R}_{\Sigma}^{\lambda}$ relative to the pair $(\overline{\mathcal{W}}_{\lambda},\overline{\mathcal{B}}_{\lambda})$. (This notion will not appear outside this proof.) We say that $R \in \mathcal{R}_{\Sigma}^{\lambda}$ has \emph{type $(m_{\lambda},n_{\lambda})$ relative to $(\overline{\mathcal{W}}_{\lambda},\overline{\mathcal{B}}_{\lambda})$} if the following sets $\mathcal{W}_{\lambda}(R) \subset \overline{\mathcal{W}}_{\lambda}$ and $\mathcal{B}_{\lambda}(R) \subset \overline{\mathcal{B}}_{\lambda}$ have cardinalities $|\mathcal{W}_{\lambda}(R)| = m_{\lambda}$ and $|\mathcal{B}_{\lambda}(R)| = n_{\lambda}$:
\begin{itemize}
\item $\mathcal{W}_{\lambda}(R)$ consists of all $\mathbf{p} \in \overline{\mathcal{W}}_{\lambda}$ such that $R \subset \delta^{-\mathbf{C}\epsilon}\mathcal{E}_{\Sigma}^{\lambda}(\mathbf{p})$.
\item $\mathcal{B}_{\lambda}(R)$ consists of all $\mathbf{q} \in \overline{\mathcal{B}}_{\lambda}$ such that $R \subset \delta^{-\mathbf{C}\epsilon}\mathcal{E}_{\Sigma}^{\lambda}(\mathbf{q})$.
\end{itemize}
Here
\begin{equation}\label{form221} \mathbf{C} := 2C \geq 2, \end{equation}
where "$C$" is the absolute constant from \eqref{form94}. We observe at once that the type of every rectangle $R \in \mathcal{R}^{\lambda}_{\Sigma}$ is $(\geq 1,\geq 1)$ in this terminology, because each $R \in \mathcal{R}^{\lambda}_{\Sigma}$ has the form $R = R^{\lambda}_{\Sigma}(\mathbf{p},\mathbf{q})$ for some $(\mathbf{p},\mathbf{q}) \in \overline{\mathcal{W}}_{\lambda} \times \overline{\mathcal{B}}_{\lambda}$, and \eqref{form94} holds for this pair $(\mathbf{p},\mathbf{q})$.

\begin{remark}\label{rem2} Assume for a moment that $\lambda \leq \delta^{\sqrt{\epsilon}}$. Then, if $R \in \mathcal{R}^{\lambda}_{\Sigma}$ has type $(\geq m_{\lambda},n_{\lambda})$ relative to $(\overline{\mathcal{W}}_{\lambda},\overline{\mathcal{B}}_{\lambda})$ according to the definition above, then $R$ also has type $(\geq m_{\lambda},n_{\lambda})_{\mathbf{C}\sqrt{\epsilon}}$ relative to $(\overline{\mathcal{W}}_{\lambda},\overline{\mathcal{B}}_{\lambda})$ in the sense of Definition \ref{def:WolffType}. This is simply because
\begin{displaymath} \delta^{-\mathbf{C}\epsilon}\mathcal{E}_{\Sigma}^{\lambda}(\mathbf{p}) \subset S^{\delta^{-\mathbf{C}\epsilon}\lambda}(p), \end{displaymath}
and $\delta^{-\mathbf{C}\epsilon} \leq \lambda^{-\mathbf{C}\sqrt{\epsilon}}$ by the temporary assumption $\lambda \leq \delta^{\sqrt{\epsilon}}$. But the "ad hoc" definition here is far more restrictive: it requires $R$ to lie close to the sets $E_{\Sigma}(\mathbf{p})$ and $E_{\Sigma}(\mathbf{q})$.  \end{remark}

We now establish two claims related to our \emph{ad hoc} notion of type:

\begin{claim}\label{c1} If $R \in \mathcal{R}_{\Sigma}^{\lambda}$ has type $(m_{\lambda},n_{\lambda})$ relative to $(\overline{\mathcal{W}}_{\lambda},\overline{\mathcal{B}}_{\lambda})$, then
\begin{equation}\label{form60} |\{(\mathbf{p},\mathbf{q}) \in \overline{\mathcal{W}}_{\lambda} \times \overline{\mathcal{B}}_{\lambda} : R \sim (\mathbf{p},\mathbf{q})\}| \leq m_{\lambda}n_{\lambda}. \end{equation}
\end{claim}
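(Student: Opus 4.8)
The plan is to reduce the cardinality bound \eqref{form60} to a single set inclusion, namely
\begin{displaymath} \{(\mathbf{p},\mathbf{q}) \in \overline{\mathcal{W}}_{\lambda} \times \overline{\mathcal{B}}_{\lambda} : R \sim (\mathbf{p},\mathbf{q})\} \subset \mathcal{W}_{\lambda}(R) \times \mathcal{B}_{\lambda}(R). \end{displaymath}
Once this is known, \eqref{form60} follows at once by taking cardinalities, using that $|\mathcal{W}_{\lambda}(R)| = m_{\lambda}$ and $|\mathcal{B}_{\lambda}(R)| = n_{\lambda}$ by the hypothesis that $R$ has type $(m_{\lambda},n_{\lambda})$ relative to $(\overline{\mathcal{W}}_{\lambda},\overline{\mathcal{B}}_{\lambda})$. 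So I would fix a pair $(\mathbf{p},\mathbf{q})$ with $R \sim (\mathbf{p},\mathbf{q})$ and, unwinding the definition of ``$\sim$'', record that this means $\mathbf{p} \sim \mathbf{q}$ — so that $\mathbf{p} \in \overline{\mathcal{W}}_{\lambda}$, $\mathbf{q} \in \overline{\mathcal{B}}_{\lambda}$, the conditions \eqref{form52} hold, and the rectangle $R^{\lambda}_{\Sigma}(\mathbf{p},\mathbf{q})$ has been constructed so that \eqref{form94} holds — together with $R \sim_{100} R^{\lambda}_{\Sigma}(\mathbf{p},\mathbf{q})$. The task is then to show $\mathbf{p} \in \mathcal{W}_{\lambda}(R)$ and $\mathbf{q} \in \mathcal{B}_{\lambda}(R)$; by the symmetry of the roles of $\mathbf{p}$ and $\mathbf{q}$ it is enough to verify $\mathbf{p} \in \mathcal{W}_{\lambda}(R)$, i.e. $R \subset \delta^{-\mathbf{C}\epsilon}\mathcal{E}^{\lambda}_{\Sigma}(\mathbf{p})$.

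For this I would chain two inclusions. First, from $R \sim_{100} R^{\lambda}_{\Sigma}(\mathbf{p},\mathbf{q})$ and Lemma \ref{lemma6} (via Remark \ref{r:CompRem}) one gets $R \subset C_{1}R^{\lambda}_{\Sigma}(\mathbf{p},\mathbf{q})$ for an absolute constant $C_{1} \geq 1$. Second, I would recall the explicit shape of $R^{\lambda}_{\Sigma}(\mathbf{p},\mathbf{q})$ produced in the construction around \eqref{form93}--\eqref{form94}: it equals $R^{\lambda}_{\Sigma}(\bar{\mathbf{p}},v)$ for some $\bar{\mathbf{p}}$ with $|\bar{\mathbf{p}} - \mathbf{p}| \leq \delta^{-2\epsilon}\lambda$ and some $v \in \delta^{-2\epsilon}R^{\lambda}_{\Sigma}(\mathbf{p},\mathbf{v})$ with $\mathbf{v} \in E_{\Sigma}(\mathbf{p})$ (in particular $v$ lies within $\delta^{-2\epsilon}\Sigma$ of $\mathbf{v}$). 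A short triangle-inequality computation — thickening $S^{C_{1}\lambda}(\bar{\mathbf{p}})$ to $S^{(C_{1}+2)\delta^{-2\epsilon}\lambda}(\mathbf{p})$ and recentering the disc $B(v,C_{1}\Sigma)$ at $\mathbf{v}$ — then gives
\begin{displaymath} R \subset C_{1}R^{\lambda}_{\Sigma}(\bar{\mathbf{p}},v) \subset (C_{1}+2)\delta^{-2\epsilon}R^{\lambda}_{\Sigma}(\mathbf{p},\mathbf{v}) \subset (C_{1}+2)\delta^{-2\epsilon}\mathcal{E}^{\lambda}_{\Sigma}(\mathbf{p}). \end{displaymath}
Since $\mathbf{C}=2C$ with $C\ge1$ the constant from \eqref{form94} (which we may enlarge at will), for $\delta$ small enough one has $(C_{1}+2)\delta^{-2\epsilon}\le\delta^{-\mathbf{C}\epsilon}$, so $R \subset \delta^{-\mathbf{C}\epsilon}\mathcal{E}^{\lambda}_{\Sigma}(\mathbf{p})$, which is precisely $\mathbf{p}\in\mathcal{W}_{\lambda}(R)$. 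Running the identical argument with $\mathbf{q},\bar{\mathbf{q}},E_{\Sigma}(\mathbf{q})$ in place of $\mathbf{p},\bar{\mathbf{p}},E_{\Sigma}(\mathbf{p})$ yields $\mathbf{q}\in\mathcal{B}_{\lambda}(R)$.

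I do not expect a genuine obstacle here: the claim is essentially the bookkeeping statement that the ``ad hoc'' sets $\mathcal{W}_{\lambda}(R)$ and $\mathcal{B}_{\lambda}(R)$ already contain every pair $(\mathbf{p},\mathbf{q})$ with $R\sim(\mathbf{p},\mathbf{q})$, and all the geometric content is already in place — the containment \eqref{form94} was built into the very definition of $R^{\lambda}_{\Sigma}(\mathbf{p},\mathbf{q})$, and Lemma \ref{lemma6} converts $100$-comparability into an honest containment. The only thing requiring attention is tracking how the two successive enlargement factors combine and checking that the exponent $\mathbf{C}\epsilon$ swallows them for small $\delta$; this is exactly the reason $\mathbf{C}$ was set to be a fixed multiple of the constant from \eqref{form94}.
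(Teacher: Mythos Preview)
Your proposal is correct and follows essentially the same approach as the paper. The paper's version is marginally more streamlined: rather than returning to the explicit construction of $R^{\lambda}_{\Sigma}(\mathbf{p},\mathbf{q})$ and redoing the triangle-inequality computation, it simply cites the already-established inclusion \eqref{form94} and scales by the absolute constant $C$ coming from Lemma~\ref{lemma6}, writing $R \subset CR^{\lambda}_{\Sigma}(\mathbf{p},\mathbf{q}) \subset C\delta^{-C\epsilon}\mathcal{E}^{\lambda}_{\Sigma}(\mathbf{p}) \subset \delta^{-\mathbf{C}\epsilon}\mathcal{E}^{\lambda}_{\Sigma}(\mathbf{p})$; the intermediate sets $\mathcal{W}'_{\lambda}(R),\mathcal{B}'_{\lambda}(R)$ it introduces are exactly the projections of your pair-set, so your direct product inclusion and the paper's two inclusions $\mathcal{W}'_{\lambda}(R)\subset\mathcal{W}_{\lambda}(R)$, $\mathcal{B}'_{\lambda}(R)\subset\mathcal{B}_{\lambda}(R)$ are equivalent.
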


\begin{proof} Let $\mathcal{W}'_{\lambda}(R) \subset \overline{\mathcal{W}}_{\lambda}$ be the subset of all those $\mathbf{p} \in \overline{\mathcal{W}}_{\lambda}$ such that $R \sim (\mathbf{p},\mathbf{q})$ for at least one $\mathbf{q} \in \overline{\mathcal{B}}_{\lambda}$. Define $\mathcal{B}'_{\lambda}(R)$ similarly, interchanging the roles of $\overline{\mathcal{W}}_{\lambda}$ and $\overline{\mathcal{B}}_{\lambda}$. Evidently
\begin{displaymath} |\{(\mathbf{p},\mathbf{q}) \in \overline{\mathcal{W}}_{\lambda} \times \overline{\mathcal{B}}_{\lambda} : R \sim (\mathbf{p},\mathbf{q})\}| \leq |\mathcal{W}'_{\lambda}(R)||\mathcal{B}'_{\lambda}(R)|. \end{displaymath}
It remains to show that $\mathcal{W}_{\lambda}'(R) \subset \mathcal{W}_{\lambda}(R)$ and $\mathcal{B}_{\lambda}'(R) \subset \mathcal{B}_{\lambda}(R)$. To see this, fix $\mathbf{p} \in \mathcal{W}_{\lambda}'(R)$. By definition, there exists $\mathbf{q} \in \overline{\mathcal{B}}_{\lambda}$ such that $R \sim (\mathbf{p},\mathbf{q})$. This means that $R$ is $100$-comparable to the rectangle $R^{\lambda}_{\Sigma}(\mathbf{p},\mathbf{q})$ which satisfies \eqref{form94}. According to Lemma \ref{lemma6}, there exists an absolute constant $C > 0$ such that
\begin{displaymath} R \subset CR^{\lambda}_{\Sigma}(\mathbf{p},\mathbf{q}) \stackrel{\eqref{form94}}{\subset} C\delta^{-C\epsilon}\mathcal{E}^{\lambda}_{\Sigma}(\mathbf{p}) \cap C\delta^{-\epsilon}\mathcal{E}^{\lambda}_{\Sigma}(\mathbf{q}) \subset \delta^{-\mathbf{C}\epsilon}\mathcal{E}^{\lambda}_{\Sigma}(\mathbf{p}). \end{displaymath}
This shows that $\mathbf{p} \in \mathcal{W}_{\lambda}(R)$ by definition. Thus $\mathcal{W}_{\lambda}'(R) \subset \mathcal{W}_{\lambda}(R)$. The proof of the inclusion $\mathcal{B}_{\lambda}'(R) \subset \mathcal{B}_{\lambda}(R)$ is similar. \end{proof}

\begin{claim}\label{claim1} Assume that $R \in \mathcal{R}_{\Sigma}^{\lambda}$ has type $(m_{\lambda},n_{\lambda})$ relative to $(\overline{\mathcal{W}}_{\lambda},\overline{\mathcal{B}}_{\lambda})$, and assume that the constant "$\mathbf{A}$" in \eqref{form54} satisfies $\mathbf{A} \geq 3(\mathbf{C} + 1)$, where $\mathbf{C}$ is the absolute constant determined at \eqref{form221}. Then
\begin{equation}\label{form59} \max\{m_{\lambda},n_{\lambda}\} \leq Y_{\lambda}. \end{equation}
\end{claim}

This is where the absolute constant $\mathbf{A}$ in the statement of Theorem \ref{thm4} is determined.

\begin{proof}[Proof of Claim \ref{claim1}] Write $R = \mathcal{R}^{\lambda}_{\Sigma}(\mathbf{p},\mathbf{q})$ with $\mathbf{p} \in \overline{\mathcal{W}}_{\lambda}$, $\mathbf{q} \in \overline{\mathcal{B}}_{\lambda}$, and $\mathbf{p} \sim \mathbf{q}$. Then, enumerate $\mathcal{W}_{\lambda}(R) = \{\mathbf{p}_{1},\ldots,\mathbf{p}_{m_{\lambda}}\}$. Now $\delta^{2\epsilon}t \leq |\mathbf{p}_{j} - \mathbf{q}| \leq \delta^{-2\epsilon}t$ for all $1 \leq j \leq m_{\lambda}$, and moreover
\begin{displaymath} R \subset \delta^{-\mathbf{C}\epsilon}\mathcal{E}_{\Sigma}^{\lambda}(\mathbf{p}_{j}) \cap \delta^{-\mathbf{C}\epsilon}\mathcal{E}_{\Sigma}^{\lambda}(\mathbf{q}), \qquad 1 \leq j \leq m_{\lambda}. \end{displaymath}
Unraveling this inclusion, there exist $\mathbf{w} \in E_{\Sigma}(\mathbf{q})$, and for each $1 \leq j \leq m_{\lambda}$ some $\mathbf{v}_{j} \in E_{\Sigma}(\mathbf{p}_{j})$ such that
\begin{equation}\label{form206} R \subset \delta^{-\mathbf{C}\epsilon}R^{\lambda}_{\Sigma}(\mathbf{p}_{j},\mathbf{v}_{j}) \cap \delta^{-\mathbf{C}\epsilon}R^{\lambda}_{\Sigma}(\mathbf{q},\mathbf{w}). \end{equation}
We now claim that
\begin{equation}\label{form207} (\mathbf{p}_{j},\mathbf{v}_{j}) \in (\Omega^{\lambda}_{\Sigma})^{\delta^{-\mathbf{A}\epsilon}}_{\lambda,t}(\mathbf{q},\mathbf{w}), \qquad 1 \leq j \leq m_{\lambda}, \end{equation}
if $\mathbf{A} \geq 3(\mathbf{C} + 1)$. Noting that $\omega := (\mathbf{q},\mathbf{w}) \in \Omega^{\lambda}_{\Sigma}$, this will prove that
\begin{displaymath} m_{\lambda} \leq |\{\omega' \in (\Omega_{\Sigma}^{\lambda})_{\lambda,t}^{\delta^{-\mathbf{A}\epsilon}}(\omega) : \delta^{-\mathbf{A}\epsilon}R^{\lambda}_{\Sigma}(\omega') \cap \delta^{-\mathbf{A}\epsilon}R^{\lambda}_{\Sigma}(\omega) \neq \emptyset\}| = m^{\delta^{-\mathbf{A}\epsilon},\delta^{-\mathbf{A}\epsilon}}_{\lambda,\lambda,t}(\omega \mid \Omega) \stackrel{\eqref{form54}}{\leq} Y_{\lambda}, \end{displaymath}
and the upper bound $|\mathcal{B}_{\lambda}(R)| = n_{\lambda} \leq Y_{\lambda}$ can be established in a similar fashion.

Regarding \eqref{form207}, we already know that $\delta^{\mathbf{A}\epsilon}t \leq |\mathbf{p}_{j} - \mathbf{q}| \leq \delta^{-\mathbf{A}\epsilon}t$, provided that $\mathbf{A} \geq 2$. So, it remains to show that $\Delta(\mathbf{p}_{j},\mathbf{q}) \leq \delta^{-\mathbf{A}\epsilon}\lambda$. But \eqref{form206} implies that
\begin{displaymath} R \subset S^{\delta^{-\mathbf{C}\epsilon}\lambda}(\mathbf{p}_{j}) \cap S^{\delta^{-\mathbf{C}\epsilon}\lambda}(\mathbf{q}), \qquad 1 \leq j \leq m_{\lambda}. \end{displaymath}
The set $R \in \mathcal{R}^{\lambda}_{\Sigma}$ has $\diam(R) \sim \Sigma = \sqrt{\lambda/t}$, and on the other hand Lemma \ref{lemma5} implies that the intersection of the two annuli above can be covered by boundedly many discs of radius
\begin{displaymath} \delta^{-\mathbf{C}\epsilon}\lambda/\sqrt{\Delta(\mathbf{p}_{j},\mathbf{q})|\mathbf{p}_{j} - \mathbf{q}|} \leq \delta^{-(\mathbf{C} + 1)\epsilon}\lambda/\sqrt{\Delta(\mathbf{p}_{j},\mathbf{q}) \cdot t}.\end{displaymath}
This shows that $\sqrt{\lambda/t} \lesssim \delta^{-(\mathbf{C} + 1)\epsilon}\lambda/\sqrt{\Delta(\mathbf{p}_{j},\mathbf{q}) \cdot t}$, and rearranging gives $\Delta(\mathbf{p}_{j},\mathbf{q}) \lesssim \delta^{-2(\mathbf{C} + 1)\epsilon}\lambda$. This completes the proof of \eqref{form207}, and the lemma.
  \end{proof}

Each rectangle $R \in \mathcal{R}_{\Sigma}^{\lambda}$ has some type $(m_{\lambda},n_{\lambda})$ relative to $(\overline{\mathcal{W}}_{\lambda},\overline{\mathcal{B}}_{\lambda})$, with $1 \leq m_{\lambda},n_{\lambda} \lesssim \lambda^{-3}$. By pigeonholing, we may find a subset $\bar{\mathcal{R}}_{\Sigma}^{\lambda} \subset \mathcal{R}_{\Sigma}^{\lambda}$ such that every rectangle $R \in \bar{\mathcal{R}}_{\Sigma}^{\lambda}$ has type between $(m_{\lambda},n_{\lambda})$ and $(2m_{\lambda},2n_{\lambda})$ for some $m_{\lambda},n_{\lambda} \geq 1$, and moreover
\begin{equation}\label{form63} \sum_{R \in \mathcal{R}^{\lambda}_{\Sigma}} |\{(\mathbf{p},\mathbf{q}) \in \overline{\mathcal{W}}_{\lambda} \times \overline{\mathcal{B}}_{\lambda} : R \sim (\mathbf{p},\mathbf{q})\}| \approx_{\delta} \sum_{R \in \bar{\mathcal{R}}^{\lambda}_{\Sigma}} |\{(\mathbf{p},\mathbf{q}) \in \overline{\mathcal{W}}_{\lambda} \times \overline{\mathcal{B}}_{\lambda} : R \sim (\mathbf{p},\mathbf{q})\}|. \end{equation}
When we now combine \eqref{form58} with \eqref{form57}, then \eqref{form63}, and finally \eqref{form60}, we find
\begin{equation}\label{form95} |\mathcal{R}_{\sigma}^{\delta}| \lessapprox_{\delta} M_{\lambda}N_{\lambda} \cdot (m_{\lambda}n_{\lambda}) \cdot |\bar{\mathcal{R}}_{\Sigma}^{\lambda}|. \end{equation}
To conclude the proof of \eqref{form61} from here, we consider separately the "main" case $\lambda \leq \delta^{\sqrt{\epsilon}}$, and the "trivial" case $\lambda \geq \delta^{\sqrt{\epsilon}}$. In the trivial case, we simply apply the following uniform estimates:
\begin{displaymath} \max\{m_{\lambda},n_{\lambda}\} \leq \lambda^{-C} \leq \delta^{-C\sqrt{\epsilon}} \quad \text{and} \quad |\bar{\mathcal{R}}_{\Sigma}^{\lambda}| \leq \lambda^{-C} \leq \delta^{-C\sqrt{\epsilon}} \end{displaymath}
Consequently, using also $M_{\lambda} \leq \min\{|W|,X_{\lambda}\}$ and $N_{\lambda} \leq \min\{|B|,X_{\lambda}\}$, we get
\begin{displaymath} |\mathcal{R}^{\delta}_{\sigma}| \lessapprox_{\delta} \delta^{-3C\sqrt{\epsilon}}(M_{\lambda}N_{\lambda}) \leq \delta^{-3C\sqrt{\epsilon}}(|W||B|)^{3/4}X_{\lambda}^{1/2}. \end{displaymath}
This is even better than the case $m = 1 = n$ of \eqref{form61}, assuming $3C\sqrt{\epsilon} \leq \eta$.

Assume then that $\lambda \leq \delta^{\sqrt{\epsilon}}$. In this case, as pointed out in Remark \ref{rem2}, the family $\bar{\mathcal{R}}^{\lambda}_{\Sigma}$ consists of $(\lambda,\Sigma)$-rectangles of type $(\geq m_{\lambda},\geq n_{\lambda})_{\mathbf{C}\sqrt{\epsilon}}$ relative to $(\overline{\mathcal{W}}_{\lambda},\overline{\mathcal{B}}_{\lambda})$, in the sense of Definition \ref{def:WolffType}. Furthermore, the pair $(\overline{\mathcal{W}}_{\lambda},\overline{\mathcal{B}}_{\lambda})$ is $(\lambda,\mathbf{C}\sqrt{\epsilon})$-almost $t$-bipartite by \eqref{form52}, and since $\delta^{-2\epsilon} \leq \lambda^{-\mathbf{C}\sqrt{\epsilon}}$. Consequently, by Lemma \ref{lemma8}, we have
\begin{equation}\label{form62} |\bar{\mathcal{R}}_{\Sigma}^{\lambda}| \leq \lambda^{-O(\sqrt{\epsilon})} \left[ \left(\frac{|\overline{\mathcal{W}}_{\lambda}||\overline{\mathcal{B}}_{\lambda}|}{m_{\lambda}n_{\lambda}}\right)^{3/4} + \frac{|\overline{\mathcal{W}}_{\lambda}|}{m_{\lambda}} + \frac{|\overline{\mathcal{B}}_{\lambda}|}{n_{\lambda}} \right]. \end{equation}
In particular, we may choose $\epsilon = \epsilon(\eta) > 0$ so small that $\lambda^{-O(\sqrt{\epsilon})} \leq \delta^{-\eta}$.

The estimate \eqref{form62} is not yet the same as the case $m = 1 = n$ of \eqref{form61}. To reach \eqref{form61} from here, we consider separately the cases where the first, second, or third terms in \eqref{form62} dominate. In all cases, we will use (recall \eqref{form67}) that
\begin{displaymath} |\overline{\mathcal{W}}_{\lambda}| \lesssim \frac{|W|}{M_{\lambda}} \quad \text{and} \quad |\overline{\mathcal{B}}_{\lambda}| \lesssim \frac{|B|}{N_{\lambda}} \quad \text{and} \quad \max\{M_{\lambda},N_{\lambda}\} \leq X_{\lambda}. \end{displaymath}
Now, if the first ("main") term in \eqref{form62} is the largest, then (omitting the factor $\lambda^{-O(\sqrt{\epsilon})}$ for notational simplicity, and combining \eqref{form95} with \eqref{form62})
\begin{align*} |\mathcal{R}_{\sigma}^{\delta}| & \lessapprox_{\delta} M_{\lambda}N_{\lambda} \cdot (m_{\lambda}n_{\lambda}) \cdot \left(\frac{|\overline{\mathcal{W}}_{\lambda}||\overline{\mathcal{B}}_{\lambda}|}{m_{\lambda}n_{\lambda}}\right)^{3/4}\\
& \lesssim (M_{\lambda}N_{\lambda})^{1/4} \cdot (m_{\lambda}n_{\lambda})^{1/4} \cdot (|W||B|)^{3/4} \stackrel{\eqref{form59}}{\leq} (X_{\lambda}Y_{\lambda})^{1/2} \cdot (|W||B|)^{3/4}. \end{align*}
This is what we desired in \eqref{form61} (case $m = n = 1$).

Assume next that the second term in \eqref{form62} dominates. Then,
\begin{displaymath} |\mathcal{R}_{\sigma}^{\delta}| \lessapprox_{\delta} M_{\lambda}N_{\lambda} \cdot (m_{\lambda}n_{\lambda}) \cdot \frac{|\overline{\mathcal{W}}_{\lambda}|}{m_{\lambda}} = N_{\lambda} \cdot n_{\lambda} \cdot |W| \lesssim X_{\lambda}Y_{\lambda}|W|. \end{displaymath}
Similarly, if the third term in \eqref{form62} dominates, we get $|\mathcal{R}_{\sigma}^{\delta}| \lessapprox_{\delta} X_{\lambda}Y_{\lambda}|B|$. This concludes the proof of \eqref{form61} in the case $m = 1 = n$.

We then, finally, consider the case of general $1 \leq m \leq |W|$ and $1 \leq n \leq |B|$. This is morally the random sampling argument from \cite[Lemma 1.4]{MR1800068}, but the details are more complicated due to our asymmetric definition of "$\lambda$-restricted type". Fix a large absolute constant $A \geq 1$ (to be determined soon; this constant has no relation to the constant $\mathbf{A}$ introduced in Claim \ref{claim1}). Let $\overline{W} \subset W$ be the subset obtained by keeping every element of $W$ with probability $A/m$. Define the random subset $\bar{B} \subset B$ in the same way, keeping every element of $B$ with probability $A/n$. However, if $m \leq 2A$, we keep all the elements of $W$, and if $n \leq 2A$, we keep all the elements of $B$. We assume in the sequel that $\min\{m,n\} \geq 2A$ and leave the converse special cases to the reader (the case $\max\{m,n\} < 2A$ is completely elementary, but to understand what to do in the case $m < 2A \leq n$, we recommend first reading the argument below, and then thinking about the small modification afterwards.)

The underlying probability space is $\{0,1\}^{|W|} \times \{0,1\}^{|B|} =: \Lambda$. The pairs $(\omega,\beta) \in \Lambda$ are in $1$-to-$1$ correspondence with subset-pairs $\overline{W} \times \bar{B} \subset W \times B$, and we will prefer writing "$(\overline{W},\bar{B}) \in \Lambda$" in place of "$(\omega,\beta) \in \Lambda$". We denote by $\mathbb{P}$ the probability which corresponds to the explanation in the previous paragraph: thus, the probability of a sequence $(\omega,\beta)$ equals
\begin{displaymath} \mathbb{P}\{(\omega,\beta)\} = (\tfrac{A}{m})^{|\{\omega_{i} = 1\}|}(1 - \tfrac{A}{m})^{|\{\omega_{i} = 0\}|}(\tfrac{A}{n})^{|\{\beta_{j} = 1\}|}(1 - \tfrac{A}{n})^{|\{\beta_{j} = 0\}|}. \end{displaymath}
The most central random variables will be $|\overline{W}|$ and $|\bar{B}|$, formally
\begin{displaymath} |\overline{W}|(\omega,\beta) := |\{1 \leq i \leq |W| : \omega_{i} = 1\}| \quad \text{and} \quad |\bar{B}|(\omega,\beta) := |\{1 \leq j \leq |B| : \beta_{j} = 1\}| \end{displaymath}
In expectation $\mathbb{E}|\overline{W}| = A|W|/m$ and $\mathbb{E}|\bar{B}| = A|B|/n$. By Chebychev's inequality, the probability that either $|\overline{W}| \geq 4A|W|/m$ or $|\bar{B}| \geq 4A|B|/n$ is at most $\tfrac{1}{2}$. We let $\Lambda' \subset \Lambda$ be sequences in $(\omega,\beta) \in \Lambda$ for which $|\overline{W}(\omega,\beta)| \leq 4A|W|/m$ and $|\bar{B}(\omega,\beta)| \leq 4A|B|/n$. As we just said, $\mathbb{P}(\Lambda') \geq \tfrac{1}{2}$.

Let $\mathcal{R}^{\delta}_{\sigma}(\overline{W},\bar{B}) \subset \mathcal{R}^{\delta}_{\sigma}$ be the subset which has $\lambda$-restricted type $(\geq 1,\geq 1)_{\epsilon}$ relative to $(\overline{W},\bar{B})$. We claim that there exists $(\overline{W},\bar{B}) \in \Lambda'$ such that
\begin{equation}\label{form69} |\mathcal{R}_{\sigma}^{\delta}| \leq 4|\mathcal{R}^{\delta}_{\sigma}(\overline{W},\bar{B})|. \end{equation}
To see this, fix $R \in \mathcal{R}_{\sigma}^{\delta}$, and recall the definition of $\lambda$-restricted type $(\geq m,\geq n)_{\epsilon}$ relative to $(W,B)$. There exists a set $W_{R} \subset W$ with $|W_{R}| \geq m$, and for each $p \in W_{R}$ a subset
\begin{equation}\label{form215} B(p) \subset B \quad \text{with} \quad |B(p)| \geq n, \end{equation}
such that $\delta^{\epsilon}\lambda \leq \Delta(p,q) \leq \delta^{-\epsilon}\lambda$ for all $p \in W_{R}$ and $q \in B(p)$, and $R \subset \delta^{-\epsilon}\mathcal{E}_{\sigma}^{\delta}(p) \cap \delta^{-\epsilon}\mathcal{E}_{\sigma}^{\delta}(q)$ for all $p \in W_{R}$ and $q \in B_{R}(p)$. We claim that for any $c > 0$, we have
\begin{equation}\label{form96} \mathbb{P}(\{\exists \text{ at least one pair } (p,q) \in \overline{W} \times \bar{B} \text{ such that } p \in W_{R} \text{ and } q \in B(p)\}) \geq 1 - c, \end{equation}
assuming that the constant "$A$" is chosen large enough, depending only on $c$. Before attempting this, we prove something easier: $\mathbb{P}(\{\overline{W} \cap W_{R} \neq \emptyset\}) \geq 1 - c$. For each $p \in W_{R}$ fixed, we have
\begin{displaymath} \mathbb{P}(\{p \notin \overline{W}\}) = 1 - \frac{A}{m}. \end{displaymath}
Moreover, these events are independent when $p \in W_{R}$ (or even $p \in W$) varies. Therefore,
\begin{equation}\label{form98} \mathbb{P}(\{\overline{W} \cap W_{R} = \emptyset\}) = \prod_{p \in W_{R}} \mathbb{P}(\{p \notin \overline{W}\}) = (1 - \tfrac{A}{m})^{|W_{R}|} \leq \left((1 - \tfrac{A}{m})^{m/A} \right)^{A}. \end{equation}
Since $m \geq 2A$, the right hand side is bounded from above by $\rho^{A}$ for some (absolute) $\rho < 1$, and in particular the probability is $< c$ as soon as $\rho^{A} < c$.

To proceed towards \eqref{form96}, we partition the event $\{\overline{W} \cap W_{R} \neq \emptyset\}$ into a union of events of the form $\{\overline{W} \cap W_{R} = H\}$, where $H \subset W_{R}$ is a fixed non-empty subset. Clearly the events $\{\overline{W} \cap W_{R} = H\}$ and $\{\overline{W} \cap W_{R} = H'\}$ are disjoint for distinct (not necessarily disjoint) $H,H' \subset W_{R}$. For every $\emptyset \neq H \subset W_{R}$, we designate a point $p_{H} \in H$ in an arbitrary manner. For example, we could enumerate the points in $W_{R}$, and $p_{H} \in H$ could be the point with the lowest index in the enumeration. Then, for $H \subset W_{R}$ fixed, we consider the event $\{\bar{B} \cap B(p_{H}) \neq \emptyset\}$, where $B(p_{H}) \subset B$ is the set from \eqref{form215}. Since $\mathbb{P}(\{q \notin \bar{B}\}) = 1 - A/n$, and $|B(p_{H})| \geq n$, a calculation similar to the one on line \eqref{form98} shows that
\begin{equation}\label{form97} \mathbb{P}(\{\bar{B} \cap B(p_{H}) \neq \emptyset\}) \geq 1 - \rho^{A} > 1 - c, \qquad \emptyset \neq H \subset W_{R}, \end{equation}
assuming that $\rho^{A} < c$. Furthermore, we notice that for $\emptyset \neq H \subset W_{R}$ fixed,
\begin{displaymath} \mathbb{P}(\{\overline{W} \cap W_{R} = H\} \cap \{\bar{B} \cap B(p_{H}) \neq \emptyset\}) = \mathbb{P}(\{\overline{W} \cap W_{R} = H\})\mathbb{P}(\{\bar{B} \cap B(p_{H}) \neq \emptyset\}). \end{displaymath}
From a probabilistic point of view, this is because the events $\{\bar{B} \cap B(p_{H}) \neq \emptyset\}$ and $\{\overline{W} \cap W_{R} = H\}$ are independent. From a measure theoretic point of view, the set $\{\overline{W} \cap W_{R} = H\} \cap \{\bar{B} \cap B(p_{H}) \neq \emptyset\} \subset \{0,1\}^{|W|} \times \{0,1\}^{|B|} = \Lambda$ can be written as a product set. Now, we may estimate as follows:
\begin{align*} \sum_{\emptyset \neq H \subset W_{R}} & \mathbb{P}(\{\bar{B} \cap B(p_{H}) \neq \emptyset\} \cap \{\overline{W} \cap W_{R} = H\})\\
& \stackrel{\eqref{form97}}{\geq} (1 - c) \sum_{\emptyset \neq H \subset W_{R}} \mathbb{P}(\{\overline{W} \cap W_{R} = H\})\\
& = (1 - c) \cdot \mathbb{P}(\{\overline{W} \cap W_{R} \neq \emptyset\}) \geq (1 - c)^{2}. \end{align*}
On the other hand, the events we are summing over on the far left are disjoint, and their union is contained in the event shown in \eqref{form96}. This proves \eqref{form96} with $(1 - c)^{2}$ in place of $(1 - c)$, which is harmless.

Let $G_{R} \subset \Lambda$ be the "good" event from \eqref{form96}. Note that if $(\overline{W},\bar{B}) \in G_{R}$, then $R$ has restricted $\lambda$-type $(\geq 1,\geq 1)_{\epsilon}$ relative to $(\overline{W},\bar{B})$ -- indeed this is due to the pair $(p,q) \in \overline{W} \times \bar{B}$ with $p \in W_{R}$ and $q \in B(p)$ whose existence is guaranteed by the definition of $(\overline{W},\bar{B}) \in G_{R}$. Thus $R \in \mathcal{R}^{\delta}_{\sigma}(\overline{W},\bar{B})$ (defined above \eqref{form69}) whenever $(\overline{W},\bar{B}) \in G_{R}$. This implies that
\begin{displaymath} \int_{\Lambda'} |\mathcal{R}^{\delta}_{\sigma}(\overline{W},\bar{B})| \, d\tn(\overline{W},\bar{B}) = \sum_{R \in \mathcal{R}^{\delta}_{\sigma}} \mathbb{P}(\Lambda' \cap \{R \in \mathcal{R}^{\delta}_{\sigma}(\overline{W},\bar{B})\}) \geq \sum_{R \in \mathcal{R}^{\delta}_{\sigma}} \mathbb{P}(\Lambda' \cap G_{R}). \end{displaymath}
Finally, recall that $\mathbb{P}(\Lambda') \geq 1/2$ and $\mathbb{P}(G_{R}) \geq 1 - c$. In particular, if we choose $c < 1/4$ (and thus finally fix "$A$" sufficiently large), then the integral above is bounded from below by $|\mathcal{R}^{\delta}_{\sigma}|/4$. This proves the existence of $(\overline{W},\bar{B}) \in \Lambda'$ such that \eqref{form69} holds.

Finally, since every $R \in \mathcal{R}^{\delta}_{\sigma}(\overline{W},\bar{B}) =: \bar{\mathcal{R}}_{\sigma}^{\delta}$ has $\lambda$-restricted type $(\geq 1,\geq 1)_{\epsilon}$ relative to $(\overline{W},\bar{B})$, the first part of the proof implies
\begin{displaymath} |\mathcal{R}^{\delta}_{\sigma}| \leq 4|\bar{\mathcal{R}}^{\delta}_{\sigma}| \lesssim \delta^{-\eta} \left[ (|\overline{W}||\bar{B}|)^{3/4}(X_{\lambda}Y_{\lambda})^{1/2} + |\overline{W}|(X_{\lambda}Y_{\lambda}) + |\bar{B}|(X_{\lambda}Y_{\lambda}) \right]. \end{displaymath}
Since $(\overline{W},\bar{B}) \in \Lambda'$, we have $|\overline{W}| \leq 4A|W|/m$ and $|\bar{B}| \leq 4A|B|/n$. Noting that "$A$" is an absolute constant, the upper bound matches \eqref{form61}, and the proof is complete. \end{proof}


\section{Proof of Theorem \ref{thm2}}\label{s:mainInduction}

In this section we finally prove Theorem \ref{thm2}. In fact, we will prove a stronger statement concerning the partial multiplicity functions $m_{\delta,\lambda,t}$, see Theorem \ref{thm5} below. Theorem \ref{thm2} will finally be deduced from Theorem \ref{thm5} in Section \ref{s:thm2Proof}.

Recall Notation \ref{not3}. We will need the following slight generalisation, where the ranges of the "distance" and "tangency" parameters can be specified independently of each other.

\begin{definition}[$G_{\lambda,t}^{\rho_{\lambda},\rho_{t}}(\omega)$]\label{def:lambdaTNeighbourhood} Let $\delta \leq \lambda \leq t \leq 1$, and $G \subset \Omega = \{(p,v) : p \in P \text{ and } v \in E(p)\}$. For $\rho_{\lambda},\rho_{t} \geq 1$ and $\omega = (p,v) \in \Omega$, we write
\begin{displaymath} G^{\rho_{\lambda},\rho_{t}}_{\lambda,t}(\omega) := \{(p',v') \in G : \lambda/\rho_{\lambda} \leq \Delta(p,p') \leq \rho_{\lambda}\lambda \text{ and } t/\rho_{t} \leq |p - p'| \leq \rho_{t}t\}. \end{displaymath}
Similarly, for $Q \subset P \subset \mathbf{D}$, we will also write
\begin{displaymath} Q_{\lambda,t}^{\rho_{\lambda},\rho_{t}}(p) := \{q \in Q : \lambda/\rho_{\lambda} \leq \Delta(p,q) \leq \rho_{\lambda}\lambda \text{ and } t/\rho_{t} \leq |p - q| \leq \rho_{t}t\}. \end{displaymath}
Thus, the former notation concerns pairs, and the latter points. The correct interpretation should always be clear from the context (whether $G \subset \Omega$ or $Q \subset P$).

Whenever $\delta \leq \lambda \leq \delta \rho_{\lambda}$, we modify both definitions so that the two-sided condition $\lambda/\rho_{\lambda} \leq \Delta(p,q) \leq \rho_{\lambda}\lambda$ is replaced by the one-sided condition $\Delta(p,q) \leq \rho_{\lambda}\lambda$.
\end{definition}

\begin{notation}\label{not2} Thankfully, we can most often (not always) use the definitions in the cases $\rho_{\lambda} = \rho = \rho_{t}$. In this case, we abbreviate $G^{\rho_{\lambda},\rho_{t}}_{\lambda,t} =: G^{\rho}_{\lambda,t}$.
\end{notation}

\begin{definition}[$m_{\delta,\lambda,t}^{\rho_{\lambda},\rho_{t},C}$]\label{def:multFunction} Fix $0 < \delta \leq \lambda \leq t \leq 1$ and $\rho_{\lambda},\rho_{t} \geq 1$. Let $\Omega = \{(p,v) : p \in P \text{ and } v \in E(p)\}$ as usual, and write $\sigma := \delta/\sqrt{\lambda t}$. For any set $G \subset \Omega$, we define
\begin{displaymath} m_{\delta,\lambda,t}^{\rho_{\lambda},\rho_{t},C}(\omega \mid G) := |\{\omega' \in (G^{\delta}_{\sigma})_{\lambda,t}^{\rho_{\lambda},\rho_{t}} : CR^{\delta}_{\sigma}(\omega) \cap CR^{\delta}_{\sigma}(\omega') \neq \emptyset\}|, \qquad \omega \in G. \end{displaymath}
Here $G^{\delta}_{\sigma}$ is the $(\delta,\sigma)$-skeleton of $G$. \end{definition}

\begin{notation}\label{not:shorthand} Consistently with Notation \ref{not2}, in the case $\rho_{\lambda} = \rho = \rho_{t}$ we abbreviate
\begin{displaymath} m^{\rho_{\lambda},\rho_{t},C}_{\delta,\lambda,t} =: m^{\rho,C}_{\delta,\lambda,t}. \end{displaymath}
The full generality of the notation will only be needed much later, and we will remind the reader at that point. \end{notation}

\begin{thm}\label{thm5} For every $\kappa \in (0,\tfrac{1}{2}]$ and $s \in (0,1]$, there exist $\epsilon = \epsilon(\kappa,s) > 0$ and $\delta_{0} = \delta_{0}(\epsilon,\kappa,s) > 0$ such that the following holds for all $\delta \in (0,\delta_{0}]$. Let $\Omega = \{(p,v) : p \in P \text{ and } v \in E(p)\}$ be a $(\delta,\delta,s,\delta^{-\epsilon})$-configuration with
\begin{equation}\label{sizeP} |P| \leq \delta^{-s - \epsilon}, \end{equation}
Then, there exists a subset $G \subset \Omega$ of cardinality $|G| \geq \delta^{\kappa}|\Omega|$ such that the following holds simultaneously for all $\delta \leq \lambda \leq t \leq 1$:
\begin{equation}\label{form99} m_{\delta,\lambda,t}^{\delta^{-\epsilon},\kappa^{-1}}(\omega \mid G) \leq \delta^{-\kappa}, \qquad \omega \in G. \end{equation}
\end{thm}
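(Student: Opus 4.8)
The strategy is an induction, over the dyadic pairs $(\lambda,t)$ with $\delta\le\lambda\le t\le1$, of the following statement: after finitely many refinements (the number controlled only by $\kappa,s$), we can pass from $\Omega$ to a subconfiguration $G$ so that \eqref{form99} holds \emph{for the given pair} $(\lambda,t)$. Since there are only $\lesssim(\log(1/\delta))^2\le\delta^{-\kappa'}$ such pairs, iterating the refinement over all of them — each step losing at most a $\delta^{\kappa''}$-fraction of the cardinality, with $\kappa',\kappa''$ small multiples of the final target exponent — yields a single $G$ with $|G|\ge\delta^\kappa|\Omega|$ satisfying \eqref{form99} for \emph{all} $(\lambda,t)$ simultaneously. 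The order in which the pairs $(\lambda,t)$ are processed is crucial, and is the one sketched in Section \ref{s:outline}: for fixed $t$, we process $\lambda$ in increasing order $\delta,2\delta,4\delta,\ldots$, and before handling $(\delta,\lambda,t)$ we require that $(\lambda,\lambda,t)$ has already been dealt with (so that the hypothesis \eqref{form54} of Theorem \ref{thm4} is available with $Y_\lambda\approx_\delta\lambda^s|P|_\lambda$), and also that all $(\delta,\lambda',t)$ with $\lambda'<\lambda$ have been handled.

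\textbf{The base case} is $\lambda=\delta$: here $m_{\delta,\delta,t}^{\delta^{-\epsilon},\kappa^{-1}}$ counts incidences of maximal tangency, and this is exactly the content of Theorem \ref{thm3} (applied with the roles of $\delta$ and $\lambda$ there both equal to the present $\delta$). Theorem \ref{thm3} hands us a $(\delta,\delta,s,C\delta^{-\epsilon})$-configuration $G\subset\Omega$ with $|G|\approx_{\delta,\kappa}|\Omega|$ and $m_{\delta,\delta,t}^{\delta^{-\epsilon_0},\delta^{-\epsilon_0}}(\omega\mid G)\le\delta^{-\kappa}\delta^s|P|_\delta\le\delta^{-\kappa}$ (using \eqref{sizeP}, which forces $|P|_\delta=|P|\le\delta^{-s-\epsilon}$, hence $\delta^s|P|_\delta\le\delta^{-\epsilon}\le\delta^{-\kappa}$ after adjusting constants). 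The analogous statement with $(\lambda,\lambda,t)$ in place of $(\delta,\delta,t)$ — which we need as the \emph{a priori} input for the inductive step — follows by the same theorem applied at scale $\lambda$, after using Proposition \ref{prop6} to pass to a $(\lambda,\Sigma)$-skeleton that is a genuine $(\lambda,\Sigma,s)$-configuration; this is precisely how Theorem \ref{thm3} is phrased.

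\textbf{The inductive step}, passing from the pair $(\lambda',t)$ with $\lambda'=\lambda/2$ (and the base pair $(\lambda,\lambda,t)$) to $(\lambda,t)$, is where Theorem \ref{thm4} enters and is the \emph{main obstacle}. The bound $m_{\delta,\lambda,t}^{\delta^{-\epsilon},\kappa^{-1}}(\omega\mid G)\le\delta^{-\kappa}$ asks us to control, for $\omega=(p,v)$ in the $(\delta,\sigma)$-skeleton, the number of $\omega'=(p',v')$ in the skeleton with $\Delta(p,p')\sim\lambda$, $|p-p'|\sim t$, and $CR^\delta_\sigma(\omega)\cap CR^\delta_\sigma(\omega')\neq\emptyset$. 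Fixing $\omega$, this quantity is (essentially) the number of $\delta$-cubes $p'$ — grouped through the pigeonholed $\lambda$-cube structure — for which the rectangle $R^\delta_\sigma(\omega')$ lies in the common intersection $S^{\delta^{1-\epsilon}}(p)\cap S^{\delta^{1-\epsilon}}(p')$; grouping the admissible $p'$ into a $(\delta,\epsilon)$-almost $t$-bipartite pair $(W,B)$ with $W=\{p\}$ (or a small ball around it) and $B$ the set of candidate $p'$, the rectangles $\{R^\delta_\sigma(\omega')\}$ form a pairwise $100$-incomparable family of $\lambda$-restricted type $(\ge m,\ge n)_\epsilon$ relative to $(W,B,\{E(p)\})$ in the sense of Definition \ref{def:type} (the refinements at each earlier step and the hereditary properties from Proposition \ref{prop6} make this precise). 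Theorem \ref{thm4} then bounds the cardinality of this rectangle family by
\begin{displaymath}
\delta^{-\eta}\Bigl[\Bigl(\tfrac{|W||B|}{mn}\Bigr)^{3/4}(X_\lambda Y_\lambda)^{1/2}+\tfrac{|W|}{m}X_\lambda Y_\lambda+\tfrac{|B|}{n}X_\lambda Y_\lambda\Bigr],
\end{displaymath}
where $X_\lambda$ is the max $\lambda$-ball count of $P$ and $Y_\lambda$ is the already-established bound $\|m_{\lambda,\lambda,t}\|_{L^\infty}\lessapprox_\delta\lambda^s|P|_\lambda$. Inserting the $(\delta,s,\delta^{-\epsilon})$-set bounds $|B|\le\delta^{-\epsilon}t^s|P|$, $X_\lambda\le\delta^{-\epsilon}\lambda^s|P|$, together with \eqref{sizeP} to control $|P|\le\delta^{-s-\epsilon}$, and the non-concentration lower bound $M_\sigma=|E(p)|\ge\delta^\epsilon\sigma^{-s}$ which survives the skeleton passage, the product $|W||B|X_\lambda Y_\lambda/(mn)$ collapses (after the numerology is carried through exactly as in the proof of Proposition \ref{prop3}) to something $\le\delta^{-\kappa/2}$, and the two linear error terms are similarly absorbed. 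The delicate points — and the bulk of the work — are: (i) verifying that the refinements can be arranged so that the \emph{partial} multiplicity function at scale $\delta$ is indeed counting rectangles of the required restricted type, which needs the skeleton compatibility of Proposition \ref{prop6} and the definitions in Section \ref{s:prelimConfigurations}; (ii) the reason all $\lambda'<\lambda$ must be treated first — this stems from the geometry illustrated around Figure \ref{fig3}, where a rectangle's containment in the common intersection of two $\delta^{1-\epsilon}$-annuli at tangency $\sim\lambda$ can be reorganised, but only after the smaller tangency scales have been "emptied out" by prior refinements; and (iii) bookkeeping the constants so that $\epsilon$ stays bounded away from zero (Remark \ref{rem3}) — here one sets $\epsilon$ to be a small multiple of $\kappa s/(\text{number of pairs})$ and applies Theorem \ref{thm4} with $\eta$ a correspondingly small multiple of $\kappa$. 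Finally, one concatenates all the per-pair refinements (a product of $\lesssim(\log(1/\delta))^2$ factors, each $\approx_\delta 1$ or at worst $\ge\delta^{\kappa/C}$) to obtain the single $G$ of cardinality $\ge\delta^\kappa|\Omega|$, completing the proof.
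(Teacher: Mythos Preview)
Your outline captures the correct overall architecture --- an outer induction on pairs $(\lambda,t)$, with the base case handled by Theorem \ref{thm3} and the inductive step driven by Theorem \ref{thm4} --- and you correctly identify the order in which pairs must be processed. However, there are two genuine gaps.

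\textbf{The number of pairs.} You write both ``the number controlled only by $\kappa,s$'' and ``$\lesssim(\log(1/\delta))^2$ pairs''; these are incompatible, and the discrepancy matters. The paper does \emph{not} iterate over all dyadic $(\lambda,t)$. Instead it fixes a multiplicatively $\delta^{-\epsilon/2}$-dense set $\Lambda\subset[\delta,1]$ of cardinality $|\Lambda|\sim\epsilon^{-1}$, and similarly for $t$, so that the outer induction runs only $\sim\epsilon^{-2}$ times --- a constant depending only on $\kappa,s$. This is essential because each refinement degrades the configuration constant from $C\delta^{-\epsilon}$ to $C'\delta^{-\epsilon}$ with $C'\approx_\delta C$; after $\epsilon^{-2}$ rounds the constant is still $\approx_\delta\delta^{-\epsilon}$, but after $(\log(1/\delta))^2$ rounds it would blow up past $\delta^{-\epsilon_{\max}}$ and Theorems \ref{thm3}--\ref{thm4} would no longer apply. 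A separate density argument (Proposition \ref{prop7}) then upgrades the bound from the grid pairs to all $(\lambda,t)$.

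\textbf{The inductive step is not workable as written.} You propose to fix $\omega=(p,v)$, take $W=\{p\}$ (or a small ball), set $B$ to be the candidate $p'$, and invoke Theorem \ref{thm4}. With $|W|=1$ and $m=1$ the main term $(|W||B|/mn)^{3/4}(X_\lambda Y_\lambda)^{1/2}$ gives nothing useful: there is no gain from the $3/4$-exponent unless $|W|$ is large. The paper does \emph{not} bound the multiplicity at a single $\omega$ directly. Instead, for each fixed $(\lambda,t)$ it runs an \emph{interior} induction of length $\sim 1/\kappa$ (a sequence $\mathbf{G}_0\supset\mathbf{G}_1\supset\cdots$ exactly parallel to the proof of Proposition \ref{prop3}): at each step either the multiplicity threshold drops, or one locates a large set $\mathbf{H}$ on which the multiplicity is nearly extremal, $n\le m_{j+1}(\cdot)\le\delta^{-\zeta}n$. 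In the latter case one pigeonholes a ball $B(p_0,t/(4\rho))$ to produce a genuine bipartite pair with $W=\bar P\cap B(p_0,t/(4\rho))$ \emph{large}, and shows that the family $\{R^\delta_\sigma(\omega):\omega\in\mathbf{W}\}$ has $\lambda$-restricted type $(\gtrsim m,\gtrsim n)$ with $m\lessapprox n$. Now Theorem \ref{thm4} gives an upper bound for $|\bar{\mathcal{R}}^\delta_\sigma|$ that contradicts the lower bound $|\bar{\mathcal{R}}^\delta_\sigma|\gtrsim|W|M_\sigma/m$. It is precisely in the verification that $m\lessapprox n$ that the inductive hypothesis for all $\lambda'<\lambda$ is consumed (the Figure \ref{fig3} phenomenon): a pair $(p',q)$ contributing to $m(R)$ may have $\Delta(p',q)\ll\lambda$, so one falls back on the already-established bound $m^{C_l\delta^{-\epsilon},C_l}_{\delta,\lambda_l,t}\le\delta^{-\kappa}$ at the appropriate smaller scale $\lambda_l$. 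Your sketch alludes to Proposition \ref{prop3} ``for the numerology'' but does not supply this interior-induction/contradiction mechanism, and the choice $W=\{p\}$ is incompatible with it.
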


Theorem \ref{thm2} will be easy to derive from Theorem \ref{thm5}. The details are in Section \ref{s:thm2Proof}. Theorem \ref{thm5} will be proven by a sequence of successive refinements to the initial configuration $\Omega$. Every refinement will take care of the inequality \eqref{form99} for one fixed pair $(\lambda,t)$, but the refinements will need to be performed in an appropriate order, as we will discuss later. After a large but finite number of such refinements, we will be able to check that \eqref{form99} holds for all $\delta \leq \lambda \leq t \leq 1$ simultaneously.

\begin{notation} Throughout this section, we allow the implicit constants in the "$\approx_{\delta}$" notation to depend on the constants $\kappa,s$ and $\epsilon = \epsilon(\kappa,s)$ in Theorem \ref{thm5} (the choice of $\epsilon$ is explained in Section \ref{s:constants}). Thus, the notation $A \lessapprox_{\delta} B$ means that $A \leq C(\log(1/\delta))^{C}B$, where $C = C(\epsilon,\kappa,s) > 0$. In particular, if $\delta > 0$ is small enough depending on $\epsilon,\kappa,s$, the inequality $A \lessapprox_{\delta} B$ implies $A \leq \delta^{-\epsilon}B$. \end{notation}

\subsection{Choice of constants}\label{s:constants} We explain how $\epsilon$ in Theorem \ref{thm5} depends on $\kappa,s$. Let $\epsilon_{\mathrm{max}} = \epsilon_{\mathrm{max}}(\kappa,s) > 0$ be an auxiliary constant, which (informally) satisfies $\epsilon \ll \epsilon_{\mathrm{max}} \ll \kappa$. Precisely, the constant $\epsilon_{\mathrm{max}}$ is determined by the following two requirements:
\begin{itemize}
\item Let $\mathbf{A}$ be the absolute constant from Theorem \ref{thm4}. We require $\epsilon_{\mathrm{max}}$ to be so small that if Theorem \ref{thm3} is applied with parameters $\bar{\kappa} = \kappa s/100$ and $s$, then $\mathbf{A}\epsilon_{\mathrm{max}} \leq \epsilon_{0}(\bar{\kappa},s)$, where the $\epsilon_{0}(\bar{\kappa},s)$ is the constant produced by Theorem \ref{thm3}.
\item We apply Theorem \ref{thm4} with constant $\eta = \kappa s/100$, and we require that $\epsilon_{\mathrm{max}} \leq \epsilon(\eta)$ (where $\epsilon(\eta)$ is the constant produced by Theorem \ref{thm4}).
\item We require that $\epsilon_{\mathrm{max}} < c\kappa s$ for a small absolute constant $c > 0$, whose size will be determined later.
\end{itemize}

The relationship between the "final" $\epsilon$ in Theorem \ref{thm5}, and the constant $\epsilon_{\mathrm{max}}$ fixed above, is the following, for a suitable absolute constant $C > 0$:
\begin{equation}\label{form133} C \cdot 10^{100/\kappa}\epsilon \leq \epsilon_{\mathrm{max}}. \end{equation}
As stated in Theorem \ref{thm5}, the threshold $\delta_{0} > 0$ may depend on all the parameters $\epsilon,\kappa,s$. We do not attempt to track the dependence explicitly, and often we will state inequalities of (e.g.) the form "$C \leq \delta^{-\epsilon}$" under the implicit assumption that $\delta > 0$ is small enough, depending on $\epsilon$. Here, we only explicitly record that $\delta_{0} > 0$ is taken so small that
\begin{equation}\label{form184a} CA(\epsilon,\kappa)^{C/\epsilon} \leq \delta_{0}^{-\epsilon_{\mathrm{max}}}, \end{equation}
where $A(\epsilon,\kappa) \geq 1$ is a constant depending only on $\kappa$, and $C \geq 1$ is absolute.

\subsection{The case $t \approx \lambda$}\label{s:largeLambda} In the "main" argument for Theorem \ref{thm5}, we will need to assume that $t \geq \delta^{-\kappa/10}\lambda$. The opposite case $t \leq \delta^{-\kappa/10}\lambda$ is elementary, and we handle it straight away. So, fix $\delta \leq \lambda \leq t \leq 1$ with $t \leq \delta^{-\kappa/10}\lambda$.
\begin{claim} There exists a $(\delta,\delta,s,4\delta^{-\epsilon})$-configuration $G \subset \Omega$ (depending on $\lambda,t$) of cardinality $|G| \geq |\Omega|/16$ such that \eqref{form99} holds with $\epsilon := \kappa/100$. \end{claim}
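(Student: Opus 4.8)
The argument is an averaging estimate followed by the refinement principle, so that the delicate pointwise counting is avoided. Fix $\lambda \le t \le 1$ with $t \le \delta^{-\kappa/10}\lambda$, write $\sigma := \delta/\sqrt{\lambda t}$ and $\rho_{t} := \kappa^{-1}$, and let $C := \delta^{-\kappa/100}$ denote the enlargement constant of Definition \ref{def:multFunction}, so that the quantity to bound is $m_{\delta,\lambda,t}^{\delta^{-\kappa/100},\kappa^{-1}}(\omega \mid G)$. Since $G^{\delta}_{\sigma} \subset \Omega^{\delta}_{\sigma}$ whenever $G \subset \Omega$, the map $\omega \mapsto m_{\delta,\lambda,t}^{\delta^{-\kappa/100},\kappa^{-1}}(\omega \mid G)$ is monotone in $G$; thus it suffices to prove the averaged bound
\begin{equation*}
\sum_{\omega \in \Omega} m_{\delta,\lambda,t}^{\delta^{-\kappa/100},\kappa^{-1}}(\omega \mid \Omega) \le \tfrac{1}{2}\delta^{-\kappa}|\Omega|.
\end{equation*}
Granting this, the set $G_{0} := \{\omega \in \Omega : m_{\delta,\lambda,t}^{\delta^{-\kappa/100},\kappa^{-1}}(\omega \mid \Omega) \le \delta^{-\kappa}\}$ has $|G_{0}| \ge \tfrac{1}{2}|\Omega|$ by Chebyshev's inequality, and applying Lemma \ref{refinement} with $c = \tfrac{1}{2}$ to the $(\delta,\delta,s,\delta^{-\epsilon})$-configuration $\Omega$ produces a $(\delta,\delta,s,4\delta^{-\epsilon})$-configuration $G \subset G_{0}$ with $|G| \ge \tfrac{1}{16}|\Omega|$; by the monotonicity just noted, $G$ satisfies \eqref{form99}. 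The hypothesis $t \le \delta^{-\kappa/10}\lambda$ enters below only through the identity $\sigma t/\delta = \sqrt{t/\lambda} \le \delta^{-\kappa/20}$.

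To prove the averaged bound I count the pairs $(\omega,\omega') = ((p,v),(p',\mathbf{v}'))$ with $p,p' \in P$, $v \in E(p)$, $\mathbf{v}' \in E_{\sigma}(p')$, $|p-p'| \in [\rho_{t}^{-1}t,\rho_{t}t]$, $\Delta(p,p') \le \delta^{-\kappa/100}\lambda$, and $CR^{\delta}_{\sigma}(p,v) \cap CR^{\delta}_{\sigma}(p',\mathbf{v}') \ne \emptyset$, fixing the cube-pair $(p,p')$ first. For such a pair, Lemma \ref{lemma5} applied at scale $C\delta$ (with tangency parameter $\Delta(p,p')$ and distance $|p-p'|$) covers $S^{C\delta}(p) \cap S^{C\delta}(p')$ by boundedly many $(\delta,\sigma_{1})$-rectangles, and a direct computation using $|p-p'| \ge \rho_{t}^{-1}t$, $\Delta(p,p') \le \delta^{-\kappa/100}\lambda$ and $t \le \delta^{-\kappa/10}\lambda$ shows $\sigma_{1} \le \delta^{-O(\kappa)}\sigma$ (the one mildly delicate point is the range $\lambda \le \delta^{1-\kappa/100}$, where the ``$+C\delta$'' term may dominate $\Delta(p,p')$). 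Any point realising the intersection lies in $S^{C\delta}(p) \cap S^{C\delta}(p')$ and within $C\sigma$ of both $v$ and $\mathbf{v}'$, so $v$ lies in $O(1)$ arcs of $S(p)$ of length $\lesssim \delta^{-O(\kappa)}\sigma$, and likewise $\mathbf{v}'$ on $S(p')$. Since $E(p)$ is a $(\delta,s,\delta^{-\epsilon})$-set, the number of admissible $v$ is $\lesssim \delta^{-O(\kappa)}\sigma^{s}M$ with $M := |E(p)|$; since the arcs of $\mathcal{S}_{\sigma}(p')$ are $\sigma$-separated, the number of admissible $\mathbf{v}'$ is $\lesssim \delta^{-O(\kappa)}$. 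Summing over admissible $p'$ and then over $p$, and using the $(\delta,s,\delta^{-\epsilon})$-set property of $P$ in the form $|\{p' \in P : |p-p'| \le \rho_{t}t\}| \le \delta^{-\epsilon}(2\rho_{t}t)^{s}|P|$, I obtain
\begin{equation*}
\sum_{\omega \in \Omega} m_{\delta,\lambda,t}^{\delta^{-\kappa/100},\kappa^{-1}}(\omega \mid \Omega) \le \delta^{-O(\kappa)}\sigma^{s}M(\rho_{t}t)^{s}|P|^{2}.
\end{equation*}

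Finally, dividing by $|\Omega| = |P|M$ and inserting $|P| \le \delta^{-s-\epsilon}$ (recall \eqref{sizeP}), it remains to check $\delta^{-O(\kappa)}(\sigma\rho_{t}t/\delta)^{s} \le \tfrac{1}{2}\delta^{-\kappa}$; and since $\sigma t/\delta = \sqrt{t/\lambda} \le \delta^{-\kappa/20}$ and $\rho_{t} = \kappa^{-1}$, the left-hand side is $\le \kappa^{-s}\delta^{-O(\kappa)-\kappa s/20}$, which is $< \tfrac{1}{2}\delta^{-\kappa}$ for all sufficiently small $\delta$. I expect the only real obstacle to be the bookkeeping of the constant hidden in ``$O(\kappa)$'': it is a small, explicitly bounded power of $C = \delta^{-\kappa/100}$ coming from Lemma \ref{lemma5} and the scale comparison $\sigma_{1} \le \delta^{-O(\kappa)}\sigma$ (the further loss $\delta^{-2\epsilon}$ from $|P| \le \delta^{-s-\epsilon}$ is negligible, as $\epsilon \ll \kappa$), and one must confirm that, together with the $\delta^{-\kappa s/20}$ coming from $\sqrt{t/\lambda}$, it stays comfortably below $\delta^{-\kappa}$ — which is exactly what the budget $\epsilon = \kappa/100$ leaves room for.
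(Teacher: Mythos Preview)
Your overall strategy---average the multiplicity over $\Omega$, apply Chebyshev, then Lemma~\ref{refinement}---is exactly the paper's. However, you have swapped the two superscripts in $m_{\delta,\lambda,t}^{\delta^{-\epsilon},\kappa^{-1}}$: in the shorthand of Notation~\ref{not:shorthand}, the first entry is the \emph{range} parameter $\rho = \rho_{\lambda} = \rho_{t} = \delta^{-\epsilon} = \delta^{-\kappa/100}$, and the second is the \emph{enlargement} constant $C = \kappa^{-1}$. You have taken $\rho_{t} = \kappa^{-1}$ and $C = \delta^{-\kappa/100}$, which is backwards.

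This is not merely cosmetic. With the correct range $\rho = \delta^{-\kappa/100}$, the definition of $(G^{\delta}_{\sigma})^{\rho}_{\lambda,t}(\omega)$ includes the \emph{lower bound} $\Delta(p,p') \geq \delta^{\kappa/100}\lambda$ (whenever $\lambda > \delta\rho$; see Definition~\ref{def:lambdaTNeighbourhood}). You omit this lower bound and try to get $\sigma_{1} \leq \delta^{-O(\kappa)}\sigma$ from $|p-p'| \geq \rho_{t}^{-1}t$ and $\Delta(p,p') \leq \delta^{-\kappa/100}\lambda$ alone. That fails: take $\lambda = t \sim 1$ and $\Delta(p,p') = 0$; then $\sigma \sim \delta$ but the covering radius from Lemma~\ref{lemma5} is $\sigma_{1} \sim \sqrt{\delta}$, so $\sigma_{1}/\sigma \sim \delta^{-1/2}$, not $\delta^{-O(\kappa)}$. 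The paper explicitly uses both lower bounds $\Delta(p,p') \geq \delta^{\epsilon}\lambda$ and $|p-p'| \geq \delta^{\epsilon}\lambda$ to get the covering radius $r \leq \delta^{-2\epsilon}(\delta/\lambda)$ (see the display after \eqref{form157}), and this is what makes the counts $|E(p) \cap B(z_{i},r)| \lesssim \delta^{-3\epsilon}(\delta/\lambda)^{s}M$ and $|\mathcal{S}_{\sigma}(p') \cap B(z_{i},4r)| \leq \delta^{-\kappa/4}$ go through. Once you fix the parameters and invoke the lower bound on $\Delta(p,p')$, your argument and the paper's coincide.
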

We record that our assumption $t \leq \delta^{-\kappa/10}\lambda$ implies
\begin{equation}\label{form191} \sigma = \delta/\sqrt{\lambda t} \geq \delta^{\kappa/20}(\delta/\lambda) \geq \delta^{\kappa/5}(\delta/\lambda). \end{equation}
To save a little space, we abbreviate $\mathbf{R}(p,v) := \kappa^{-1}R^{\delta}_{\sigma}(p,v)$. We also write $M := |E(p)|$ for the common cardinality of the sets $E(p)$, $p \in P$. With this notation, we estimate as follows (the final estimate will be justified carefully below the computation):
\begin{align} \frac{1}{|P|} \sum_{p \in P} & \frac{1}{M} \sum_{v \in E(p)} |\{(p',v') \in (\Omega^{\delta}_{\sigma})^{\delta^{-\epsilon}}_{\lambda,t}(p,v) : \mathbf{R}(p,v) \cap \mathbf{R}(p',v') \neq \emptyset\}| \notag\\
&\label{form159} \leq \frac{1}{|P|M} \sum_{p \in P} \sum_{p' \in P^{\delta^{-\epsilon}}_{\lambda,t}(p)} |\{(v,v') \in E(p) \times \mathcal{S}_{\sigma}(p') : \mathbf{R}(p,v) \cap \mathbf{R}(p',v') \neq \emptyset\}|\\
&\label{form128} \leq \delta^{s - \kappa/2}|P|.  \end{align}
We justify the final estimate. The easiest part is
\begin{equation}\label{form158} |P^{\delta^{-\epsilon}}_{\lambda,t}(p)| \leq |P \cap B(p,\delta^{-\epsilon}t)| \leq |P \cap B(p,\delta^{-\epsilon - \kappa/10}\lambda)| \leq \delta^{-2\epsilon - \kappa/10}\lambda^{s}|P|, \end{equation}
using the $(\delta,s,\delta^{-\epsilon})$-set property of $P$. A slightly more elaborate argument is needed to estimate the number of pairs $(v,v')$ appearing in \eqref{form159} for $(p,p')$ fixed. Fix $(p,p') \in P \times P$ with $p' \in P^{\delta^{-\epsilon}}_{\lambda,t}(p)$: thus $|p - p'| \geq \delta^{\epsilon}t \geq \delta^{\epsilon}\lambda$ and $\Delta(p,p') \geq \delta^{\epsilon}\lambda$. Lemma \ref{lemma5} implies that the intersection
\begin{equation}\label{form157} S^{\delta/\kappa}(p) \cap S^{\delta/\kappa}(p') \end{equation}
can be covered by boundedly many discs of radius
\begin{displaymath} \frac{\delta/\kappa}{\sqrt{(\Delta(p,p') + \delta/\kappa)(|p - p'| + \delta/\kappa)}} \leq \frac{\delta/\kappa}{\sqrt{(\delta^{\epsilon}\lambda)(\delta^{\epsilon} \lambda)}} \leq \delta^{-2\epsilon}(\delta/\lambda) =: r. \end{displaymath}
(Here we assumed that $\delta > 0$ is small enough in terms of $\epsilon,\kappa$.) Let $\{B(z_{i},r)\}_{i = 1}^{C}$ be an enumeration of these discs. Now, if $\mathbf{R}(p,v) \cap \mathbf{R}(p',v') \neq \emptyset$, then both $v,v'$ must lie at distance $\leq 2r$ from one of these discs (the intersection $\mathbf{R}(p,v) \cap \mathbf{R}(p',v')$ is contained in the intersection \eqref{form157}, and $\diam(\mathbf{R}) \leq \sigma/\kappa \leq \delta/(\lambda \kappa) \ll r$). On the other hand,
\begin{displaymath} |E(p) \cap B(z_{i},r)| \leq \delta^{-\epsilon}r^{s}M \leq \delta^{-3\epsilon}(\delta/\lambda)^{s}M \quad \text{and} \quad |\mathcal{S}_{\sigma}(p') \cap B(z_{i},4r)| \leq \delta^{-\kappa/4}, \end{displaymath}
where the first inequality used the $(\delta,s,\delta^{-\epsilon})$-set property of $E(p)$, and the second inequality used \eqref{form191}, along with the $\sigma$-separation of $\mathcal{S}_{\sigma}(p')$. This shows that
\begin{displaymath} |\{(v,v') \in E(p) \times \mathcal{S}_{\sigma}(p') : \mathbf{R}(p,v) \cap \mathbf{R}(p',v') \neq \emptyset\}| \leq \delta^{-4\epsilon - \kappa/4}(\delta/\lambda)^{s}M. \end{displaymath}
When this upper bound is plugged into \eqref{form159}, then combined with \eqref{form158}, we find \eqref{form128}.

To conclude the proof, notice that the left hand side of \eqref{form128} is in fact the expectation of the random variable
\begin{displaymath} \omega \mapsto m^{\delta^{-\epsilon},\kappa^{-1}}_{\delta,\lambda,t}(\omega \mid \Omega), \end{displaymath}
relative to normalised counting measure on $\Omega$. By Chebychev's inequality, there exists a set $G \subset \Omega$ with $|G| \geq \tfrac{1}{2}|\Omega|$ such that
\begin{displaymath} m_{\delta,\lambda,t}^{\delta^{-\epsilon},\kappa^{-1}}(\omega \mid G) \leq m_{\delta,\lambda,t}^{\delta^{-\epsilon},\kappa^{-1}}(\omega \mid \Omega) \leq \delta^{s - 3\kappa/4}|P| \leq \delta^{-\kappa}, \qquad \omega \in G, \end{displaymath}
using the assumption \eqref{sizeP} that $|P| \leq \delta^{-s - \epsilon}$ in the final inequality. Finally, we replace "$G$" by a slightly smaller $(\delta,\delta,s,4\delta^{-\epsilon})$-configuration by applying Lemma \ref{refinement} with $c = \tfrac{1}{2}$.

\subsection{Uniform sets} We start preparing for the proof of Theorem \ref{thm5} (the case of pairs $(\lambda,t)$ with $t \geq \delta^{-\kappa/10}\lambda$) with a few auxiliary definitions and results which allow us to find -- somewhat -- regular subsets inside arbitrary finite sets $P \subset \mathbf{D}$.

 \begin{definition}\label{def:uniformity} Let $n \geq 1$, and let
\begin{displaymath} \delta = \Delta_{n} < \Delta_{n - 1} < \ldots < \Delta_{1} \leq \Delta_{0} = 1 \end{displaymath}
be a sequence of dyadic scales.  We say that a set $P\subset \mathbf{D}$ is \emph{$\{\Delta_j\}_{j=1}^n$-uniform} if there is a sequence $\{N_j\}_{j=1}^n$ such that $|\mathcal{D}_{\Delta_{j}}(P \cap \mathbf{p})| = |P\cap \mathbf{p}|_{\Delta_{j}} = N_j$ for all $j\in \{1,\ldots,n\}$ and all $\mathbf{p} \in \mathcal{D}_{\Delta_{j - 1}}(P)$. As usual, we extend this definition to $P \subset\mathcal{D}_{\delta}$ (by applying it to $\cup P$).
\end{definition}

The following lemma allows us to find $\{\Delta_{j}\}_{j = 1}^{n}$-uniform subsets inside general finite sets. The result is a special case of \cite[Lemma 7.3]{2021arXiv210603338O}, which works for more general sequences $\{\Delta_{j}\}_{j = 1}^{m}$ than the sequence $\{2^{-jT}\}_{j = 1}^{m}$ treated in Lemma \ref{l:uniformization}.

\begin{lemma}\label{l:uniformization}
Let $P\subset \mathbf{D}$, $m,T \in \N$, and $\delta := 2^{-mT}$. Let also $\Delta_{j} := 2^{-jT}$ for $0 \leq j \leq m$, so in particular $\delta = \Delta_{m}$. Then, there there is a $\{\Delta_j\}_{j=1}^{m}$-uniform set $P'\subset P$ such that
\begin{equation}\label{form8}
|P'|_\delta \ge  \left(4T \right)^{-m} |P|_\delta. \end{equation}
In particular, if $\epsilon > 0$ and $T^{-1}\log (4T) \leq \epsilon$, then $|P'|_{\delta} \geq \delta^{\epsilon}|P|_{\delta}$.
\end{lemma}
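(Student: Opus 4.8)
\textbf{Proof proposal for Lemma \ref{l:uniformization}.}

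The plan is to build the uniform set $P'$ by a greedy, scale-by-scale pigeonholing procedure, working from the coarsest scale $\Delta_1$ down to the finest scale $\Delta_m = \delta$. At each stage I maintain a finite collection of dyadic cubes together with a lower bound on the number of $\delta$-subcubes of $P$ they collectively capture.

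First I would set $\mathcal{P}_0 := \mathcal{D}_{\Delta_0}(P) = \mathcal{D}_1(P)$ (a single cube, since $\mathbf{D}$ has diameter $<2$, or boundedly many — the constant is harmless). Then, inductively, suppose after step $j-1$ I have a family $\mathcal{P}_{j-1} \subset \mathcal{D}_{\Delta_{j-1}}(P)$ with the property that every cube $\mathbf{p} \in \mathcal{P}_{j-1}$ contains the same number, call it $M_{j-1}$, of $\delta$-cubes of $P$ that "survive" (i.e.\ lie in cubes kept at all previous stages). For each $\mathbf{p} \in \mathcal{P}_{j-1}$, look at the quantity $|P \cap \mathbf{p}|_{\Delta_j}$, which ranges in $\{1, 2, \ldots, \Delta_j^{-3}\} \subset \{1,\ldots,\delta^{-3}\}$; there are $\lesssim T\log(1/\delta) $ dyadic values, but more economically I partition $[1,\Delta_j^{-3}]$ into $\lesssim \log_2(\Delta_{j-1}/\Delta_j)^{3} \le 3T$ dyadic ranges $[2^{i-1},2^i)$ and, inside each $\mathbf p$, I then also pigeonhole the $\delta$-cube counts of the child $\Delta_j$-cubes into $\lesssim 3T$ dyadic ranges. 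A single pigeonholing over the pair of ranges — there are $\lesssim (4T)$ relevant combined choices after optimizing — lets me pass to a subfamily $\mathcal{P}_j \subset \bigcup_{\mathbf{p} \in \mathcal{P}_{j-1}} \mathcal{D}_{\Delta_j}(P \cap \mathbf{p})$ of $\Delta_j$-cubes, all having the \emph{same} number $N_j$ of child-counting, and such that the total number of surviving $\delta$-cubes drops by a factor at most $(4T)$. More precisely, the clean way is: first pigeonhole the $\Delta_{j-1}$-cubes of $\mathcal{P}_{j-1}$ so that $|P \cap \mathbf{p}|_{\Delta_j}$ is constant $= N_j$ (losing a factor $\le 2T+1 \le 3T$, say, over the $\lesssim 3T$ dyadic ranges of possible values, then refining the constant within a factor-2 range down to exactly one value — this last refinement costs nothing in $\delta$-cube count beyond the factor already counted if one tracks it carefully, or one absorbs it into a slightly larger base like $4T$). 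Keeping all children of all retained $\Delta_{j-1}$-cubes preserves every surviving $\delta$-cube lying below a retained $\Delta_{j-1}$-cube, so the loss per step is the single factor $\le 4T$ from the one pigeonholing. After step $m$, $P' := \bigcup \mathcal{D}_\delta \cap (\text{surviving cubes})$ satisfies $|P'|_\delta \ge (4T)^{-m}|P|_\delta$ and, by construction, $|P' \cap \mathbf{p}|_{\Delta_j} = N_j$ for every $\mathbf{p} \in \mathcal{D}_{\Delta_{j-1}}(P')$ and every $j$, which is exactly $\{\Delta_j\}_{j=1}^m$-uniformity.

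For the final sentence: if $T^{-1}\log(4T) \le \epsilon$, then $(4T)^{-m} = e^{-m\log(4T)} \ge e^{-mT\epsilon} = \delta^{\epsilon}$, since $\delta = 2^{-mT}$ gives $\log(1/\delta) = mT\log 2 \ge mT$... — more carefully, $\delta^{\epsilon} = 2^{-mT\epsilon}$ and $(4T)^{-m} = 2^{-m\log_2(4T)} = 2^{-m\,T\,(T^{-1}\log_2(4T))} \ge 2^{-mT\epsilon}$ provided $T^{-1}\log_2(4T) \le \epsilon$; adjusting the base of the logarithm in the hypothesis (the statement writes $\log$, which I read as $\log_2$ or note the discrepancy only changes absolute constants) this gives $|P'|_\delta \ge \delta^\epsilon |P|_\delta$ as claimed. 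Alternatively — and this is cleaner — I would simply invoke \cite[Lemma 7.3]{2021arXiv210603338O} directly, since the statement explicitly says Lemma \ref{l:uniformization} is a special case of it; the only thing to verify is that the sequence $\Delta_j = 2^{-jT}$ satisfies the (mild) hypotheses of that reference, and that the quantitative bound there specializes to $(4T)^{-m}$.

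The main obstacle is bookkeeping rather than any conceptual difficulty: one must be careful that the pigeonholing is done \emph{per parent cube but with a globally consistent choice of range}, so that a single factor (not a factor per cube) is lost at each scale, and that "keeping all children of retained parents" is what preserves the count — a naive version that also pigeonholes which children to keep would destroy uniformity at the next level up or cost too much. Getting the constant to be exactly $4T$ (rather than, say, $(CT)$) requires being slightly clever about combining the two dyadic pigeonholings (ranges of $|P\cap\mathbf p|_{\Delta_j}$ and the within-range refinement to a single value), but since the paper only needs the consequence $|P'|_\delta \ge \delta^\epsilon |P|_\delta$ under $T^{-1}\log(4T)\le\epsilon$, a slightly worse absolute constant inside the log would be tolerable, and in any case the cited reference supplies the sharp form.
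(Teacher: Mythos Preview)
Your fallback --- invoking \cite[Lemma 7.3]{2021arXiv210603338O} directly --- is exactly what the paper does, and your computation for the ``in particular'' clause (reading $\log$ as $\log_2$) matches the paper's one-line calculation $(4T)^{-m} = 2^{-mT \cdot (T^{-1}\log(4T))} = \delta^{T^{-1}\log(4T)}$.

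Your direct top-down construction, however, has a structural gap, not just bookkeeping. The inductive hypothesis you announce --- that every $\mathbf{p} \in \mathcal{P}_{j-1}$ contains the \emph{same} number $M_{j-1}$ of surviving $\delta$-cubes --- is not preserved by the step you describe. If you pigeonhole parents by $\Delta_j$-child-count and then ``keep all children of retained parents'' (as one of your sentences says), the retained $\Delta_j$-cubes will typically have wildly varying $\delta$-counts, so the hypothesis fails one level down. If instead you trim each parent to exactly $N_j$ children (as another sentence says), then trimming \emph{does} cost $\delta$-cubes, contrary to your claim, and you have not specified which children to discard. Your proposed additional pigeonholing on the $\delta$-counts of the $\Delta_j$-children does not fix this cheaply: those counts lie in $\{1,\dots,(\Delta_j/\delta)^3\} = \{1,\dots,2^{3(m-j)T}\}$, so there are $\sim 3(m-j)T$ dyadic ranges, not $\sim 3T$, and the product over $j$ is far worse than $(CT)^m$. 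The standard remedy is to run the induction \emph{bottom-up}: once levels $j+1,\dots,m$ have been uniformised, every surviving $\Delta_{j+1}$-cube automatically carries the same number of $\delta$-leaves, so a single pigeonholing of the $\Delta_j$-parents by their $\Delta_{j+1}$-child-count --- followed by trimming each to exactly $N_{j+1}$ children, which now costs only a factor $\leq 2$ in $\delta$-count since all children are equivalent --- both maintains the invariant and loses at most $O(T)$ per step.
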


\begin{proof} The inequality \eqref{form8} follows by inspecting the short proof of \cite[Lemma 7.3]{2021arXiv210603338O}. The "in particular" claim follows by noting that
\begin{displaymath} (4T)^{-m} = 2^{-m \log (4T)} = 2^{-mT \cdot (T^{-1}\log (4T))} = \delta^{T^{-1} \log(4T)}. \end{displaymath}
This completes the proof. \end{proof}

\subsection{Initial regularisation for the proof of Theorem \ref{thm5}}\label{s:regularisation} We denote the "given" $(\delta,\delta,s,\delta^{-\epsilon})$-configuration in Theorem \ref{thm5} by
\begin{displaymath} \Omega_{0} = \{(p,v) : p \in P_{0} \text{ and } v \in E_{0}(p)\}, \end{displaymath}
where $P_{0} \subset \mathcal{D}_{\delta}$ is a non-empty $(\delta,s,\delta^{-\epsilon})$-set, and $E_{0}(p) \subset \mathcal{S}_{\delta}(p)$ is a $(\delta,s,\delta^{-\epsilon})$-set of cardinality $M \geq 1$ (for every $p \in P_{0}$). The purpose of this section is to perform an initial pruning to $\Omega_{0}$, that is, to find a $(\delta,\delta,s,\delta^{-2\epsilon})$-configuration
\begin{displaymath} \Omega = \{(p,v) : p \in P \text{ and } v \in E(p)\} \subset \Omega_{0}, \end{displaymath}
where $P$ is a $(\delta,s,\delta^{-2\epsilon})$-set, each $E(p)$ is a $(\delta,s,\delta^{-2\epsilon})$-set with constant cardinality, and $|\Omega| \approx_{\delta} |\Omega_{0}|$. The subset $\Omega$ will have additional useful regularity properties compared to $\Omega_{0}$. After we are finished constructing $\Omega$, we will focus on finding the "final" set $G$ (as in Theorem \ref{thm5}) inside $\Omega$, instead of $\Omega_{0}$.

 There is no loss of generality in assuming that $\delta = 2^{-mT}$ for some $m \geq 1$, and some $T \geq 1$ whose size depends on $\epsilon$ (and therefore eventually $\kappa$). We start by applying Lemma \ref{l:uniformization} to the sequence
\begin{displaymath} \lambda_{j} := 2^{-jT}, \qquad 0 \leq j \leq m. \end{displaymath}
Provided that $T^{-1}\log(4T) \leq \epsilon$, the result is a $\{\lambda_{j}\}_{j = 1}^{m}$-uniform subset $P_{0}' \subset P_{0}$ with cardinality $|P_{0}'| \geq \delta^{\epsilon}|P_{0}|$. In particular, $P_{0}'$ is a $(\delta,s,\delta^{-2\epsilon})$-set. We define $\Omega_{0}' := \{(p,v) : p \in P_{0}' \text{ and } v \in E_{0}(p)\}$. Then $|\Omega_{0}'| \geq \delta^{\epsilon}|\Omega_{0}|$. From this point on, the proof will see no difference between $\Omega_{0},P_{0}$ and $\Omega_{0}',P_{0}'$, so we assume that $P_{0} = P_{0}'$ and $\Omega_{0} = \Omega_{0}'$ to begin with -- or in other words that $P_{0}$ is $\{\lambda_{j}\}_{j = 1}^{m}$-uniform for $\lambda_{j} = 2^{-jT}$, $1 \leq j \leq m$. In particular, the "branching numbers"
\begin{displaymath} N_{j} := |P_{0} \cap \mathbf{p}|_{\lambda_{j}}, \qquad \mathbf{p} \in \mathcal{D}_{2^{T}\lambda_{j}}(P_{0}), \, 1 \leq j \leq m, \end{displaymath}
are well-defined (that is, independent of "$\mathbf{p}$").

We have slightly overshot our target: the argument above shows that $P_{0}$ may be assumed to be $\{2^{-jT}\}_{j = 1}^{m}$-uniform. We only need something weaker. Let $\epsilon > 0$ be so small that the requirement \eqref{form133} is met. Let $\Lambda \subset [\delta,1]$ be a finite set of cardinality $|\Lambda| \sim 1/\epsilon$ which is \emph{multiplicatively $\delta^{-\epsilon/2}$-dense} in the following sense: if $\lambda \in [\delta,1]$ is arbitrary, then there exists $\underline{\lambda} \in \Lambda$ with $\underline{\lambda} \leq \lambda \leq \delta^{-\epsilon/2}\underline{\lambda}$. If $\delta > 0$ is so small that $2^{T} \leq \delta^{-\epsilon}$, we may (and will) choose $\Lambda \subset \{2^{-jT}\}_{j = 1}^{m} = \{\lambda_{j}\}_{j = 1}^{m}$. We agree that $\{\delta,1\} \in \Lambda$, and for every $\lambda \in \Lambda \, \setminus \, \{1\}$, we denote by $\hat{\lambda} \in \Lambda$ the smallest element of $\Lambda$ with $\hat{\lambda} > \lambda$.

Since $\Lambda \subset \{2^{-jT}\}_{j = 1}^{m}$, the set $P_{0}$ is automatically $\Lambda$-uniform: the number
\begin{equation}\label{form100} N_{\lambda} := |P_{0} \cap \mathbf{p}|_{\lambda}, \qquad \mathbf{p} \in \mathcal{D}_{\hat{\lambda}}(P_{0}), \, \lambda \in \Lambda \, \setminus \, \{1\}, \end{equation}
is independent of the choice of $\mathbf{p} \in \mathcal{D}_{\hat{\lambda}}(P_{0})$. From this point on, the uniformity with respect to the denser sequence $\{2^{-jT}\}_{j = 1}^{m}$ will no longer be required. From \eqref{form100}, it follows that also the number
\begin{equation}\label{form102} X_{\lambda} := |P_{0} \cap \mathbf{p}|_{\delta} = |P_{0}|/|P_{0}|_{\lambda}, \qquad \mathbf{p} \in \mathcal{D}_{\lambda}(P_{0}), \, \lambda \in \Lambda, \end{equation}
is independent of the choice of $\mathbf{p} \in \mathcal{D}_{\lambda}(P_{0})$ (since $X_{\lambda}$ is the product of the numbers $N_{\lambda'}$ for $\lambda' \in \Lambda$ with $\lambda' < \lambda$, recalling that $\delta \in \Lambda$ by definition).

Next, for every $\lambda \in \Lambda$ fixed, we associate a finite set $\mathcal{T}(\lambda) \subset [\lambda,1]$ of cardinality $|\mathcal{T}(\lambda)| \sim 1/\epsilon$ which is multiplicatively $\delta^{-\epsilon/2}$-dense on the interval $[\lambda,1]$ in the same sense as above: if $t \in [\lambda,1]$ is arbitrary, then there exists $\underline{t} \in \mathcal{T}(\lambda)$ such that $\underline{t} \leq t \leq \delta^{-\epsilon/2}\underline{t}$. For later technical convenience, it will be useful to know that the sets
\begin{equation}\label{form153} \Lambda(t) := \{\lambda \in \Lambda : t \in \mathcal{T}(\lambda)\}, \qquad t \in \mathcal{T} := \bigcup_{\lambda \in \Lambda} \mathcal{T}(\lambda), \end{equation}
are multiplicatively $\delta^{-\epsilon/2}$-dense in $[\delta,t]$. This can be accomplished by choosing both the $\lambda$'s and the $t$'s from some "fixed" multiplicatively $\delta^{-\epsilon/2}$-dense sequence in $[\delta,1]$, for example $\{\delta,\delta^{1 - \epsilon/2},\delta^{-\epsilon},\ldots,1\}$.

We order the pairs $(\lambda,t)$ with $\lambda \in \Lambda$ and $t \in \mathcal{T}(\lambda)$ arbitrarily. The total number of pairs is $\lesssim \epsilon^{-2}$. Then, we apply Theorem \ref{thm3} with constant $\kappa s/100$ to the first pair $(\lambda_{1},t_{1})$. If $\epsilon_{\mathrm{max}} > 0$ is sufficiently small (as small as we stated in Section \ref{s:constants}), and since $\epsilon \leq \epsilon_{\mathrm{max}} \leq \mathbf{A}\epsilon_{\mathrm{max}} \leq \epsilon_{0}(\bar{\kappa},s)$, Theorem \ref{thm3} provides us with a $(\delta,\delta,s,C\delta^{-\epsilon})$-configuration $G \subset \Omega_{0}$ such that $C \approx_{\delta} 1$, $|G| \approx_{\delta} |\Omega_{0}|$, and
\begin{equation}\label{form101} m_{\lambda_{1},\lambda_{1},t_{1}}^{\delta^{-\mathbf{A}\epsilon_{\mathrm{max}}},\delta^{-\mathbf{A}\epsilon_{\mathrm{max}}}}(\omega \mid G) \leq \delta^{-\kappa s/100}\lambda_{1}^{s}|P_{0}|_{\lambda_{1}}, \qquad \omega \in G^{\lambda_{1}}_{\Sigma_{1}}, \end{equation}
where $\Sigma_{1} = \sqrt{\lambda_{1}/t_{1}}$, and $\mathbf{A} \geq 1$ is the constant from Theorem \ref{thm4}.

Assume that we have already found a sequence of $(\delta,\delta,s,C_{j}\delta^{-\epsilon})$-configurations $G =: G_{1} \supset G_{2} \supset \ldots G_{j}$, where $C_{j} \approx_{\delta,j} 1$ and $|G_{j}| \approx_{\delta,j} |\Omega_{0}|$, and \eqref{form101} holds for $G_{j}$ relative to the pair $(\lambda_{j},t_{j})$ (with $\Sigma_{j} = \sqrt{\lambda_{j}/t_{j}}$). We reapply Theorem \ref{thm3} to $\Omega_{j} := G_{j}$, and the pair $(\lambda_{j + 1},t_{j + 1})$. This is legitimate, since $j \lesssim \epsilon^{-2}$, and the constant $C_{j}\delta^{-\epsilon}$ is smaller than the threshold $\delta^{-\epsilon_{\mathrm{max}}}$ required to apply Theorem \ref{thm3} with constant "$\kappa s/100$" (by our choice of "$\epsilon$"). Thus, Theorem \ref{thm3} outputs a $(\delta,\delta,s,C_{j + 1}\delta^{-\epsilon})$-configuration $G_{j + 1} \subset G_{j}$ satisfying \eqref{form101} for the pair $(\lambda_{j + 1},t_{j + 1})$, and with $|G_{j + 1}| \approx_{\delta,j + 1} |\Omega_{0}|$.

After Theorem \ref{thm3} has been applied in this "successive" manner to all the pairs $(\lambda,t)$ with $\lambda \in \Lambda$ and $t \in \mathcal{T}(\lambda)$,  we arrive at a final $(\delta,\delta,s,C_{\epsilon}\delta^{-\epsilon})$-configuration
\begin{equation}\label{form220} \Omega = \{(p,v) : p \in P \text{ and } v \in E(p)\}, \end{equation}
where $C_{\epsilon} \approx_{\delta} 1$, $|\Omega| \approx_{\delta} |\Omega_{0}|$, and $\Omega$ satisfies simultaneously a version of \eqref{form101} for all the pairs $(\lambda_{j},t_{j})$. In particular, we note that $|P| \approx_{\delta} |P_{0}|$ and $|E(p)| \approx_{\delta} M$ for all $p \in P$. Therefore, $P,E(p)$ remain $(\delta,s,C_{\epsilon}\delta^{-\epsilon})$-sets with $C_{\epsilon} \approx_{\delta} 1$.

\begin{remark} It is worth comparing the accomplishment \eqref{form101} with the ultimate goal \eqref{form99} in Theorem \ref{thm5}. Roughly speaking, we have now tackled the cases $(\lambda,\lambda,t)$ of \eqref{form99} (with the caveat that this has only been done for the pairs $(\lambda,t)$ with $\lambda \in \Lambda$ and $t \in \mathcal{T}(\lambda)$). \end{remark}

\subsection{Proof of Theorem \ref{thm5}}\label{s:final} We just finished constructing the $(\delta,\delta,s,C_{\epsilon}\delta^{-\epsilon})$-configuration $\Omega = \{(p,v) : p \in P \text{ and } v \in E(p)\} \subset \Omega_{0}$ with $|\Omega| \approx_{\delta} |\Omega_{0}|$ which satisfies property \eqref{form101} (with $G = \Omega$) for all $\lambda \in \Lambda$ and $t \in \mathcal{T}(\lambda)$. We record this once more:
\begin{equation}\label{form130} m^{\delta^{-\mathbf{A}\epsilon_{\mathrm{max}}},\delta^{-\mathbf{A}\epsilon_{\mathrm{max}}}}_{\lambda,\lambda,t}(\omega \mid \Omega) \leq \delta^{-\kappa s/100}\lambda^{s}|P_{0}|_{\lambda}, \quad \omega \in \Omega^{\lambda}_{\Sigma}, \end{equation}
for every $\lambda \in \Lambda$ and $t \in \mathcal{T}(\lambda)$, where $\Sigma = \sqrt{\lambda/t}$.
\begin{remark} At this point, we remind the reader that the left hand side of \eqref{form130} is shorthand notation for
\begin{displaymath} m_{\lambda,\lambda,t}^{\delta^{-\mathbf{A}\epsilon_{\mathrm{max}}},\delta^{-\mathbf{A}\epsilon_{\mathrm{max}}},\delta^{-\mathbf{A}\epsilon_{\mathrm{max}}}}(\omega \mid \Omega),\end{displaymath}
recall Notation \ref{not:shorthand}. Soon we will need the full generality of the notation $m^{\rho_{\lambda},\rho_{t},C}_{\delta,\lambda,t}$. \end{remark}

The main step towards proving Theorem \ref{thm5} for every pair $(\lambda,t)$ with $\delta \leq \lambda \leq t \leq 1$ is to prove it for the (finitely many) pairs $(\lambda,t)$ with $\lambda \in \Lambda$ and $t \in \mathcal{T}(\lambda)$. Write
\begin{displaymath} \mathcal{T} := \bigcup_{\lambda \in \Lambda} \mathcal{T}(\lambda) \subset [\delta,1], \end{displaymath}
and for every $t \in \mathcal{T}$, let $\Lambda(t) := \{\lambda \in \Lambda : t \in \mathcal{T}(\lambda)\} \subset [\delta,t]$. Recall from (around) \eqref{form153}  that $\Lambda(t)$ is multiplicatively $\delta^{-\epsilon/2}$-dense in $[\delta,t]$. This will be used in the form of the corollary that $\Lambda(t)$ is multiplicatively $\delta^{-\epsilon/2}$-dense in $[\delta,\max \Lambda(t)]$.

\begin{proposition}\label{prop8} For every fixed $t \in \mathcal{T}$ and $\lambda \in \Lambda(t)$, there exists a $(\delta,\delta,s,C_{\epsilon}\delta^{-\epsilon})$-configuration $G \subset \Omega$ (depending on $\lambda,t$), such that $|G| \approx_{\delta} |\Omega|$, and
\begin{equation}\label{form110} m_{\delta,\lambda,t}^{\delta^{-\epsilon},C\kappa^{-1}}(\omega \mid G) \leq \delta^{- \kappa}, \qquad \omega \in G, \end{equation}
where $C > 0$ is an absolute constant to be determined in the proof of Proposition \ref{prop7}.\end{proposition}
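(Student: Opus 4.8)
The plan is to prove Proposition \ref{prop8} by an inductive scheme on the tangency parameter $\lambda \in \Lambda(t)$, with $t \in \mathcal{T}$ fixed throughout. Thanks to the reduction in Section \ref{s:largeLambda}, we may assume $t \geq \delta^{-\kappa/10}\lambda$, so $\sigma = \delta/\sqrt{\lambda t} \leq \delta^{1/2 + \kappa/20}$ is genuinely small; the opposite regime is already handled. The induction runs \emph{upwards} in $\lambda$: we process the elements of $\Lambda(t)$ in increasing order $\delta = \lambda_1 < \lambda_2 < \cdots$, and at step $k$ we will produce a refinement $G_k \subset G_{k-1}$ (with $|G_k| \approx_\delta |G_{k-1}|$, so after $|\Lambda(t)| \lesssim 1/\epsilon$ steps the total loss is still $\approx_\delta 1$) which satisfies \eqref{form110} for the pair $(\lambda_k, t)$. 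Crucially, at step $k$ we will be allowed to use that (i) \eqref{form110} already holds for all $(\lambda_{k'}, t)$ with $k' < k$ on the current configuration, and (ii) the "diagonal" bound \eqref{form130} for the triple $(\lambda_k, \lambda_k, t)$, which is available from the initial regularisation of Section \ref{s:regularisation} because $\lambda_k \in \Lambda$ and $t \in \mathcal{T}(\lambda_k)$. These are exactly the two ingredients that feed Theorem \ref{thm4}.

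The core of a single inductive step is the following. Fix $\lambda = \lambda_k$ and suppose toward a contradiction that after any admissible refinement there remains a large subset $H$ of points $\omega = (p,v)$ with $m_{\delta,\lambda,t}^{\delta^{-\epsilon}, C\kappa^{-1}}(\omega \mid G) > \delta^{-\kappa}$. As in the proof of Proposition \ref{prop3}, I would run a bootstrapping argument over a decreasing chain of constants $\mathbf{C}_0 \gg \cdots \gg \mathbf{C}_h = \delta^{-\epsilon}$ (with $h \sim 1/\kappa$ and $\mathbf{C}_j = A\mathbf{C}_{j+1}^{18}$, so $\mathbf{C}_0 < \delta^{-\epsilon_{\mathrm{max}}}$ by \eqref{form133}), tracking the partial multiplicity at "thickening level" $\mathbf{C}_j$; the trivial bound $\lesssim \mathbf{C}_0 \cdot \delta^{-s-\epsilon}$ (using \eqref{sizeP} and a $\Sigma$-separation count as in \eqref{form45}) serves as the base case, and one shows case (1) — where the lower bound at level $\mathbf{C}_{j+1}$ nearly matches the inductive upper bound at level $\mathbf{C}_j$ — cannot occur for the relevant range of exponents. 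To run this, pigeonhole on a ball $B(p_0, \tfrac14 t/\mathbf{C}_{j+1})$ to extract a $(\delta,\epsilon')$-almost $t$-bipartite pair $(W, B)$ inside $P$ (with $W$ the heavy points, $B$ the points at distance $\sim t$ from them), exactly as in \eqref{form89}; form the family $\mathcal{R}^\delta_\sigma$ of pairwise $100$-incomparable $(\delta,\sigma)$-rectangles carried by the $\delta$-skeletons of the heavy points. One checks these rectangles have $\lambda$-restricted type $(\geq m, \geq n)_\epsilon$ relative to $(W, B, \{E(p)\})$ in the sense of Definition \ref{def:type}, where $m$ is the comparability-multiplicity of a rectangle and $n \gtrsim \delta^\rho$ times the assumed-large multiplicity; the argument that $m$ is controlled — namely $m \lessapprox \delta^{-\zeta}n$ — is the same chaining of Lemma \ref{lemma6} and Corollary \ref{cor2} (at scale $\lambda$, constants $\sim \mathbf{C}_{j+1}^2$) used around \eqref{form196}–\eqref{form201}.

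Now apply Theorem \ref{thm4}. Its hypothesis \eqref{form49} holds with $X_\lambda = |P_0 \cap \mathbf{p}|_\delta$ from the uniformity \eqref{form102}; its hypothesis \eqref{form54} is precisely \eqref{form130} for the triple $(\lambda,\lambda,t)$, so $Y_\lambda \leq \delta^{-\kappa s/100}\lambda^s |P_0|_\lambda$; and $\eta = \kappa s /100$, $\epsilon \leq \epsilon(\eta) = \epsilon_{\mathrm{max}}$ are fixed as in Section \ref{s:constants}. The output \eqref{form61} then bounds $|\mathcal{R}^\delta_\sigma|$ by $\delta^{-\eta}[(|W||B|/mn)^{3/4}(X_\lambda Y_\lambda)^{1/2} + (|W|/m + |B|/n) X_\lambda Y_\lambda]$. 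On the other hand, a lower bound $|\mathcal{R}^\delta_\sigma| \gtrsim |W| M / m$ follows as in \eqref{form48} (every rectangle $R^\delta_\sigma(p,v)$, $(p,v) \in \mathbf{W}$, is comparable to one in the family, and there are $\sim M$ choices of $v$ for each $p \in W$). Comparing the two, using $|B| \lessapprox_\delta \delta^{-C\epsilon}|W|$, the $(\delta,s,\delta^{-\epsilon})$-set bound $|W| \leq \delta^{-\epsilon} t^s |P|$, the relation $X_\lambda Y_\lambda \lessapprox \lambda^s |P_0| = \lambda^s X_\lambda^{-1}|P_0|_\lambda \cdot X_\lambda \approx$ (rearrange carefully), and finally $M \geq \delta^{\epsilon}\delta^{-s}$, one derives a contradiction provided $\epsilon$ is small enough depending on $\kappa, s$ — the $3/4$-exponent "main term" is the one that must dominate, and it fails because the lower bound grows like $M \geq \delta^{-s+\epsilon}$ while the main term yields only a power $\lambda^{s/2}(t/\lambda)^{s/2}$-type quantity times small losses. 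I expect the \textbf{main obstacle} to be the bookkeeping of the various $\delta^{-C\epsilon}$, $\delta^{-C\rho}$ and $\delta^{-\zeta}$ losses across the chain of $h \sim 1/\kappa$ bootstrapping scales while simultaneously keeping the "inner" triples $(\lambda', \lambda', t)$ ($\lambda' < \lambda$) and $(\lambda, \lambda, t)$ available — in particular verifying that the constant $C\kappa^{-1}$ in the rectangle thickening of \eqref{form110} is the right one to make the restricted-type extraction and the subsequent application of Theorem \ref{thm4} consistent, and that the refinement at step $k$ does not destroy the property \eqref{form110} already secured at steps $k' < k$ (this uses monotonicity of $m_{\delta,\lambda,t}(\cdot \mid G)$ in $G$, together with Lemma \ref{refinement} to restore the configuration structure).
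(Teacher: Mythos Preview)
The overall architecture is right --- exterior induction upward on $\lambda \in \Lambda(t)$, interior bootstrapping as in Proposition \ref{prop3}, bipartite-pair extraction, then Theorem \ref{thm4} --- but there is a genuine gap in your $m \lessapprox n$ step. You claim this is ``the same chaining'' as around \eqref{form196}--\eqref{form201}. It is not. In Proposition \ref{prop3} the rectangles were $(\lambda,\Sigma)$-rectangles with $\Sigma = \sqrt{\lambda/t}$, and the tangency parameter is automatically $\sim \lambda$ at that scale. Here you work with $(\delta,\sigma)$-rectangles, $\sigma = \delta/\sqrt{\lambda t}$, and when $R^\delta_\sigma(p,v) \sim_{100} R^\delta_\sigma(p',v')$ with a witness $\beta = (q,w) \in \mathbf{B}$ satisfying $\Delta(p,q) \sim \lambda$, the chaining via Corollary \ref{cor2} only yields the \emph{upper} bound $\Delta(p',q) \lesssim \delta^{-\epsilon_j}\lambda$; the matching lower bound can genuinely fail (see Figure \ref{fig3}): the circles $S(p')$, $S(q)$ may be far more tangent than $S(p)$, $S(q)$. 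This is precisely where your ingredient (i) --- the exterior inductive hypothesis for the pairs $(\lambda_{k'},t)$, $k' < k$ --- is consumed: when $\Delta(p',q) < \delta^{\epsilon_j}\lambda$, one locates an index $l \leq k$ with $\Delta(p',q) \sim \lambda_l$, passes the pairs $\omega'$ up to the $(\delta,\sigma_l)$-skeleton with $\sigma_l = \delta/\sqrt{\lambda_l t} > \sigma$, and invokes the already-secured bound $m^{C_l\delta^{-\epsilon},C_l}_{\delta,\lambda_l,t}(\beta \mid G_l) \leq \delta^{-\kappa}$. You list (i) as available but never say where it is used; without this detour through lower tangency levels, the bound $m \lesssim \delta^{-\zeta} n$ cannot be established and the contradiction does not close. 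Note also that this forces the constants $C_l$ in the exterior induction to decrease as $l$ increases (so that $\mathbf{C}_j \leq C_l$ for all intermediate $\mathbf{C}_j$), which is the opposite of a single fixed $C\kappa^{-1}$.

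A secondary but non-negligible point: the lower bound on $|\bar{\mathcal{R}}^\delta_\sigma|$ is $\gtrapprox |W|M_\sigma/m$ with $M_\sigma \approx \sigma^{-s} = (\sqrt{\lambda t}/\delta)^s$, not $|W|M/m$ with $M \approx \delta^{-s}$. The family is indexed by the $(\delta,\sigma)$-skeleton of $\mathbf{W}$, so the $v$-count per $p \in W$ is $M_\sigma$, not $M$; this requires a preliminary pigeonholing to make $|E_\sigma(p)|$ roughly constant across $p$. Your final numerology ``$M \geq \delta^{-s+\epsilon}$'' is therefore off: the correct comparison is $M_\sigma \geq \delta^{2\epsilon}(\sqrt{\lambda t}/\delta)^s$ against the right-hand side of \eqref{form61}, and it is the assumption $t \geq \delta^{-\kappa/10}\lambda$ that makes the $(\lambda/\delta)^s$-term lose.
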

We will prove Proposition \ref{prop8} in such a way that the various configurations "$G$" will form a nested sequence. So, once the proposition has been established for all pairs $(\lambda,t)$ with $t \in \mathcal{T}$ and $\lambda \in \Lambda(t)$, then the "last" set $G$ will satisfy \eqref{form110} for all pairs $t \in \mathcal{T}$ and $\lambda \in \Lambda(t)$ simultaneously. We start with the easiest cases where $\lambda \approx t$. The value of the constant "$C_{\epsilon}$" will change many times during the proof, but it will always remain $C_{\epsilon} \approx_{\delta} 1$.

\subsubsection*{Pairs $(\lambda,t)$ with $t \leq \delta^{-\kappa/10}\lambda$} Let $\lambda \in \Lambda$ and $t \in \mathcal{T}$ with $t \leq \delta^{-\kappa/10}\lambda$. In this case we apply the claim proved in Section \ref{s:largeLambda}: the conclusion is that there exists a $(\delta,\delta,s,4C_{\epsilon}\delta^{-\epsilon})$-configuration $G_{1} \subset \Omega$ satisfying \eqref{form110} for the fixed pair $(\lambda,t)$. (To be perfectly accurate, one needs to apply the proof of the claim with constant $C\kappa$ in place of $\kappa$.) Next, we simply repeat the argument inside $G_{1}$, and for all the pairs $(\lambda,t) \in \Lambda \times \mathcal{T}$ with $t \leq \delta^{-\kappa/10}\lambda$, in arbitrary order. This involves refining $\Omega$ at most $\lesssim \epsilon^{-2}$ times, so the final product of this argument remains a $(\delta,\delta,s,C_{\epsilon}\delta^{-\epsilon})$-configuration.

Before launching to the main argument -- treating the cases $\lambda \leq \delta^{\kappa/10}t$ -- we use \eqref{form110} to complete the proof of Theorem \ref{thm5}.

\begin{proposition}\label{prop7} Assume that \eqref{form110} holds for simultaneously for all $(\lambda,t) \in \Lambda \times \mathcal{T}$. Then, if the absolute constant $C > 0$ is large enough, we have
\begin{equation}\label{form132} m^{\delta^{-\epsilon/2},\kappa^{-1}}_{\delta,\lambda,t}(\omega \mid G) \leq \delta^{-2\kappa}, \qquad \omega \in G \end{equation}
simultaneously for all $\delta \leq \lambda \leq t \leq 1$ (not necessarily from $\Lambda \times \mathcal{T}$).  \end{proposition}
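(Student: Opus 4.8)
The plan is to reduce the bound for an arbitrary pair $\delta\le\lambda\le t\le1$ to the \emph{assumed} estimate \eqref{form110}, which is available for all pairs $(\underline\lambda,\underline t)$ with $\underline t\in\mathcal{T}$ and $\underline\lambda\in\Lambda(\underline t)$. Fix such $\lambda,t$ and $\omega=(p,v)\in G$, and write $\sigma:=\delta/\sqrt{\lambda t}$. The first step is to choose a nearby pair $(\underline\lambda,\underline t)$ of the admissible type, approximating $(\lambda,t)$ coordinatewise up to the single factor $\delta^{-\epsilon/2}$. If $t\le\delta^{-\kappa/10}\lambda$ I would take $\underline\lambda$ to be the largest element of $\Lambda$ with $\underline\lambda\le\lambda$ (so $\underline\lambda\le\lambda\le\delta^{-\epsilon/2}\underline\lambda$ by the multiplicative density of $\Lambda$), and then, since $t\in[\underline\lambda,1]$, use the multiplicative $\delta^{-\epsilon/2}$-density of $\mathcal{T}(\underline\lambda)$ to pick $\underline t\in\mathcal{T}(\underline\lambda)$ with $\underline t\le t\le\delta^{-\epsilon/2}\underline t$; then $\underline\lambda\in\Lambda(\underline t)$ by construction. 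If instead $t>\delta^{-\kappa/10}\lambda$, I would first pick $\underline t\in\mathcal{T}$ with $\underline t\le t\le\delta^{-\epsilon/2}\underline t$; since $\epsilon\le\kappa/5$ this forces $\lambda<\delta^{\kappa/10}t\le\delta^{\kappa/10-\epsilon/2}\underline t<\underline t$, so $\lambda\in[\delta,\underline t]$ and the multiplicative $\delta^{-\epsilon/2}$-density of $\Lambda(\underline t)$ in $[\delta,\underline t]$ yields $\underline\lambda\in\Lambda(\underline t)$ with $\underline\lambda\le\lambda\le\delta^{-\epsilon/2}\underline\lambda$. Either way $(\underline\lambda,\underline t)$ is admissible and $\underline\lambda\le\lambda\le\delta^{-\epsilon/2}\underline\lambda$, $\underline t\le t\le\delta^{-\epsilon/2}\underline t$. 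Setting $\underline\sigma:=\delta/\sqrt{\underline\lambda\,\underline t}$, we get $\sigma\le\underline\sigma$ and $\underline\sigma/\sigma=\sqrt{\lambda t/(\underline\lambda\,\underline t)}\le\delta^{-\epsilon/2}$, so (passing to dyadic arc-lengths) $\mathcal{S}_\sigma(p')$ refines $\mathcal{S}_{\underline\sigma}(p')$ with at most $\delta^{-\epsilon}$ of the $\sigma$-arcs inside each $\underline\sigma$-arc.

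The second step is the comparison of multiplicity functions via the "parent arc" map. Let $\omega'=(p',v')\in(G^{\delta}_{\sigma})^{\delta^{-\epsilon/2},\delta^{-\epsilon/2}}_{\lambda,t}(\omega)$ satisfy $\kappa^{-1}R^{\delta}_{\sigma}(\omega)\cap\kappa^{-1}R^{\delta}_{\sigma}(\omega')\neq\emptyset$, and let $\mathbf{v}'\in\mathcal{S}_{\underline\sigma}(p')$ be the unique $\underline\sigma$-arc with $v'\subset\mathbf{v}'$; set $\bar\omega':=(p',\mathbf{v}')$. Since $v'\subset\mathbf{v}'$ implies $V(p',v')\subset V(p',\mathbf{v}')$, any $(p',v_0)\in G$ witnessing $\omega'\in G^{\delta}_{\sigma}$ (i.e. $v_0\prec v'$) also witnesses $\bar\omega'\in G^{\delta}_{\underline\sigma}$. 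The inequalities $\underline\lambda\le\lambda\le\delta^{-\epsilon/2}\underline\lambda$ and $\underline t\le t\le\delta^{-\epsilon/2}\underline t$ convert the conditions $\Delta(p,p')\in[\delta^{\epsilon/2}\lambda,\delta^{-\epsilon/2}\lambda]$ and $|p-p'|\in[\delta^{\epsilon/2}t,\delta^{-\epsilon/2}t]$ into $\Delta(p,p')\in[\delta^{\epsilon}\underline\lambda,\delta^{-\epsilon}\underline\lambda]$ and $|p-p'|\in[\delta^{\epsilon}\underline t,\delta^{-\epsilon}\underline t]$, so $\bar\omega'\in(G^{\delta}_{\underline\sigma})^{\delta^{-\epsilon},\delta^{-\epsilon}}_{\underline\lambda,\underline t}(\omega)$. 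Finally, because $\sigma\le\underline\sigma$ and the centres of $v'$ and $\mathbf{v}'$ are within $O(\underline\sigma)$ of each other, one checks the elementary inclusions $\kappa^{-1}R^{\delta}_{\sigma}(\omega)\subset 3\kappa^{-1}R^{\delta}_{\underline\sigma}(\omega)$ and $\kappa^{-1}R^{\delta}_{\sigma}(\omega')\subset 3\kappa^{-1}R^{\delta}_{\underline\sigma}(\bar\omega')$, so the nonempty intersection survives. Hence $\omega'\mapsto\bar\omega'$ maps the set counted by $m^{\delta^{-\epsilon/2},\kappa^{-1}}_{\delta,\lambda,t}(\omega\mid G)$ into the set counted by $m^{\delta^{-\epsilon},3\kappa^{-1}}_{\delta,\underline\lambda,\underline t}(\omega\mid G)$, and it is at most $\delta^{-\epsilon}$-to-one since each fibre lies in a single $\underline\sigma$-arc over a fixed $p'$.

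Fixing the absolute constant $C$ in \eqref{form110} to be $C:=3$, these two steps give
\begin{displaymath} m^{\delta^{-\epsilon/2},\kappa^{-1}}_{\delta,\lambda,t}(\omega\mid G)\le\delta^{-\epsilon}\,m^{\delta^{-\epsilon},C\kappa^{-1}}_{\delta,\underline\lambda,\underline t}(\omega\mid G)\stackrel{\eqref{form110}}{\le}\delta^{-\epsilon-\kappa}\le\delta^{-2\kappa}, \end{displaymath}
where the last inequality uses $\epsilon\le\kappa$, which follows from the choice of constants in Section \ref{s:constants} (indeed $\epsilon\le\epsilon_{\mathrm{max}}<c\kappa s<\kappa$). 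As $\omega\in G$ and the pair $(\lambda,t)$ were arbitrary, this is exactly \eqref{form132}.

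The main obstacle I anticipate is the bookkeeping in the first step: one must guarantee that the approximating pair can \emph{always} be chosen with $\underline\lambda\in\Lambda(\underline t)$ and with \emph{both} coordinates within a single factor $\delta^{-\epsilon/2}$ — the naive "round both down independently" breaks near the diagonal $\lambda\approx t$, where the rounded-down $\underline t$ can drop below $\lambda$, and this is precisely why the case split along $t\le\delta^{-\kappa/10}\lambda$ (already isolated in Section \ref{s:largeLambda}) is forced here. The remaining point, verifying that passing to the $\underline\sigma$-skeleton lands one inside $G^{\delta}_{\underline\sigma}$ in the correct $(\underline\lambda,\underline t)$-neighbourhood with controlled rectangle-dilation constants, is routine once the cone inclusion $V(p',v')\subset V(p',\mathbf{v}')$ and the containments between $(\delta,\sigma)$- and $(\delta,\underline\sigma)$-rectangles are spelled out.
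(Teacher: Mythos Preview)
Your proof is correct and follows essentially the same route as the paper's: approximate $(\lambda,t)$ by an admissible pair $(\underline\lambda,\underline t)$, pass from the $(\delta,\sigma)$-skeleton to the $(\delta,\underline\sigma)$-skeleton via the parent-arc map $\omega'\mapsto\bar\omega'$, check that this map lands in the set counted by $m^{\delta^{-\epsilon},C\kappa^{-1}}_{\delta,\underline\lambda,\underline t}(\omega\mid G)$ and is at most $\delta^{-\epsilon}$-to-one, then invoke \eqref{form110} and $\epsilon\le\kappa$. One remark: your case split in the first step is unnecessary. Your ``Case 1'' argument (pick $\underline\lambda\in\Lambda$ first with $\underline\lambda\le\lambda\le\delta^{-\epsilon/2}\underline\lambda$, then use the $\delta^{-\epsilon/2}$-density of $\mathcal{T}(\underline\lambda)$ in $[\underline\lambda,1]$ to pick $\underline t$) works for \emph{all} $\delta\le\lambda\le t\le1$, since $t\ge\lambda\ge\underline\lambda$ always places $t$ in the domain $[\underline\lambda,1]$ where $\mathcal{T}(\underline\lambda)$ is dense; the near-diagonal obstruction you anticipated does not arise in this order of selection. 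The paper simply asserts the existence of the approximating pair without comment (relying on the construction in Section~\ref{s:regularisation} where $\Lambda$ and all $\mathcal{T}(\lambda)$ are drawn from a single fixed $\delta^{-\epsilon/2}$-dense sequence), and otherwise your argument matches the paper's line by line, with $C=3$ an acceptable explicit value for the absolute constant.
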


\begin{proof} Let $\delta \leq \lambda \leq t \leq 1$. Let $\underline{\lambda} \in \Lambda$ and $\underline{t} \in \mathcal{T}(\lambda)$ be elements with $\underline{\lambda} \leq \lambda \leq \delta^{-\epsilon/2}\underline{\lambda}$ and $\underline{t} \leq t \leq \delta^{-\epsilon/2}\underline{t}$. Recall that
\begin{displaymath} m^{\delta^{-\epsilon/2},\kappa^{-1}}_{\delta,\lambda,t}(\omega \mid G) = |\{\omega' \in (G^{\delta}_{\sigma})_{\lambda,t}^{\delta^{-\epsilon/2}}(\omega) : \kappa^{-1}R^{\delta}_{\sigma}(\omega) \cap \kappa^{-1}R^{\delta}_{\sigma}(\omega') \neq \emptyset\}|, \qquad \omega \in G, \end{displaymath}
where $G_{\sigma}^{\delta}$ is the $(\delta,\sigma)$-skeleton of $G$ (with $\sigma = \delta/\sqrt{\lambda t}$). An unpleasant technicality is that $\bar{\sigma} = \delta/\sqrt{\underline{\lambda}\underline{t}} \in [\sigma,\delta^{-\epsilon/2}\sigma]$ might be a little different from $\sigma$, so elements of $G^{\delta}_{\sigma}$ are not automatically elements of $G^{\delta}_{\bar{\sigma}}$. However, for every $\omega' = (q,w) \in G^{\delta}_{\sigma}$, we may pick $\bar{\omega}' = (q,\bar{w}) \in G^{\delta}_{\bar{\sigma}}$ with $(q,w) \prec (q,\bar{w})$, and in particular $|w - \bar{w}| \leq C\bar{\sigma}$ for an absolute constant $C \geq 1$. Then, it is straightforward to check that
\begin{equation}\label{form164} \omega' \in (G^{\delta}_{\sigma})^{\delta^{-\epsilon/2}}_{\lambda,t}(\omega) \quad \Longrightarrow \quad \bar{\omega}' \in (G^{\delta}_{\bar{\sigma}})^{\delta^{-\epsilon}}_{\underline{\lambda},\underline{t}}(\omega), \qquad \omega \in G, \end{equation}
and
\begin{equation}\label{form165} \kappa^{-1}R^{\delta}_{\sigma}(\omega) \cap \kappa^{-1}R^{\delta}_{\sigma}(\omega') \neq \emptyset \quad \Longrightarrow \quad C\kappa^{-1}R^{\delta}_{\bar{\sigma}}(\omega) \cap C\kappa^{-1}R^{\delta}_{\bar{\sigma}}(\bar{\omega}') \neq \emptyset. \end{equation}
The implication \eqref{form165} follows from the inclusion $\kappa^{-1}R^{\delta}_{\sigma}(\omega') \subset C\kappa^{-1}R^{\delta}_{\bar{\sigma}}(\bar{\omega}')$ (note that $\bar{\sigma} \geq \sigma$). Regarding \eqref{form164}, it is worth noting that the implication is even true in the special case $\lambda \leq \delta^{1 - \epsilon/2}$ (recall Definition \ref{def:lambdaTNeighbourhood}) since in that case $\underline{\lambda} \leq \delta^{1 - \epsilon}$.

Finally, observe that the map $\omega' \mapsto \bar{\omega}'$ is at most $\delta^{-\epsilon}$-to-$1$: if $(q,w_{1}),(q,w_{2}),\ldots,(q,w_{N}) \in G^{\delta}_{\sigma}$ are distinct, and $\bar{\omega}' = (q,\bar{w})$ is the image of them all, then $|w_{i} - w_{j}| \gtrsim N\sigma$ for some $1 \leq i \neq j \leq N$, and on the other hand $\max\{|\bar{w} - w_{i}|,|\bar{w} - w_{j}|\} \lesssim \bar{\sigma} \leq \delta^{-\epsilon/2}\sigma$.

Combining this with \eqref{form164}-\eqref{form165}, we find
\begin{displaymath} m^{\delta^{-\epsilon/2},\kappa^{-1}}_{\delta,\lambda,t}(\omega \mid G) \leq \delta^{-\epsilon} m^{\delta^{-\epsilon},C\kappa^{-1}}_{\delta,\underline{\lambda},\underline{t}}(\omega \mid G) \stackrel{\eqref{form110}}{\leq} \delta^{-\kappa - \epsilon}, \qquad \omega \in G. \end{displaymath}
This proves \eqref{form132}, since $\epsilon \leq \kappa$ (by the choices in Section \ref{s:constants}). \end{proof}

\subsection{Proof of Proposition \ref{prop8}} We then arrive at the core of the proof of Theorem \ref{thm5}.

\subsubsection{Structure of the proof Proposition \ref{prop8}}\label{s:structure} Very much like in Section \ref{s:regularisation}, we will enumerate the pairs $(\lambda,t)$ with $t \in \mathcal{T}$, and $\lambda \in \Lambda(t) \cap [\delta,\delta^{\kappa/10}t]$, and we will construct a decreasing sequence of $(\delta,\delta,s)$-configurations $G_{1} \supset G_{2} \supset \ldots$ such that $G_{j}$ satisfies \eqref{form110} for the pair $(\lambda_{j},t_{j})$ -- and therefore automatically for all pairs $(\lambda_{i},t_{i})$ with $1 \leq i \leq j$. We will show inductively that $|G_{j}| \approx_{\delta} |\Omega|$.

In contrast to Section \ref{s:regularisation}, this time the ordering of the pairs $(\lambda_{j},t_{j})$ matters. We will do this as follows. We enumerate the elements of $\mathcal{T}$ arbitrarily. Then, if $t_{j} \in \mathcal{T}$ is fixed, we enumerate the pairs $(\lambda,t_{j})$ with $\lambda \in \Lambda(t_{j}) \cap [\delta,\delta^{\kappa/10}t_{j}]$ in increasing order. Thus, the first pair is $(\delta,t_{j})$, the second one $(\delta^{1 - \epsilon/2},t_{j})$, and so on. This has the crucial benefit that when we are in the process of proving \eqref{form110} for a fixed pair $(\lambda,t_{j})$, we may already assume that (the current) $G$ satisfies \eqref{form110} for all pairs $(\lambda',t_{j})$ with $\lambda' \in \Lambda(t_{j})$ and $\lambda' < \lambda$.

\subsubsection{Setting up the induction} We will then begin to implement the strategy outlined above. Fix $t := t_{j} \in \mathcal{T}$ arbitrarily, and for the remainder of the proof. We enumerate $\Lambda(t) \cap [\delta,\delta^{\kappa/10}t]$ in increasing order, with the abbreviation $|\Lambda| := |\Lambda(t) \cap [\delta,\delta^{\kappa/10}t]|$:
\begin{equation}\label{form170} \delta = \lambda_{1} < \lambda_{2} < \ldots < \lambda_{|\Lambda|} \leq \delta^{\kappa/10}t. \end{equation}
For each index $1 \leq l \leq |\Lambda|$ we also define a constant $C_{l} \geq 1$ in such a way that the sequence $C_{1} > C_{2} > \ldots > C_{|\Lambda|} \geq 1$ is very rapidly decreasing, more precisely
\begin{equation}\label{form129} C_{l + 1} = A(\epsilon,\kappa)^{-1}C_{l}, \qquad 1 \leq l < |\Lambda|
\end{equation}
for a suitable constant $A(\epsilon,\kappa) \geq 1$, depending only on $\kappa$, and to be determined later, precisely right after \eqref{form138}. To complete the definition of the sequence $\{C_{l}\}$, we specify its smallest (last) element:
\begin{equation}\label{form183} C_{|\Lambda|} := C\kappa^{-1}, \end{equation}
where $\kappa > 0$ is the parameter given in Theorem \ref{thm5}, and $C > 0$ is the absolute constant from \eqref{form110}. With these definitions, and noting that $|\Lambda| \leq C/\epsilon$ for an absolute constant $C > 0$, we have
\begin{equation}\label{form184} C_{1} = A(\epsilon,\kappa)^{|\Lambda|}C_{|\Lambda|} \leq CA(\epsilon,\kappa)^{C/\epsilon}\kappa^{-1} \stackrel{\eqref{form184a}}{\leq} \delta^{-\epsilon_{\mathrm{max}}}, \qquad \delta \in (0,\delta_{0}]. \end{equation}
We will prove the following by induction on $k \in \{1,\ldots,|\Lambda|\}$: there exists a decreasing sequence of $(\delta,\delta,s,C_{\epsilon}\delta^{-\epsilon})$-configurations $G_{1} \supset \ldots \supset G_{k}$ such that $|G_{l}| \approx_{\delta} |\Omega|$ for all $1 \leq l \leq k$, and such that the following slightly stronger version of \eqref{form110} holds:
\begin{equation}\label{form111} m^{C_{l}\delta^{-\epsilon},C_{l}}_{\delta,\lambda_{l},t}(\omega \mid G_{l}) \leq \delta^{- \kappa}, \qquad \omega \in G_{l}, \, 1 \leq l \leq k. \end{equation}
Once we have accomplished this for $k = |\Lambda|$, we set $G := G_{|\Lambda|}$. Then \eqref{form110} holds for $G$ (by \eqref{form183}), and for all pairs $(\lambda,t)$ with $\lambda \in \Lambda(t)$. After this, we may repeat the same procedure for all $t \in \mathcal{T}$ in arbitrary order (but always working inside the configurations we have previously constructed). This will complete the proof of Proposition \ref{prop8}.

\begin{remark} Notice that the constants "$C_{l}$" in \eqref{form111} decrease (rapidly) as $l$ increases. The idea is that we can prove \eqref{form111} with index "$k + 1$" and the smaller constant $C_{k + 1}$, provided that we already have \eqref{form111} for all $1 \leq l \leq k$, and the much larger constants $C_{l} \gg C_{k + 1}$. \end{remark}

\subsubsection{The case $k = 1$} This case is a consequence of \eqref{form130} applied with $\lambda = \lambda_{1} = \delta$, with $G_{1} := \Omega$. Note that in this case $\sigma = \delta/\sqrt{\lambda t} = \sqrt{\lambda/t} = \Sigma$, so \eqref{form130} with $\lambda = \delta$ (and our fixed $t \in \mathcal{T}$) can be rewritten as
\begin{equation}\label{form208} m^{\delta^{-\mathbf{A}\epsilon_{\mathrm{max}}},\delta^{-\mathbf{A}\epsilon_{\mathrm{max}}}}_{\delta,\delta,t}(\omega \mid \Omega) \leq \delta^{-s\kappa/100}\lambda^{s}|P_{0}|_{\delta} \leq \delta^{-\kappa}, \qquad \omega \in \Omega^{\delta}_{\Sigma}. \end{equation}
This is actually much stronger than what we need in \eqref{form111}, since $\epsilon < \epsilon_{\mathrm{max}}$, and $C_{1} \sim_{\epsilon,\kappa} 1$. One small point of concern is that \eqref{form111} is a statement about $\omega \in G_{1} = \Omega$, whereas \eqref{form208} deals with $\omega \in \Omega^{\delta}_{\Sigma} = \Omega^{\delta}_{\sigma}$. This is not a problem thanks to the following elementary lemma, which will also be useful later:
\begin{lemma}\label{lemma10} Let $0 < \delta \leq \lambda \leq t \leq 1$ and $\rho_{\lambda},\rho_{t},C \geq 1$. Let $G \subset \Omega$ and $\omega \in G$. Let $\bar{\omega} \in G^{\delta}_{\sigma}$ be the parent of $\omega$ in the $(\delta,\sigma)$-skeleton $G^{\delta}_{\sigma}$, where $\sigma = \delta/\sqrt{\lambda t}$ as usual. Then,
\begin{equation}\label{form209} m_{\delta,\lambda,t}^{\rho_{\lambda},\rho_{t},C/A}(\bar{\omega} \mid G) \leq m_{\delta,\lambda,t}^{\rho_{\lambda},\rho_{t},C}(\omega \mid G) \leq m_{\delta,\lambda,t}^{\rho_{\lambda},\rho_{t},AC}(\bar{\omega} \mid G),\end{equation}
where $A \geq 1$ is absolute.  \end{lemma}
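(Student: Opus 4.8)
The statement to prove, Lemma \ref{lemma10}, compares the partial multiplicity function evaluated at a point $\omega \in G$ with the same function evaluated at its $(\delta,\sigma)$-skeleton parent $\bar\omega \in G^\delta_\sigma$. The key observation is that $\omega$ and $\bar\omega$ sit inside comparable $(\delta,\sigma)$-rectangles: by the definition of the skeleton (the relation $\omega \prec \bar\omega$), the arcs $v$ and $\bar v$ satisfy $v \subset V(\mathbf{p}, I_{\bar v})$ with $I_{\bar v}$ of length $\sim \sigma$, and since $\Delta = \delta$ here the underlying cube is the same; hence $|v - \bar v| \lesssim \sigma$. Consequently $R^\delta_\sigma(\omega) \subset A R^\delta_\sigma(\bar\omega)$ and $R^\delta_\sigma(\bar\omega) \subset A R^\delta_\sigma(\omega)$ for an absolute constant $A \geq 1$. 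Note also that since $p$ is the same cube for $\omega$ and $\bar\omega$, the distance and tangency parameters $|p - p'|$ and $\Delta(p,p')$ are literally unchanged when we replace $\omega$ by $\bar\omega$; thus the index set $(G^\delta_\sigma)^{\rho_\lambda,\rho_t}_{\lambda,t}(\omega)$ and $(G^\delta_\sigma)^{\rho_\lambda,\rho_t}_{\lambda,t}(\bar\omega)$ \emph{coincide}, because these sets only depend on the $\mathbf{D}$-coordinate of $\omega$, which is shared.

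The plan is then a direct set-inclusion argument. First I would record that $(G^\delta_\sigma)^{\rho_\lambda,\rho_t}_{\lambda,t}(\omega) = (G^\delta_\sigma)^{\rho_\lambda,\rho_t}_{\lambda,t}(\bar\omega)$, so the two multiplicity functions differ only through the intersection condition $C R^\delta_\sigma(\omega) \cap C R^\delta_\sigma(\omega') \neq \emptyset$. For the right-hand inequality of \eqref{form209}: if $\omega' = (q,w)$ is counted by $m_{\delta,\lambda,t}^{\rho_\lambda,\rho_t,C}(\omega \mid G)$, then $C R^\delta_\sigma(\omega) \cap C R^\delta_\sigma(\omega') \neq \emptyset$; since $R^\delta_\sigma(\omega) \subset A R^\delta_\sigma(\bar\omega)$, we get $AC R^\delta_\sigma(\bar\omega) \cap AC R^\delta_\sigma(\omega') \neq \emptyset$, and actually $AC R^\delta_\sigma(\bar\omega) \cap C R^\delta_\sigma(\omega') \ne \emptyset$ suffices, but to keep the rectangle of $\omega'$ also in the form $C' R^\delta_\sigma(\omega')$ with the same $C'$ on both sides we enlarge to $AC$ on both; hence $\omega'$ is counted by $m_{\delta,\lambda,t}^{\rho_\lambda,\rho_t,AC}(\bar\omega \mid G)$. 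Symmetrically, for the left-hand inequality: if $\omega'$ is counted by $m_{\delta,\lambda,t}^{\rho_\lambda,\rho_t,C/A}(\bar\omega \mid G)$, then $(C/A) R^\delta_\sigma(\bar\omega) \cap (C/A) R^\delta_\sigma(\omega') \neq \emptyset$; using $R^\delta_\sigma(\bar\omega) \subset A R^\delta_\sigma(\omega)$, i.e.\ $(C/A) R^\delta_\sigma(\bar\omega) \subset C R^\delta_\sigma(\omega)$, we obtain $C R^\delta_\sigma(\omega) \cap C R^\delta_\sigma(\omega') \neq \emptyset$, so $\omega'$ is counted by $m_{\delta,\lambda,t}^{\rho_\lambda,\rho_t,C}(\omega \mid G)$. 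This gives both inequalities.

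The only genuine point requiring a little care — and the main obstacle, such as it is — is the passage between the scaled rectangles $CR^\delta_\sigma$ when the constant changes: one must check that $R^\delta_\sigma(\omega) \subset A R^\delta_\sigma(\bar\omega)$ truly holds with an absolute $A$, independent of $\delta,\sigma,\lambda,t$. This follows from the skeleton definition combined with Lemma \ref{conelem} applied in the degenerate case $\Delta = \delta$ (where $\mathbf{p} = Q_\delta(p) = p$), which yields $|v - \bar v| \leq C_0 \sigma$ for an absolute $C_0$, together with the trivial inclusion $S^{C\delta}(p) \subset S^{C\delta}(p)$ (same cube). Then $R^\delta_\sigma(\omega) = S^\delta(p) \cap B(v,\sigma) \subset S^\delta(p) \cap B(\bar v, (1+C_0)\sigma) \subset (1+C_0) R^\delta_\sigma(\bar\omega)$, and similarly with the roles reversed, so $A := 1 + C_0$ works. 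I would write the proof in essentially this form, spelling out the equality of the index sets first, then the two rectangle inclusions, then the two chains of implications; the whole argument is a few lines once these ingredients are in place.

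\begin{proof}[Proof of Lemma \ref{lemma10}] Write $\omega = (p,v)$ and $\bar\omega = (p,\bar v)$, where $p \in \mathcal{D}_{\delta}$ is the common $\mathbf{D}$-coordinate (since $\sigma \geq \delta$, the $(\delta,\sigma)$-skeleton leaves the $\delta$-cube $p$ unchanged), and $\bar v \in \mathcal{S}_{\sigma}(p)$ is the arc with $v \prec \bar v$. By Lemma \ref{conelem} applied with $\Delta = \delta$ and $\Sigma = \sigma$ (so that $\mathbf{p} = p$), we have $v \subset V(p,I_{\bar v})$ for an arc $I_{\bar v} \subset S(p)$ of length $\lesssim \sigma$ containing $\bar v$; in particular $|v - \bar v| \leq C_{0}\sigma$ for an absolute constant $C_{0} \geq 1$. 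Consequently, writing $A := 1 + C_{0}$,
\begin{equation}\label{form221} R^{\delta}_{\sigma}(\omega) = S^{\delta}(p) \cap B(v,\sigma) \subset S^{\delta}(p) \cap B(\bar v, A\sigma) = R^{\delta}_{A\sigma}(p,\bar v) \subset A R^{\delta}_{\sigma}(\bar\omega), \end{equation}
and symmetrically $R^{\delta}_{\sigma}(\bar\omega) \subset A R^{\delta}_{\sigma}(\omega)$. More generally, for any $C \geq 1$ we get $C R^{\delta}_{\sigma}(\omega) \subset AC R^{\delta}_{\sigma}(\bar\omega)$ and $C R^{\delta}_{\sigma}(\bar\omega) \subset AC R^{\delta}_{\sigma}(\omega)$.

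Next, observe that the membership $\omega' \in (G^{\delta}_{\sigma})^{\rho_{\lambda},\rho_{t}}_{\lambda,t}(\cdot)$ depends only on the $\mathbf{D}$-coordinate of the base point, via the conditions $\lambda/\rho_{\lambda} \leq \Delta(p,p') \leq \rho_{\lambda}\lambda$ and $t/\rho_{t} \leq |p - p'| \leq \rho_{t}t$ (with the one-sided modification when $\lambda \leq \delta\rho_{\lambda}$). Since $\omega$ and $\bar\omega$ share the same $\mathbf{D}$-coordinate $p$, it follows that
\begin{displaymath} (G^{\delta}_{\sigma})^{\rho_{\lambda},\rho_{t}}_{\lambda,t}(\omega) = (G^{\delta}_{\sigma})^{\rho_{\lambda},\rho_{t}}_{\lambda,t}(\bar\omega). \end{displaymath}
Denote this common set by $\mathcal{N}$.

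We now prove the right-hand inequality of \eqref{form209}. Let $\omega' \in \mathcal{N}$ be counted by $m_{\delta,\lambda,t}^{\rho_{\lambda},\rho_{t},C}(\omega \mid G)$, so that $C R^{\delta}_{\sigma}(\omega) \cap C R^{\delta}_{\sigma}(\omega') \neq \emptyset$. By \eqref{form221} (applied with constant $C$), $C R^{\delta}_{\sigma}(\omega) \subset AC R^{\delta}_{\sigma}(\bar\omega)$, and trivially $C R^{\delta}_{\sigma}(\omega') \subset AC R^{\delta}_{\sigma}(\omega')$. Hence $AC R^{\delta}_{\sigma}(\bar\omega) \cap AC R^{\delta}_{\sigma}(\omega') \neq \emptyset$, so $\omega'$ is counted by $m_{\delta,\lambda,t}^{\rho_{\lambda},\rho_{t},AC}(\bar\omega \mid G)$. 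This proves $m_{\delta,\lambda,t}^{\rho_{\lambda},\rho_{t},C}(\omega \mid G) \leq m_{\delta,\lambda,t}^{\rho_{\lambda},\rho_{t},AC}(\bar\omega \mid G)$.

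Finally, for the left-hand inequality, let $\omega' \in \mathcal{N}$ be counted by $m_{\delta,\lambda,t}^{\rho_{\lambda},\rho_{t},C/A}(\bar\omega \mid G)$, so that $(C/A) R^{\delta}_{\sigma}(\bar\omega) \cap (C/A) R^{\delta}_{\sigma}(\omega') \neq \emptyset$. Applying the inclusion $R^{\delta}_{\sigma}(\bar\omega) \subset A R^{\delta}_{\sigma}(\omega)$ scaled by $C/A$ gives $(C/A) R^{\delta}_{\sigma}(\bar\omega) \subset C R^{\delta}_{\sigma}(\omega)$, and clearly $(C/A) R^{\delta}_{\sigma}(\omega') \subset C R^{\delta}_{\sigma}(\omega')$. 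Therefore $C R^{\delta}_{\sigma}(\omega) \cap C R^{\delta}_{\sigma}(\omega') \neq \emptyset$, so $\omega'$ is counted by $m_{\delta,\lambda,t}^{\rho_{\lambda},\rho_{t},C}(\omega \mid G)$. This shows $m_{\delta,\lambda,t}^{\rho_{\lambda},\rho_{t},C/A}(\bar\omega \mid G) \leq m_{\delta,\lambda,t}^{\rho_{\lambda},\rho_{t},C}(\omega \mid G)$, completing the proof. \end{proof}
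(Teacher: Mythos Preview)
Your proof is correct and follows essentially the same approach as the paper's own proof: both arguments rest on the observation that $\omega$ and $\bar\omega$ share the same $p$-component (so the index sets $(G^{\delta}_{\sigma})^{\rho_{\lambda},\rho_{t}}_{\lambda,t}(\omega)$ and $(G^{\delta}_{\sigma})^{\rho_{\lambda},\rho_{t}}_{\lambda,t}(\bar\omega)$ coincide), together with the rectangle inclusion $CR^{\delta}_{\sigma}(\omega) \subset ACR^{\delta}_{\sigma}(\bar\omega)$ coming from $|v - \bar v| \lesssim \sigma$. Your version is slightly more detailed in that you spell out both inequalities and explicitly invoke Lemma~\ref{conelem} for the distance estimate, whereas the paper only writes out the upper bound and leaves the lower bound to symmetry.
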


In particular, \eqref{form208} for $\omega \in \Omega^{\delta}_{\Sigma}$ implies \eqref{form111} for all $\omega \in \Omega$, at the cost of replacing the second $\delta^{-\mathbf{A}\epsilon_{\mathrm{max}}}$ by $\delta^{-\mathbf{A}\epsilon_{\mathrm{max}}}/A$ (which is still much bigger than $C_{1} \sim_{\epsilon,\kappa} 1$).

\begin{proof}[Proof of Lemma \ref{lemma10}] We only prove the upper bound, since the lower bound is established in a similar fashion. Let us spell out the quantities in \eqref{form209}:
\begin{displaymath} m_{\delta,\lambda,t}^{\rho_{\lambda},\rho_{t},C}(\omega \mid G) = |\{\omega' \in (G^{\delta}_{\sigma})^{\rho_{\lambda},\rho_{t}}_{\lambda,t}(\omega) : CR^{\delta}_{\sigma}(\omega) \cap CR^{\delta}_{\sigma}(\omega') \neq \emptyset\}| \end{displaymath}
and
\begin{displaymath} m_{\delta,\lambda,t}^{\rho_{\lambda},\rho_{t},AC}(\bar{\omega} \mid G) = |\{\omega' \in (G^{\delta}_{\sigma})^{\rho_{\lambda},\rho_{t}}_{\lambda,t}(\bar{\omega}) : ACR^{\delta}_{\sigma}(\bar{\omega}) \cap ACR^{\delta}_{\sigma}(\omega') \neq \emptyset\}|. \end{displaymath}
The crucial observation is that if the point $\omega \in G$ is written as $\omega = (p,v)$, then the parent $\bar{\omega} = (p,\mathbf{v})$, where $|v - \mathbf{v}| \lesssim 1$, and the "$p$-component" remains unchanged. In particular,
\begin{displaymath} \omega' \in (G^{\delta}_{\sigma})^{\rho_{\lambda},\rho_{t}}_{\lambda,t}(\omega) \quad \Longleftrightarrow \quad \omega' \in (G^{\delta}_{\sigma})^{\rho_{\lambda},\rho_{t}}_{\lambda,t}(\bar{\omega}), \end{displaymath}
since these inclusions only concern the $p$-components of $\omega,\omega',\bar{\omega}$. Therefore, \eqref{form209} boils down to the observation
\begin{displaymath} CR^{\delta}_{\sigma}(\omega) \cap CR^{\delta}_{\sigma}(\omega') \neq \emptyset \quad \Longrightarrow \quad ACR^{\delta}_{\sigma}(\bar{\omega}) \cap CR^{\delta}_{\sigma}(\omega') \neq \emptyset, \end{displaymath}
which follows from $CR^{\delta}_{\sigma}(\omega) \subset ACR^{\delta}_{\sigma}(\bar{\omega})$ (for $A \geq 1$ sufficiently large).  \end{proof}

\subsubsection{Cases $1 < k + 1 \leq |\Lambda|$} We then assume that the $(\delta,\delta,s,C_{\epsilon}\delta^{-\epsilon})$-configurations $G_{1} \supset \ldots \supset G_{k}$ have already been constructed for some $1 \leq k < |\Lambda|$. We next explain how to construct the set $G_{k + 1}$. To be precise, our task is to construct a $(\delta,\delta,s,C_{\epsilon}\delta^{-\epsilon})$-configuration $G_{k + 1} \subset G_{k}$ with the properties $C_{\epsilon} \approx_{\delta} 1$, $|G_{k + 1}| \approx_{\delta} |G_{k}|$, and
\begin{equation}\label{form115} m_{\delta,\lambda_{k + 1},t}^{C_{k + 1}\delta^{-\epsilon},C_{k + 1}}(\omega \mid G_{k + 1}) \leq \delta^{- \kappa}, \qquad \omega \in G_{k + 1}. \end{equation}
We abbreviate
\begin{equation}\label{abbreviations} \lambda := \lambda_{k + 1} \quad \text{and} \quad \sigma := \delta/\sqrt{\lambda_{k + 1}t} \end{equation}
 for the duration of this argument. We write $G_{k} = \{(p,v) : p \in P_{k} \text{ and } v \in G_{k}(p)\}$ with $|G_{k}(p)| \equiv M_{k}$ for all $p \in P_{k}$. Here $|P_{k}|\approx_{\delta} |P|$ and $M_{k} \approx_{\delta} M$ since $|G_{k}| \approx_{\delta} |\Omega| = M|P|$.

Note that the multiplicity function appearing in \eqref{form115} counts elements in the $(\delta,\sigma)$-skeleton of $G_{k + 1}$. It would be desirable to know that $|E_{\sigma}(p)| \equiv M_{\sigma}$ is a constant independent of $p \in P_{k}$, where
\begin{displaymath} E_{\sigma}(p) = \{\mathbf{v} \in \mathcal{S}_{\sigma}(p) : v \prec \mathbf{v} \text{ for some } v \in G_{k}(p)\} \end{displaymath}
is the $(\delta,\sigma)$-skeleton of $G_{k}(p)$. This may not be true to begin with, but may be accomplished with a small pruning, as follows. For each $(p,\mathbf{v}) \in (G_{k})^{\delta}_{\sigma}$, let
\begin{displaymath} M(p,\mathbf{v}) = |\{(p',v) \in G_{k} : (p',v) \prec (p,\mathbf{v})\}| = |\{v \in G_{k}(p) : v \prec \mathbf{v}\}|. \end{displaymath}
The second equation follows from $p \in \mathcal{D}_{\delta}$ (that is, $(p',v) \prec (p,\mathbf{v})$ implies $p' = p$). Now, for each $p \in P_{k}$ fixed, we pigeonhole an integer $M(p) \geq 1$ and a subset $E_{\sigma}'(p) \subset (G_{k})^{\delta}_{\sigma}(p)$ such that $M(p) \leq M(p,\mathbf{v}) \leq 2M(p)$ for all $\mathbf{v} \in E_{\sigma}'(p)$, and further
\begin{equation}\label{form135} |\{v \in G_{k}(p) : v \prec \mathbf{v} \text{ for some } \mathbf{v} \in E_{\sigma}'(p)\}| \approx_{\delta} |G_{k}(p)| = M_{k}. \end{equation}
It follows that $M(p) \cdot |E_{\sigma}'(p)| \approx_{\delta} M_{k} \approx_{\delta} M$ for all $p \in P_{k}$. Next, we pigeonhole an integer $M_{\sigma} \geq 1$, and a subset $\bar{P}_{k} \subset P_{k}$ such that $M_{\sigma} \leq |E_{\sigma}'(p)| \leq 2M_{\sigma}$ for all $p \in \bar{P}_{k}$, and $|\bar{P}_{k}| \approx_{\delta} |P_{k}|$. With this definition, let
\begin{displaymath} \bar{G} := \{(p,v) : p \in \bar{P}_{k}, \, v \in G_{k}(p), \text{ and } v \prec \mathbf{v} \text{ for some } \mathbf{v} \in E_{\sigma}'(p)\}. \end{displaymath}
Thus, the $(\delta,\sigma)$-skeleton of $\bar{G}$ is $\bar{G}^{\delta}_{\sigma} = \{(p,\mathbf{v}) : p \in \bar{P}_{k} \text{ and } \mathbf{v} \in E_{\sigma}'(p)\}$, and for each $p \in \bar{P}_{k}$, the $(\delta,\sigma)$-skeleton of $\bar{G}(p)$ is $\bar{G}^{\delta}_{\sigma}(p) = E_{\sigma}'(p)$, which has constant cardinality $M_{\sigma}$ (up to a factor of $2$). To simplify notation, we denote in the sequel $E_{\sigma}(p) := E_{\sigma}'(p)$ for $p \in \bar{P}_{k}$. Note that
\begin{displaymath} |\bar{G}| = \sum_{p \in \bar{P}_{k}} \sum_{\mathbf{v} \in E_{\sigma}(p)} |\{v \in G_{k}(p) : v \prec \mathbf{v}\}| \stackrel{\eqref{form135}}{\approx_{\delta}} |\bar{P}_{k}|M_{k} \approx_{\delta} |P_{k}|M_{k} = |G_{k}|. \end{displaymath}
To summarise, the procedure above has reduced $G_{k}$ to a subset $\bar{G} \subset G_{k}$ of size $|\bar{G}| \approx_{\delta} |G_{k}|$, and further we have gained the following properties:
\begin{equation}\label{form139} |\bar{G}_{\sigma}^{\delta}(p)| = |E_{\sigma}(p)| \in [M_{\sigma},2M_{\sigma}], \qquad p \in \bar{P}_{k}, \end{equation}
and
\begin{equation}\label{form140} |\{v \in G_{k}(p) : v \prec \mathbf{v}\}| = M(p) \approx_{\delta} M/M_{\sigma}, \qquad p \in \bar{P}_{k}, \, \mathbf{v} \in E_{\sigma}(p). \end{equation}
We also record for future reference that
\begin{equation}\label{form160} M_{\sigma} \sim |E_{\sigma}(p)| \gtrapprox_{\delta} \delta^{\epsilon}\sigma^{-s} = \delta^{\epsilon} \left( \frac{\sqrt{\lambda t}}{\delta} \right)^{s}, \end{equation}
since $\bar{G}(p)$ is a non-empty $(\delta,s,C_{\epsilon}\delta^{-\epsilon})$-set. (It follows from \eqref{form139}-\eqref{form140} that $|\bar{G}(p)| \approx_{\delta} M$ for all $p \in \bar{P}_{k}$, but $\bar{G}$ may fail to be a $(\delta,\delta,s,C\delta^{-\epsilon})$-configuration in the strict sense that the sets $|\bar{G}(p)|$ have equal cardinality. This will not be needed, so we make no attempt to prune back this property. The moral here is that the set $G_{k}$ can be completely forgotten: we will only need $\bar{G} \subset G_{k}$ in the sequel, and the rough constancy of $|\bar{G}^{\delta}_{\sigma}(p)|$.)

We then begin the construction of the set $G_{k + 1} \subset \bar{G}$. This argument requires another induction, in fact very similar to the one  we saw during the proof of Proposition \ref{prop3}. This is not too surprising, given that the "base case" $\delta = \lambda$, or in other words $k = 1$, of \eqref{form111} followed directly from Proposition \ref{prop3}. To reduce confusion with indices, the letters "$k,l$" will from now on refer to the sets in the sequence $G_{1},\ldots,G_{k}$ already constructed in our "exterior" induction and we will use letters "$i,j$" are reserved for the "interior" induction required to construct $G_{k + 1}$.

\begin{remark} It may be worth noting that the "exterior" induction runs $\sim 1/\epsilon$ times, whereas the "interior" induction below runs only $\ceil{20/\kappa} \sim 1/\kappa$ times. This is significant, because it is legitimate to increase (say: double) the constant "$\epsilon$" roughly $1/\kappa$ times and still rest assured that the resulting final constant is $\lesssim 2^{1/\kappa}\epsilon \leq \epsilon_{\mathrm{max}}$ (a small number). In contrast, it would not be legitimate to double the constant "$\epsilon$" roughly $1/\epsilon$ times in the "exterior" induction.  \end{remark}

We start by setting $h := \ceil{20/\kappa}$, and defining the auxiliary sequence of exponents
\begin{equation}\label{form145} 100\epsilon < \epsilon_{h} < \epsilon_{h - 1} < \ldots < \epsilon_{0} < \epsilon_{\mathrm{max}}/100, \end{equation}
where $\epsilon_{j} < \epsilon_{j - 1}/10$ for all $1 \leq j \leq h$. This choice of the sequence $\{\epsilon_{j}\}$ is possible thanks to the relation between the constants "$\epsilon$" and "$\epsilon_{\mathrm{max}}$" explained in Section \ref{s:constants}. Namely, in \eqref{form133} we required that
\begin{displaymath} C\cdot 10^{100/\kappa}\epsilon \leq \epsilon_{\mathrm{max}}. \end{displaymath}
In addition to the exponents $\{\epsilon_{j}\}$, we also define an auxiliary sequence of constants $\{\mathbf{C}_{j}\}$:
\begin{equation}\label{form138} C_{k + 1} \ll \mathbf{C}_{h} \ll \mathbf{C}_{h - 1} \ll \ldots \ll \mathbf{C}_{0} \ll C_{k}. \end{equation}
The necessary rate of decay for the sequence $\{\mathbf{C}_{j}\}$ turns out to be of the form $A\mathbf{C}_{j + 1}^{5} \leq \mathbf{C}_{j}$ for an absolute constant $A \geq 1$. There are $h = \ceil{20/\kappa}$ constants in the sequence, so the sequence $\{\mathbf{C}_{j}\}$ can be found, satisfying \eqref{form138}, since $C_{k} = A(\epsilon,\kappa)C_{k + 1}$ by \eqref{form129}. This is the requirement which determines the size of the constant $A(\epsilon,\kappa)$. It may worth remarking that the constant $A(\epsilon,\kappa)$ necessarily depends on both $\epsilon$ and $\kappa$. This is because the index "$k$" in $C_{k},C_{k + 1}$ ranges in $\{1,\ldots,C/\epsilon\}$ for an absolute constant $C \geq 1$, so $C_{k + 1}$ depends on both $\epsilon,\kappa$. Given the requirement for the constants $\mathbf{C}_{j}$ stated below \eqref{form138}, we see that the size of the multiplicative gap $A(\epsilon,\kappa) = C_{k}/C_{k + 1}$ also depends on both $\epsilon,\kappa$.

Recall that our goal is to define the next set "$G_{k + 1}$" satisfying \eqref{form115}. To do so (as in the proof of Proposition \ref{prop3}), we consider an auxiliary sequence of sets $\bar{G} = \mathbf{G}_{0} \supset \mathbf{G}_{1} \supset \ldots \supset \mathbf{G}_{j}$. Finally, we will set $G_{k + 1} := \mathbf{G}_{j}$ for a suitable member of this auxiliary sequence (or in fact a slight refinement of $\mathbf{G}_{j}$).

Recalling from \eqref{abbreviations} that $\sigma = \delta/\sqrt{\lambda t}$, and $\lambda = \lambda_{k + 1}$, and writing
\begin{equation}\label{def:rho} \rho_{j} := \mathbf{C}_{j}\delta^{-\epsilon}, \end{equation}
we will abbreviate
\begin{align}\label{form171} m_{j}(\omega \mid \mathbf{G}) & := m^{\delta^{-\epsilon_{j},\rho_{j}},\mathbf{C}_{j}}_{\delta,\lambda,t}(\omega \mid \mathbf{G}) \notag\\
& = |\{\omega' \in (\mathbf{G}^{\delta}_{\sigma})^{\delta^{-\epsilon_{j}},\rho_{j}}_{\lambda,t}(\omega) : \mathbf{C}_{j}R^{\delta}_{\sigma}(\omega) \cap \mathbf{C}_{j}R^{\delta}_{\sigma}(\omega') \neq \emptyset\}| \end{align}
for $\mathbf{G} \subset \bar{G}$ and $\omega \in \bar{G}$. We recall that the constant $\delta^{-\epsilon_{j}}$ refers to the range of the tangency parameter "$\lambda$", and the constant $\rho_{j}$ refers to the range of the distance parameter "$t$". It is worth noting that
\begin{displaymath} \mathbf{C}_{j + 1} \leq \mathbf{C}_{j} \quad \text{and} \quad \rho_{j + 1} \leq \rho_{j} \quad \text{and} \quad \delta^{-\epsilon_{j + 1}} \leq \delta^{-\epsilon_{j}}, \end{displaymath}
so $m_{h} \leq m_{h - 1} \leq \ldots \leq m_{0}$. It is also worth noting that since $\epsilon_{j} > 10\epsilon$, the "tangency" range $\delta^{-\epsilon_{j}}$ is very much larger than the "distance" range $\rho_{j} \sim_{\epsilon,\kappa} \delta^{-\epsilon}$, assuming that $\delta > 0$ is sufficiently small in terms of $\epsilon,\kappa$.

We start by recording the "trivial" upper bound
\begin{equation}\label{form114} m_{0}(\omega \mid \mathbf{G}) \leq m_{0}(\omega \mid \Omega_{0}) \lesssim \mathbf{C}_{0}\delta^{-s - \epsilon}, \qquad \omega \in \bar{G}, \, \mathbf{G} \subset \bar{G}, \end{equation}
which has nothing to do with the parameters $\delta^{-\epsilon_{0}},\rho_{0}$, and only has to do with the constant $\mathbf{C}_{0} \sim_{\epsilon,\kappa} 1$. The first inequality is clear. To see the second inequality, fix $\omega = (p,v) \in \bar{G}$ and $(p',v') \in (\Omega_{0})_{\sigma}^{\delta} \subset \{(q,w) : q \in P_{0} \text{ and } w \in \mathcal{S}_{\sigma}(q)\}$ such that
\begin{displaymath} \mathbf{C}_{0}R^{\delta}_{\sigma}(p',v') \cap \mathbf{C}_{0}R^{\delta}_{\sigma}(p,v) \neq \emptyset. \end{displaymath}
Then $v' \in \mathcal{S}_{\sigma}(p')$ and $|v' - v| \lesssim \mathbf{C}_{0}\sigma$. But $\mathcal{S}_{\sigma}(p')$ is $\sigma$-separated, so this can only happen for $\lesssim \mathbf{C}_{0}$ choices of $v'$. This gives \eqref{form114}, recalling that $|P_{0}| \leq \delta^{-s - \epsilon}$ by assumption \eqref{sizeP}.

The trivial inequality \eqref{form114} tells us that the estimate \eqref{form115} holds automatically with $G_{k + 1} = \bar{G}$ and $\kappa = 2s$ (with room to spare), assuming that $\delta,\epsilon > 0$ is chosen so small that $\mathbf{C}_{0} \leq \delta^{-\epsilon} \leq \delta^{-s/2}$. So, we may assume that $0 < \kappa \leq 2s$. Let $0 = \kappa_{1} < \kappa_{2} < \ldots < \kappa_{h} = 2s$ be a $(\kappa s/10)$-dense sequence in $[0,2s]$. Thus $h \leq 20/\kappa$. As already hinted above, we now define a decreasing sequence of sets $\bar{G} = \mathbf{G}_{0} \supset \mathbf{G}_{1} \supset \ldots \supset \mathbf{G}_{l}$, where $l \leq h$. We set $\mathbf{G}_{0} := \bar{G}$, and in general we will assume inductively that $|\mathbf{G}_{j + 1}| \geq \tfrac{1}{2}|\mathbf{G}_{j}|$ for $j \geq 0$ (whenever $\mathbf{G}_{j},\mathbf{G}_{j + 1}$ have been defined). Note that $m_{0}(\omega \mid \mathbf{G}_{0}) \leq \delta^{-2s} = \delta^{- \kappa_{h}}$ by \eqref{form114}, for all $\omega \in \mathbf{G}_{0}$, provided that $\delta > 0$ is small enough.

Let us then assume that the sets $\mathbf{G}_{0} \supset \ldots \supset \mathbf{G}_{j}$ have already been defined. We also assume inductively that
\begin{equation}\label{form137} m^{\delta^{-\epsilon_{j}},\rho_{j},\mathbf{C}_{j}}_{\delta,\lambda,t}(\omega \mid \mathbf{G}_{j}) = m_{j}(\omega \mid \mathbf{G}_{j}) \leq \delta^{-\kappa_{h - j}}, \qquad \omega \in \mathbf{G}_{j}. \end{equation}
This is true by \eqref{form114} for $j = 0$, as we observed above. Define
\begin{displaymath} \mathbf{H}_{j} := \{\omega \in \mathbf{G}_{j} : m_{j + 1}(\omega \mid \mathbf{G}_{j}) \geq \delta^{- \kappa_{h - (j + 1)}}\}. \end{displaymath}
Note that $\kappa_{h - (j + 1)} < \kappa_{h - j}$. So, $\mathbf{H}_{j}$ is the subset of $\mathbf{G}_{j}$ where the lower bound for the $(j + 1)^{st}$ multiplicity nearly matches the (inductive) upper bound on the $j^{th}$ multiplicity.

There are two options.
\begin{enumerate}
\item If $|\mathbf{H}_{j}| \geq \tfrac{1}{2}|\mathbf{G}_{j}|$, then we set $\mathbf{H} := \mathbf{H}_{j}$, and the construction of the sets $\mathbf{G}_{j}$ terminates. We will see that this case cannot occur as long as $\kappa_{h - j} > \kappa$.
\item If $|\mathbf{H}_{j}| < \tfrac{1}{2}|\mathbf{G}_{j}|$, then the set $\mathbf{G}_{j + 1} := \mathbf{G}_{j} \, \setminus \, \mathbf{H}_{j}$ has $|\mathbf{G}_{j + 1}| \geq \tfrac{1}{2}|\mathbf{G}_{j}|$, and moreover
\begin{displaymath} m_{j + 1}(\omega \mid \mathbf{G}_{j + 1}) \leq m_{j + 1}(\omega \mid \mathbf{G}_{j}) \leq \delta^{-\kappa_{h - (j + 1)}}, \qquad \omega \in \mathbf{G}_{j + 1}. \end{displaymath}
In other words, $\mathbf{G}_{j + 1}$ is a valid "next set" in our sequence $\mathbf{G}_{0} \supset \ldots \supset \mathbf{G}_{j + 1}$, and the inductive construction may proceed.
\end{enumerate}

If (and since) case (1) does not occur for indices $j \geq 0$ with $\kappa_{h - j} > \kappa$, we can keep constructing the sets $\mathbf{G}_{j}$ until the first index "$j$" where $\kappa_{h - j} \leq \kappa$. At this stage, the set
\begin{equation}\label{form163a} G_{k + 1} := \mathbf{G}_{j} \end{equation}
satisfies $m_{j}(\omega \mid G_{k + 1}) \leq \delta^{-\kappa}$ for all $\omega \in G_{k + 1}$ by the inductive assumption \eqref{form137}. This implies \eqref{form115}, since $C_{k + 1} \leq \mathbf{C}_{j}$ by \eqref{form138}. Moreover, $|G_{k + 1}| \geq 2^{-j}|\bar{G}| \geq 2^{-20/\kappa}|\bar{G}| \approx_{\delta} |G_{k}|$, so $G_{k + 1}$ is a valid "next set" in the sequence $\{G_{k}\}$. To be precise, we still need to apply Lemma \ref{refinement}, and thereby refine $G_{k + 1}$ (as in \eqref{form163a}) to a $(\delta,\delta,s,C_{\epsilon}\delta^{-\epsilon})$-configuration of cardinality $\approx_{\delta} |G_{k}|$. This will complete the definition of $G_{k + 1}$.

Thus, to complete the construction of the sequence $\{G_{k}\}$, and the proof of Theorem \ref{thm5}, it suffices to verify that the "hard" case (1) cannot occur for any $j \geq 0$ such that $\kappa_{h - j} > \kappa$. To prove this, we make a counter assumption:
\medskip

\textbf{Counter assumption:} Case (1) occurs at some index $j \in \{0,\ldots,h\}$ with $\kappa_{h - j} > \kappa$.

\subsubsection{Deriving a contradiction}\label{s1} The overall strategy is similar to the one we have already encountered in the proofs of Proposition \ref{prop3} and Theorem \ref{thm4}. We will use the counter assumption to produce a "large" collection of incomparable $(\delta,\sigma)$-rectangles, each of which has a high ($\lambda$-restricted) type relative to a certain $(\delta,\epsilon_{\mathrm{max}})$-almost $t$-bipartite pair $(W,B)$ of subsets of $P$. Eventually, the existence of these rectangles will contradict the upper bound established in Theorem \ref{thm4}. The hypothesis \eqref{form54} of Theorem \ref{thm4} will be valid thanks to our previous refinements, specifically \eqref{form130}.

We write $\bar{\kappa} := \kappa_{h - j}$ and (recalling the $(\kappa s)/10$-density of the sequence $\{\kappa_{j}\}$),
\begin{displaymath} \kappa_{h - (j + 1)} =: \bar{\kappa} - \zeta, \qquad \text{where }\zeta \leq (\kappa s)/10 \leq (\bar{\kappa}s)/10. \end{displaymath}
We also abbreviate
\begin{displaymath} \mathbf{G} := \mathbf{G}_{j} \quad \text{and} \quad \mathbf{H} := \mathbf{H}_{j} = \{\omega \in \mathbf{G} : m_{j + 1}(\omega \mid \mathbf{G}) \geq \delta^{- \bar{\kappa} + \zeta}\} \subset \mathbf{G}, \end{displaymath}
and we recall that $|\mathbf{H}| \geq \tfrac{1}{2}|\mathbf{G}| \approx_{\delta} |G_{k}| \approx_{\delta} M|P|$ by the assumption that we are in case (1). Finally, we will abbreviate
\begin{equation}\label{def:n} n := \delta^{- \bar{\kappa} + \zeta}. \end{equation}
To spell out the definition of "$m_{j + 1}$" (recall \eqref{form171}), we have
\begin{equation}\label{form116} |\{\omega' \in (\mathbf{G}^{\delta}_{\sigma})^{\delta^{-\epsilon_{j + 1}},\rho_{j + 1}}_{\lambda,t}(\omega) : \mathbf{C}_{j + 1}R^{\delta}_{\sigma}(\omega) \cap \mathbf{C}_{j + 1}R^{\delta}_{\sigma}(\omega') \neq \emptyset\}| \geq n, \qquad \omega \in \mathbf{H}. \end{equation}
On the other hand, by the inductive assumption \eqref{form137} applied to $\mathbf{G} = \mathbf{G}_{j}$, and recalling that $\bar{\kappa} = \kappa_{h - j}$, we have
\begin{equation}\label{form117} |\{\omega' \in (\mathbf{G}^{\delta}_{\sigma})_{\lambda,t}^{\delta^{-\epsilon_{j}},\rho_{j}}(\omega) : \mathbf{C}_{j}R^{\delta}_{\sigma}(\omega) \cap \mathbf{C}_{j}R^{\delta}_{\sigma}(\omega') \neq \emptyset\}| \leq \delta^{-\bar{\kappa}} = \delta^{-\zeta}n, \quad \omega \in \mathbf{G}. \end{equation}
The numerology is not particularly important yet, but it is crucial that a certain lower bound for $m_{j + 1}(\cdot \mid \mathbf{G})$ holds in a large subset $\mathbf{H} \subset \mathbf{G}$, whereas a nearly matching upper bound for $m_{j}(\omega \mid \mathbf{G})$ holds for all $\omega \in \mathbf{G}$. Achieving this "nearly extremal" situation was the reason to define the sequence $\{\mathbf{G}_{j}\}$.

\begin{remark}\label{rem5} In fact, we will need \eqref{form116}-\eqref{form117} for $\omega \in \mathbf{H}^{\delta}_{\sigma}$ and $\omega \in \mathbf{G}^{\delta}_{\sigma}$ instead of $\omega \in \mathbf{H}$ and $\omega \in \mathbf{G}$, respectively. This is easily achieved, at the cost of changing the constants a little. Indeed, if $A \geq 1$ is a sufficiently large absolute constant, then \eqref{form116}-\eqref{form117} imply
\begin{displaymath} m_{\delta,\lambda,t}^{\delta^{-\epsilon_{j + 1}},\rho_{j + 1},A\mathbf{C}_{j + 1}}(\omega \mid \mathbf{G}) \geq n, \qquad \omega \in \mathbf{H}^{\delta}_{\sigma}, \end{displaymath}
and
\begin{displaymath} m_{\delta,\lambda,t}^{\delta^{-\epsilon_{j}},\rho_{j},\mathbf{C}_{j}/A}(\omega \mid \mathbf{G}) \leq \delta^{-\zeta}n, \qquad \omega \in \mathbf{G}^{\delta}_{\sigma}. \end{displaymath}
These inequalities follow from Lemma \ref{lemma10}. It will be important that the constant $\mathbf{C}_{j}$ is substantially larger than $\mathbf{C}_{j + 1}$, but we can arrange this so (recall the definition \eqref{form138}) that even $\mathbf{C}_{j}/A \gg A\mathbf{C}_{j + 1}$. To avoid burdening the notation with further constants, we will assume from now on that \eqref{form116}-\eqref{form117} hold as stated for $\omega \in \mathbf{H}^{\delta}_{\sigma}$ and $\omega \in \mathbf{G}^{\delta}_{\sigma}$, respectively. \end{remark}

The set $\mathbf{H} \subset \bar{G}$ may have lost the uniformity property \eqref{form139} at scale $\sigma$. That is, we no longer know that all the $(\delta,\sigma)$-skeletons $\mathbf{H}^{\delta}_{\sigma}(p) \subset \bar{G}_{\sigma}^{\delta}(p)$, for $p \in \bar{P}_{k}$, have roughly constant cardinality (let alone $M_{\sigma}$). (Recall that the set $\bar{P}_{k} \subset P_{k}$ was defined below \eqref{form135}.) We resuscitate this property by a slight pruning of $\mathbf{H}$. Note that
\begin{equation}\label{form141} M|P| \approx_{\delta} |\mathbf{H}| = \sum_{p \in \bar{P}_{k}} \sum_{\mathbf{v} \in \mathbf{H}^{\delta}_{\sigma}(p)} |\{v \in \mathbf{H}(p) : v \prec \mathbf{v}\}|. \end{equation}
By pigeonholing, choose a number $\bar{M}_{\sigma} \geq 1$, and a subset $\bar{P} \subset \bar{P}_{k}$ with the properties $\bar{M}_{\sigma} \leq |\mathbf{H}^{\delta}_{\sigma}(p)| \leq 2\bar{M}_{\sigma}$ for all $p \in \bar{P}$, and such that the quantity on the right hand side of \eqref{form141} is only reduced by a factor of $\approx_{\delta} 1$ when replacing $\bar{P}_{k}$ by $\bar{P}$. Thus,
\begin{equation}\label{form142} M|P| \approx_{\delta} \sum_{p \in \bar{P}} \sum_{\mathbf{v} \in \mathbf{H}^{\delta}_{\sigma}(p)} |\{v \in \mathbf{H}(p) : v \prec \mathbf{v}\}| \leq |\bar{P}| \cdot 2\bar{M}_{\sigma} \cdot \max_{\mathbf{v}} |\{v \in \mathbf{H}(p) : v \prec \mathbf{v}\}|. \end{equation}
Here the "$\max$" runs over all $\mathbf{v} \in \mathbf{H}^{\delta}_{\sigma}(p)$, with all possible $p \in \bar{P}$. Here $\mathbf{H}(p) \subset G_{k}(p)$ and $p \in \bar{P}_{k}$, so we see from \eqref{form140} that the "$\max$" is bounded by $\lessapprox_{\delta} M/M_{\sigma}$. Since evidently $\bar{M}_{\sigma} \leq 2M_{\sigma}$, we may now deduce that $\bar{M}_{\sigma} \approx_{\delta} M_{\sigma}$ and $|\bar{P}| \approx_{\delta} |\bar{P}_{k}|$. At this point we define $\bar{\mathbf{H}} := \{(p,v) \in \mathbf{H} : p \in \bar{P}\}$. Then it follows from \eqref{form142} that $|\bar{\mathbf{H}}| \approx_{\delta} |\mathbf{H}| \approx_{\delta} M|P|$, and moreover
\begin{equation}\label{form143} |\bar{\mathbf{H}}^{\delta}_{\sigma}(p)| = |\mathbf{H}^{\delta}_{\sigma}(p)| \sim \bar{M}_{\sigma} \approx_{\delta} M_{\sigma}, \qquad p \in \bar{P}. \end{equation}

\subsubsection{Finding a $t$-bipartite pair} Next, we proceed to find a $(\delta,\epsilon_{\mathrm{max}})$-almost $t$-bipartite pair of subsets of $P$, very much like in the proof of Proposition \ref{prop3}. Let $\mathcal{B}$ be a cover of $P$ by balls of radius $t/(4\rho_{j + 1})$ such that the concentric balls of radius $2\rho_{j + 1}t$ (that is, the balls $\{8\rho_{j + 1}^{2}B : B \in \mathcal{B}\}$) have overlap bounded by $O(\rho_{j + 1}) = O_{\epsilon}(\delta^{-\epsilon}) \leq \delta^{-\epsilon_{\mathrm{max}}}$ (recall that $\rho_{j + 1} = \mathbf{C}_{j + 1}\delta^{-\epsilon}$). Then, we choose a ball $B(p_{0},t/(4\rho_{j + 1})) \in \mathcal{B}$ in such a way that the ratio
\begin{displaymath} \theta := \frac{|\bar{P} \cap B(p_{0},t/(4\rho_{j + 1}))|}{|P \cap B(p_{0},2\rho_{j + 1}t)|} \end{displaymath}
is maximised. Here $\bar{P} \subset \bar{P}_{k} \subset P_{k}$ is the subset of cardinality $|\bar{P}| \approx_{\delta} |P_{k}| \approx_{\delta} |P|$ we just found above, recall \eqref{form143}. We claim that $\theta \gtrapprox_{\delta} \delta^{\epsilon_{\mathrm{max}}}$: this follows immediately from the estimate
\begin{displaymath} |\bar{P}| \leq \sum_{B \in \mathcal{B}} |\bar{P} \cap B| \leq \theta \sum_{B \in \mathcal{B}} |P \cap 8\rho_{k + 1}^{2}B| \leq \theta \delta^{-\epsilon_{\mathrm{max}}}|P|, \end{displaymath}
and since $|\bar{P}| \approx_{\delta} |P|$. Now, we set

\begin{equation}\label{form121} W := \bar{P} \cap B(p_{0},t/(4\rho_{j + 1})) \quad \text{and} \quad B := P \cap B(p_{0},2\rho_{j + 1}t) \, \setminus \, B(p_{0},t/(2\rho_{j + 1})), \end{equation}
so that
\begin{equation}\label{form120} |B| \leq |P \cap B(p_{0},2\rho_{j + 1}t)| = \theta^{-1}|W| \lessapprox_{\delta} \delta^{-\epsilon_{\mathrm{max}}}|W|. \end{equation}
We record at this point that
\begin{equation}\label{form152} \dist(W,B) \geq \tfrac{1}{4}t/\rho_{j + 1} \geq \delta^{\epsilon_{\mathrm{max}}}t \quad \text{and} \quad \diam(W \cup B) \leq 4\rho_{j + 1}t \leq \delta^{-\epsilon_{\mathrm{max}}}t, \end{equation}
so the pair $(W,B)$ is $(\delta,\epsilon_{\mathrm{max}})$-almost $t$-bipartite, independently of "$j$" or "$k$". This will be needed in an upcoming application of Theorem \ref{thm4}.

We then set
\begin{equation}\label{def:fatW} \mathbf{W} := \{(p,v) \in \bar{\mathbf{H}}^{\delta}_{\sigma} : p \in W\} \quad \text{and} \quad \mathbf{B} := \{(p,v) \in \mathbf{G} : p \in B\}. \end{equation}
We note that the "angular" components of $\mathbf{W}$ have separation $\sigma$, but the angular components of $\mathbf{B}$ are $\delta$-separated; this is not a typo. Let us note that
\begin{equation}\label{form119}  \quad |\mathbf{W}(p)| = |\bar{\mathbf{H}}^{\delta}_{\sigma}(p)|_{\sigma} \stackrel{\eqref{form143}}{\sim} \bar{M}_{\sigma} \approx_{\delta} M_{\sigma}, \quad p \in W. \end{equation}
(For this purpose, it was important to choose $W \subset \bar{P}$.) Also, it follows from definitions of $W,B$ that if $p \in W$, and $q \in P$ is arbitrary with $t/\rho_{j + 1} \leq |p - q| \leq \rho_{j + 1}t$, then $q \in B$. Consequently,
\begin{displaymath} \omega \in \mathbf{W} \quad \Longrightarrow \quad (\mathbf{G}^{\delta}_{\sigma})_{\lambda,t}^{\delta^{-\epsilon_{j + 1}},\rho_{j + 1}}(\omega) \subset (\mathbf{B}^{\delta}_{\sigma})_{\lambda,t}^{\delta^{-\epsilon_{j + 1}},\rho_{j + 1}}(\omega). \end{displaymath}
For this inclusion to be true, it is important that in the definition of "$B$" we take into account all points in $\bar{P}_{k}$, and not only the refinement $\bar{P}$. Now this is certainly true, because we are even taking along all the points in $P$.  From this, and since $\mathbf{W} \subset \mathbf{H}^{\delta}_{\sigma}$, and recalling \eqref{form116}, it follows
\begin{equation}\label{form118}  |\{\beta \in (\mathbf{B}^{\delta}_{\sigma})^{\delta^{-\epsilon_{j + 1}},\rho_{j + 1}}_{\lambda,t}(\omega) : \mathbf{C}_{j + 1}R^{\delta}_{\sigma}(\omega) \cap \mathbf{C}_{j + 1}R^{\delta}_{\sigma}(\beta) \neq \emptyset\}| \geq n > 0, \qquad \omega \in \mathbf{W}. \end{equation}
We also used the reduction explained in Remark \ref{rem5} that we may assume \eqref{form116} to hold for all $\omega \in \mathbf{H}^{\delta}_{\sigma}$. Without this reduction, \eqref{form118} would instead hold with constant "$C\mathbf{C}_{j + 1}$".

\subsubsection{The rectangles $\mathcal{R}^{\delta}_{\sigma}$} We will produce a family of $100$-incomparable $(\delta,\sigma)$-rectangles with high $\lambda$-restricted type relative to $(W,B)$. This will place us in a position to apply Theorem \ref{thm4}. Consider the $(\delta,\sigma)$-rectangles $\{R^{\delta}_{\sigma}(\omega) : \omega \in \mathbf{W}\}$, and let
\begin{displaymath} \mathcal{R}^{\delta}_{\sigma} \subset \{R^{\delta}_{\sigma}(\omega) : \omega \in \mathbf{W}\}. \end{displaymath}
be a maximal family of pairwise $100$-incomparable elements. Some rectangles in $\mathcal{R}^{\delta}_{\sigma}$ may arise as $R^{\delta}_{\sigma}(\omega)$ for multiple distinct $\omega \in \mathbf{W}$. We quantify this by considering
\begin{equation}\label{form122} m(R) = |\{\omega \in \mathbf{W} : R \sim_{100} R^{\delta}_{\sigma}(\omega)\}|, \qquad R \in \mathcal{R}^{\delta}_{\sigma}, \end{equation}
where "$\sim_{100}$" refers to $100$-comparability. We note that since every $R \in \mathcal{R}^{\delta}_{\sigma}$ satisfies $R \sim_{100} R^{\delta}_{\sigma}(\omega)$ for some $\omega \in \mathbf{W}$, we have $m(R) \geq 1$ (and $m(R) \leq |\mathbf{W}| \lesssim \delta^{-4}$). By pigeonholing, we may find a subset $\bar{\mathcal{R}}^{\delta}_{\sigma} \subset \mathcal{R}^{\delta}_{\sigma}$ with the property $m(R) \equiv m \in [1,C\delta^{-4}]$ for all $R \in \bar{\mathcal{R}}^{\delta}_{\sigma}$, and moreover
\begin{displaymath} \sum_{\omega \in \mathbf{W}} |\{R \in \bar{\mathcal{R}}^{\delta}_{\sigma} : R \sim_{100} R^{\delta}_{\sigma}(\omega)\}| \approx_{\delta} \sum_{\omega \in \mathbf{W}} |\{R \in \mathcal{R}^{\delta}_{\sigma} : R \sim_{100} R^{\delta}_{\sigma}(\omega)\}|. \end{displaymath}
Now, we have
\begin{align} |\mathcal{\bar{R}}^{\delta}_{\sigma}| & = \frac{1}{m} \sum_{R \in \bar{\mathcal{R}}_{\sigma}^{\delta}} m(R) = \frac{1}{m} \sum_{R \in \bar{\mathcal{R}}^{\delta}_{\sigma}} \sum_{p \in W} \sum_{\mathbf{v} \in \mathbf{W}(p)} \mathbf{1}_{\{R \sim_{100} R^{\delta}_{\sigma}(p,\mathbf{v})\}} \notag\\
&\label{form123}  \approx_{\delta} \frac{1}{m} \sum_{p \in W} \sum_{\mathbf{v} \in \mathbf{W}(p)} |\{R \in \mathcal{R}^{\delta}_{\sigma} : R \sim_{100} R^{\delta}_{\sigma}(p,\mathbf{v})\}| \stackrel{\eqref{form119}}{\gtrapprox_{\delta}} \frac{|W|M_{\sigma}}{m}. \end{align}
(The final lower bound would not necessarily hold for $\bar{\mathcal{R}}_{\sigma}^{\delta}$, since every rectangle $R^{\delta}_{\sigma}(p,\mathbf{v})$, $(p,\mathbf{v}) \in \mathbf{W}$, is not necessarily $100$-comparable to at least one rectangle from $\bar{\mathcal{R}}_{\sigma}^{\delta}$.)

\subsubsection{Proving that $m \lesssim n$} Recall the constant $n = \delta^{-\bar{\kappa} + \zeta}$ from \eqref{def:n}. We next claim that
\begin{equation}\label{form127} m(R) \lesssim_{\epsilon,\kappa} \delta^{-\zeta} n, \qquad R \in \mathcal{R}^{\delta}_{\sigma}, \end{equation}
and in particular $m \lesssim_{\epsilon,\kappa} \delta^{-\zeta}n$. This inequality is analogous to \eqref{form196} in the proof of Proposition \ref{prop3}, but the argument here will be a little harder: now we will finally need the inductive information \eqref{form111} regarding the higher levels of tangency $\lambda_{l}$ for $1 \leq l \leq k$.

Let $R = R^{\delta}_{\sigma}(p,v) \in \mathcal{R}^{\delta}_{\sigma}$, with $(p,v) \in \mathbf{W}$. According to \eqref{form118}, there exists at least one
\begin{equation}\label{def:beta} \beta = (q,w) \in (\mathbf{B}^{\delta}_{\sigma})^{\delta^{-\epsilon_{j + 1}},\rho_{j + 1}}_{\lambda,t}(p,v) \subset (\mathbf{G}^{\delta}_{\sigma})^{\delta^{-\epsilon_{j + 1}},\rho_{j + 1}}_{\lambda,t}(p,v) \end{equation}
such that $\mathbf{C}_{j + 1}R^{\delta}_{\sigma}(p,v) \cap \mathbf{C}_{j + 1}R^{\delta}_{\sigma}(\beta) \neq \emptyset$. We first claim that if $\omega' = (p',v') \in \mathbf{W} \subset \mathbf{G}^{\delta}_{\sigma}$ is any element such that $R^{\delta}_{\sigma}(p,v) \sim_{100} R^{\delta}_{\sigma}(\omega')$, then automatically
\begin{equation}\label{form124} t/\rho_{j} \leq \tfrac{1}{4}t/\rho_{j + 1} \leq |p' - q| \leq 4 \rho_{j + 1}t \leq \rho_{j} t\quad \text{and} \quad \mathbf{C}_{j}R^{\delta}_{\sigma}(\omega') \cap \mathbf{C}_{j}R^{\delta}_{\sigma}(\beta) \neq \emptyset \end{equation}
The first property follows from the separation \eqref{form121} of the sets $W,B$, and noting that $\rho_{j} = \mathbf{C}_{j}\delta^{-\epsilon} \geq 4\mathbf{C}_{j + 1}\delta^{-\epsilon} = 4\rho_{j + 1}$ (recall \eqref{def:rho}).

For the second property, note that since $R^{\delta}_{\sigma}(p,v) \sim_{100} R^{\delta}_{\sigma}(\omega')$, we have $R^{\delta}_{\sigma}(\omega') \subset AR^{\delta}_{\sigma}(p,v) \subset \mathbf{C}_{j + 1}R^{\delta}_{\sigma}(p,v)$ for some absolute constant $A > 0$ according to Lemma \ref{lemma6}, and by a second application of the same lemma,
\begin{displaymath} \mathbf{C}_{j + 1}R^{\delta}_{\sigma}(p,v) \subset \mathbf{C}_{j + 1}'R^{\delta}_{\sigma}(\omega') \end{displaymath}
for some $\mathbf{C}_{j + 1}' \lesssim \mathbf{C}_{j + 1}^{5}$. In particular, $\mathbf{C}_{j + 1}R^{\delta}_{\sigma}(p,v) \subset \mathbf{C}_{j}R^{\delta}_{\sigma}(\omega')$, recalling from \eqref{form138} the rapid decay of the sequence $\{\mathbf{C}_{j}\}$. The second part of \eqref{form124} follows from this inclusion, recalling that $\mathbf{C}_{j + 1}R(p,v) \cap \mathbf{C}_{j + 1}R^{\delta}_{\sigma}(\beta) \neq \emptyset$.

Let us recap: we have now shown that for $\omega' \in \mathbf{W}$ with $R^{\delta}_{\sigma}(p,v) \sim_{100} R^{\delta}_{\sigma}(\omega')$, the conditions \eqref{form124} hold relative to the fixed pair $\beta \in \mathbf{B}^{\delta}_{\sigma}$ (determined by $R$). This gives an inequality of the form
\begin{equation}\label{form125} m(R) \leq |\{\omega' \in (\mathbf{G}^{\delta}_{\sigma})^{\rho_{j}}_{t}(\beta) : \mathbf{C}_{j}R^{\delta}_{\sigma}(\beta) \cap \mathbf{C}_{j}R^{\delta}_{\sigma}(\omega') \neq \emptyset\}|, \end{equation}
where the (non-standard) notation $(\mathbf{G}^{\delta}_{\sigma})^{\rho_{j}}_{t}(\beta)$ refers to those pairs $(p',v')$ such that $t/\rho_{j} \leq |p' - q| \leq \rho_{j}t$. In particular, we have no information -- yet -- about the tangency parameter $\Delta(p',q)$. This almost brings us into a position to apply \eqref{form117}, except for one problem: \eqref{form117} only gives an upper bound for the cardinality of elements
\begin{displaymath} \omega' \in (\mathbf{G}^{\delta}_{\sigma})^{\delta^{-\epsilon_{j}},\rho_{j}}_{\lambda,t}(\beta). \end{displaymath}
To benefit directly from this upper bound, we should be able to add the information
\begin{equation}\label{form147} \delta^{\epsilon_{j}}\lambda \leq \Delta(p',q) \leq \delta^{-\epsilon_{j}}\lambda \end{equation}
to the properties \eqref{form124}. This is a delicate issue: it follows from the choice of $\beta = (q,w)$ in \eqref{def:beta} that we have excellent two-sided control for $\Delta(p,q)$. Regardless, it is only possible to obtain the upper bound for $\Delta(p',q)$ required by \eqref{form147}, given the information that $R^{\delta}_{\sigma}(p,v) \sim_{100} R^{\delta}_{\sigma}(p',v')$. The lower bound may seriously fail: the circles $S(p'),S(q)$ may be much more tangent than the circles $S(p),S(q)$, see Figure \ref{fig3}. This problem will be circumvented by applying our inductive hypothesis. Before that, we however prove the upper bound: we claim that if the properties \eqref{form124} hold, then the upper bound in \eqref{form147} holds. This will be a consequence of Corollary \ref{cor2}.
\begin{figure}[h!]
\begin{center}
\begin{overpic}[scale = 0.9]{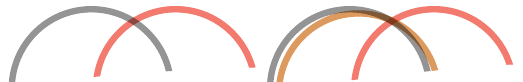}
\put(-3,8){\tiny{$S(q)$}}
\put(35,8){\tiny{$S(p)$}}
\put(60,7){\tiny{$S(p')$}}
\end{overpic}
\caption{The failure of the lower bound in \eqref{form147} in a case where $\lambda \approx 1 \approx t$, thus $\sigma = \delta/\sqrt{\lambda t} \approx \delta$. The black and red annuli $S^{\delta}(p),S^{\delta}(q)$ on the left intersect in a $(\delta,\delta)$-rectangle $R = R^{\delta}_{\delta}(p,v)$. On the right, the rectangle $R$ is evidently $100$-comparable to a $(\delta,\delta)$-rectangle $R' = R^{\delta}_{\delta}(p',v') \subset S^{\delta}(p')$, but nevertheless $\Delta(p',q) \approx \delta \ll \lambda$.}\label{fig3}
\end{center}
\end{figure}

Let $p',p,q$ be as in \eqref{form124}. Thus $R^{\delta}_{\sigma}(p',v') \sim_{100} R^{\delta}_{\sigma}(p,v)$, where the point $(p,v) \in \mathbf{W}$ satisfied
\begin{equation}\label{form178} \mathbf{C}_{j + 1}R^{\delta}_{\sigma}(p,v) \cap \mathbf{C}_{j + 1}R^{\delta}_{\sigma}(q,w) \neq \emptyset, \end{equation}
and
\begin{equation}\label{form179} \delta^{\epsilon_{j + 1}}\lambda \leq \Delta(p,q) \leq \delta^{-\epsilon_{j + 1}}\lambda \quad \text{and} \quad t/\rho_{j} \leq |p - q| \leq \rho_{j}t. \end{equation}
These conditions place us in a position to apply Corollary \ref{cor2}. We write $\bar{\lambda} := \Delta(p,q)$ and $\bar{t} := |p - q|$, and $\bar{\sigma} := \delta/\sqrt{(\bar{\lambda} + \delta)(\bar{t} + \delta)}$. If follows from the upper bounds in \eqref{form179}, and since $\rho_{j} = \mathbf{C}_{j}\delta^{-\epsilon} \leq \delta^{-\epsilon_{j + 1}}$, that
\begin{equation}\label{form188} \sigma \lesssim \delta^{-\epsilon_{j + 1}}\bar{\sigma}. \end{equation}
After this observation, it follows from \eqref{form178} that
\begin{equation}\label{form189} \mathbf{A}_{j + 1}R^{\delta}_{\bar{\sigma}}(p,v) \cap \mathbf{A}_{j + 1}R^{\delta}_{\bar{\sigma}}(q,w) \neq \emptyset \end{equation}
for some $\mathbf{A}_{j + 1} \lesssim \mathbf{C}_{j + 1}\delta^{-\epsilon_{j + 1}} \lesssim_{\epsilon,\kappa} \delta^{-\epsilon_{j + 1}}$. Using \eqref{form188} again, we may also choose the constant $\mathbf{A}_{j + 1}$ (under the same size constraint) so large that
\begin{displaymath} AR^{\delta}_{\sigma}(p,v) \subset \mathbf{A}_{j + 1}R^{\delta}_{\bar{\sigma}}(p,v), \end{displaymath}
where $A \geq 1$ is an absolute constant to be specified momentarily. Now, according to Corollary \ref{cor2}, \eqref{form189} implies
\begin{equation}\label{form198} AR^{\delta}_{\sigma}(p,v) \subset \mathbf{A}_{j + 1}R^{\delta}_{\bar{\sigma}}(p,v) \subset \mathbf{A}_{j + 1}'R^{\delta}_{\bar{\sigma}}(q,w) \end{equation}
for some $\mathbf{A}_{j + 1}' \lesssim \mathbf{A}_{j + 1}^{4} \lesssim_{\epsilon,\kappa} \delta^{-4\epsilon_{j + 1}}$. Finally, since $R^{\delta}_{\sigma}(p',v') \sim_{100} R^{\delta}_{\sigma}(p,v)$, we have
\begin{displaymath} R^{\delta}_{\sigma}(p',v') \stackrel{\mathrm{L.\,} \ref{lemma6}}{\subset} AR^{\delta}_{\sigma}(p,v)  \subset \mathbf{A}_{j + 1}'R^{\delta}_{\bar{\sigma}}(q,w) \subset S^{\mathbf{A}_{j + 1}'\delta}(q). \end{displaymath}
Trivially also $R^{\delta}_{\sigma}(p',v') \subset S^{\mathbf{A}_{j + 1}'\delta}(p')$, so $R^{\delta}_{\sigma}(p',v') \subset S^{\mathbf{A}_{j + 1}'\delta}(q) \cap S^{\mathbf{A}_{j + 1}'\delta}(p')$. This implies
\begin{displaymath} \frac{\mathbf{A}_{j + 1}'\delta}{\sqrt{\Delta(p',q)|p' - q|}} \stackrel{\mathrm{L.\,} \ref{lemma5}}{\gtrsim} \diam(R^{\delta}_{\sigma}(p',v')) \sim \sigma. \end{displaymath}
Recalling that $\mathbf{A}_{j + 1}' \lesssim_{\epsilon,\kappa} \delta^{-4\epsilon_{j + 1}}$, this can be rearranged to
\begin{displaymath} \Delta(p',q) \lesssim_{\epsilon,\kappa} (\delta^{1 - 4\epsilon_{j + 1}}/\sigma)^{2}|p' - q|^{-1} = \delta^{-8\epsilon_{j + 1}}\lambda \cdot (t/|p' - q|) \lesssim_{\epsilon} \delta^{-9\epsilon_{j + 1}}\lambda. \end{displaymath}
In the final inequality we used that $p' \in W$ and $q \in B$, so $|p' - q| \geq t/\rho_{j} \gtrsim_{\epsilon} \delta^{\epsilon_{j + 1}}t$. In \eqref{form145}, the sequence $\{\epsilon_{j}\}$ was chosen to be so rapidly decreasing that $\epsilon_{j} > 10\epsilon_{j + 1}$. Therefore, if $\delta > 0$ is small enough, the inequality above implies the upper bound claimed in \eqref{form147}.

Recalling also \eqref{form125}, we have now shown that
\begin{equation}\label{form125a} m(R) \leq |\{\omega' \in (\mathbf{G}^{\delta}_{\sigma})^{\delta^{-\epsilon_{j}},\rho_{j}}_{\leq \lambda,t}(\beta) : \mathbf{C}_{j}R^{\delta}_{\sigma}(\beta) \cap \mathbf{C}_{j}R^{\delta}_{\sigma}(\omega') \neq \emptyset\}|, \end{equation}
where the "$\leq \lambda$" symbol refers to the fact that we only have guaranteed the upper bound in \eqref{form147}, but not a matching lower bound.

As noted above, the matching lower bound $\Delta(p',q) \gtrapprox \lambda$ may be false. However, recall from \eqref{abbreviations} that $\lambda = \lambda_{k + 1}$, and that the sequence $\{\lambda_{l}\}_{l = 1}^{k}$ is multiplicatively $\delta^{-\epsilon}$-dense (or even $\delta^{-\epsilon/2}$-dense) on the interval $[\delta,\delta^{\epsilon}\lambda] \subset [\delta,\lambda_{k}]$. Therefore, we are either in the happy case of the $2$-sided bound
\begin{equation}\label{form180} \delta^{\epsilon_{j}}\lambda \leq \Delta(p',q) \leq \delta^{-\epsilon_{j}}\lambda, \end{equation}
or otherwise $\Delta(p',q) < \delta^{\epsilon_{j}}\lambda \leq \delta^{\epsilon}\lambda$, and we can find an index $1 \leq l \leq k$ such that
\begin{equation}\label{form181} \delta^{\epsilon}\lambda_{l}/C_{l} \leq \lambda_{l} \leq \Delta(p',q) \leq \delta^{-\epsilon}\lambda_{l} \leq C_{l}\delta^{-\epsilon}\lambda_{l}. \end{equation}
In fact, there is a small gap in this argument: if $\Delta(p',q) < \delta$, then we cannot guarantee \eqref{form181} for any $1 \leq l \leq k$. To fix this, we modify \eqref{form181} so that in the case $l = 1$, only the upper bound is claimed. With this convention, the index $l \in \{1,\ldots,k\}$ satisfying \eqref{form181} can always be found whenever $\Delta(p',q) < \delta^{\epsilon_{j}}\lambda$.

One of the two cases \eqref{form180}-\eqref{form181} is "typical" in the following sense.  Since $\Delta(p',q) \leq \delta^{-\epsilon_{j}}\lambda$ for all pairs $(p',q)$ appearing in \eqref{form125a}, by the pigeonhole principle there exist $\bar{m}(R) \sim_{\epsilon} m(R)$ pairs $\omega_{1},\ldots,\omega_{\bar{m}(R)} \in \mathbf{W}$ with first components $p_{1},\ldots,p_{\bar{m}(R)} \in W$, and a fixed index $1 \leq l \leq k + 1$, such that
\begin{equation}\label{form126} \delta^{\epsilon}\lambda_{l}/C_{l} \leq \Delta(p_{i},q) \leq C_{l}\delta^{-\epsilon}\lambda_{l}, \qquad 1 \leq i \leq \bar{m}(R), \end{equation}
for some fixed $1 \leq l \leq k + 1$. In the case $l = k + 1$, the constant "$\epsilon$" in \eqref{form126} needs to be replaced by $\epsilon_{j}$, recalling the alternatives \eqref{form180}-\eqref{form181}. In the case $l = 1$, the two-sided inequality in \eqref{form126} has to be replaced by the one-sided inequality $\Delta(p_{i},q) \leq C_{1}\delta^{1 - \epsilon}$.


A subtle point is that even though the pairs $\omega_{1},\ldots,\omega_{\bar{m}(R)}$ are distinct, the first components $p_{1},\ldots,p_{\bar{m}(R)}$ need not be. However, they "almost" are: for $p \in W$ fixed, there can only exist $\lesssim \mathbf{C}_{j}$ choices $v \in \mathcal{S}_{\sigma}(p)$ such that $\mathbf{C}_{j}R^{\delta}_{\sigma}(p,v) \cap \mathbf{C}_{j}R^{\delta}_{\sigma}(\beta) \neq \emptyset$ (as in \eqref{form124}). Thus,
\begin{displaymath} |\{p_{1},\ldots,p_{\bar{m}(R)}\}| \gtrsim \mathbf{C}_{j}^{-1}m(R). \end{displaymath}
For this argument, it was important that the "angular" components of the pairs in $\mathbf{W}$ are elements in $\mathcal{S}_{\sigma}(p)$, recall \eqref{def:fatW}. For notational convenience, we will assume in the sequel that the points $p_{1},\ldots,p_{\bar{m}(R)}$ are distinct, and we will trade this information for the weaker estimate $\bar{m}(R) \gtrsim_{\epsilon} \mathbf{C}_{j}^{-1}m(R)$ (this is harmless, since $\mathbf{C}_{j} \lesssim_{\epsilon,\kappa} 1$).

Now, we have two separate cases to consider. First, if $l = k + 1$, then $\lambda_{l} = \lambda$, and we have $\delta^{\epsilon_{j}}\lambda \leq \Delta(p_{i},q) \leq \delta^{-\epsilon_{j}}\lambda$ for all $1 \leq i \leq \bar{m}(R)$. In this case
\begin{displaymath} m(R) \lesssim_{\epsilon} \bar{m}(R) \leq |\{\omega' \in (\mathbf{G}^{\delta}_{\sigma})_{\lambda,t}^{\delta^{-\epsilon_{j}},\rho_{j}}(\beta) : \mathbf{C}_{j}R^{\delta}_{\sigma}(\beta) \cap \mathbf{C}_{j}R^{\delta}_{\sigma}(\omega') \neq \emptyset\}| \stackrel{\eqref{form117}}{\leq} \delta^{-\zeta}n, \end{displaymath}
using that $\beta \in \mathbf{B}^{\delta}_{\sigma} \subset \mathbf{G}^{\delta}_{\sigma}$ (recall also Remark \ref{rem5} where we explained why \eqref{form117} may be assumed to hold for $\beta \in \mathbf{G}^{\delta}_{\sigma}$, not just $\beta \in \mathbf{G}$). This proves \eqref{form127} in the case $l = k + 1$.

Assume finally that $1 \leq l \leq k$. Then, according to \eqref{form126} we have
\begin{equation}\label{form148} m(R) \lesssim_{\epsilon} \bar{m}(R) \leq |\{\omega' \in (\mathbf{G}^{\delta}_{\sigma})_{\lambda_{l},t}^{C_{l}\delta^{-\epsilon},\rho_{j}}(\beta) : \mathbf{C}_{j}R^{\delta}_{\sigma}(\beta) \cap \mathbf{C}_{j}R^{\delta}_{\sigma}(\omega') \neq \emptyset\}|. \end{equation}
We note that $\rho_{j} = \mathbf{C}_{j}\delta^{-\epsilon} \leq C_{l}\delta^{-\epsilon}$ by the choice of the intermediate constants $\{\mathbf{C}_{j}\}$, see \eqref{form138}, so the inequality \eqref{form148} implies
\begin{equation}\label{form185} m(R) \lesssim_{\epsilon} |\{\omega' \in (\mathbf{G}^{\delta}_{\sigma})_{\lambda_{l},t}^{C_{l}\delta^{-\epsilon}}(\beta) : \mathbf{C}_{j}R^{\delta}_{\sigma}(\beta) \cap \mathbf{C}_{j}R^{\delta}_{\sigma}(\omega') \neq \emptyset\}|. \end{equation}
(This remains true as stated also in the special case $l = 1$: in this case \eqref{form126} had to be replaced by the one-sided inequality $\Delta(p_{i},q) \leq C_{1}\delta^{-\epsilon}\lambda_{1} = C_{1}\delta^{1 - \epsilon}$, but this implies $\omega_{i} = (p_{i},v_{i}) \in (\mathbf{G}_{\sigma}^{\delta})^{C_{1}\delta^{-\epsilon}}_{\lambda_{1},t}(\beta)$ for $\lambda_{1} = \delta$, see the last line of Definition \ref{def:lambdaTNeighbourhood}).

The right hand side looks deceptively like $m_{\delta,\lambda_{l},t}^{C_{l}\delta^{-\epsilon},C_{l}}(\omega \mid \mathbf{G})$ (note also that $\mathbf{C}_{j} \leq C_{l}$), and since $\mathbf{G} \subset G_{l}$, the inductive hypothesis \eqref{form111} now appears to show that
\begin{displaymath} m(R) \lesssim_{\epsilon} \delta^{-\kappa} \stackrel{\eqref{def:n}}{\leq} \delta^{-\zeta}n, \end{displaymath}
as desired, using here that $\bar{\kappa} = \kappa_{h - j} > \kappa$ by our counter assumption. There is still a small gap in this argument: the definition of $m^{C_{l}\delta^{-\epsilon},C_{l}}_{\delta,\lambda_{l},t}$ counts elements in the $(\delta,\sigma_{l})$-skeleton of $\mathbf{G}$ with $\sigma_{l} = \delta/\sqrt{\lambda_{l}t} > \sigma$, rather than the $(\delta,\sigma)$-skeleton appearing on the right hand side of \eqref{form185}.

This is easy to fix. The solution is to first use the (distinct!) points $p_{1},\ldots,p_{\bar{m}(R)}$ found in \eqref{form126} to produce a collection of pairs $\bar{\omega}_{1},\ldots,\bar{\omega}_{\bar{m}(R)} \in \mathbf{G}^{\delta}_{\sigma_{l}}$. Indeed, for every $1 \leq i \leq \bar{m}(R)$, we know from \eqref{form124} that there corresponds a pair $\omega_{i} = (p_{i},v_{i}) \in \mathbf{G}^{\delta}_{\sigma}$ such that
\begin{equation}\label{form182} \mathbf{C}_{j}R^{\delta}_{\sigma}(p_{i},v_{i}) \cap \mathbf{C}_{j}R^{\delta}_{\sigma}(\beta) \neq \emptyset. \end{equation}
For every $1 \leq i \leq \bar{m}(R)$, choose $\bar{\omega}_{i} := (p_{i},\mathbf{v}_{i}) \in \mathbf{G}_{\sigma_{l}}^{\delta}$ with $(p_{i},v_{i}) \prec (p_{i},\mathbf{v}_{i})$. Note that the pairs $\bar{\omega}_{1},\ldots,\bar{\omega}_{\bar{m}(R)}$ are all distinct, since the "base" points $p_{1},\ldots,p_{\bar{m}(R)}$ are distinct. Further, it follows from \eqref{form182}, combined with
\begin{displaymath} A\mathbf{C}_{j} \stackrel{\eqref{form138}}{\leq} C_{k} \leq C_{l} \quad \Longrightarrow \quad \mathbf{C}_{j}R^{\delta}_{\sigma}(p_{i},v_{i}) \subset C_{l}R^{\delta}_{\sigma_{l}}(p_{i},\mathbf{v}_{i}) = C_{l}R^{\delta}_{\sigma_{l}}(\bar{\omega}_{i}), \end{displaymath}
(here $A \geq 1$ is a sufficiently large absolute constant) that
\begin{equation}\label{form186} C_{l}R^{\delta}_{\sigma_{l}}(\bar{\omega}_{i}) \cap C_{l}R^{\delta}_{\sigma_{l}}(\beta) \neq \emptyset, \qquad 1 \leq i \leq \bar{m}(R). \end{equation}
(The deduction from \eqref{form182} to \eqref{form186} looks superficially similar to the deduction of the second claim in \eqref{form124}, but now the situation is much simpler, because $(p_{i},v_{i})$ and $(p_{i},\mathbf{v}_{i})$ have the same "$p_{i}$".) We note that the tangency and distance parameters of the pairs $((p_{i},v_{i}),\beta)$ and $(\bar{\omega}_{i},\beta)$ are exactly the same, since the "base point" $p_{i}$ remained unchanged. Consequently, by \eqref{form185} and \eqref{form186}, we have
\begin{align} \bar{m}(R) & \leq |\{\bar{\omega}' \in (\mathbf{G}^{\delta}_{\sigma_{l}})_{\lambda_{l},t}^{C_{l}\delta^{-\epsilon}}(\beta) : C_{l}R^{\delta}_{\sigma_{l}}(\beta) \cap C_{l}R^{\delta}_{\sigma_{l}}(\bar{\omega}') \neq \emptyset\}| \notag\\
&\label{form163} = m^{C_{l}\delta^{-\epsilon},C_{l}}_{\delta,\lambda_{l},t}(\beta \mid \mathbf{G}) \stackrel{\eqref{form111}}{\leq} \delta^{-\kappa} \stackrel{\eqref{def:n}}{\leq} \delta^{-\zeta}n. \end{align}
We have finally proven \eqref{form127}.

\subsubsection{The type of the rectangles $R \in \bar{\mathcal{R}}^{\delta}_{\sigma}$} We next claim that every rectangle $R \in \bar{\mathcal{R}}^{\delta}_{\sigma}$ has $\lambda$-restricted type $(\geq \bar{m},\geq \bar{n})_{\epsilon_{\mathrm{max}}}$ relative to $(W,B,\{E(p)\})$, where $\bar{m} := \delta^{\epsilon_{\mathrm{max}}}m$ and $\bar{n} := \delta^{\epsilon_{\mathrm{max}}}n$. Recall from Definition \ref{def:type} what this means. Given $R \in \bar{\mathcal{R}}^{\delta}_{\sigma}$, we should find a subset $W_{R} \subset W$ with $|W_{R}| \geq \bar{m}$, and the following property: for every $p \in W_{R}$, there exists a subset $B_{R}(p) \subset B$ of cardinality $|B_{R}(p)| \geq \bar{n}$ satisfying
\begin{equation}\label{form150} \delta^{\epsilon_{\mathrm{max}}}\lambda \leq \Delta(p,q) \leq \delta^{-\epsilon_{\mathrm{max}}}\lambda \quad \text{and} \quad R \subset \delta^{-\epsilon_{\mathrm{max}}}\mathcal{E}^{\delta}_{\sigma}(p) \cap \delta^{-\epsilon_{\mathrm{max}}}\mathcal{E}^{\delta}_{\sigma}(q) \end{equation}
for all $p \in W_{R}$ and $q \in B_{R}(p)$. If $\lambda = \delta$, the first requirement in \eqref{form150} is relaxed to $\Delta(p,q) \leq \delta^{-\epsilon_{\mathrm{max}}}\lambda$.

\begin{remark} In \eqref{form150}, the definition of the sets $\mathcal{E}^{\delta}_{\sigma}(p),\mathcal{E}^{\delta}_{\sigma}(q)$ involves the $(\delta,\sigma)$-skeletons of $E(p)$ and $E(q)$. We emphasise that these sets are not the "original" sets $E_{0}(p),E_{0}(q)$ given in Theorem \ref{thm5} (recall the notation from Section \ref{s:regularisation}), but rather the subsets found at the end of Section \ref{s:regularisation}, see \eqref{form220}. This is important, since the upper bound \eqref{form130} will be needed in a moment. \end{remark}

 To begin finding $W_{R}$ and $B_{R}(p)$ for $p \in W_{R}$, recall that $m(R) = m$ for all $R \in \bar{\mathcal{R}}^{\delta}_{\sigma}$. This mean that there exists a set $\mathbf{W}_{R} \subset \mathbf{W}$ of $m$ pairs $\{\omega_{i}\}_{i = 1}^{m} = \{(p_{i},v_{i})\}_{i = 1}^{m}$ such that $R \sim_{100} R^{\delta}_{\sigma}(\omega_{i})$ for all $1 \leq i \leq m$. While the pairs $\omega_{i}$ are all distinct, the first components $p_{i}$ need not be. This issue is similar to the one we encountered below \eqref{form126}, and the solution is also the same: for every $p_{i}$ fixed, there can only be $\lesssim 1$ possibilities for $v \in \mathcal{S}_{\sigma}(p_{i})$ such that $R \sim_{100} R^{\delta}_{\sigma}(p_{i},v)$. Therefore the number of distinct elements in $W_{R} := \{p_{1},\ldots,p_{m}\}$ is $\gtrsim m$, and certainly $|W_{R}| \geq \delta^{\epsilon_{\mathrm{max}}}m = \bar{m}$. To remove ambiguity, for each distinct point $p_{i} \in W_{R}$, we pick a single element $v \in \mathcal{S}_{\sigma}(p_{i})$ such that $(p_{i},v) \in \mathbf{W}_{R}$, and we restrict $\mathbf{W}_{R}$ to this subset without changing notation.

Next, fix $p \in W_{R}$. Let $v \in \mathcal{S}_{\sigma}(p)$ be the unique element such that $\omega = (p,v) \in \mathbf{W}_{R} \subset \mathbf{W}$. Recall from \eqref{form118}  that
\begin{displaymath} |\{\beta \in (\mathbf{B}^{\delta}_{\sigma})^{\delta^{-\epsilon_{j + 1}},\rho_{j + 1}}_{\lambda,t}(\omega) : \mathbf{C}_{j + 1}R^{\delta}_{\sigma}(\omega) \cap \mathbf{C}_{j + 1}R^{\delta}_{\sigma}(\beta) \neq \emptyset\}| \geq n. \end{displaymath}
Thus, there exists a collection $\{\beta_{i}\}_{i = 1}^{n} = \{(q_{i},w_{i})\}_{i = 1}^{n} \subset \mathbf{B}^{\delta}_{\sigma}$ of pairs such that
\begin{equation}\label{form187} \mathbf{C}_{j + 1}R^{\delta}_{\sigma}(\omega) \cap \mathbf{C}_{j + 1}R^{\delta}_{\sigma}(q_{i},w_{i}) \neq \emptyset, \end{equation}
and
\begin{equation}\label{form151} \delta^{\epsilon_{j + 1}}\lambda \leq \Delta(p,q_{i}) \leq \delta^{-\epsilon_{j + 1}}\lambda \quad \text{and} \quad \delta^{\epsilon_{\mathrm{max}}}t \leq |p - q_{i}| \leq \delta^{-\epsilon_{\mathrm{max}}}t \end{equation}
for all $1 \leq i \leq n$. In the estimates for $|p - q_{i}|$, we already plugged in $\rho_{j + 1} = \mathbf{C}_{j + 1}\delta^{-\epsilon} \leq \delta^{-\epsilon_{\mathrm{max}}}$, assuming $\delta > 0$ small enough.

Once more, the $q_{i}$-components of the pairs $\{\beta_{i}\}$ need not all be distinct, but they almost are, by the following familiar argument: for each $q_{i}$, there can correspond $\lesssim \mathbf{C}_{j + 1}$ distinct choices $w \in \mathcal{S}_{\sigma}(q_{i})$ such that \eqref{form187} holds. Therefore, $B_{R}(p) := \{q_{1},\ldots,q_{n}\} \subset B$ has $\gtrsim \mathbf{C}_{j + 1}^{-1}n$ distinct elements, and certainly $|B_{R}(p)| \geq \bar{n}$.

Let us finally check the conditions \eqref{form150} for $p \in W_{R}$ and $q \in B_{R}(p)$. The tangency constraint follows readily from \eqref{form151}, and noting that $\epsilon_{j + 1} \leq \epsilon_{\mathrm{max}}$. So, it remains to check the inclusion in \eqref{form150}. Fix $p \in W_{R}$ and $q \in B_{R}(p)$. By definition, $p \in W_{R}$ means that $R \sim_{100} R^{\delta}_{\sigma}(\omega)$ for some $\omega = (p,v) \in \mathbf{W}_{R} \subset \mathbf{G}^{\delta}_{\sigma}$, and in particular $v \in E_{\sigma}(p)$ (the $(\delta,\sigma)$-skeleton of $E(p)$). Next, $q \in B_{R}(p)$ means that there exists $\beta = (q,w) \in \mathbf{B}^{\delta}_{\sigma}$ (in particular $w \in E_{\sigma}(q)$) such that \eqref{form187}-\eqref{form151} hold. We now claim that
\begin{equation}\label{form190} R \subset \delta^{-\epsilon_{\mathrm{max}}}R^{\delta}_{\sigma}(\omega) \cap \delta^{-\epsilon_{\mathrm{max}}}R^{\delta}_{\sigma}(\beta) \subset \delta^{-\epsilon_{\mathrm{max}}}\mathcal{E}^{\delta}_{\sigma}(p) \cap \delta^{-\epsilon_{\mathrm{max}}}\mathcal{E}^{\delta}_{\sigma}(q). \end{equation}
This is a consequence of Corollary \ref{cor2}, and the argument is extremely similar to the one we recorded below \eqref{form178}-\eqref{form179}. We just sketch the details. Applying Corollary \ref{cor2} with $\bar{\sigma} := \delta/\sqrt{(\Delta(p,q) + \delta)(|p - q| + \delta)}$, it follows from the non-empty intersection \eqref{form187} that
\begin{displaymath} AR^{\delta}_{\sigma}(\omega) \subset \mathbf{A}_{j + 1}R^{\delta}_{\sigma}(\beta), \end{displaymath}
where $A \geq 1$ is a suitable absolute constant, and $\mathbf{A}_{j + 1} \lesssim_{\epsilon} \delta^{-O(\epsilon_{j + 1})}$ (compare with \eqref{form198}). Next, from $R \sim_{100} R^{\delta}_{\sigma}(\omega)$, we simply deduce that $R \subset AR^{\delta}_{\sigma}(\omega)$. Since $\max\{A,\mathbf{A}_{j + 1}\} \leq \delta^{-\epsilon_{\mathrm{max}}}$ for $\delta > 0$ small enough, the inclusion \eqref{form190} follows.

We have now proven that every rectangle $R \in \bar{\mathcal{R}}^{\delta}_{\sigma}$ has $\lambda$-restricted type $(\geq \bar{m},\geq \bar{n})_{\epsilon_{\mathrm{max}}}$ relative to $(W,B,\{E(p)\})$.

\subsubsection{Applying Theorem \ref{thm4}} The constant $\epsilon_{\mathrm{max}} = \epsilon_{\mathrm{max}}(\kappa,s) > 0$ was chosen (recall Section \ref{s:constants}) in such a way that Theorem \ref{thm4} holds with constant $\eta = \kappa s/100$. Therefore, we may apply the theorem as soon as we have checked that its hypotheses are valid. At the risk of over-repeating, we will apply Theorem \ref{thm4} to the space $\Omega = \{(p,v) : p \in P \text{ and } v \in E(p)\} \subset \Omega_{0}$ constructed during the "initial regularisation" in Section \ref{s:regularisation}. Crucially, we recall that $\Omega$ satisfies the upper bounds \eqref{form130} for all $\lambda \in \Lambda$ and $t \in \mathcal{T}(\lambda)$. This means that the hypothesis \eqref{form54} of Theorem \ref{thm4} is valid with constant $Y_{\lambda} = \delta^{-\kappa s/100}\lambda^{s}|P_{0}|_{\lambda}$.

We also recall from Section \ref{s:regularisation} that our set $P_{0}$ is $\Lambda$-uniform (without loss of generality), and at \eqref{form102} we denoted $X_{\lambda} := |P_{0} \cap \mathbf{p}|_{\delta} = |P_{0}|/|P_{0}|_{\lambda}$ for $\mathbf{p} \in \mathcal{D}_{\lambda}(P_{0})$, and $\lambda \in \Lambda$.

We have now verified the hypotheses of Theorem \ref{thm4}. Recall from the previous section that every rectangle $R \in \bar{\mathcal{R}}^{\delta}_{\sigma}$ has type $(\geq \bar{m},\geq \bar{n})_{\epsilon_{\mathrm{max}}}$ relative to $(W,B,\{E(p)\})$. Therefore, we may infer from Theorem \ref{thm4} that
\begin{displaymath} \frac{|W| M_{\sigma}}{m} \stackrel{\eqref{form123}}{\lessapprox_{\delta}} |\bar{\mathcal{R}}^{\delta}_{\sigma}| \leq \delta^{-\kappa s/100} \left[ \left(\frac{|W||B|}{\bar{m}\bar{n}} \right)^{3/4} (X_{\lambda}Y_{\lambda})^{1/2} + \frac{|W|}{\bar{m}} \cdot X_{\lambda}Y_{\lambda} + \frac{|B|}{\bar{n}} \cdot X_{\lambda}Y_{\lambda}\right]. \end{displaymath}
Here
\begin{displaymath} X_{\lambda}Y_{\lambda} \leq (|P_{0}|/|P_{0}|_{\lambda}) \cdot (\delta^{-\kappa s/100}\lambda^{s}|P_{0}|_{\lambda}) = \delta^{- \kappa s/100}\lambda^{s}|P_{0}|. \end{displaymath}
We also recap from \eqref{form127} that $m \lesssim_{\epsilon,\kappa} \delta^{-\zeta}n \leq \delta^{-\zeta - \epsilon_{\mathrm{max}}}\bar{n}$ (where $\zeta < \kappa s/10$), and from \eqref{form120} that $|B| \leq \delta^{-2\epsilon_{\mathrm{max}}}|W|$. Recalling from \eqref{def:n} that $n \geq \delta^{- \kappa + \zeta}$, and from \eqref{sizeP} that $|P_{0}| \leq \delta^{-s - \epsilon}$, we may rearrange and simplify the estimate above to the form
\begin{align} M_{\sigma} & \leq \delta^{-\kappa s/100 - \zeta - O(\epsilon_{\mathrm{max}})} \left[ |W|^{1/2} \cdot \delta^{\kappa/2} \cdot (\delta^{- \kappa s/100}\lambda^{s}|P_{0}|)^{1/2} + \delta^{- \kappa s/100}\lambda^{s}|P_{0}| \right] \notag\\
&\label{form161} \leq \delta^{-\kappa s/100 - \kappa s/10 - O(\epsilon_{\mathrm{max}})} \left[ |W|^{1/2} \cdot (\lambda/\delta)^{s/2} \cdot \delta^{\kappa/2 - \kappa s/100} + \delta^{-\kappa s/100}(\lambda/\delta)^{s}\right]. \end{align}
To derive a contradiction from this estimate, recall from \eqref{form160} that
\begin{equation}\label{form162} M_{\sigma} \geq \delta^{2\epsilon} \left(\frac{\sqrt{\lambda t}}{\delta} \right)^{s} \geq \delta^{2\epsilon - \kappa s/5} \left(\frac{\lambda}{\delta} \right)^{s}. \end{equation}
The second inequality follows from our restriction to pairs $(\lambda,t)$ with $\lambda \leq \delta^{\kappa/10}t$ (recall \eqref{form170}, and that $\lambda = \lambda_{k + 1}$). These inequalities show that the second term in \eqref{form161} cannot dominate the left hand side, provided that $\epsilon_{\mathrm{max}}$ is chosen small enough in terms of $\kappa,s$, and finally $\delta > 0$ is sufficiently small in terms of all these parameters.

To produce a contradiction with the counter assumption formulated above Section \ref{s1}, it remains to show that the first term in \eqref{form161} cannot dominate $M_{\sigma}$. Since $P_{0}$ is a $(\delta,s,\delta^{-\epsilon})$-set, and $W \subset P \subset P_{0}$ is contained in a ball of radius $t$, we have $|W| \leq \delta^{-\epsilon}t^{s}|P_{0}| \leq \delta^{-\epsilon}(t/\delta)^{s}$. Therefore, the first term in \eqref{form161} is bounded from above by
\begin{displaymath} \delta^{\kappa/2 - \kappa s (1/50 + 1/10) - O(\epsilon_{\mathrm{max}})}\left(\frac{\sqrt{\lambda t}}{\delta} \right)^{s} \leq \delta^{\kappa s/5}\left(\frac{\sqrt{\lambda t}}{\delta} \right)^{s}, \end{displaymath}
provided that $\epsilon_{\mathrm{max}} > 0$ is small enough in terms of $\kappa,s$. Evidently, the number above is smaller than the lower bound for $M_{\sigma}$ recorded in \eqref{form162}, provided that $\epsilon,\epsilon_{\mathrm{max}},\delta > 0$ are small enough in terms of $\kappa,s$. We have now obtained the desired contradiction.

To summarise, we have now shown that case (1) in the construction of the sequence $\{\mathbf{G}_{j}\}$ cannot occur as long as $\kappa_{h - j} > \kappa$. As explained at and after \eqref{form163a}, this shows that we may define $G_{k + 1} := \mathbf{G}_{j}$ for a suitable index "$j$", and this set $G_{k + 1}$ satisfies \eqref{form115}. This completes the proof of Proposition \ref{prop8}, then the proof of Proposition \ref{prop7}, and finally the proof of Theorem \ref{thm5}.


\subsection{Deriving Theorem \ref{thm2} from Theorem \ref{thm5}}\label{s:thm2Proof} It clearly suffices to prove Theorem \ref{thm2} for all $\kappa \in (0,c]$, where $c > 0$ is a small absolute constant to be determined later. Fix $\kappa \in (0,c]$, and let $\epsilon = \epsilon(\kappa,s) > 0$ be so small that Theorem \ref{thm5} holds with constants $\kappa,s$.

Let $\Omega = \{(p,v) : p \in P \text{ and } v \in E(p)\}$ be a $(\delta,s,C)$-configuration, as in Theorem \ref{thm2}. There is no \emph{a priori} assumption in Theorem \ref{thm2} that the sets $P,E(p)$ are $\delta$-separated, but it is easy to reduce matters to that case; we leave this to the reader, and in fact we assume that $P \subset \mathcal{D}_{\delta}$ and $E(p) \subset \mathcal{S}_{\delta}(p)$ for all $p \in P$.

To prove Theorem \ref{thm2}, we need to find a subset $G \subset \Omega$ satisfying $|G| \geq \delta^{\kappa}|\Omega|$, and
\begin{equation}\label{form172} m_{\delta}(w \mid G) \lessapprox_{\delta} \delta^{-\kappa}, \qquad w \in \R^{2}. \end{equation}
We start by applying Theorem \ref{thm5} to $\Omega$ to find the subset $G \subset \Omega$ of cardinality $|G| \geq \delta^{\kappa}|\Omega|$. By the choice of "$\epsilon$" above, we then have
\begin{equation}\label{form168} m^{\delta^{-\epsilon},\kappa^{-1}}_{\delta,\lambda,t}(\omega \mid G) \leq \delta^{-\kappa}, \qquad \omega \in G. \end{equation}
We claim that if the absolute constant "$c > 0$" is chosen small enough (thus $\kappa^{-1} \geq c^{-1} > 0$ is sufficiently large), then \eqref{form168} implies that
\begin{equation}\label{form167} |\{(p',v') \in G : v' \in B(v,2\delta)\}| = m_{2\delta}((p,v) \mid G) \lessapprox_{\delta} \delta^{-\kappa}, \qquad (p,v) \in G. \end{equation}
Let us quickly check that this implies \eqref{form172} for all $w \in \R^{2}$. Indeed, if $m_{\delta}(w \mid G) > 0$, then there exists at least one pair $(p,v) \in G$ such that $w \in B(v,\delta)$. Now, it is easy to see from the definitions that $m_{\delta}(w \mid G) \leq m_{2\delta}((p,v) \mid G)$.

The idea for proving \eqref{form167} is to bound the total multiplicity function $m_{2\delta}$ from above by a suitably chosen partial multiplicity function $m_{\delta,\lambda,t}$. Fix $(p,v) = \omega \in G$. Then,
\begin{displaymath} m_{2\delta}(\omega \mid G) \leq \sum_{\lambda \leq t} m_{2\delta}(\omega \mid G_{\lambda,t}^{\delta^{-\epsilon}}(\omega)), \end{displaymath}
where $G_{\lambda,t}^{\delta^{-\epsilon}}(\omega) = \{(p',v') \in G : \delta^{\epsilon}\lambda \leq \Delta(p,p') \leq \delta^{-\epsilon}\lambda \text{ and } \delta^{\epsilon}t \leq |p - p'| \leq \delta^{-\epsilon}t\}$ as in Definition \ref{def:lambdaTNeighbourhood}, and the sum runs over some multiplicatively $\delta^{-\epsilon}$-dense sequences of $\delta \leq \lambda \leq t \leq 1$ (or even all dyadic values, this is not important here). In particular, there exists a fixed pair $(\lambda,t)$, depending on $\omega$, such that
\begin{displaymath} m_{2\delta}(\omega \mid G) \lessapprox_{\delta} m_{2\delta}(\omega \mid G_{\lambda,t}^{\delta^{-\epsilon}}(\omega)) = |\{(p',v') \in G^{\delta^{-\epsilon}}_{\lambda,t}(\omega) : v' \in B(v,2\delta)\}|. \end{displaymath}
Let $\{\omega_{j}\}_{j = 1}^{N} = \{(p_{j},v_{j})\}_{j = 1}^{N} \subset G^{\delta^{-\epsilon}}_{\lambda,t}(\omega)$ be an enumeration of the pairs on the right hand side. The points $\{p_{1},\ldots,p_{N}\}$ may not all be distinct. However, note that if $p_{i}$ is fixed, there are $\lesssim 1$ options $v' \in \mathcal{S}_{\delta}(p_{i})$ such that $v' \in B(v,2\delta)$ (since $v$ is fixed). Therefore, there is a subset of $\sim N$ pairs among $\{(p_{j},v_{j})\}$ such that the points $p_{j}$ are all distinct. Restricting attention to this subset if necessary, we assume that all the points $p_{j}$ are distinct.

Write $\sigma := \delta/\sqrt{\lambda t}$. For every index $j \in \{1,\ldots,N\}$, choose $(p_{j},\mathbf{v}_{j}) \in G^{\delta}_{\sigma}$ (the $(\delta,\sigma)$-skeleton of $G$) such that $(p_{j},v_{j}) \prec (p_{j},\mathbf{v}_{j})$. Automatically
\begin{displaymath} (p_{j},\mathbf{v}_{j}) \in (G^{\delta}_{\sigma})^{\delta^{-\epsilon}}_{\lambda,t}(\omega), \qquad 1 \leq j \leq N, \end{displaymath}
since the point "$p_{j}$" remained unchanged. We also note that $|v_{j} - \mathbf{v}_{j}| \lesssim \sigma$, and the pairs $(p_{j},\mathbf{v}_{j})$ are distinct because the points $p_{j}$ are. We claim that
\begin{equation}\label{form166} \kappa^{-1}R^{\delta}_{\sigma}(p_{j},\mathbf{v}_{j}) \cap \kappa^{-1}R^{\delta}_{\sigma}(\omega) \neq \emptyset, \qquad 1 \leq j \leq N, \end{equation}
provided that $\kappa \leq c$, and $c > 0$ is sufficiently small. Indeed, fix $1 \leq j \leq N$, and recall that $v_{j} \in B(v,2\delta)$. This immediately shows that $v_{j} \in 2R^{\delta}_{\sigma}(p,v) = 2R^{\delta}_{\sigma}(\omega)$, since $\sigma \geq \delta$. On the other hand, $v_{j} \in S(p_{j})$, and $|v_{j} - \mathbf{v}_{j}| \lesssim \sigma$, so also $v_{j} \in CR^{\delta}_{\sigma}(p_{j},\mathbf{v}_{j})$ for some absolute constant $C \geq 1$. Now, \eqref{form166} holds for all $\kappa^{-1} \geq c^{-1} \geq \max\{2,C\}$.

We have now shown that
\begin{displaymath} m_{2\delta}(\omega \mid G) \lessapprox_{\delta} N \leq |\{\omega' \in (G^{\delta}_{\sigma})^{\delta^{-\epsilon}}_{\lambda,t}(\omega) : \kappa^{-1}R^{\delta}_{\sigma}(\omega') \cap \kappa^{-1}R^{\delta}_{\sigma}(\omega) \neq \emptyset\}| = m^{\delta^{-\epsilon},\kappa^{-1}}_{\delta,\lambda,t}(\omega \mid G). \end{displaymath}
Recalling \eqref{form168}, this proves \eqref{form167}, and consequently Theorem \ref{thm2}.

\appendix

\section{Proof of Proposition \ref{PYZ}} \label{app}
We complete the proof of Proposition \ref{PYZ} in this appendix. For the reader's convenience, we recall the statement of Proposition \ref{PYZ} here.

\bigskip \noindent
\textbf{Proposition 63.} {\it Let $A \geq 100$ and $\delta \leq \sigma \leq 1$, and let
$\mathcal{R}$ be a family of pairwise $100$-incomparable
$(\delta,\sigma)$-rectangles. Suppose also that there exists a fixed ($\delta,\sigma)$-rectangle
  $\mathbf{R}$ such that the union of the rectangles in $\mathcal{R}$ is contained in $A\mathbf{R}$. Then, $|\mathcal{R}| \lesssim A^{10}$.}

 \bigskip

As mentioned in Section \ref{s:comparableRectangles}, we first need several auxiliary definitions and lemmas.

\begin{definition}\label{d:Graph}
We denote by $\pi_L\colon \mathbb{R}^2 \to L$ the orthogonal
projection onto a $1$-dimensional subspace $L$ in $\mathbb{R}^2$.
If $\mathbf{I}\subset L$ is a fixed segment, $p\in \mathbb{R}^3$
and $v\in S(p)$ are such that $\pi_L(v)\in \mathbf{I}$, then we
denote by $\Gamma_{\mathbf{I},p,v}$ the connected component of
$\pi_L^{-1}(\mathbf{I})\cap S(p)$ containing $v$.
\end{definition}

The set $\Gamma_{\mathbf{I},p,v}$ need not be a graph over
$\mathbf{I}$ in general. However, given  a rectangle $\mathbf{R}$
and  a family $\mathcal{R}$ of $(\delta,\sigma)$-rectangles as in
Proposition \ref{PYZ} with a suitable upper bound on $\sigma$, we
now show how to select a subfamily
$\mathcal{R}_{\ast}\subset\mathcal{R}$ with $|\mathcal{R}^{\ast}| \geq |\mathcal{R}|/2$ such that both
 $A\mathbf{R}$, and the rectangles in  $\mathcal{R}_{\ast}$, look like
 neighborhoods of $2$-Lipschitz graphs over a fixed line $L$. By
a ``$2$-Lipschitz graph over $L$'' we mean the graph of a
$2$-Lipschitz function defined on a subset of $L$. In the argument below, we abbreviate $R^{\delta}_{\sigma}(p,v) =: R(p,v)$.

\begin{lemma} \label{lipgragh}
  Let $A \geq 1$, $\delta \le \sigma\leq A\sigma \leq \sigma_0:=1/600$.
   Assume that $\mathcal{R}$ is a finite family of $(\delta,\sigma)$-rectangles,
   all contained in $A\mathbf{R}$, where $\mathbf{R}=R(\mathbf{p},\mathbf{v})$
   is another $(\delta,\sigma)$-rectangle. Then there exists a $1$-dimensional subspace $L \subset \R^{2}$,
   an interval $\mathbf{I} \subset L$
   and a subfamily $\calR_\ast \subset \calR$ with $|\calR_\ast|\ge |\calR|/2$ such that
  \begin{enumerate}
    \item\label{i:lipgraph} $\pi_L(A\mathbf{R}) \subset \mathbf{I}$ and $\Gamma_{\mathbf{I},\mathbf{p},\mathbf{v}}$ is a $2$-Lipschitz graph over $\mathbf{I}$;
    \item\label{ii:lipgraph} for each $R(p,v) \in \calR_\ast$:\\ $\pi_L(R) \subset
    \mathbf{I}$
    and
   $\Gamma_{\mathbf{I},p,v}$ is a $2$-Lipschitz graph over
$\mathbf{I}$.
  \end{enumerate}
\end{lemma}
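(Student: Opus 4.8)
The statement asserts that, from any family $\mathcal{R}$ of $(\delta,\sigma)$-rectangles all contained in $A\mathbf{R}$ (with $\sigma$ small), one can pass to half of them so that the enclosing annular piece $A\mathbf{R}$ and each chosen rectangle $R(p,v)$ are neighbourhoods of $2$-Lipschitz graphs over a common line $L$. The key point is purely geometric: a short arc of a circle of radius $\in[\tfrac12,1]$ is a Lipschitz graph over the tangent direction at any point of it, with Lipschitz constant controlled by the arc's aspect. So the plan is: (i) fix the tangent line at the centre $\mathbf{v}$ of $\mathbf{R}$, call its direction $L$, (ii) show that the arc of $S(\mathbf{p})$ underlying $A\mathbf{R}$ is a $2$-Lipschitz graph over its $\pi_L$-image $\mathbf{I}$, using that $A\mathbf{R}\subset B(\mathbf{v},A\sigma)$ with $A\sigma\le\sigma_0=1/600$, and (iii) observe that any rectangle $R(p,v)\in\mathcal R$ lies inside $A\mathbf R$, hence inside $B(\mathbf v,A\sigma)$, so its underlying arc of $S(p)$ is \emph{also} a small arc contained in a ball of radius $A\sigma\le\sigma_0$; if the \emph{tangent direction of $S(p)$ at $v$} makes a small enough angle with $L$, then $\Gamma_{\mathbf I,p,v}$ is again a $2$-Lipschitz graph over $L$. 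The ``half of $\mathcal R$'' in the statement is there precisely to handle rectangles whose circles are tangent to $S(\mathbf p)$ in roughly the \emph{opposite} normal direction at the point where they meet $A\mathbf R$: pigeonhole the rectangles of $\mathcal R$ according to whether the inward normal of $S(p)$ at $v$ points into the same half-plane (bounded by the line through $\mathbf v$ in direction $L$) as that of $S(\mathbf p)$ at $\mathbf v$, or the opposite one. One of the two classes has $\ge|\mathcal R|/2$ elements; keep that one as $\mathcal R_\ast$.

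**Carrying it out.**
First I would set up coordinates so that $\mathbf v=0$, the tangent line to $S(\mathbf p)$ at $0$ is $L=\{y=0\}$ (the $x$-axis), and the centre $\mathbf x$ of $\mathbf p$ lies on the positive $y$-axis at distance $\mathbf r\in[\tfrac12,1]$. Then $S(\mathbf p)$ near $0$ is the graph $y=\mathbf r-\sqrt{\mathbf r^2-x^2}$, whose derivative is $x/\sqrt{\mathbf r^2-x^2}$; for $|x|\le A\sigma\le\sigma_0=1/600$ and $\mathbf r\ge\tfrac12$ this is bounded by, say, $\tfrac{1/600}{\sqrt{1/4-(1/600)^2}}<1$, comfortably $\le 1$, so in particular $\Gamma_{\mathbf I,\mathbf p,\mathbf v}$ is a $1$-Lipschitz (hence $2$-Lipschitz) graph over the interval $\mathbf I:=\pi_L(A\mathbf R)$, which is contained in $\{|x|\le A\sigma\}\subset L$. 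This proves (\ref{i:lipgraph}). For (\ref{ii:lipgraph}): given $R(p,v)\subset A\mathbf R\subset B(0,A\sigma)$, the point $v$ has $|v|\le A\sigma$ and $S(p)$ has radius $r\in[\tfrac12,1]$ with centre $x_p$. The inward unit normal to $S(p)$ at $v$ is $n_p:=(x_p-v)/r$. Because $S(p)$ passes within $\delta$ of $0$ and also through points near the $x$-axis of $A\mathbf R$ while its radius is $\ge\tfrac12$, an elementary estimate (triangle inequality plus $A\sigma\le 1/600$) forces the angle between $n_p$ and the $y$-axis to be small — say $\le 30^\circ$ — \emph{up to replacing $n_p$ by $-n_p$}; the two possibilities are what the pigeonholing in the previous paragraph separates. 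Having fixed the half of $\mathcal R$ on which $n_p$ is within $30^\circ$ of $+\mathbf e_y$, the graph $y=(\text{centre}_y)-\sqrt{r^2-(x-\text{centre}_x)^2}$ description of $S(p)$ over $\{|x|\le A\sigma\}$ has slope bounded by a quantity that, after accounting for the $\le 30^\circ$ tilt, is still $\le 2$: concretely one bounds the slope of $\Gamma_{\mathbf I,p,v}$ by $\tan(30^\circ)+\tfrac{A\sigma}{\sqrt{r^2-(A\sigma)^2}}\le \tfrac{1}{\sqrt 3}+\tfrac{1/600}{\sqrt{1/4-(1/600)^2}}<2$. This gives that $\Gamma_{\mathbf I,p,v}$ is a $2$-Lipschitz graph over $\mathbf I$, and $\pi_L(R)\subset\pi_L(A\mathbf R)=\mathbf I$ because $R\subset A\mathbf R$.

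**The main obstacle.**
The genuinely delicate point is the angle estimate on the normal $n_p$: one has to argue that a circle of radius $\in[\tfrac12,1]$ which has \emph{any} point within the tiny ball $B(0,A\sigma)$ (namely a point of $R(p,v)\subset A\mathbf R$) must have its normal at that point nearly aligned with $\pm\mathbf e_y$. This is true because $A\mathbf R$ itself is a thin piece of the annulus $S^\delta(\mathbf p)$ of diameter $\lesssim A\sigma$, so it is squeezed between two nearly-horizontal arcs; any $(\delta,\sigma)$-rectangle sitting inside it is likewise trapped, and its circle, being of radius $\ge\tfrac12\gg A\sigma$, is essentially flat there, forcing the tangent direction to be within $O(A\sigma)+O(\text{slope of }A\mathbf R)$ of horizontal — except that the circle could ``dip in and out'' of $A\mathbf R$ with the opposite orientation, i.e. curving the other way, which is exactly the $n_p$ versus $-n_p$ dichotomy. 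Making this rigorous is a short but fiddly computation with the law of cosines comparing the two centres $x_p,\mathbf x$ and the common near-point; I would do it by writing $v=(v_1,v_2)$ with $|v|\le A\sigma$, noting $\big|\,|v-x_p|-r\,\big|\le\delta$ and that $S(p)$ must reach the far end of $\mathbf I$ (distance $\sim A\sigma$ away along $L$) while staying within $C\delta$ of $S(\mathbf p)$, and extracting that the $x$-component of $x_p-v$ is $O(A\sigma)$ relative to its $y$-component of size $\sim r$. Once the $30^\circ$ bound is in hand, everything else is the routine $\sqrt{r^2-x^2}$ slope bound above, and the factor $\tfrac12$ loss in $|\mathcal R_\ast|\ge|\mathcal R|/2$ is exactly the cost of the single binary pigeonhole on the sign of $n_p$.
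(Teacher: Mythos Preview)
Your approach differs from the paper's: you fix $L$ to be the tangent to $S(\mathbf{p})$ at $\mathbf{v}$ and try to bound the angle between $n_p$ and the vertical, whereas the paper selects $L$ from three fixed lines $L_1,L_2,L_3$ at mutual angle $60^\circ$ and pigeonholes on which of them makes the arc $J(p,v)=S(p)\cap B(v,\tfrac{1}{100})$ a $2$-Lipschitz graph (each such arc works over at least two of the three; the two that work for $J(\mathbf{p},\mathbf{v})$ give the dichotomy).

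There is a genuine gap in your angle argument. Your justification---that the arc of $S(p)$ underlying $R(p,v)$ is trapped in $A\mathbf{R}$ and must ``reach the far end of $\mathbf{I}$ while staying within $C\delta$ of $S(\mathbf{p})$''---only gives $|\sin\theta|\lesssim A\delta/\sigma$, which is useless when $\delta\sim\sigma$ (permitted by the hypotheses). In that regime $R(p,v)=B(v,\delta)$ is just a $\delta$-ball, and the inclusion $R(p,v)\subset A\mathbf{R}$ imposes no constraint whatsoever on the tangent direction of $S(p)$. The $30^\circ$ claim is also false even with the sign flip: take $\mathbf{p}=((-\tfrac14,0),\tfrac12)$, $p=((\tfrac14,0),\tfrac12)$, $v=\mathbf{v}=(0,\tfrac{\sqrt3}{4})$; the normals differ by exactly $60^\circ$.

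That said, your route is salvageable by an ingredient you do not invoke: the definition of $(\delta,\sigma)$-rectangle forces $p,\mathbf{p}\in\mathbf{D}$, hence $|x_p-\mathbf{x}|\le\tfrac12$. Combined with $|v-\mathbf{v}|\le A\sigma$ this gives $|\mathbf{r}\,n_{\mathbf{p}}-r_p\,n_p|\le\tfrac12+A\sigma$, and since $\mathbf{r},r_p\ge\tfrac12$ one obtains $\cos\theta\ge 1-2(\tfrac12+A\sigma)^2$, i.e.\ $\theta\lesssim 60.2^\circ<\arctan 2$. This is tight (by the example above) but just enough for the $2$-Lipschitz conclusion over your short interval $\mathbf{I}=\pi_L(A\mathbf{R})$, and it makes your sign-pigeonhole unnecessary: one can in fact take $\mathcal{R}_\ast=\mathcal{R}$. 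The paper's three-line device avoids this delicate margin entirely, needing only $r\ge\tfrac12$ and not the centre constraint.
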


 \begin{proof} First, we find the subspace $L$ and the subfamily $\calR_\ast
\subset \calR$. Let
\begin{displaymath}
J(\mathbf{p},\mathbf{v}) := S(\mathbf{p}) \cap
B(\mathbf{v},\tfrac{1}{100}) \quad \text{and} \quad J(p,v) := S(p)
\cap B(v,\tfrac{1}{100}).
\end{displaymath} These are arcs  on the circles
$S(\mathbf{p})$ and $S(p)$ which contain the ``core arcs''
$S(\mathbf{p}) \cap B(\mathbf{v},A\sigma)$ and $S(p) \cap
B(v,\sigma)$ of the rectangles $A\mathbf{R}$ and $R$ respectively.
We claim that $L$ can be chosen from one of the three lines
\begin{displaymath}
L_{1} = \mathrm{span}\{(1,0)\},\quad L_{2} =
\mathrm{span}\{(1,\sqrt{3})\},\quad L_{3} = \mathrm{span}\{(-1,\sqrt{3})\}
\end{displaymath}
such that $J(\mathbf{p},\mathbf{v})$ and $J(p,v)$ are
$2$-Lipschitz graphs over $L$ for at least half of the rectangles
$R(p,v)\in\mathcal{R}$. The idea is that the arc
$J(\mathbf{p},\mathbf{v})$ (resp. $J(p,v)$) is individually a
$2$-Lipschitz graph over any line which is sufficiently far from
perpendicular to (any tangent of) that arc. For
$J(\mathbf{p},\mathbf{v})$ (resp. $J(p,v)$), this is true for at
least two of the lines among $\{L_{1},L_{2},L_{3}\}$. We give some
details to justify this claim.

For every circle $S(x,r)$ and every line $L$, there exists a
segment $I$ of length $4r/\sqrt{5}$, centered at $\pi_L(z)$, such
that the two components of $\pi_L^{-1}(I)\cap S(x,r)$ are
$2$-Lipschitz graphs over $I$; see the explanation around
\eqref{eq:VertDiff}. The constant ``$1/100$'' in the definition of
$J(p,v)$ has been chosen so small that for each $v\in S(p)$, there
are two choices of lines $L_i$ such that $\pi_{L_i}(J(p,v))$ is
contained in the segment on $L_i$ over which the corresponding arc
of $S(p)$ is a $2$-Lipschitz graph. This also uses the fact that
we are only considering parameters $p=(x,r)\in \mathbf{D}$, so
that $r\geq 1/2$.

For instance, if $p=((0,0),r)$ and
$v=(-\frac{\sqrt{3}}{2}r,\tfrac{1}{2}r)$, then $J(p,v)$ is clearly
a $2$-Lipschitz graph over the line $L_2$, which is perpendicular
to the direction of $v$, but $J(p,v)$ is also a $2$-Lipschitz
graph over the horizontal line $L_1$ since
\begin{displaymath}
\pi_{L_1}(J(p,v))\subset
\pi_{L_1}(B(v,1/100))=[-\tfrac{\sqrt{3}}{2}r-\tfrac{1}{100},-\tfrac{\sqrt{3}}{2}r+\tfrac{1}{100}]\subset
[-\tfrac{2}{\sqrt{5}}r,\tfrac{2}{\sqrt{5}}r].
\end{displaymath}
(By the same argument $J(p,v)$ is a $2$-Lipschitz graph over $L_1$
for  any $v=(r\cos \varphi,r\sin \varphi)$ with $\varphi \in
[\pi/6,5\pi/6]$).

Without loss of generality, we assume in the following that
$J(\mathbf{p},\mathbf{v})$ is a $2$-Lipschitz graph over $L_1$ and
$L_2$. For $1 \le i \leq 2$, define
$$  \calR_{i} := \{R(p,v) \in \calR : J(p,v) \mbox{ is a $2$-Lipschitz graph over $L_i$}\}. $$
We have $|\calR| \le |\calR_{1}| + |\calR_{2}|$. Hence if $|\calR_{1}| \ge |\calR|/2$, we choose
$L=L_1$ and $\calR_\ast=\calR_{1}$. Otherwise, we choose $L=L_2$ and $\calR_\ast=\calR_{2}$. For an
illustration, see Figure \ref{fig5}.
\begin{figure}
\begin{center}
\begin{overpic}[scale = 0.7]{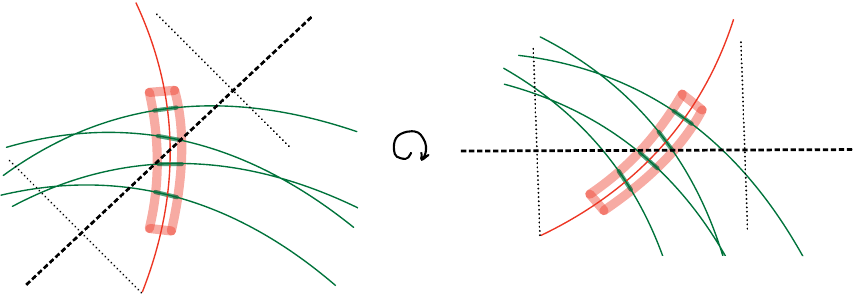}
\put(32,25){$L$} \put(6,27){$J(p,v)$} \put(24,2){$J(q,w)$}
\put(12,7){$\mathbf{I}$} \put(65,13){$\mathbf{I}$} \put(90,13){$L
= \R$}
\end{overpic}
\caption{Finding the line $L$. The fat red rectangle represents
$A\mathbf{R}$, and the smaller green rectangles inside $A\mathbf{R}$ represent the
rectangles $R' \in \mathcal{R}$.}\label{fig5}
\end{center}
\end{figure}
We assume with no loss of generality that $L = L_1 =
\mathrm{span}\{(1,0)\}$ and $\calR_\ast=\calR_{1}$.
We abbreviate $\pi := \pi_{L}$ and identify $L$ with $\mathbb{R}$
via $(x_1,0)\mapsto x_1$. Next, we note that
\begin{equation}\label{gragh1}
\mathbf{I} := [\pi(\mathbf{v})-\tfrac{1}{600},\pi(\mathbf{v})+\tfrac{1}{600}]\subset \pi(J(\mathbf{p},\mathbf{v})) \cap \bigcap_{R(p,v) \in \mathcal{R}_{\ast}} \pi(J(p,v)).
\end{equation}
This follows easily from the fact that $|v - \mathbf{v}| \leq 1/600$ and that $\pi$ restricted to $J(\mathbf{p},\mathbf{v})$ and $J(p,v)$ is $2$-Lipschitz; we omit the details. Since $J(\mathbf{p},\mathbf{v}),J(p,v)$ are $2$-Lipschitz graphs over $L$, the inclusion \eqref{gragh1} shows that $\Gamma_{\mathbf{I},\mathbf{p},\mathbf{v}},\Gamma_{\mathbf{I},p,v}$ are $2$-Lipschitz graphs over the segment $\mathbf{I}$. Moreover, it is clear that
\begin{displaymath} \pi(R) \subset \pi(A\mathbf{R}) \subset \pi(B(\mathbf{v},\tfrac{1}{600})) = \mathbf{I}, \qquad R \in \mathcal{R}_{\ast}. \end{displaymath}
This completes the proof of the lemma. \end{proof}

We will apply (a corollary of) Lemma \ref{lipgragh} to the
rectangle $\mathbf{R}$ in Proposition \ref{PYZ}. For that purpose
we may assume without loss of generality that the line $L$ given
by Lemma \ref{lipgragh} is $\mathrm{span}\{(1,0)\}$, and we restrict
the following discussion to this case. This convention leaves for
each graph $\Gamma_{\mathbf{I},p,v}$ two possibilities: it is
contained either on an `upper' or on a `lower' half-circle. For
$p=(x,r)=(x_1,x_2,r) \in \mathbb{R}^2\times (0,\infty)$, we write
the circle $S(p)$ as the union of two graphs over $L$ as follows
   $$S(p)=S(x,r) =\{ (y_1,y_2) \in \mathbb{R}^2 : (y_1 -x_1)^2 + (y_2- x_2)^2 = r^2  \}=S_{+}(p)\cup S_{-}(p),$$
where
\begin{displaymath}
S_{\pm}(p)=\left\{(y_1,\pm \sqrt{r^2 - (y_1 -x_1)^2} + x_2)\colon
y_1\in [x_1-r,x_1+r]\right\}.
\end{displaymath}
Now for $p=(x_1,x_2,r)\in \mathbb{R}^2\times (0,\infty)$, we
define
\begin{equation}\label{eq:f_p}
f_{p,\pm}(\theta):=\pm \sqrt{r^2 - (\theta -x_1)^2} + x_2,\quad
\theta \in [x_1-r,x_1+r].
\end{equation}
We record for any $\theta \in (x_1-r,x_1+r)$,
\begin{equation}\label{eq:f_deriv}
  f_{p,\pm}'(\theta) = \mp\frac{\theta-x_1}{\sqrt{r^2-(\theta-x_1)^2}}
   \quad \mbox{ and } \quad f_{p,\pm}''(\theta) = \mp\frac{r^2}{[r^2-(\theta-x_1)^2]^{3/2}}.
   \end{equation}
The functions $f_{p,\pm}$ are $2$-Lipschitz on
$[x_1-\tfrac{2}{\sqrt{5}}r,x_1+\tfrac{2}{\sqrt{5}}r]$, and this is
the largest interval with that property. At the endpoints of it,
the corresponding function values are
\begin{equation}\label{eq:VertDiff}
f_{p,\pm}(x_1-\tfrac{2r}{\sqrt{5}})=f_{p,\pm}(x_1+\tfrac{2r}{\sqrt{5}})=x_2\pm
\tfrac{r}{\sqrt{5}}.
\end{equation}
The tangents to $S(p)$ in the respective points on $S(p)$ have
precise slopes $+2$ or $-2$.

For simplicity, we denote $\pi=\pi_L: (y_1,y_2)\mapsto y_1$.
Assume that $\mathbf{I}\subset L$ is an interval and consider
$p\in \mathbf{D}$ and $v\in S(p)$. If the arc
$\Gamma_{\mathbf{I},p,v}$ introduced in Definition \ref{d:Graph}
is a graph over $L$, then
   \begin{equation}\label{eq:UpperLower}
   \mbox{either}\quad  \Gamma_{\mathbf{I},p,v}= \pi^{-1}(\mathbf{I}) \cap S_{+}(p) \quad
   \mbox{or} \quad \Gamma_{\mathbf{I},p,v}= \pi^{-1}(\mathbf{I}) \cap
   S_{-}(p)
   \end{equation}
and $\Gamma_{\mathbf{I},p,v}$ is the graph of
${f_{p,+}}|_{\mathbf{I}}$ or ${f_{p,-}}|_{\mathbf{I}}$,
respectively. We may
   in the following assume that the rectangles  $R(p,v)\in \mathcal{R}_{\ast}$  given by Lemma \ref{lipgragh}  all
yield functions of the same type, either all associated to upper
half-circles, or all associated to lower half-circles. This type may not however be the same as for the
rectangle $\mathbf{R}$, cf.\ Figure \ref{fig5}.

\begin{lemma}\label{cinematic}
  Under the assumptions of Lemma \ref{lipgragh} (with
  $L=\mathrm{span}\{(1,0)\}$), there exists a subset
  $\mathcal{R}_{\ast}\subset \mathcal{R}$ with
  $|\mathcal{R}_{\ast}|\geq |\mathcal{R}|/4$ such that the
  conclusions \eqref{i:lipgraph}-\eqref{ii:lipgraph} hold and
  additionally, either $\Gamma_{\mathbf{I},p,v}= \pi_1^{-1}(\mathbf{I}) \cap
  S_{+}(p)$ for all $R(p,v)\in \mathcal{R}_{\ast}$, or  $\Gamma_{\mathbf{I},p,v}= \pi_1^{-1}(\mathbf{I}) \cap
  S_{-}(p)$ for all $R(p,v)\in \mathcal{R}_{\ast}$.
\end{lemma}

\begin{proof} Observation \eqref{eq:UpperLower} shows that the additional property
can  be arranged by discarding at most half of the elements in the
original family $\mathcal{R}_{\ast}$ given by Lemma
\ref{lipgragh}.
\end{proof}

Even with this additional assumption in place, the family
$\mathcal{R}_{\ast}$ is not quite of the same form as the families
of graph neighborhoods considered in \cite{2022arXiv220702259P},
but it is also not too different. For arbitrary $\eta>0$ and
subinterval $I\subset \mathbf{I}$ in the domain of $f_{p,\pm}$, we
define the \emph{vertical $\eta$-neighborhood}
   $$f^{\eta}_{p,\pm}(I):= \left\{(y_1,y_2) \in I\times\mathbb{R} : f_{p,\pm}(y_1)- \eta \le y_2 \le f_{p,\pm}(y_1)+\eta\right \}.$$
Moreover, for any $\eta\in (0,1/200]$, if $ f_{p,\pm}:\mathbf{I}
\to \mathbb{R}$ is $2$-Lipschitz, then
 \begin{equation}\label{fact421b}
\pi^{-1}(I) \cap S^{\eta}(p) \subset f^{4\eta}_{p,+}(I)\cup
f^{4\eta}_{p,-}(I).
   \end{equation}
Here, the upper bound on $\eta$ ensures that the points on $S(p)$
which are $\eta$-close to points in $\pi^{-1}(I) \cap S^{\eta}(p)$
lie in the part of the graph of $f_{p,\pm}$ where the Lipschitz
constant is small enough for the inclusion \eqref{fact421b} to
hold. In particular, if
   $R=R(p,v)$ is a rectangle with $I=\pi(R)\subset \mathbf{I}$ and $\eta=\delta<1/200$, and
   if $f_p\in \{f_{p,+},f_{p,-}\}$ is such that $\Gamma_{\mathbf{I},p,v}$ is
   the graph of $f_p$, then the inclusion in
   \eqref{fact421b} yields
   \begin{equation}\label{fact422}
R \subset f^{4\delta}_{p}(\pi(R))
   \end{equation}
   since  $ R \subset \pi^{-1}(\pi(R))\cap S^{\delta}(p)$. A
   priori,  \eqref{fact421b} only yields  $R\subset f^{4\delta}_{p,+}(\pi(R))\cup
f^{4\delta}_{p,-}(\pi(R))$, but the conditions $\delta<1/200$,
$\pi(R)\subset \mathbf{I}$  and the assumptions on
$\Gamma_{\mathbf{I},p,v}$ ensure that either $R\subset
f^{4\delta}_{p,+}(\pi(R))$ or
    $R\subset f^{4\delta}_{p,-}(\pi(R))$.

We will also need an opposite inclusion for enlarged rectangles.
Let $\delta\leq \sigma$, $R=R(p,v)$ with
$\pi(R)\subset \mathbf{I}$ and $f_{p}:\mathbf{I}\to\mathbb{R}$  be
$2$-Lipschitz with graph equal to $\Gamma_{\mathbf{I},p,v}$. Then,
for any $C\geq 1$, if $I\subset \mathbf{I}$ is an interval with
$|I|\leq C\sigma$ and such that
 $\pi(R)\subset I$, then
\begin{equation}\label{fact422b}
f^{C\delta}_{p}(I)\subset 4C R.
   \end{equation}
The inclusion $f^{C\delta}_{p}(I)\subset S^{4C\delta}(p)$ is
clear. To prove that also $f^{C\delta}_{p}(I)\subset
B(v,4C\sigma)$, consider an arbitrary point
  $y=(y_1,y_2)\in  f^{C\delta}_{p}(I)$. Since $\pi(R)\subset
  I\subset \mathbf{I}$ and $\Gamma_{\mathbf{I},p,v}$ is the graph of $f_p$, there exists $\theta\in I$ such that
  $v=(\theta,f_{p}(\theta))$ and using the $2$-Lipschitz continuity of $f_p$ on $\mathbf{I}\supset I$, we can estimate
  \begin{align*}
  |y-v|\leq
  |y_2-f_{p}(y_1)|+|(y_1,f_{p}(y_1))-(\theta,f_{p}(\theta))|\leq
  C\delta + \sqrt{5}|y_1-\theta|\leq C \delta
  +3|I|\leq 4C \sigma,
  \end{align*}
  concluding the proof of \eqref{fact422b}.
 In order to apply arguments that were stated in
     \cite{2022arXiv220702259P} for certain $C^2$ functions, we need
     a preliminary result about the behavior of $p\mapsto
     f_{p,\pm}$ with respect to the $C^2(\mathbf{I})$-norm.

\begin{lemma}\label{Nincl5} There exists an absolute constant
$K\geq 1$ such that for all $p,p'\in \mathbf{D}$, if
$\mathbf{I}\subset \mathbb{R}$ is an interval so that
$f_{p,+},f_{p',+}:\mathbf{I} \to \mathbb{R}$ are $2$-Lipschitz,
then
\begin{equation}\label{eq:f_p_dist}
\|f_{p,+}-f_{p',+}\|_{C^2(\mathbf{I})}\leq K|p-p'|.
\end{equation}
\end{lemma}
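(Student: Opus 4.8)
The plan is to reduce the claimed $C^{2}(\mathbf{I})$-bound to three pointwise estimates — one for $f_{p,+}-f_{p',+}$ and one for each of its first two derivatives — and to deduce each of them from the mean value theorem applied to explicit elementary functions on a convex domain on which all the relevant denominators are bounded away from $0$. First I would extract the arithmetic consequence of the $2$-Lipschitz hypothesis. If $p=(x_{1},x_{2},r)\in\mathbf{D}$ and $f_{p,+}$ is $2$-Lipschitz on an interval $\mathbf{I}$, then $\mathbf{I}\subset[x_{1}-\tfrac{2r}{\sqrt{5}},x_{1}+\tfrac{2r}{\sqrt{5}}]$, since by \eqref{eq:VertDiff} and the discussion around it this is exactly the interval on which $f_{p,+}$ has Lipschitz constant at most $2$. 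Hence, writing $u:=\theta-x_{1}$ for $\theta\in\mathbf{I}$, one has $|u|\leq\tfrac{2r}{\sqrt{5}}$, and therefore
\begin{displaymath} r^{2}-u^{2}\;\geq\;r^{2}-\tfrac{4}{5}r^{2}\;=\;\tfrac{1}{5}r^{2}\;\geq\;\tfrac{1}{20}, \end{displaymath}
using $r\geq\tfrac{1}{2}$ for points of $\mathbf{D}$. The same applies verbatim to $p'=(x_{1}',x_{2}',r')$: with $u':=\theta-x_{1}'$ one gets $|u'|\leq\tfrac{2r'}{\sqrt{5}}$ and $(r')^{2}-(u')^{2}\geq\tfrac{1}{20}$.

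Next I would record, using \eqref{eq:f_p}–\eqref{eq:f_deriv}, that $f_{p,+}(\theta)=F(u,r)+x_{2}$, $f_{p,+}'(\theta)=G(u,r)$ and $f_{p,+}''(\theta)=H(u,r)$, where
\begin{displaymath} F(u,r)=\sqrt{r^{2}-u^{2}},\qquad G(u,r)=-\frac{u}{\sqrt{r^{2}-u^{2}}},\qquad H(u,r)=-\frac{r^{2}}{(r^{2}-u^{2})^{3/2}}, \end{displaymath}
and similarly for $p'$ with $(u',r')$ in place of $(u,r)$. The key observation is that the set $K:=\{(u,r):|u|\leq\tfrac{2r}{\sqrt{5}},\ \tfrac{1}{2}\leq r\leq1\}$ is convex — being the intersection of the slab $\{\tfrac{1}{2}\leq r\leq1\}$ with the two half-planes $\{u\leq\tfrac{2r}{\sqrt{5}}\}$ and $\{u\geq-\tfrac{2r}{\sqrt{5}}\}$ — and that on $K$ one has $r^{2}-u^{2}\geq\tfrac{1}{20}$ and $r\leq1$, so $F,G,H$ are $C^{1}$ on a neighbourhood of $K$ with gradients bounded, on $K$, by an absolute constant $K_{0}$ (every partial derivative of $F,G,H$ is a polynomial in $u,r$ divided by a positive power of $r^{2}-u^{2}$, hence $\lesssim1$ on $K$). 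By the first paragraph both $(u,r)$ and $(u',r')$ lie in $K$, so the straight segment joining them stays in $K$, and the mean value theorem gives
\begin{displaymath} |F(u,r)-F(u',r')|+|G(u,r)-G(u',r')|+|H(u,r)-H(u',r')|\;\leq\;3K_{0}\big(|u-u'|+|r-r'|\big). \end{displaymath}

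Finally, since $|u-u'|=|x_{1}-x_{1}'|\leq|p-p'|$, $|r-r'|\leq|p-p'|$ and $|x_{2}-x_{2}'|\leq|p-p'|$, the displays above yield, for every $\theta\in\mathbf{I}$,
\begin{displaymath} |f_{p,+}(\theta)-f_{p',+}(\theta)|+|f_{p,+}'(\theta)-f_{p',+}'(\theta)|+|f_{p,+}''(\theta)-f_{p',+}''(\theta)|\;\leq\;(6K_{0}+1)|p-p'|, \end{displaymath}
and taking the supremum over $\theta\in\mathbf{I}$ proves \eqref{eq:f_p_dist} with the absolute constant $K:=6K_{0}+1$. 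I do not expect a genuine obstacle in this argument: the only point requiring (mild) care is recognising that the domain $K$ is convex, so that the mean value theorem may be applied along a straight segment; everything else is a bounded-derivative estimate for explicit functions whose denominators are uniformly positive on $K$.
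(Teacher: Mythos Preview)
Your argument is correct. The key step --- that the region $K=\{(u,r):|u|\leq \tfrac{2r}{\sqrt{5}},\ \tfrac12\leq r\leq 1\}$ is convex, contains both $(u,r)$ and $(u',r')$, and that $F,G,H$ have uniformly bounded gradients there --- is sound, and the mean value theorem along the segment gives the three pointwise bounds at once.

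Your route differs from the paper's in a useful way. The paper first shows $\|f_p\|_{C^2(\mathbf{I})}$ is uniformly bounded and uses this to reduce to the case $|p-p'|\leq 1/400$. It then derives the zeroth-order bound $\|f_{p,+}-f_{p',+}\|_\infty\lesssim|p-p'|$ \emph{geometrically}, via the annulus inclusion $S(p')\subset S^{2|p-p'|}(p)$ together with \eqref{fact421b}, and needs a short case analysis to rule out that $(\theta,f_{p',+}(\theta))$ lands near the lower half-circle $f_{p,-}$; only after that does it invoke the same ``bounded denominator'' calculus to control the first and second derivatives. Your approach bypasses both the small-distance reduction and the half-circle dichotomy by treating all three orders uniformly via the mean value theorem on the convex domain $K$. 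The paper's version has the virtue of making the geometric content (circles close in parameter are close as sets) explicit, but yours is more self-contained and slightly more elementary.
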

The corresponding result for the pair $(f_{p,-},f_{p',-})$ is also true, but not needed.
\begin{proof} We abbreviate  $f_p=f_{p,+}$ for $p=(x_1,x_2,r)$.
The norm $\|f_p\|_{C^2(\mathbf{I})}$ is uniformly bounded for all
$p$ and $\mathbf{I}$ as in the statement of the lemma. Indeed,
since $f_p$ is assumed to be $2$-Lipschitz on $\mathbf{I}$, we
have $ f_p(\theta)\in  [x_2+\tfrac{r}{\sqrt{5}},x_2+r]$,  $\theta
\in \mathbf{I}$, by the discussion around \eqref{eq:VertDiff} and
hence
\begin{equation}\label{eq:rBound}
 \tfrac{r}{\sqrt{5}}\leq \sqrt{r^2-(\theta-x_1)^2}\leq r
 \end{equation} for all
$\theta\in \mathbf{I}$. Since $p\in \mathbf{D}$, this yields a
uniform upper bound for $\|f_p\|_{C^2(\mathbf{I})}$, recalling
 the expressions stated in \eqref{eq:f_p}--\eqref{eq:f_deriv} for $f_p$ and its
derivatives. Thus it suffices to prove \eqref{eq:f_p_dist} under
the assumption that $|p-p'|\leq 1/400$.

For arbitrary $p,p'\in  \mathbb{R}^2 \times (0,\infty)$, we have
\begin{equation}\label{eq:annulus_incl}
S(p')\subset S^{2|p-p'|}(p).
\end{equation}
In particular,
\begin{displaymath}
(\theta,f_{p'}(\theta))\in \pi^{-1}(\mathbf{I})\cap S^{2|p-p'|}(p)
 \overset{\eqref{fact421b}}{\subset}
 f^{8|p-p'|}_{p,+}(\mathbf{I})\cup   f^{8|p-p'|}_{p,-}(\mathbf{I}),\quad \theta \in \mathbf{I}.
\end{displaymath}
 Our
upper bound $|p-p'| \leq 1/400$ and the assumption $p,p'\in
\mathbf{D}$ rule out the possibility that
$(\theta,f_{p'}(\theta))\in f^{8|p-p'|}_{p,-}(\mathbf{I})$.
Indeed, by  \eqref{eq:VertDiff}, we know on the one hand that
\begin{displaymath}
f_{p'}(\theta)\in [x_2'+\tfrac{r'}{5},x_2'+r'].
\end{displaymath}
On the other hand, again by \eqref{eq:VertDiff}, if
$(\theta,f_{p'}(\theta))\in f^{8|p-p'|}_{p,-}(\mathbf{I})$, then
necessarily
\begin{displaymath}f_{p'}(\theta)\in
\left[x_2-r-8|p-p'|,x_2-\tfrac{r}{\sqrt{5}}+8|p-p'|\right].
\end{displaymath}
The two inclusions are compatible only if
\begin{displaymath}
x_2'+\tfrac{r'}{5} \leq x_2-\tfrac{r}{\sqrt{5}}+8|p-p'|,
\end{displaymath}
or in other words, if
\begin{displaymath}
8|p-p'|\geq x_2'-x_2 + \tfrac{r+r'}{\sqrt{5}}.
\end{displaymath}
Since this implies that $9|p-p'|\geq 1/\sqrt{5}$, it is
impossible. Thus we conclude that
 \begin{displaymath}
 (\theta,f_{p'}(\theta))\in
 f^{8|p-p'|}_{p}(\mathbf{I}),\quad \theta \in \mathbf{I}.
 \end{displaymath}
  In particular,  it follows
   \begin{equation}\label{eq:SupNormf}
\|f_{p'}-f_{p}\|_{\infty}:=\sup_{\theta \in \mathbf{I}}
|f_{p'}(\theta)
   -f_{p}(\theta)|\leq 8|p-p'|.
   \end{equation}
   We write again in coordinates $p=(x_1,x_2,r)$. The estimate \eqref{eq:rBound}, established at the beginning of the proof,
   combined with
\eqref{eq:SupNormf}, the assumption $p,p'\in \mathbf{D}$,
 and a direct computation gives
   $$  \|f_{p'}'-f_{p}'\|_{\infty}  \lesssim |p-p'|, \quad  \|f_{p'}''-f_{p}''\|_{\infty}  \lesssim |p-p'|,  $$
  with uniform implicit constants. Together with
   \eqref{eq:SupNormf}, this concludes the proof. \end{proof}

To prove Proposition \ref{PYZ}, we have to deal with rectangles
that are $100$-incomparable in the sense of Definition
\ref{def:comparability}. We now record a simple consequence of
this property that will be easier to apply when working with the
`graph neighborhood rectangles'.

\begin{lemma}\label{Nincl4} Let $0<\delta\leq \sigma\leq 1/200$ and
assume that $R=R(p,v), R'=R(p',v')$ are
$100$-incomparable $(\delta,\sigma)$-rectangles with $p,p'\in
\mathbf{D}$. Suppose further that there exists an interval
$\mathbf{I}$ such that $\Gamma_{\mathbf{I},p,v}\subset S_{+}(p)$,
$\Gamma_{\mathbf{I},p',v'}\subset S_{+}(p')$, $\pi(R)\cup
\pi(R')\subset \mathbf{I}$ and so that $f_{p,+}$ and $f_{p',+}$
are $2$-Lipschitz on $\mathbf{I}$.

Then, if $R(p,v)
\cap R(p',v') \ne \emptyset$,
   there exists a point $\theta \in \pi(R(p,v)\cup R(p',v'))$ such that
   $$|f_{p,+}(\theta)-f_{p',+}(\theta)| > 20\delta.$$
\end{lemma}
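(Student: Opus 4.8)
The plan is to argue by contradiction, producing an explicit common enlargement. Assume $R(p,v)\cap R(p',v')\neq\emptyset$ but that $|f_{p,+}(\theta)-f_{p',+}(\theta)|\leq 20\delta$ for every $\theta\in\pi(R(p,v)\cup R(p',v'))$. I will show this forces $R(p,v)$ and $R(p',v')$ to be $100$-comparable, contradicting the hypothesis. The enlargement will simply be $100\,R(p,v)=R^{100\delta}_{100\sigma}(p,v)=S^{100\delta}(p)\cap B(v,100\sigma)$, so everything reduces to checking $R(p,v)\subset 100\,R(p,v)$ (trivial, since $R(p,v)\subset S^{\delta}(p)\cap B(v,\sigma)$) and $R(p',v')\subset 100\,R(p,v)$.

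For the disc factor: pick a common point $z\in R(p,v)\cap R(p',v')$; then $|v-v'|\leq|v-z|+|z-v'|\leq 2\sigma$, so $R(p',v')\subset B(v',\sigma)\subset B(v,3\sigma)\subset B(v,100\sigma)$. For the annular factor: since $\pi(R(p',v'))\subset\mathbf{I}$, $\delta\leq\sigma\leq 1/200$, and $\Gamma_{\mathbf{I},p',v'}\subset S_+(p')$ is the graph of $f_{p',+}$ by \eqref{eq:UpperLower}, the inclusion \eqref{fact422} gives $R(p',v')\subset f^{4\delta}_{p',+}(\pi(R(p',v')))$. Hence any $y=(y_1,y_2)\in R(p',v')$ has $y_1\in\pi(R(p',v'))$ and $|y_2-f_{p',+}(y_1)|\leq 4\delta$; applying the contradiction assumption at $\theta=y_1$ yields $|y_2-f_{p,+}(y_1)|\leq 4\delta+20\delta=24\delta$. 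Since $y_1\in\mathbf{I}$ lies in the interval on which $f_{p,+}$ is $2$-Lipschitz, the point $(y_1,f_{p,+}(y_1))$ lies on $S(p)$, so $\dist(y,S(p))\leq 24\delta$ and thus $R(p',v')\subset S^{24\delta}(p)\subset S^{100\delta}(p)$. Intersecting the two inclusions gives $R(p',v')\subset 100\,R(p,v)$, so by Definition \ref{def:comparability} the rectangles $R(p,v)$ and $R(p',v')$ are $100$-comparable, the desired contradiction.

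I do not expect a genuine obstacle: the geometric content is already packaged in \eqref{fact421b}--\eqref{fact422}, and the remaining steps are elementary triangle inequalities. The only point needing a little care is the bookkeeping that the running hypotheses really license each step — namely that $\Gamma_{\mathbf{I},p',v'}\subset S_+(p')$ together with $\pi(R(p',v'))\subset\mathbf{I}$ and $\sigma\leq 1/200$ allow the use of \eqref{fact422} for $R(p',v')$, and that $\pi(R(p',v'))\subset\mathbf{I}$ places every $y_1$ in the domain where $f_{p,+}$ is defined and $2$-Lipschitz. Each of the lemma's assumptions gets used exactly once in this way.
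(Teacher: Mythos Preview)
Your argument is correct and follows essentially the same route as the paper: contradiction, pass to vertical neighborhoods via \eqref{fact422}, transfer from $f_{p',+}$ to $f_{p,+}$ using the counter-assumption, and conclude $R(p',v')\subset 100\,R(p,v)$. The only cosmetic difference is that the paper packages the final step through \eqref{fact422b} (bounding $|I|\leq 4\sigma$ and invoking $f^{24\delta}_{p,+}(I)\subset 100\,R(p,v)$), whereas you verify the annular and ball inclusions separately by hand; both work.
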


\begin{proof} We denote $I:=\pi(R(p,v)\cup R(p',v'))$ and observe
that this is an interval since $R(p,v) \cap R(p',v') \ne
\emptyset$. By assumption $I\subset \mathbf{I}$. To prove the
lemma, we argue by contradiction and assume for all $\theta \in
I$,
  \begin{equation}\label{eq:Vert8Delta}
  |f_{p,+}(\theta)-f_{p',+}(\theta)| \le 20\delta.
  \end{equation} This implies that
  \begin{equation}\label{eq:R(p',v')Incl}
  R(p',v') \overset{\eqref{fact422}}{\subset} f^{4\delta}_{p',+}(I) \overset{\eqref{eq:Vert8Delta}}{\subset}
  f^{24\delta}_{p,+}(I).
  \end{equation}
Since
  \begin{displaymath}
  |I|=|\pi(R(p,v)\cup R(p',v'))|\leq |\pi(B(v,\sigma))| +
  |\pi(B(v',\sigma))|\leq 4\sigma,
  \end{displaymath}
we can use \eqref{fact422b} to conclude from
\eqref{eq:R(p',v')Incl} that $R(p',v')\subset 100 R(p,v)$, contradicting the $100$-incomparability of $R(p,v)$ and $R(p',v')$. This concludes the proof.
\end{proof}

We are now in a position to show Proposition \ref{PYZ}:

\begin{proof}[Proof of Proposition \ref{PYZ}]
Let $\mathbf{R}=R(\mathbf{p},\mathbf{v})$ be a fixed
$(\delta,\sigma)$-rectangle as in the statement of the
proposition. Since every $(\delta,\sigma)$-rectangle
$R=R(p,v)\subset A\mathbf{R}$ is contained in
$S^{A\delta}(\mathbf{p},\mathbf{v})\cap S^{A\delta}(p,v)$, and
since $S^{A\delta}(\mathbf{p},\mathbf{v})\cap S^{A\delta}(p,v)$
can be covered by boundedly many
$(A\delta,\sqrt{A\delta/|p-\mathbf{p}|})$-rectangles according to
Lemma \ref{lemma5}, it follows that
\begin{displaymath}
 \sigma
  \lesssim \sqrt{\tfrac{A\delta}{|p - \mathbf{p}|}}.
  \end{displaymath} This holds
  in particular for all $R=R(p,v)\in \mathcal{R}$.
Hence defining
\begin{displaymath}
P_\mathbf{R}:=\{ p \in \mathbf{D}\colon R(p,v)\in\mathcal{R}
\text{ for some }v\in S(p)\},
\end{displaymath} we know that there exists a universal constant
$\mathbf{C}>0$ such that
\begin{equation}\label{eq:P_in_Ball}
P_{\mathbf{R}} \subset
B\big(\mathbf{p},\mathbf{C}\tfrac{A\delta}{\sigma^2}\big) \subset
\mathbb{R}^3.
\end{equation}
We make one more observation about the family $\mathcal{R}$, which
will show in particular that it is finite. Namely, if $R(p,v)\in
\mathcal{R}$, then
\begin{equation}\label{eq:UpperBoundClose}
|\{R(p',v')\in \mathcal{R}\colon |p-p'|\leq \delta\}|\lesssim A.
\end{equation}
Indeed, let $R(p_{1},v_{1}),\ldots,R(p_{n},v_{n})$ be a listing of the rectangles on the left. Then $v_{i} \in A\mathbf{R} \cap S^{3\delta}(p)$ for all $1 \leq i \leq n$. Note that $\diam(A\mathbf{R}) \sim A\sigma$. Now, if $n \geq CA$ for a suitable absolute constant $C \geq 1$, we may find two elements $v_{i},v_{j}$ with $|v_{i} - v_{j}| \leq 10\sigma$. But since $|p_{i} - p_{j}| \leq 2\delta$, it would follow that the rectangles $R(p_{i},v_{i})$ and $R(p_{j},v_{j})$ are $100$-comparable, contrary to our assumption. This proves \eqref{eq:UpperBoundClose}.

We divide the remaining proof into two cases according to the size
of $\sigma$, using the threshold $\sigma_0=1/600$ from Lemma
\ref{lipgragh}. The first case, where $\sigma$ is close to $1$,
will follow roughly speaking because the rectangles in
$\mathcal{R}$ are so curvy that their containment in a common
rectangle $A\mathbf{R}$ forces $P_{\mathbf{R}}$ to be contained in
a $\sim_A \delta$ ball. The second case falls under the regime
where the assumptions of Lemmas \ref{lipgragh} and \ref{cinematic}
are satisfied, and we can work with rectangles that are
essentially neighborhoods of graphs over a fixed line.

\medskip
\textbf{Case 1 ($A^{-1}\sigma_0 < \sigma \le 1$).} Inserting the
lower bound for $\sigma$ into \eqref{eq:P_in_Ball}, we
find that there exists a universal constant $\mathbf{C}>0$
(possibly larger than before) such that
$$P_{\mathbf{R}} \subset B(\mathbf{p},\mathbf{C}A^3\delta).$$
Hence, $P_{\mathbf{R}}$ can be covered by $N\lesssim
(\mathbf{C}A^3)^3$ balls $B_1,\ldots,B_N$ of radius $\delta/2$. By
\eqref{eq:UpperBoundClose}, for every $i=1,\ldots,N$, there are $\lesssim A$ rectangles $R(p,v)\in \mathcal{R}$ with $p\in
B_i$. We deduce that
\begin{displaymath}
|\mathcal{R}| \lesssim (\mathbf{C}A^3)^3 \,\max_{i\in
\{1,\ldots,N\}} |\{R(p,v)\in \mathcal{R}\colon p\in
P_{\mathbf{R}}\cap B_i\}|\lesssim (\mathbf{C}A^3)^3 \cdot A \sim
A^{10}.
\end{displaymath}

\medskip
\textbf{Case 2 ($\sigma \leq A^{-1}\sigma_0$).} Let now
$\mathcal{R}_{\ast}\subset \mathcal{R}$ be the subfamily given by
Lemma \ref{cinematic}. Without loss of generality we may assume
that for every $R(p,v)\in \mathcal{R}_{\ast}$, we have
$\Gamma_{\mathbf{I},p,v}\subset S_{+}(p)$. To implement the
approach from   the proof of \cite[Lemma
3.15]{2022arXiv220702259P}, we need one more reduction to ensure
that the rectangles $R(p,v)$
 we consider
 give rise to functions $f_{p,+}$ that are sufficiently close
to each other in $C^2(\mathbf{I})$-norm. Using  Lemma
\ref{Nincl5}, this can be ensured if the parameters $p$ are
sufficiently close in $\mathbf{D}$. By \eqref{eq:P_in_Ball}, and
recalling $\mathrm{diam}\,\mathbf{D}\leq 2$, we know already that
\begin{equation}\label{eq:P_in_D}
P_{\mathbf{R,\ast}}:=\{ p\in \mathbf{D}\colon \text{ there is }
v\in S(p)\text{ with }R(p,v)\in\mathcal{R}_{\ast}\}\subset
B(\mathbf{p},\mathbf{A}t),
\end{equation}
where  $\mathbf{A} \lesssim A$ and
$$t:= \min\{\delta/\sigma^{2},2\}.$$
On the other hand, by \eqref{eq:UpperBoundClose}, we also know
that for each $p \in P_{\mathbf{R},\ast}$, there are at most
$\lesssim A$ many $v \in S(p)$ such that $R(p,v) \in
\mathcal{R}_{\ast}$. As a result,
\begin{equation}\label{eq:P_large}
|P_{\mathbf{R},\ast}| \gtrsim A^{-1}|\mathcal{R}_{\ast}|.
\end{equation}
Combining \eqref{eq:P_in_D} and \eqref{eq:P_large}, we may choose
a ball
\begin{equation}\label{incl9}
 B_0 \subset B(\mathbf{p},\mathbf{A}t)
\end{equation}
of radius $\frac{t}{2K}$, where $K\geq 1$ is the constant from
Lemma \ref{Nincl5},
 such that
\begin{equation}\label{eq:P_large2}
|P_{\mathbf{R},\ast} \cap B_0| \gtrsim
\mathbf{A}^{-3}|P_{\mathbf{R},\ast}|.
\end{equation}
We define a further subfamily
$$ \calR_\ast^\circ:=\{R(p,v)\in \mathcal{R}_{\ast} : p \in P_{\mathbf{R},\ast} \cap B_0\}.$$
Hence by \eqref{eq:P_large} and \eqref{eq:P_large2}
\begin{displaymath}
|\calR_\ast^\circ| \geq |P_{\mathbf{R},\ast} \cap B_0| \gtrsim
A^{-4}|\mathcal{R}_{\ast}|.
\end{displaymath}
Thus if we manage to show that $ |\calR_\ast^\circ| \lesssim A^3$, we can deduce that
\begin{displaymath}
A^{-4}|\mathcal{R}_{\ast}|\lesssim |\calR_\ast^\circ| \lesssim
A^{3} \quad \Longrightarrow \quad |\mathcal{R}_{\ast}|  \lesssim
{A}^{4}A^{3} \lesssim A^{10}.
\end{displaymath}
This will conclude the proof since $|\mathcal{R}_{\ast}|\sim
|\mathcal{R}|$ by Lemma \ref{cinematic}.

It remains to prove that $|\calR_\ast^\circ| \lesssim A^3$. Applying Corollary \ref{Nincl5}, we deduce that
\begin{equation}\label{tball}
   \|f_i-f_j\|_{C^2(\mathbf{I})} \le t \qquad p_i,p_j \in B_0,
\end{equation}
where $f_i:=f_{p_i,+}$ and $f_j:=f_{p_j,+}$. Following the
argument in \cite[Lemma 3.15]{2022arXiv220702259P}, we will show
that
\begin{equation}\label{form216} |\{R \in \calR_\ast^\circ : z \in R\}| \lesssim A, \qquad z \in \R^{2}. \end{equation}
This will give
\begin{displaymath}
|\calR_\ast^\circ| \cdot \delta \sigma \lesssim \int_{A\mathbf{R}}
\sum_{R \in \calR_\ast^\circ} \mathbf{1}_{R} \lesssim A \cdot
\mathrm{Leb}(A\mathbf{R}) \lesssim A^{3}\delta \sigma,
\end{displaymath} as desired.

To prove \eqref{form216}, fix $z = (\theta_{0},y_{0}) \in \R^{2}$
which is contained in, say, $N$ pairwise $100$-incomparable
$(\delta,\sigma)$-rectangles $R_{j} \in \calR_\ast^\circ$, for $1
\leq j \leq N$. The claim is that $N \lesssim A$. Note that
$\pi(R_j)$ necessarily contains the point $\theta_0 + \sigma/3$ or
$\theta_0 - \sigma/3$, and we can bound individually the
cardinality of the two subfamilies of $\{R_j:\, j=1,\ldots,N\}$
where one of the two options occur. Thus let us assume in the
following without loss of generality that $\theta_0 + \sigma/3 \in
\pi(R_j)$ for all $j$.

To show our claim, it suffices to establish the following two
inequalities:
\begin{equation}\label{form217}
|f_{i}'(\theta_{0}) - f_{j}'(\theta_{0})| \leq 100A \cdot
(\delta/\sigma), \qquad 1 \leq i,j \leq N,
\end{equation}
and
\begin{equation}\label{form218} |f_{i}'(\theta_{0}) - f_{j}'(\theta_{0})| \geq \delta/\sigma, \qquad 1 \leq i \neq j \leq N. \end{equation}
The first inequality will be based on
 the assumption that the rectangles in $\mathcal{R}$ are contained in $A\mathbf{R}$, and the second inequality uses the $100$-incomparability of the rectangles in $\calR_\ast^\circ$.

We give one argument that takes care both of the short rectangles
($\sigma \leq \sqrt{\delta}$),
 and the long rectangles ($\sigma \geq \sqrt{\delta}$) treated in  \cite{2022arXiv220702259P}.
Recalling the $C^2(\mathbf{I})$ bound \eqref{tball}, we have
\begin{equation}\label{form219}
 \|f_{i} -
f_{j}\|_{C^2(\mathbf{I})} \le t=\min\{\delta/\sigma^2,2\}.
\end{equation}
We apply this to prove \eqref{form217}. Let us denote $h := f_{i}
- f_{j}$, and let us assume to the contrary that $|h'(\theta_{0})|
> 100A \cdot (\delta/\sigma)$. Then, using \eqref{form219}, for all
 $\theta \in \pi(R_i)\cup
\pi(R_j)$ with $|\theta - \theta_{0}| \leq \sigma$, we have
\begin{displaymath}
|h'(\theta)| \geq |h'(\theta_{0})| - \|h'' \|_{\infty}|\theta -
\theta_{0}| \geq 100A \cdot (\delta/\sigma) -
\min\{\delta/\sigma^2,2\} \sigma > 99A \cdot (\delta/\sigma),
\end{displaymath}
using  $A \geq 1$. By \eqref{fact422} and the assumption that the
rectangles $R_{j}$ all intersect at $(\theta_{0},y_{0})$ and
$\theta_0\in \pi(R_j)\subset \mathbf{I}$, we have $|h(\theta_{0})|
\leq 8\delta$. We will combine this information with the lower
bound for $|h'|$ on the interval $\pi(R_i)\cup \pi(R_j)$ to reach
a contradiction with the assumption that $R_i\cup R_j \subset A
\mathbf{R}$. Recall that $\theta_0+\sigma/3 \in \pi(R_i) \cap \pi(R_{j})$. Then,
\begin{displaymath}
|h(\theta_{0} + \sigma/3)| \geq
|h(\theta_0+\sigma/3)-h(\theta_0)|-8\delta\geq 99A \cdot
(\delta/\sigma) \cdot \sigma/3 -  8\delta \geq 33A \cdot \delta -
8\delta > 25A\delta.
\end{displaymath}
But this is not consistent with the assumption that
\begin{displaymath} \{(\theta_{0} + \sigma/3,f_{i}(\theta_{0} + \sigma/3)),(\theta_{0} + \sigma/3,f_{j}(\theta_{0} + \sigma/3))\}
\subset R_{i} \cup R_{j} \subset A\mathbf{R}, \end{displaymath}
noting that the ``vertical'' thickness of $A\mathbf{R}$ is at most
$8A\delta$ since $A\mathbf{R} \subset
f_{\mathbf{p},+}^{4A\delta}(\pi(A\mathbf{R}))$ or  $A\mathbf{R}
\subset f_{\mathbf{p},-}^{4A\delta}(\pi(A\mathbf{R}))$ according
to \eqref{fact422}.

The proof of \eqref{form218} is similar. This time we make the
counter assumption that $|h'(\theta_{0})| < \delta/\sigma$. The
assumption $\theta_0 \in \pi(R_i\cap R_j)$ implies that
$\pi(R_i\cup R_j)$ is an interval contained in
$[\theta_{0}-2\sigma,\theta_{0}+2\sigma]$. Using \eqref{form219},
as above, this leads to the following estimate
\begin{displaymath}
|h'(\theta)|  \leq
|h'(\theta_0)|+\|h'\|_{\infty}|\theta-\theta_0|<
\tfrac{\delta}{\sigma} +
\min\left\{\tfrac{\delta}{\sigma^2},2\right\}2\sigma \leq
3\delta/\sigma, \qquad \theta \in \pi(R_i\cup R_j).
\end{displaymath} Finally, since $|h(\theta_{0})| \leq 8 \delta$, we
deduce from the preceding estimate that
\begin{displaymath}
|h(\theta)|\leq 8\delta+ (3\delta/\sigma) \cdot 2\sigma =
14\delta, \qquad \theta\in \pi(R_i\cup R_j).
\end{displaymath} This inequality
contradicts  Lemma \ref{Nincl4} and shows that the
counter-assumption cannot hold. This completes the proof of
\eqref{form218}, and thus the proof of Proposition \ref{PYZ}.
\end{proof}


\section*{Acknowledgments} 
We would like to thank the anonymous reviewers for reading the manuscript carefully, and for making many helpful suggestions.

\bibliographystyle{amsplain}


\begin{dajauthors}
\begin{authorinfo}[fk]
  Katrin F\"assler\\
  Department of Mathematics and Statistics\\ University of Jyv\"askyl\"a,
P.O. Box 35 (MaD)\\
FI-40014 University of Jyv\"askyl\"a\\
Finland\\
  katrin\imagedot{}s\imagedot{}fassler\imageat{}jyu\imagedot{}fi \\
\end{authorinfo}
\begin{authorinfo}[jl]
  Jiayin Liu\\
  Department of Mathematics and Statistics\\ University of Jyv\"askyl\"a,
P.O. Box 35 (MaD)\\
FI-40014 University of Jyv\"askyl\"a\\
Finland\\
  jiayin\imagedot{}mat\imagedot{}liu\imageat{}jyu\imagedot{}fi \\
\end{authorinfo}
\begin{authorinfo}[to]
  Tuomas Orponen\\
  Department of Mathematics and Statistics\\ University of Jyv\"askyl\"a,
P.O. Box 35 (MaD)\\
FI-40014 University of Jyv\"askyl\"a\\
Finland\\
  tuomas\imagedot{}t\imagedot{}orponen\imageat{}jyu\imagedot{}fi\\
\end{authorinfo}
\end{dajauthors}

\end{document}